\documentclass[10pt]{amsart}

\usepackage{amsmath, amsthm, amssymb}

\usepackage{multicol}
\usepackage[nopostdot, nonumberlist, style=index]{glossaries}
\usepackage{glossary-mcols}

\setglossarystyle{mcolindex}
\makenoidxglossaries


\usepackage[initials, alphabetic]{amsrefs}
\usepackage[colorlinks=true, linktoc=page, linkcolor=webgreen, citecolor=webgreen, urlcolor=webbrown]{hyperref}
\usepackage[dvipsnames]{xcolor}
\usepackage[mathscr]{eucal}

\usepackage{enumitem}
\usepackage{thmtools}
\usepackage{geometry} 
\usepackage{graphicx} 
\usepackage{float} 
\usepackage{wrapfig}
\usepackage{tikz-cd}
\usepackage[utf8]{inputenc} 
\usepackage[T1]{fontenc} 
\usepackage{XCharter} 

\usepackage{geometry}
\geometry{vscale=0.76, hscale=0.76 ,footnotesep=1cm}
\usepackage{setspace}
\setstretch{1.25}
\setlength\parindent{12pt}
\setlength\parskip{0pt}

\oddsidemargin=5pt
\evensidemargin=5pt
\parskip=4pt
\parindent=20pt
\addtolength{\headheight}{15pt}

\newtheorem{thm}{Theorem}[section]
\newtheorem{state}{Theorem}

\newtheorem{prop}[thm]{Proposition}
\newtheorem{lem}[thm]{Lemma}
\newtheorem{cor}[thm]{Corollary}

\theoremstyle{remark}
\newtheorem{dfn}[thm]{Definition}
\newtheorem{rmk}[thm]{Remark}
\newtheorem{note}[thm]{Notation}
\newtheorem{conv}[thm]{Convention}
\newtheorem{ex}[thm]{Example}
\theoremstyle{definition}
\newtheorem*{ack}{Acknowledgements}

\numberwithin{equation}{section}

\newcommand{\tc}{\textbf{t}} 
\newcommand{\qc}{\textbf{q}}	

\DeclareMathOperator{\SL}{SL}
\DeclareMathOperator{\ord}{ord}
\DeclareMathOperator{\Sym}{Sym}
\DeclareMathOperator{\pr}{pr}
\newcommand{\Rat}{\mathbb Q}
\newcommand{\Z}{\mathbb Z}

\def\sp{$^+$stable }
\def\sm{$^-$stable }
\def\mfa{\mathfrak{a}} 
\def\mfb{\mathfrak{b}} 
\def\mfe{\mathfrak{e}} 
\def\mfi{\iota} 
\def\ct{\mathfrak{c}} 
\def\A{\mathscr{A}}
\def\Atq{\mathbb{A}_{\tc,\qc}}
\def\Aqt{\mathbb{A}_{\qc,\tc}}

\def\Bo{\mathscr{B}}
\def\T{\mathscr{T}}
\def\X{\mathscr{X}}
\def\Xb{\mathbf{X}}

\def\Y{\mathscr{Y}}
\def\P{\mathscr{P}}
\def\Pas{\mathscr{P}_{\rm as}}
\def\H{\mathscr{H}}
\def\AHA{\mathscr{A}}
\def\Ht{\widetilde{\mathscr{H}}}

\definecolor{webgreen}{rgb}{0,.4,0}
\definecolor{webbrown}{rgb}{.4,0,0}

\newglossaryentry{ct}
{
  name={\ensuremath{\ct_{y_k} }},
  description={\hfill\S\ref{sec: ddparep}},
  sort=ct
}

\newglossaryentry{Xb}
{
  name={\ensuremath{\Xb, \Xb_k, }},
  description={\hfill\S\ref{sec: not1}},
  sort=x
}

\newglossaryentry{Xbar}
{
  name={\ensuremath{\overline{\Xb}_k, \overline{\Xb}_{[k,m]} }},
  description={\hfill\S\ref{sec: not1}},
  sort=xb
}

\newglossaryentry{pn}
{
  name={\ensuremath{p_n[\Xb]}},
  description={\hfill\S\ref{sec: not1}},
  sort=p
}

\newglossaryentry{en}
{
  name={\ensuremath{e_n[\Xb] }},
  description={\hfill\S\ref{sec: not1}},
  sort=en
}

\newglossaryentry{m}
{
  name={\ensuremath{m_\lambda[\Xb] }},
  description={\hfill\S\ref{sec: not1}},
  sort=m
}

\newglossaryentry{hn}
{
  name={\ensuremath{h_n[\Xb]}},
  description={\hfill\S\ref{sec: not1}},
  sort=hna
}

\newglossaryentry{hnf}
{
  name={\ensuremath{h_n[(1-\tc)\overline{\Xb}_k]}},
  description={\hfill\S\ref{sec: esf}},
  sort=hnb
}

\newglossaryentry{Exp}
{
  name={\ensuremath{{\rm Exp}[\Xb] }},
  description={\hfill\S\ref{sec: not2}},
  sort=exp
}

\newglossaryentry{Pn}
{
  name={\ensuremath{\P_k}},
  description={\hfill\S\ref{sec: not4}},
  sort=pa
}

\newglossaryentry{Pnpm}
{
  name={\ensuremath{\P_k^\pm }},
  description={\hfill\S\ref{sec: not4}},
  sort=pa
}

\newglossaryentry{P(n)}
{
  name={\ensuremath{\P(k)^\pm }},
  description={\hfill\S\ref{sec: p(k)}},
  sort=pab
}

\newglossaryentry{Pbullet}
{
  name={\ensuremath{\P_\bullet }},
  description={\hfill\S\ref{sec: ddpaP}},
  sort=pask
}

\newglossaryentry{Pas}
{
  name={\ensuremath{\Pas^\pm }},
  description={\hfill\S\ref{sec: pas}},
  sort=pas
}

\newglossaryentry{Pinf}
{
  name={\ensuremath{\P_\infty^\pm }},
  description={\hfill\S\ref{sec: maps}},
  sort=paa
}

\newglossaryentry{sn}
{
  name={\ensuremath{s_i}},
  description={\hfill\S\ref{sec: not4}},
  sort=s
}

\newglossaryentry{pik}
{
  name={\ensuremath{\pi_k }},
  description={\hfill\S\ref{sec: maps}},
  sort=pik
}

\newglossaryentry{Pik}
{
  name={\ensuremath{\Pi_k }},
  description={\hfill\S\ref{sec: maps}},
  sort=pik
}

\newglossaryentry{iota}
{
  name={\ensuremath{\iota }},
  description={\hfill\S\ref{sec: sl2}},
  sort=iota
}

\newglossaryentry{ik}
{
  name={\ensuremath{\iota_k }},
  description={\hfill\S\ref{sec: maps}},
  sort=iotak
}

\newglossaryentry{iotak}
{
  name={\ensuremath{\iota(k) }},
  description={\hfill\S\ref{sec: ddpaP}},
  sort=iotakk
}

\newglossaryentry{Ik}
{
  name={\ensuremath{I_k }},
  description={\hfill\S\ref{sec: maps}},
  sort=iotakb
}

\newglossaryentry{J}
{
  name={\ensuremath{J }},
  description={\hfill\S\ref{sec: maps}},
  sort=j
}

\newglossaryentry{Jk}
{
  name={\ensuremath{J_k }},
  description={\hfill\S\ref{sec: maps}},
  sort=jk
}

\newglossaryentry{Ak}
{
  name={\ensuremath{\A_k}},
  description={\hfill\S\ref{sec: AHA}},
  sort=ak
}

\newglossaryentry{tomegak}
{
  name={\ensuremath{\widetilde{\omega}_k}},
  description={\hfill\S\ref{sec: AHA},\ref{sec: standardrep}},
  sort=omega
}

\newglossaryentry{omegak}
{
  name={\ensuremath{\omega_k}},
  description={\hfill\S\ref{sec: DAHA},\ref{sec: standardrep}},
  sort=omega
}

\newglossaryentry{Hk}
{
  name={\ensuremath{\H_k}},
  description={\hfill\S\ref{sec: DAHA}},
  sort=h
}

\newglossaryentry{Hkpm}
{
  name={\ensuremath{\H_k^\pm}},
  description={\hfill\S\ref{sec: DAHA}},
  sort=h
}

\newglossaryentry{tHk}
{
  name={\ensuremath{\widetilde{\H}_k^+}},
  description={\hfill\S\ref{sec: dDAHA}},
  sort=hk
}

\newglossaryentry{H}
{
  name={\ensuremath{\H^\pm}},
  description={\hfill\S\ref{sec: sDAHA}},
  sort=h
}

\newglossaryentry{H(k)}
{
  name={\ensuremath{\H(k)^\pm}},
  description={\hfill\S\ref{sec: sDAHA}},
  sort=hkp
}

\newglossaryentry{e}
{
  name={\ensuremath{\mfe}},
  description={\hfill\S\ref{sec: sDAHA}},
  sort=e
}

\newglossaryentry{a}
{
  name={\ensuremath{\mfa}},
  description={\hfill\S\ref{sec: sl2}},
  sort=a
}

\newglossaryentry{b}
{
  name={\ensuremath{\mfb}},
  description={\hfill\S\ref{sec: sl2}},
  sort=b
}

\newglossaryentry{T}
{
  name={\ensuremath{T_i}},
  description={\hfill\S\ref{sec: DAHA}},
  sort=tnor
}

\newglossaryentry{X}
{
  name={\ensuremath{X_i}},
  description={\hfill\S\ref{sec: DAHA}},
  sort=xi
}

\newglossaryentry{Y}
{
  name={\ensuremath{Y_i}},
  description={\hfill\S\ref{sec: DAHA}},
  sort=yi
}

\newglossaryentry{Y(k)}
{
  name={\ensuremath{Y_i^{(k)}}},
  description={\hfill\S\ref{sec: standardrep}},
  sort=yi
}

\newglossaryentry{varpi}
{
  name={\ensuremath{\varpi_k}},
  description={\hfill\S\ref{sec: dDAHA},\ref{sec: dDAHArep}},
  sort=omegapi
}

\newglossaryentry{gamma}
{
  name={\ensuremath{\gamma_k}},
  description={\hfill\S\ref{sec: dDAHA},\ref{sec: dDAHArep}},
  sort=gamma
}

\newglossaryentry{tY}
{
  name={\ensuremath{\widetilde{Y}_i}},
  description={\hfill\S\ref{sec: dDAHA}},
  sort=yit
}

\newglossaryentry{tY(k)}
{
  name={\ensuremath{\widetilde{Y}_i^{(k)}}},
  description={\hfill\S\ref{sec: dDAHArep}},
  sort=yit
}

\newglossaryentry{tYinf}
{
  name={\ensuremath{\widetilde{Y}_i^{(\infty)}}},
  description={\hfill\S\ref{sec: ind}},
  sort=yiti
}

\newglossaryentry{tZ}
{
  name={\ensuremath{ \widetilde{Z}_i^{(k)}}},
  description={\hfill\S\ref{sec: W}},
  sort=zk
}

\newglossaryentry{tZinf}
{
  name={\ensuremath{\widetilde{Z}_i^{(\infty)}}},
  description={\hfill\S\ref{sec: W}},
  sort=zkinf
}

\newglossaryentry{W}
{
  name={\ensuremath{W_i}},
  description={\hfill\S\ref{sec: W}},
  sort=w
}

\newglossaryentry{W(k)}
{
  name={\ensuremath{W_i^{(k)}}},
  description={\hfill\S\ref{sec: W}},
  sort=w
}

\newglossaryentry{Winf}
{
  name={\ensuremath{W_i^{(\infty)}}},
  description={\hfill\S\ref{sec: Wlim}},
  sort=winf
}

\newglossaryentry{pr}
{
  name={\ensuremath{\pr_i}},
  description={\hfill\S\ref{sec: dDAHArep}},
  sort=pr
}

\newglossaryentry{Tcal}
{
  name={\ensuremath{\T_i}},
  description={\hfill\S\ref{sec: -limits},\ref{sec: +limits}},
  sort=tt
}

\newglossaryentry{Xcal}
{
  name={\ensuremath{\X_i}},
  description={\hfill\S\ref{sec: -limits},\ref{sec: +limits}},
  sort=xical
}

\newglossaryentry{Ycal}
{
  name={\ensuremath{\Y_i}},
  description={\hfill\S\ref{sec: ycal}},
  sort=yz
}

\newglossaryentry{At}
{
  name={\ensuremath{\mathbb{A}_{\tc}}},
  description={\hfill\S\ref{subsec: ddpa}},
  sort=at
}

\newglossaryentry{Atq}
{
  name={\ensuremath{\Atq}},
  description={\hfill\S\ref{subsec: ddpa}},
  sort=atq
}

\newglossaryentry{Q}
{
  name={\ensuremath{\dot{\mathbf{Q}}, \ddot{\mathbf{Q}}}},
  description={\hfill\S\ref{subsec: ddpa}},
  sort=q
}

\newglossaryentry{d}
{
  name={\ensuremath{d_+, d_+^*, d_-}},
  description={\hfill\S\ref{subsec: ddpa},\ref{sec: ddparep}},
  sort=d
}

\newglossaryentry{z}
{
  name={\ensuremath{z_i}},
  description={\hfill\S\ref{subsec: ddpa},\ref{sec: ddparep}},
  sort=z
}

\newglossaryentry{y}
{
  name={\ensuremath{y_i}},
  description={\hfill\S\ref{subsec: ddpa},}\ref{sec: ddparep},
  sort=y
}

\newglossaryentry{zeta}
{
  name={\ensuremath{\zeta_k}},
  description={\hfill\S\ref{sec: ddparep}},
  sort=zeta
}

\newglossaryentry{par}
{
  name={\ensuremath{\partial_k}},
  description={\hfill\S\ref{sec: ddpaP}},
  sort=del
}

\newglossaryentry{park}
{
  name={\ensuremath{\partial_k^* }},
  description={\hfill\S\ref{sec: ddpaP}},
  sort=del
}

\newglossaryentry{parmin}
{
  name={\ensuremath{\partial_k^- }},
  description={\hfill\S\ref{sec: d-}},
  sort=delmin
}

\newglossaryentry{B}
{
  name={\ensuremath{\Bo_n, \Bo_\infty}},
  description={\hfill\S\ref{sec: d-}},
  sort=bn
}

\newglossaryentry{HL}
{
  name={\ensuremath{P_\lambda[\Xb,\tc] }},
  description={\hfill\S\ref{sec: d-}},
  sort=pmu
}

\newglossaryentry{Phi}
{
  name={\ensuremath{\Phi_\bullet }},
  description={\hfill\S\ref{sec: isom}},
  sort=phi
}

\newglossaryentry{Phik}
{
  name={\ensuremath{\Phi_k}},
  description={\hfill\S\ref{sec: isom}},
  sort=phi
}


\title{The Stable Limit DAHA and the Double Dyck Path Algebra} 
\author{Bogdan Ion} 
\address{Department of Mathematics, University of Pittsburgh, Pittsburgh, PA 15260}
\email{bion@pitt.edu}
\author{Dongyu Wu}
\address{Department of Mathematics, University of Pittsburgh, Pittsburgh, PA 15260}
\address{Beijing Institute of Mathematical Sciences And Applications, Beijing 101408}
\email{dow16@pitt.edu, wudongyu@bimsa.cn}
\date{April 26, 2022}
\subjclass[2010]{20C08, 05E05}
\keywords{double affine Hecke algebra, double Dyck path algebra}


\begin{document}

\begin{abstract}
We study the compatibility of the action of the DAHA of type GL with two inverse systems of polynomial rings obtained from the standard Laurent polynomial representations. In both cases, the crucial analysis is that of the compatibility of the action of the Cherednik operators. Each case leads to a representation of a limit structure (the +/- stable limit DAHA) on a space of almost symmetric polynomials in infinitely many variables (the standard representation). As an application, we show that the defining representation of the double Dyck path algebra arises from the standard representation of the  +stable limit DAHA.
\end{abstract}

\maketitle

\section{Introduction}

There are subtle phenomena in Macdonald theory (the positivity of the composition Kostka functions \cite{Kn}*{Conjecture 11.2}) that arise in the so-called stable limit, that is, they describe properties of objects that are associated to  
certain limits of polynomial functions when the number of variables approaches infinity. The object that provides the appropriate finite rank algebraic context in this case is the double affine Hecke algebra (DAHA) of type ${\rm GL}_k$. However, the only stable limit structure that was investigated more systematically is the stable limit of the spherical DAHA of type ${\rm GL}_k$, which is intimately related to several interconnected geometric contexts: the spherical subalgebra of the Hall algebra of the category of coherent sheaves on an elliptic curve \cites{BS, SV}, the convolution algebra in the equivariant K-theory of the Hilbert scheme of $\mathbb{A}^2$ \cite{SV2}, the  Feigin-Odesskii \cite{FO} shuffle algebra \cites{FT, SV2, Neg}.

In the case of the spherical DAHA, the stable limit structure arises as an inverse limit of the corresponding finite rank structure. The DAHA $\H_k$ of type ${\rm GL}_k$ do not form an inverse system and a conceivable residual limit structure  will not capture some relations that hold in particular finite ranks.
The basic idea towards understanding the residual limit structure is to undertake a study of the limiting behavior of individual elements (e.g. for a specific set of generators)  in an inverse system of representations and understand the algebraic structure defined by the limit operators. 

We pursue here this idea, as follows. Let $\P_k=\Rat(\tc,\qc)[x_1^{\pm 1},\dots,x_k^{\pm 1}]$ be the standard Laurent polynomial representation of $\H_k$; $\P_k$ is a faithful representation of $\H_k$. Let
$$\P_k^+=\Rat(\tc,\qc)[x_1,\dots,x_k]\quad 
\text{and}\quad \P_k^-=\Rat(\tc,\qc)[x_1^{-1},\dots,x_k^{-1}],$$
each being a faithful representation of a corresponding subalgebra of $\H_k$ denoted by $\H_k^+$ and, respectively, $\H_k^-$.  Both $(\P_k^+)_{k\geq 2}$ and $(\P_k^-)_{k\geq 2}$ form graded inverse systems, with structure maps that specialize the last variable to $0$. The corresponding graded inverse limits are denoted by $\P_\infty^{\pm }$; they are sometimes referred to as the rings of formal polynomials in the variables $x_i^\pm$, $i\geq 1$. For us, an important role is played by the graded subrings $\Pas^\pm\subset \P_\infty^\pm$ consisting of elements fixed by all simple transpositions of the variables, with the possible exception of finitely many. Following Knop \cite {Kn}, we refer to $\Pas^\pm$ as the \emph{almost symmetric modules} and to its elements as almost symmetric functions.

The action of the generators of $\H^\pm_k$ that act as Demazure-Lusztig operators or multiplications operators is immediately seen to be compatible with the inverse system;  each operator will consequently induce a limit operator acting on the corresponding graded inverse limit $\P_\infty^{\pm}$. Therefore, in both cases, the crucial analysis is that of the action of the Cherednik operators and of  their compatibility with the inverse system. 

The compatibility of the action of the Cherednik operators in $\H_k^-$ with the inverse system $\P_k^-$ was established in \cite{Kn}; the corresponding operators act on $\P_\infty^-$. Knop has also pointed out that  $\Pas^-\subset \P_\infty^-$ is stable under the action of the limit Cherednik operators and provides the natural context for the theory of nonsymmetric Macdonald functions (the stable limits of nonsymmetric Madonald polynomials). 

On the other hand, the analysis of the $\H_k^+$ stable limit action requires a more sophisticated approach. The action of the Cherednik operators in $\H_k^+$ is no longer compatible with the inverse system $\P_k^+$ and requires a careful investigation of their action in order to understand the obstructions to such a compatibility and what particular features might have a chance to manifest in the stable limit. We briefly discuss the main new developments here, postponing a more technical discussion for the later sections.

The failure of the compatibility of the action of the Cherednik operators  in $\H_k^+$ with the inverse system $\P_k^+$ can be witnessed at the level of their spectrum and allows for a precise identification of the obstruction. As it turns out, the Cherednik operator $Y_i$ is compatible with the inverse system $x_i\P_k^+$, inducing an action of the limit operator on $x_i\P_\infty^+$. While this might be satisfactory from the point of view of a single Cherednik operator, it does not lead to a non-zero common domain for all Cherednik operators $Y_i$, $i\geq 1$. It is therefore necessary to adjust the  action of $Y_i$ on a complement of $x_i\P_k^+$. We address this first difficulty uniformly, by introducing a new finite rank  algebraic structure $\Ht_k^+$, which we call \emph{the deformed DAHA}, and its standard representation on $\P_k^+$. The actions of the Cherednik operator $Y_i$  and the deformed Cherednik operator $\widetilde{Y}_i$  coincide on $x_i\P_k^+$. 

Nevertheless, $\Ht_k^+$ is a more complicated structure, and, most notably, the deformed Cherednik operators no longer commute. Their action is still not compatible with the inverse system, but they satisfy instead a more subtle compatibility. To express this compatibility we formulate a concept of limit that takes into consideration not only the inverse system, but also the $\tc$-adic topology. This allows us to define limit operators $\widetilde{Y}_i$ acting, not on $\P_\infty^+$, but only on $\Pas^+$, attesting again to the canonical nature of the almost symmetric module. On $x_i\Pas^+$ the action of the limit operator  $\widetilde{Y}_i$ coincides with the action of  the limit Cherednik operator $Y_i$.
This analysis occupies most of \S\ref{sec: +stablelimit}, culminating with the proof of Proposition \ref{prop: limit action}.

We relate the action of the limit operators on $\Pas^+$ to the following algebraic structure.  Let  $\H^+$ be  the $\mathbb{Q}(\tc,\qc)$-algebra    generated by the elements $T_i$,$X_i$, and $Y_i$, $i\geq 1$, satisfying  the following relations:
  \begin{subequations}
        \begin{equation}
          \begin{gathered}
          T_{i}T_{j}=T_{j}T_{i}, \quad |i-j|>1,\\
          T_{i}T_{i+1}T_{i}=T_{i+1}T_{i}T_{i+1}, \quad i\geq 1,
          \end{gathered}
        \end{equation}
        \begin{equation}
                  (T_{i}-1)(T_{i}+\tc)=0, \quad i\geq 1,
        \end{equation}
        \begin{equation}
            \begin{gathered}
                \tc T_i^{-1} X_i T_i^{-1}=X_{i+1}, \quad  i\geq 1\\
                T_{i}X_{j}=X_{j}T_{i}, \quad  j\neq i,i+1,\\
                X_i X_j=X_j X_i,\quad i,j\geq 1,
            \end{gathered}
        \end{equation}
         \begin{equation}
            \begin{gathered}
                \tc^{-1} T_i Y_i T_i=Y_{i+1}, \quad i\geq 1\\
                T_{i}Y_{j}=Y_{j}T_{i}, \quad  j\neq i,i+1,\\
                Y_i Y_j=Y_j Y_i, \quad i,j\geq 1,
            \end{gathered}
        \end{equation}
                \begin{equation}
            Y_1 T_1 X_1=X_2 Y_1T_1.
        \end{equation}
    \end{subequations}
We call $\H^+$ the \sp limit DAHA. There is a corresponding \sm limit DAHA $\H^-$ and a canonical $\mathbb{Q}(\tc,\qc)$-algebra anti-isomorphism between $\H^+$ and $\H^-$. As a result, we use the terminology \emph{stable limit DAHA} to refer to either of them, depending on the context. Both algebras are closely related to the inductive limit of  the group algebras of the braid groups $\mathcal{B}_k$, of $k$ distinct points on the punctured torus. We refer to \S \ref{sec: braid} for the precise relationship.

The action of the limit operators on $\Pas^-$ can be easily seen to define a representation of $\H^-$. Our first main result is the following (Theorem \ref{thm: +standardrep}).
\begin{state} \label{state: A}
The action of the limit operators on $\Pas^+$ defines a representation of $\H^+$.
\end{state}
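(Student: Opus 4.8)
The plan is to verify, for the operators furnished by Proposition~\ref{prop: limit action} --- the limit Demazure--Lusztig operators $T_i$, the multiplication operators $X_i$, and the limit Cherednik operators $\widetilde{Y}_i$, all acting on $\Pas^+$ --- the defining relations (a)--(e) of $\H^+$. The relations (a), (b), (c) involve only the $T_i$ and the $X_i$. On each $\P_k^+$ these are the finite rank Demazure--Lusztig and multiplication operators; they satisfy (a)--(c) there because $\P_k^+$ is a faithful $\H_k^+$-module, and, as recalled in the introduction, they are compatible with the structure maps of the inverse system $(\P_k^+)_{k\geq 2}$. Hence the induced operators on $\P_\infty^+$ satisfy (a)--(c), and so do their restrictions to $\Pas^+$. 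All the content of the theorem is therefore in the relations (d) and (e), that is, in the relations involving the $\widetilde{Y}_i$ --- precisely the operators for which compatibility with the inverse system fails.

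For these I would rely on three inputs from the analysis of Section~\ref{sec: +stablelimit}: (i) each $\widetilde{Y}_i$ is a well-defined operator $\Pas^+\to\Pas^+$ (Proposition~\ref{prop: limit action}); (ii) on the subspace $x_i\Pas^+$ one has $\widetilde{Y}_i=Y_i$, where $Y_i$ is the operator on $x_i\P_\infty^+$ induced by the finite rank Cherednik operators, which \emph{are} compatible with the inverse subsystem $(x_i\P_k^+)_k$, so that any relation of $\H_k^+$ involving $Y_i$ survives the limit once it is evaluated on $x_i$-divisible polynomials; and (iii) for any monomial $x^\alpha$ one has $x^\alpha\Pas^+\subseteq\Pas^+$ and multiplication by $x^\alpha$ is injective on $\Pas^+$ (it is injective on each $\P_k^+$, hence on $\P_\infty^+$). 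The general strategy for a relation $R$ among the $T_i,X_i,\widetilde{Y}_i$ is then: apply $R$ to an arbitrary $f\in\Pas^+$; use the already established relations (a)--(c), together with their consequences such as $X_{i+1}T_i=\tc\,T_i^{-1}X_i$ and the decomposition $f=x_jg+f_0$ with $f_0=f|_{x_j=0}$ (both $g$ and $f_0$ lying in $\Pas^+$), to express each occurrence of $\widetilde{Y}_j$ as applied to an element of $x_j\Pas^+$ --- where, by (ii), it may be replaced by $Y_j$ and handled in $\H_k^+$ for $k$ large --- plus a term on which $\widetilde{Y}_j$ is evaluated through its explicit $\tc$-adic limit description; finally use (iii) to clear the monomials introduced along the way.

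Concretely, for (d) the recursion $\tc^{-1}T_i\widetilde{Y}_iT_i=\widetilde{Y}_{i+1}$ should be available directly from the construction of the $\widetilde{Y}_i$ in Section~\ref{sec: +stablelimit} (it may in fact serve as the definition of $\widetilde{Y}_{i+1}$); granting it, the commutations $T_i\widetilde{Y}_j=\widetilde{Y}_jT_i$ for $j\neq i,i+1$ follow, via the braid relations (a) and a standard word manipulation, from the single fact that $\widetilde{Y}_1$ commutes with $T_m$ for all $m\geq 2$, and the commutativity $\widetilde{Y}_i\widetilde{Y}_j=\widetilde{Y}_j\widetilde{Y}_i$ reduces, using those commutations and the recursion, to the relations $[\widetilde{Y}_i,\widetilde{Y}_{i+1}]=0$ on $\Pas^+$. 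For (e), the relation $Y_1T_1X_1=X_2Y_1T_1$ is, modulo (a)--(c), equivalent to the commutator identity $[\widetilde{Y}_1,X_2]=(\tc-1)\,\widetilde{Y}_1X_1T_1^{-1}$ on $\Pas^+$; evaluated on $f=x_1g+f_0$ the $x_1g$-part is disposed of by input (ii), so the relation comes down to the same identity restricted to the $x_1$-free almost symmetric polynomials.

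The step I expect to be the main obstacle is the commutativity $[\widetilde{Y}_i,\widetilde{Y}_{i+1}]=0$. In the deformed DAHA $\Ht_k^+$ the deformed Cherednik operators genuinely fail to commute, so this relation cannot be imported from finite rank by the mechanism above and must be established directly from the $\tc$-adic limit description of the $\widetilde{Y}_i$ --- that is, by comparing, in the $\tc$-adic topology, the two orders of composition $\widetilde{Y}_iT_i\widetilde{Y}_iT_i$ and $T_i\widetilde{Y}_iT_i\widetilde{Y}_i$ against their finite rank counterparts and showing that the discrepancies which obstruct commutativity in each fixed rank converge to zero. The reduction of (e) to its $x_1$-free part is of the same nature and is the secondary difficulty. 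Once (a)--(e) are in hand, the operators $T_i$, $X_i$, $\widetilde{Y}_i$ define an action of $\H^+$ on $\Pas^+$, which is the assertion of the theorem.
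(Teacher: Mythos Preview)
Your overall diagnosis is right: relations (a)--(c) transfer directly, and the commutativity $[\Y_i,\Y_j]=0$ is the crux, requiring one to show that the finite-rank commutators $[\widetilde Y_1^{(k)},\widetilde Y_2^{(k)}]$ tend to zero in the $\tc$-adic sense. This is exactly what the paper does in Lemma~\ref{finite lemma}, via the explicit formula $[\widetilde Y_1^{(k)},\widetilde Y_2^{(k)}]=\tc^{2k-1}\gamma_k T_{k-2}^{-1}\cdots T_1^{-1}T_{k-1}^{-1}\cdots T_1^{-1}$ and the closed form \eqref{eq: gamma} for $\gamma_k$.

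Where your plan diverges from the paper, and becomes unnecessarily complicated, is in the handling of relation (e) and the $T$-commutations in (d). You propose to reduce (e) to elements of $x_1\Pas^+$ via the decomposition $f=x_1g+f_0$ and then invoke compatibility of the genuine Cherednik operators on $x_1\P_k^+$, leaving the $x_1$-free case as a ``secondary difficulty''. The paper sidesteps this entirely: relation (e), as well as $\tc^{-1}T_i\widetilde Y_iT_i=\widetilde Y_{i+1}$ and $T_j\widetilde Y_i=\widetilde Y_iT_j$ for $|i-j|>1$, already hold \emph{verbatim in the deformed DAHA} $\Ht_k^+$ (Remark~\ref{rem: crossrel}; the check of (e) is a two-line computation using only \eqref{omega3 relation}). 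So nothing special happens for these relations in the limit.

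The ingredient your plan is missing, and which makes this transfer automatic, is the continuity/composition lemma (Proposition~\ref{prop: continuity} and Corollary~\ref{cor: AB}): if $(A_k)$ and $(B_k)$ each have limits $\A,\B$ on $\Pas^+$, then $(A_kB_k)$ has limit $\A\B$. With this in hand, any identity among words in $T_i,X_i,\varpi_k$ valid in $\Ht_k^+$ passes immediately to the limit operators, and the commutativity question reduces cleanly to $\lim_k[\widetilde Y_1^{(k)},\widetilde Y_2^{(k)}]\Pi_kf=0$. Without it, your inputs (ii)--(iii) do not suffice: for instance, to run your argument for $[\Y_1,\Y_2]f$ you would need not only $f$ but also $\widetilde Y_2 f$ to lie in $x_1\Pas^+$, which there is no reason to expect.
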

We call the representation of $\H^{\pm}$ on $\Pas^{\pm}$ the standard representation of the stable limit DAHA. The main difficulty in establishing this result is the proof of the commutativity of the limit $\widetilde{Y}_i$ operators. On this account, the commutativity of the Cherednik operators that was lost by deforming their action on $\P_k^+$ is restored in the limit. The action of the operators $T_i$, $X_i$, and $Y_i\in \H^+$ on $\Pas^+$ is denoted by $\T_i$, $\X_i$, and $\Y_i$, respectively.

One aspect that underscores the special nature of the positivity phenomenon in Macdonald theory is the fact that the work around the former Macdonald positivity conjecture (first proved by Haiman \cite{HaiHil} in the equivalent formulation of the $n!$ conjecture) and the closely related work on the ring of diagonal harmonics \cite{HaiVan}, relies on the geometry of Hilbert schemes of points in $\mathbb{A}^2$ and, so far, a workable explanation based on DAHA technology -- which was successful in settling all the other Macdonald conjectures -- is missing. Another apparent dissimilarity, is the fact that the representation theory relevant for the $n!$ conjecture is that of the symmetric group $S_n$, while all the other Macdonald conjectures are addressing phenomena rooted in the representation theory of reductive groups. 

Following the work in \cite{HaiVan}, a precise conjecture (the so-called Shuffle Conjecture) was formulated in \cite{HHLRU} (see also \cite{Hag}) for the monomial expansion of the bi-graded Frobenius characteristic of the ring of double harmonics. This conjecture, and a number of further refinements and generalizations were recently proved \cites{CM, Me}. In the process, Carlsson and Mellit revealed a new algebraic structure (the double Dyck path algebra $\Aqt$) together with a specific representation $V_\bullet$, which, aside from governing the combinatorics relevant for the  Shuffle Conjecture, exhibits striking structural similarities with the family of DAHA of type ${\rm GL}_k$, $k\geq 2$.  In  \cite{CGM} a connection was established between $\Aqt$ and the geometry of Hilbert schemes; it shows that the representation $V_\bullet$ is closely connected with a geometric action in the equivariant K-theory of certain smooth strata in the parabolic flag Hilbert schemes of points in $\mathbb{A}^2$. This action is akin to the action of stable limit spherical DAHA on the equivariant K-theory of the Hilbert schemes of points in $\mathbb{A}^2$  \cite{SV}. We refer to Definition \ref{def: ddpa}, Proposition \ref{prop: ddpa}, and Theorem \ref{thm: CMrep} for the definition of $\Atq$, $V_\bullet$, and other structural details.

The structural similarities between $\Aqt$ and the family of DAHA of type ${\rm GL}_k$ are therefore important for connecting the geometry of Hilbert schemes with the DAHA. The algebra $\Aqt$ is a quiver algebra and its representation $V_\bullet=(V_k)_{k\geq 0}$ is a quiver representation; the subalgebra of $\Aqt$ generated by certain elements that correspond to loops that act at node $k$ resembles quite closely the DAHA of type ${\rm GL}_k$. In fact, the generators in such a subalgebra satisfy all the expected DAHA relations except for one (see discussion  in \cite{CM}*{pg. 693--694}). However, it was expected that an explicit relationship with the DAHA exists. The main difficulty in establishing such a connection rests in explaining the relationship between the Cherednik operators and the operators $z_i$ in $\Aqt$.

We establish a direct connection between the standard representation of the \sp limit DAHA and the double Dyck path algebra. To facilitate the comparison and not deviate from the traditional notation for parameters in double affine Hecke algebras, we will swap the role of the parameters $\qc$ and $\tc$ in the original definition of the double Dyck path algebra. Therefore, the connection we establish is between the \sp limit DAHA and the algebra $\Atq$. Specifically, we show that the standard representation of $\H^+$ can be used to construct a quiver representation of $\Atq$. The standard representation $\Pas^+$ has a natural filtraton $\P(k)^+$, ${k\geq 0}$; the subspace $\P(k)^+$ consists of the elements of $\Pas^+$ that are symmetric in the variables $x_i$, $i>k$. We denote by $\P_\bullet$ the complex of vector spaces $(\P(k)^+)_{k\geq 0}$. The vector spaces $\P(k)^+$ form an direct system and $\Pas^+$ is their direct limit. Our second main result is the following (Theorem \ref{thm: quiverrep}).

\begin{state}\label{state: B}
There is a $\Atq$ action on $\P_\bullet$ such all generators of $\Atq$ corresponding to loops in Definition \ref{def: ddpa} (i.e. $T_i$, $y_i$, $z_i$) act as $\T_i$, $\X_i$, $\Y_i$.
\end{state}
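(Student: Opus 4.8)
The plan is to produce the $\Atq$-action on $\P_\bullet$ by using the standard representation of $\H^+$ from Theorem \ref{thm: +standardrep} to define the action of the loop generators, and then to exhibit the remaining generators (those corresponding to the non-loop arrows of the quiver in Definition \ref{def: ddpa}, together with the distinguished element $d_+$ and its partner) as explicit linear maps between consecutive filtration pieces $\P(k)^+\to\P(k\pm 1)^+$, built out of the variables $x_i$ and the partial (anti)symmetrization operators already available on $\Pas^+$. Concretely, I would first recall the filtration $\P(k)^+\subset\Pas^+$ and check that $\X_i$, $\Y_i$, $\T_i$ preserve $\P(k)^+$ for the appropriate range of $i$, so that they do give endomorphisms of each vertex space $V_k=\P(k)^+$ of the quiver representation; this is essentially bookkeeping using the description of $\Pas^+$ as the span of almost symmetric functions and the fact that $\T_i,\X_i,\Y_i$ are the limit operators $\widetilde{T}_i,\widetilde{X}_i,\widetilde{Y}_i$.

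Next I would define the arrow-maps. The raising arrow $V_k\to V_{k+1}$ should be multiplication by the new variable $x_{k+1}$ (suitably normalized), and the lowering arrow $V_{k+1}\to V_k$ should be the corresponding "evaluation/partial symmetrization" map that is adjoint to it in the inverse-system picture; these are exactly the structure maps and their one-sided inverses implicit in the construction of $\Pas^+$ as a direct limit of the $\P(k)^+$. For the element $d_+$ (and $d_-$) of $\Atq$ I would use the operator that in finite rank corresponds to the interaction between the Cherednik operator $Y_{k+1}$ and multiplication by $x_{k+1}$ — precisely the content of relation $(1e)$, $Y_1T_1X_1=X_2Y_1T_1$, lifted through the braid relations $(1c)$–$(1d)$. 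The point is that all the defining relations of $\Atq$ that involve only loops are, by construction, consequences of the relations of $\H^+$ and hence hold by Theorem \ref{thm: +standardrep}; the relations that mix loops with the vertical arrows reduce, after conjugating by the appropriate $\T_i$'s to move indices, to a small number of identities among $x_{k+1}$, the symmetrizers, and $\Y_{k+1}$ on $\P(k)^+$.

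I would then verify these mixed relations one quiver-vertex at a time. By the $\SL_2$-type or "zig-zag" structure of $\Atq$ (the arrows $a_k,a_k^{*}$ and the loop relations), it suffices to check: (i) that the raising and lowering arrows intertwine the $\T_i$-actions on adjacent vertices in the prescribed way; (ii) that $d_+$ together with a raising arrow reproduces $\Y$ on the larger space (this is where relation $(1e)$ enters, now in the guise $\Y_1\T_1\X_1=\X_2\Y_1\T_1$ on $\Pas^+$, restricted to $\P(k)^+$); and (iii) the "Dyck path" commutation relations peculiar to $\Atq$, which should follow from the commutativity of the $\Y_i$ (the hard-won conclusion of Theorem \ref{thm: +standardrep}) together with the polynomiality of the action on $\P(k)^+$. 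Throughout, I would lean on Proposition \ref{prop: ddpa} and Theorem \ref{thm: CMrep} to know exactly which relations must be checked and to compare with Carlsson–Mellit's $V_\bullet$.

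The main obstacle I anticipate is pinning down the correct normalizations and the precise form of the vertical arrows and of $d_\pm$ so that all of the mixed relations hold simultaneously — in the finite-rank DAHA these operators are natural, but in the limit the deformation that produced $\Ht_k^+$ (and the fact that $\widetilde{Y}_i$ agrees with $Y_i$ only on $x_i\Pas^+$, not on all of $\Pas^+$) means one must be careful that the lowering arrow lands in the subspace where the identification of $\widetilde{Y}$ with $Y$ is valid. Getting this compatibility right — equivalently, checking that the image of the lowering arrow $V_{k+1}\to V_k$ interacts correctly with $\X_{k+1}$ and $\Y_{k+1}$ — is the technical heart of the argument; once it is in place, the remaining relations are a finite check reducible to relations already established for $\H^+$.
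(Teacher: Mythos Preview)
Your proposal has the right overall architecture, but it misidentifies the key ingredient and therefore would not go through as written.

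The lowering arrow $d_-:\P(k)^+\to\P(k-1)^+$ is \emph{not} an evaluation or partial symmetrization map coming from the direct-system structure of $\Pas^+$. In the paper it is the Hall--Littlewood/Jing vertex operator: on $x_k^n F[\mathbf{X}_k]$ it acts as $\Bo_n F[\mathbf{X}_{k-1}]$, or equivalently
\[
\partial^-_k\bigl(f(x_1,\dots,x_k)F[\mathbf{X}_k]\bigr)=\tau_k\,\ct_{x_k}\!\Bigl(f(x_1,\dots,x_k)\,F[\mathbf{X}_k-x_k]\,\mathrm{Exp}\bigl[-(\tc-1)x_k^{-1}\mathbf{X}_k\bigr]\Bigr).
\]
A naive symmetrizer or specialization would not produce the correct commutators: one needs $[d_+,d_-]$ to be $(\tc-1)\widetilde{\omega}_k^{-1}$ (so that $y_1$ acts as $\X_1$), and $[d_+^*,d_-]$ to match the explicit formula of Proposition~\ref{Y1 action} (so that $z_1$ acts as $\Y_1$). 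Both of these identities are sensitive to the plethystic shift and the exponential factor; they fail for the obvious structure maps. Relatedly, your proposal blurs the distinction between the two raising arrows: $d_+$ is $-\widetilde{\omega}_{k+1}^{-1}=-T_1\cdots T_k X_{k+1}$, while $d_+^*$ is the cyclic shift $\omega_{k+1}^{-1}$ (with the plethystic adjustment on the symmetric part). These are genuinely different operators, and it is $d_+^*$, not $d_+$, that is tied to the $\Y_i$'s.

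Finally, your logic is inverted at the crucial step. In $\Atq$ the elements $z_i$ are \emph{defined} as (normalized) commutators of $d_+^*$ and $d_-$; the commutativity $z_iz_j=z_jz_i$ is already a relation in $\Atq$. What must be proved is that, with the specific choices of $\partial,\partial^*,\partial^-$, the operator $z_1=\tfrac{\tc^k}{1-\tc}[d_+^*,d_-]T_{k-1}^{-1}\cdots T_1^{-1}$ acts on $\P(k)^+$ as $\Y_1$. This is exactly the content of Proposition~\ref{Y1 action}, which computes $\Y_1\T_1\cdots\T_{k-1}$ explicitly and matches it against $[\partial^*,\partial^-]$. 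The commutativity of the $\Y_i$ from Theorem~\ref{thm: +standardrep} is not used here; rather, once $z_i\leftrightarrow\Y_i$ is established, the $\Atq$ relations \eqref{d-2 relation}, \eqref{cross relation}, \eqref{last} must still be checked directly (several reduce to computations with $h_n[(1-\tc)\cdot]$), and this is where most of the work lies.
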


In fact, this representation turns out to be precisely the defining representation of $\Atq$ (Theorem \ref{thm: isom})

\begin{state}\label{state: C}
The representations of $\Atq$ on $\P_\bullet$ and $V_\bullet$ are isomorphic.
\end{state}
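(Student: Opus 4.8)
The plan is to exhibit a morphism of quiver representations $V_\bullet \to \P_\bullet$ and show it is an isomorphism degree by degree. Recall from the previous theorem (Theorem \ref{thm: quiverrep}) that $\P_\bullet$ carries a $\Atq$-action in which the loop generators $T_i,y_i,z_i$ act by the operators $\T_i,\X_i,\Y_i$ coming from the standard representation of $\H^+$. By the definition of the defining representation $V_\bullet$ (Theorem \ref{thm: CMrep}), $V_0 = \Rat(\tc,\qc)$ and the whole representation is generated, as a module over the path algebra, by the element $1 \in V_0$; moreover, the generators of $\Atq$ that raise the grading (the maps $d_+$, or the $a$-type arrows in Definition \ref{def: ddpa}) applied to $1$, together with the loop generators, produce a spanning set of each $V_k$. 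I would therefore first locate the corresponding cyclic vector in $\P_\bullet$, namely $1 \in \P(0)^+ = \Rat(\tc,\qc)$, and check that the assignment $1 \mapsto 1$ extends to a well-defined $\Atq$-equivariant map $\varphi_\bullet\colon V_\bullet \to \P_\bullet$. Well-definedness is exactly the statement that every defining relation of $\Atq$ is satisfied by the images of the generators acting on the cyclic vector; but this is already guaranteed by Theorem \ref{thm: quiverrep}, which says $\P_\bullet$ \emph{is} a representation of $\Atq$, so $\varphi_\bullet$ is induced by the universal property of the cyclic module $V_\bullet$. Concretely one defines $\varphi_k$ on $V_k$ by sending a word $w$ in the generators applied to $1$ to the same word in $\T_i,\X_i,\Y_i,$ and the raising/lowering arrows applied to $1\in\P(0)^+$.

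Next I would prove surjectivity of each $\varphi_k$. This reduces to showing that $\P(k)^+$ is spanned, as a vector space, by the images under the $\Atq$-action of $1\in\P(0)^+$; equivalently, that $\P(k)^+$ is generated from $\P(0)^+$ by the raising arrows and the loop operators $\X_i$. Here I would use the explicit description of the filtration $\P(k)^+ \subset \Pas^+$: an element of $\P(k)^+$ is an almost symmetric polynomial whose ``non-symmetric part'' involves only the first $k$ variables. Starting from constants, repeated application of the multiplication operators $\X_i$ builds all monomials in $x_1,\dots,x_k$, and then symmetrization in the remaining variables — encoded by the appropriate composite of arrows in $\Atq$, since in $V_\bullet$ the transition between adjacent $V_k$'s records precisely this kind of partial symmetrization — fills out all of $\P(k)^+$. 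A clean way to organize this is to show $\varphi_k$ is surjective by downward induction using the known $\Rat(\tc,\qc)$-basis of $\P(k)^+$ indexed by the same combinatorial data (pairs of a composition in $k$ parts and a partition) that indexes a spanning set of $V_k$ in \cite{CM}.

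For injectivity I would compare graded dimensions. Both $V_k$ and $\P(k)^+$ are graded (by total degree in the polynomial variables / by the internal grading on $V_\bullet$), and $\varphi_k$ is compatible with these gradings by construction. It suffices to show that in each fixed internal degree the two spaces have the same (finite) dimension. On the $\P_\bullet$ side this dimension count follows from the explicit basis of almost symmetric polynomials; on the $V_\bullet$ side Carlsson--Mellit compute the bigraded Hilbert series of $V_k$ (it is governed by the combinatorics of parking functions / the $(q,t)$-enumeration behind the Shuffle Theorem). Matching the two generating functions — or, more robustly, exhibiting a dimension-preserving bijection between the indexing sets of the two bases — then forces $\varphi_k$ to be an isomorphism.

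The main obstacle will be the surjectivity/basis-matching step: one must pin down exactly which composite arrows in $\Atq$ realize the partial symmetrizations needed to pass from monomials in finitely many variables to genuine almost symmetric polynomials, and verify that the $\Atq$-submodule generated by $1$ is not accidentally smaller than $\P(k)^+$ in some degree. I expect this to require a careful bookkeeping argument using the intertwiner-type relations among the $\T_i,\X_i,\Y_i$ (the same relations that made Theorem \ref{thm: +standardrep} work) to show that the standard basis vectors of $\P(k)^+$ — which one can take to be stable-limit nonsymmetric Macdonald functions or their partially-symmetrized analogues — all lie in the cyclic submodule. Once surjectivity is in hand, the dimension count closes the argument; alternatively, if one can independently show $\varphi_\bullet$ is injective (e.g.\ because $V_\bullet$ is known to have no proper graded subrepresentations containing the cyclic vector, or by a direct ``leading term'' argument on polynomials), then surjectivity plus injectivity gives the isomorphism without needing the full Hilbert series computation.
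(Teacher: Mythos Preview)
Your proposal rests on a factual error that undermines the whole strategy: $V_0$ is not $\Rat(\tc,\qc)$. By definition $V_k=\Rat(\tc,\qc)[y_1,\dots,y_k]\otimes\Sym[X]$, so $V_0=\Sym[X]$, which is already infinite-dimensional. The claim that $V_\bullet$ is cyclic, generated by $1\in V_0$, is therefore not immediate---one would have to show that all of $\Sym[X]$ at node $0$ is produced from $1$ by composites of arrows that leave node $0$ and return---and this is not proved anywhere in the paper or in \cite{CM}. Without cyclicity there is no universal property to invoke, so the very existence of your map $\varphi_\bullet$ is not established; the subsequent surjectivity and dimension-count arguments then have nothing to act on. The Hilbert-series matching you propose would in any case require independent input (the bigraded dimensions of $V_k$) that the paper does not develop.

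The paper's argument is far more direct and sidesteps all of this. One writes down an explicit $\Rat(\tc,\qc)$-algebra isomorphism
\[
\Phi_k:\P(k)^+\cong\P_k^+\otimes\Sym[\mathbf{X}_k]\longrightarrow V_k,\qquad x_i\mapsto y_i\ (1\le i\le k),\quad \mathbf{X}_k\mapsto \frac{X}{\tc-1},
\]
the second assignment being a plethystic substitution on the symmetric-function tensor factor. Each $\Phi_k$ is visibly a linear isomorphism (it is essentially a relabeling of variables together with a rescaling of the symmetric-function alphabet), so bijectivity is free: no cyclicity, no spanning arguments, no dimension counts. The only work is to verify that $\Phi_\bullet$ intertwines the actions of the generators $T_i,d_+,d_+^*,d_-$, which is a short computation comparing the explicit formulas in Theorem \ref{thm: CMrep} with the definitions of $\partial,\partial^*,\partial^-$ on $\P_\bullet$. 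Even if your approach could be repaired, it would amount to re-deriving structural facts about $V_k$ and $\P(k)^+$ that are already manifest from their explicit descriptions as polynomial rings tensored with symmetric functions.
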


Since the notation in \cite{CM} for the elements of the double Dyck path algebra that are relevant for its comparison with the action of $\H^+$ does not match the more traditional notation used in the DAHA literature, we include below,
for the reader's convenience,  the conversion table between notation used here  and the notation used in \cite{CM} and \cite{Me}:

\begin{table}[h!]
  \begin{center}
    \label{tab:table1}
    \begin{tabular}{c|c|c|c|c|c|c|c|c|c|c} 
      & $\tc$ & $\qc$ & $\P(k)^+$ & $\P_\bullet$ & $\X_i$ & $\Y_i$ & $\T_i$  & $\partial$ & $\partial^*$ & $\partial^-$\\
      \hline
      \cite{CM} & q & t  & $V_k$ & $V_*$ & $y_i$ & $z_i$ & $T_i$ & $d_+$ & $d_+^*$ & $d_-$\\
    \end{tabular}
  \end{center}
\end{table}

It is important to note that all the generators of $\Atq$ corresponding to loops act \emph{globally} in $\P_\bullet$, by which we mean that  their action on each $\P(k)^+$ is the restriction of their action on $\Pas^+$. The vector spaces $V_k$, $k\geq 0$, also form a direct system but the action of the elements $z_i\in \Atq$ act only \emph{locally}, by which we mean that the action is \emph{not compatible} with the structure maps of the direct system. This obscures the fact that the representation $V_\bullet$ arises from a more fundamental representation on the direct limit of $V_k$, $k\geq 0$. In some sense, the isomorphism between $\P_\bullet$ and $V_\bullet$ straightens this out this apparent incompatibility and explains how the limit Cherednik operators lead to the operators $z_i$ in $\Atq$.

In \cite{CM}*{pg. 694} (see also \cite{CGM}*{Remark 2.6}) there is one phrase commenting on a possible algebra isomorphism between $e_k\Atq e_k$ (the subalgebra of $\Atq$ generated by loops based at node $k$) and a \qq{partially symmetrized}  version of the stable limit \emph{spherical} DAHA, still to be defined for $k>0$. Our results, Theorem \ref{state: B} and Theorem \ref{state: C}, establish a different relationship, between different objects, namely between the standard representations of $\H^+$ and $\Atq$.

Our construction of the standard representations of the stable limit DAHA opens a number of immediate questions on the spectral theory of the limit Cherednik operators as well as on the structure of these representations. A second set of questions is related the possible applications of the stable limit DAHA -- through the path opened by the work of Carlsson and Mellit -- to the combinatorics and geometry of Hilbert schemes. We hope to pursue these questions in  future publications.

\begin{ack}
The work of BI was partially supported by the Simons Foundation grant 420882. 
\end{ack}

\section{Notation}

\subsection{}\label{sec: not1}
 We denote by $\Xb$ an infinite alphabet $x_1,x_2,\dots$ and by $\Sym[\Xb]$ the ring of symmetric functions in $\Xb$. The field or ring of coefficients $\mathscr{K}\supseteq \Rat$ will depend on the context. For any $k\geq 1$, 
we denote by $\overline{\mathbf{X}}_k$ the finite alphabet $x_1,x_2,\dots, x_k$ and by  ${\mathbf{X}}_k$ the infinite alphabet $x_{k+1},x_{k+2},\dots$. $\Sym[{\mathbf{X}}_k]$ will denote the ring of symmetric functions in 
${\mathbf{X}}_k$. 
Furthermore, for any $1\leq k\leq m$, we denote by $\overline{\mathbf{X}}_{[k,m]}$ the finite alphabet $x_k,\dots, x_m$. As usual, we denote by \gls{hn} (or $h_n[\overline{\mathbf{X}}_k]$, or 
$h_n[\mathbf{X}_k]$, 
or $h_n[\overline{\mathbf{X}}_{[k,m]}]$) the $n$-{th} complete symmetric functions (or polynomials) in the indicated alphabet (\gls{Xb} \gls{Xbar}), by \gls{pn} (or $p_n[\overline{\mathbf{X}}_k]$, or $p_n[\mathbf{X}_k]$, or 
$p_n[\overline{\mathbf{X}}_{[k,m]}]$) 
the $n$-th power sum symmetric functions (or polynomials), and  by \gls{en} (or $e_n[\overline{\mathbf{X}}_k]$, or $e_n[\mathbf{X}_k]$, or 
$e_n[\overline{\mathbf{X}}_{[k,m]}]$) 
the $n$-th elementary symmetric functions (or polynomials). The symmetric function $p_1[\Xb]=h_1[\Xb]=e_1[\Xb]$ is also denoted by $\Xb=x_1+x_2+\cdots$. For a partition $\lambda$, \gls{m}  (or $m_\lambda[\overline{\mathbf{X}}_k]$, or 
$m_\lambda[\mathbf{X}_k]$, denotes the monomial symmetric function (or polynomial) in the indicated alphabet.

\subsection{}\label{sec: not2}
Any action of the  monoid $(\Z_{>0},\cdot)$ on the ring $\mathscr{K}$ extends to a canonical action by $\Rat$-algebra morphisms on $\Sym[\Xb]$. The morphism corresponding to the action of $n\in \Rat_{>0}$ is denoted by 
$\mathfrak{p}_n$ 
and is defined by
$$
\mathfrak{p}_n\cdot p_k[\Xb]=p_{nk}[\Xb], \quad k\geq 1.
$$
In our context $\mathscr{K}$ will be a ring of polynomials or a field of fractions generated by some finite set of parameters, the action of $(\Z_{>0},\cdot)$ on $\mathscr{K}$ is $\Rat$-linear, and $\mathfrak{p}_n$ acts on 
parameters by raising them to the $n$-th power. For example, if $\mathscr{K}=\Rat(\tc,\qc)$ then, $\mathfrak{p}_n\cdot \tc=\tc^n, ~\mathfrak{p}_n\cdot \qc=\qc^n$.

Let $R$ be a ring with an action of $(\Z_{>0},\cdot)$ by ring morphisms. Any ring morphism $\varphi: \Sym[\Xb]\to R$ that is compatible with the action of $(\Z_{>0},\cdot)$ is uniquely determined by the image of $p_1[\Xb]=\Xb$. 
The image of $F[\Xb]\in \Sym[\Xb]$ through $\varphi$ is usually denoted by  $F[\varphi(\Xb)]$ and called the plethystic evaluation (or substitution) of $F$ at $\varphi(\Xb)$.

The plethystic exponential \gls{Exp} is defined as
$$\textrm{Exp}[\Xb]=\sum_{n=0}^{\infty}h_n[\Xb]=\exp\left(\sum_{n=1}^\infty \frac{p_n[\Xb]}{n}\right).$$

\subsection{} \label{sec: esf}
We will use some symmetric polynomials that are related to the complete homogeneous symmetric functions via plethystic substitution. More precisely, let   \gls{hnf}  be the symmetric polynomial obtained from  the symmetric function $h_n[(1-\tc)\Xb]$ by specializing to $0$ the elements of the alphabet ${\mathbf{X}}_k$. The corresponding notation applies to $h_n[(\tc-1)\Xb]$ and other plethystic substitutions.

For example, from the Newton identities for the complete homogeneous symmetric functions, we can easily see that (for $x=x_1$)
\begin{equation}\label{eq2}
h_n[(1-\tc)x]=(1-\tc)x^n, \quad h_n[(\tc-1)x]=\tc^{n-1}(\tc-1)x^n, n\geq 1,
\end{equation}
and  $h_0[(1-\tc)x]=h_0[(\tc-1)x]=1$. The classical convolution formula $\displaystyle h_n[\Xb]=\sum_{i+j=n} h_i[\Xb] h_j[\Xb]$ has the following counterparts 

\begin{equation}\label{eq3}
h_n[(1-\tc)\overline{\mathbf{X}}_k]=\sum_{i+j=n} h_i[(1-\tc)\overline{\mathbf{X}}_{k-1}]h_j[(1-\tc)x_k]
\end{equation}
and
\begin{equation}\label{eq4}
h_n[(1-\tc)\overline{\mathbf{X}}_{k-1}]=\sum_{i+j=n} h_i[(1-\tc)\overline{\mathbf{X}}_{k}]h_j[(\tc-1)x_k].
\end{equation}
We will make use of such formulas applied to similar situations -- for example, with $\overline{\mathbf{X}}_{[k+1,m]}$, $m>k$, replacing $\overline{\mathbf{X}}_{k}$ in the formulas above. 

\subsection{} \label{sec: not4}

For any $k\geq 1$,  let $ \gls{Pn}=\Rat(\tc,\qc)[x_1^{\pm 1},\dots,x_k^{\pm 1}]$ be the ring of Laurent polynomials in the variables $x_1,\dots, x_k$.  The symmetric group $S_k$ acts on $\P_k$ by permuting the variables. We denote by \gls{sn} the simple transposition that interchanges $x_i$ and $x_{i+1}$ and is fixing all the other variables. The polynomial subrings 
$$\P_k^+=\Rat(\tc,\qc)[x_1,\dots,x_k]\quad 
\text{and}\quad \P_k^-=\Rat(\tc,\qc)[x_1^{-1},\dots,x_k^{-1}]$$
are stable under the action of $S_k$.

\subsection{}\label{sec: maps}

Let $$\gls{pik}:\P^+_k\rightarrow \P^+_{k-1},\quad \text{and } \quad \gls{ik}: \P_{k-1}^{+}\rightarrow \P_{k}^{+} $$
be the evaluation morphism that maps $x_{k}$ to $0$ and, respectively, the canonical inclusion.  The definitions and facts that we discuss below have  straightforward analogues associated to the sequence of rings $\P_k^-$, $k\geq 1$.  To avoid introducing additional notation, we will use $\pi_k$ and \gls{ik} for the corresponding morphisms  between the rings $\P_k^-$ and $\P_{k-1}^-$ (to be clear, in this case, the evaluation morphism $\pi_k$ maps $x_k^{-1}$ to $0$), with the hope that the necessary distinction will be clear from the context (in fact, the objects associated with $\P_k^-$ will appear only in \S\ref{sec: -stable}). We adopt the same convention for all the other morphisms considered in this sub-section: $\iota_{n,k}$, \gls{Pik},  \gls{Ik},  \gls{Jk},   \gls{J}.

The rings $\P_k^\pm$, $k\geq 1$ form an graded inverse system. We will use the notation  \gls{Pinf} for the graded inverse limit ring $\displaystyle 
\lim_{\longleftarrow}\P_k^{\pm}$. The graded inverse limit rings are sometimes referred to in the literature as the rings of formal polynomials in the variables $x_i^\pm$, $i\geq 1$. We denote by $\Pi_k: \displaystyle \lim_{\longleftarrow}\P_k^{\pm}\to \P_k^\pm$ the canonical morphism.

The inductive limit $\displaystyle \lim_{{\longrightarrow}} \P_k^+$ is canonically isomorphic with the ring $\mathbb{Q}(\tc,\qc)[x_1,x_2,\dots ]$ of polynomials in infinitely many variables (and similarly for $\displaystyle \lim_{{\longrightarrow}} \P_k^-$). As 
with the projective limit,  the rings $\displaystyle \lim_{\substack{\longrightarrow \\ k\geq n}} \P_k^\pm$ and  $\displaystyle \lim_{{\longrightarrow}} \P_k^\pm$ are canonically isomorphic. We denote by $I_k: \displaystyle 
\P_k^\pm\to\lim_{{\longrightarrow}} \P_k^\pm$ the canonical morphism.

 The following diagram is commutative:
\[
\begin{tikzcd}
  \P_{k-1}^{\pm} \arrow[r] \arrow[d, "\iota_k"'] & \P_{k-1}^{\pm} \\
  \P_{k}^{\pm} \arrow[r]                 & \P_{k}^{\pm}               \arrow[u, "\pi_k"]   
\end{tikzcd}
\]
where each horizontal map represents the identity map. For a fixed $n\geq 1$,  denote by $\iota_{n,k}: \P_n^\pm\to \P_k^\pm$, $n\leq k$, the canonical inclusion. The sequence of maps $\iota_{n,k}: \P_n^\pm\to \P_k^\pm$, $k\geq n$ is compatible 
with the structure maps $\pi_k$. Therefore, they induce a morphism
$$
\P_n^\pm\to  \lim_{\substack{\longrightarrow \\ k\geq n}} \P_k^\pm\cong \displaystyle \lim_{{\longrightarrow }} \P_k^\pm.
$$
Furthermore, these maps are compatible with the structure maps $\iota_n$ and therefore induce a morphism
$$
J: \lim_{{\longrightarrow}} \P_k^\pm \to \lim_{\longleftarrow}\P_k^{\pm}.
$$
By construction, $\Pi_kJI_k: \P_k^\pm\to \P_k^\pm$ is the identity function. We denote $J_k=JI_k: \P_k^\pm\to \P_\infty^\pm$.

\subsection{}\label{sec: p(k)}

For any $n\geq 1$, a sequence of operators $A_k:\P_k^\pm\to \P_k^\pm$, $k\geq n$, compatible with the inverse system induces a (limit) operator $A: \P_\infty^\pm\to\P_\infty^\pm$.  We have $A_k=\Pi_k A J_k$. For example, the sequence $A_k=s_n$, $k\geq n$, given by the action of the simple transposition $s_n$, induces a limit operator $s_n$ acting on $\P_\infty^\pm$. In turn, this leads to an action of the infinite symmetric group $S(\infty)$ (the inductive limit of $S_k$, $k\geq 1$) on $\P_\infty^\pm$.

For any $k\geq 0$, denote,
$$\gls{P(n)}=\{F\in \P_{\infty}^{\pm}\ |\ s_i F=F,~ \text{for all } i>k \}.$$
From the definition it is clear that ${\P}(k)^{\pm}\subset{\P}(k+1)^{\pm}$. Also, ${\P}(0)^{+}$ is the ring of symmetric functions  $\Sym[\Xb]$, and, more generally, for any $k\leq 1$,
 the multiplication map  $$ \P_k^{+}\otimes \Sym[\mathbf{X}_k]  \cong\P(k)^+$$ is an algebra isomorphism.

\subsection{}\label{sec: pas}
An important role will be played by the graded subrings $\Pas^\pm\subset \P_\infty^\pm$,  defined as the inductive limit of the spaces $\P(k)^\pm$:
$${\P}_{\textnormal{as}}^{\pm}=\bigcup_{k\geq 0}{\P}(k)^{\pm}.$$
More concretely, an element of $\Pas^\pm$ must be fixed by all simple transpositions with the possible exception of finitely many. Following Knop \cite {Kn}, we refer to \gls{Pas} as the \emph{almost symmetric modules}.


\section{Preliminaries}

\subsection{Affine Hecke Algebras}\label{sec: AHA}
\begin{dfn}\label{def: AHA}
  The affine Hecke algebra \gls{Ak} of type $GL_k$ is the $\mathbb{Q}(\tc)$-algebra generated by 
$$T_1,\dots,T_{k-1},X_1^{\pm1},\dots,X_k^{\pm1}$$
satisfying the following relations:
    \begin{subequations}\label{AHA}
        \begin{equation}\label{T relation ii}
          \begin{gathered}
          T_{i}T_{j}=T_{j}T_{i}, \quad |i-j|>1,\\
          T_{i}T_{i+1}T_{i}=T_{i+1}T_{i}T_{i+1}, \quad 1\leq i\leq k-2,
          \end{gathered}
          \end{equation}
          \begin{equation}\label{quadratic}
                    (T_{i}-1)(T_{i}+\tc)=0, \quad 1\leq i\leq k-1,
          \end{equation}
        \begin{equation}\label{X relation ii}
            \begin{gathered}
                \tc T_i^{-1} X_i T_i^{-1}=X_{i+1}, \quad 1\leq i\leq k-1\\
                T_{i}X_{j}=X_{j}T_{i},\quad  j\neq i,i+1,\\
                X_i X_j=X_j X_i \quad 1\leq i,j\leq k.
            \end{gathered}
        \end{equation}
    \end{subequations}
\end{dfn}

The definition above is based on the Bernstein presentation of the affine Hecke algebra $\AHA_k$. The following presentation of $\AHA_k$ will also be used.

\begin{prop}
  The affine Hecke algebra $\AHA_k$ is generated by 
$$\widetilde{T}_0,T_1,\dots,T_{k-1},\widetilde{\omega}_k^{\pm 1}$$
satisfying the relations:
    \begin{subequations}\label{AHA1}
        \begin{equation}\label{T relation iii}
          \begin{gathered}
          T_{i}T_{j}=T_{j}T_{i}, \quad |i-j|>1,\\
          T_{i}T_{i+1}T_{i}=T_{i+1}T_{i}T_{i+1}, \quad 1\leq i\leq k-2,\\
          (T_{i}-1)(T_{i}+\tc)=0, \quad 1\leq i\leq k-1,
            \end{gathered}
        \end{equation}
        \begin{equation}
        \begin{gathered}
          T_{i} \widetilde{T}_0=\widetilde{T}_0 T_{i}, \quad 2\leq i\leq k-1,\\
          \widetilde{T}_0 T_{1}\widetilde{T}_0=T_{1}\widetilde{T}_0 T_{1}, \\
          (\widetilde{T}_0-1)(\widetilde{T}_0+\tc)=0, 
          \end{gathered}
        \end{equation}
        \begin{equation}\label{omega relation}
            \widetilde{\omega}_k T_i \widetilde{\omega}_k^{-1}=T_{i-1} \textrm{ for }1\leq i\leq k-1,\quad \widetilde{\omega}_k \widetilde{T}_0 \widetilde{\omega}_k^{-1}=T_{k-1}.
        \end{equation}
    \end{subequations}
\end{prop}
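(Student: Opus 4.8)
The plan is to verify that the new presentation with generators $\widetilde{T}_0, T_1,\dots,T_{k-1},\widetilde{\omega}_k^{\pm 1}$ and relations \eqref{AHA1} is equivalent to the Bernstein presentation of Definition~\ref{def: AHA}. I would proceed by giving explicit mutually inverse dictionaries between the two generating sets and checking that relations map to relations. Concretely, starting from the Bernstein presentation, set
\begin{equation*}
\widetilde{\omega}_k = X_k T_{k-1}^{-1}T_{k-2}^{-1}\cdots T_1^{-1},\qquad \widetilde{T}_0 = \text{(the conjugate of $T_1$ that, together with $T_1,\dots,T_{k-1}$, generates the affine braid relations)},
\end{equation*}
the standard choice being $\widetilde{T}_0 = \widetilde{\omega}_k^{-1} T_1 \widetilde{\omega}_k$ if we take \eqref{omega relation} as the definition of $\widetilde{T}_0$; equivalently $\widetilde{T}_0 = X_1 X_2^{-1} T_1$ up to the usual normalization, so that $\widetilde{T}_0$ corresponds to the affine simple reflection $s_0$. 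In the other direction, recover $X_1 = \widetilde{\omega}_k T_{k-1}T_{k-2}\cdots T_1$ (reading \eqref{omega relation} to move $\widetilde{\omega}_k$ past the $T_i$'s) and then define $X_{i+1} = \tc\, T_i^{-1} X_i T_i^{-1}$ inductively using \eqref{X relation ii}; one checks $X_k$ reproduces the expression above for $\widetilde{\omega}_k$, closing the loop.

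The key steps, in order, are: (1) check that the elements defined from the Bernstein generators satisfy all relations in \eqref{AHA1} — the braid and quadratic relations \eqref{T relation iii} are inherited verbatim; the mixed relations $T_i\widetilde{T}_0 = \widetilde{T}_0 T_i$ for $2\le i\le k-1$, the braid relation $\widetilde{T}_0 T_1 \widetilde{T}_0 = T_1 \widetilde{T}_0 T_1$, and the quadratic relation $(\widetilde{T}_0-1)(\widetilde{T}_0+\tc)=0$ follow from the fact that $T_1,\dots,T_{k-1},\widetilde{T}_0$ are the standard generators of the affine Hecke algebra of type $\widetilde{A}_{k-1}$ sitting inside $\AHA_k$, together with the conjugation relation \eqref{omega relation} which is built into the definition of $\widetilde\omega_k$; (2) check conversely that the elements $X_i$ reconstructed from \eqref{AHA1} satisfy \eqref{X relation ii} — the relation $\tc T_i^{-1}X_iT_i^{-1}=X_{i+1}$ holds by construction, $T_i X_j = X_j T_i$ for $j\ne i,i+1$ follows from \eqref{omega relation} and the braid relations by a short induction on $|i-j|$, and the commutativity $X_iX_j=X_jX_i$ is the one genuinely nontrivial point; (3) verify the two dictionaries are mutually inverse, which is the loop-closing computation indicated above and is essentially formal once (1) and (2) are in place.

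The main obstacle is step (2), specifically deriving the commutativity $X_iX_j=X_jX_i$ of the Bernstein lattice elements from the presentation \eqref{AHA1}, since in \eqref{AHA1} there is no generator whose commutativity is postulated — one must extract it. The clean way is to observe that the subalgebra generated by $\widetilde{\omega}_k$ and its conjugates $\widetilde\omega_k^{j}\, (\text{things})\, \widetilde\omega_k^{-j}$ is the image of the group algebra of the (extended affine) lattice; alternatively, one shows directly that $X_1 = \widetilde{\omega}_k T_{k-1}\cdots T_1$ commutes with $X_2 = \tc T_1^{-1}X_1 T_1^{-1}$ by a bracket manipulation using $\widetilde T_0 = \widetilde\omega_k^{-1}T_1\widetilde\omega_k$ and the quadratic relation for $\widetilde T_0$, and then propagates commutativity to all pairs $X_i,X_j$ using the intertwining with the $T_i$'s. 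This is exactly the content of the equivalence between the Coxeter-type and Bernstein-type presentations of the affine Hecke algebra, so I would either carry out this computation explicitly or, more economically, cite the standard reference (e.g. Lusztig, or Macdonald's book on affine Hecke algebras) for the equivalence and simply record here which elements correspond under the isomorphism.
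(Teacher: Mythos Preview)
The paper does not give a proof of this proposition; it is stated as a known alternative presentation of $\AHA_k$, with the dictionary between the two generating sets recorded immediately afterward in Remark~\ref{rem: otilderels}. So your outline is being compared to that remark rather than to an argument.

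Your overall strategy is the standard one and is correct in spirit: set up mutually inverse dictionaries, transport the relations, and observe that the only genuinely nontrivial point in passing from the Coxeter-type presentation \eqref{AHA1} back to the Bernstein presentation is the commutativity $X_iX_j=X_jX_i$. You are also right that at this point one either does the explicit braid computation or cites a standard reference (Lusztig, Macdonald).

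However, your explicit formulas are off by inversion, and with the paper's conventions this matters. According to Remark~\ref{rem: otilderels},
\[
\widetilde{\omega}_k \;=\; X_k^{-1}T_{k-1}^{-1}\cdots T_1^{-1} \;=\; \tc^{1-k}T_{k-1}\cdots T_1 X_1^{-1},
\]
not $\widetilde{\omega}_k = X_k T_{k-1}^{-1}\cdots T_1^{-1}$ as you wrote. Consequently your inverse dictionary $X_1 = \widetilde{\omega}_k T_{k-1}\cdots T_1$ is also wrong; the correct recovery is $X_1 = \tc^{1-k}\,\widetilde{\omega}_k^{-1}T_{k-1}\cdots T_1$ (equivalently $X_k = T_1\cdots T_{k-1}\,\widetilde{\omega}_k^{-1}$). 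With your formulas the ``loop-closing'' check in step~(3) would fail, and the verification that $\widetilde{T}_0 = \widetilde{\omega}_k^{-1}T_{k-1}\widetilde{\omega}_k$ matches the expression $\tc^{k-1}X_1X_k^{-1}T_1^{-1}\cdots T_{k-1}^{-1}\cdots T_1^{-1}$ given in the paper would not come out right. Once you correct the dictionary to match Remark~\ref{rem: otilderels}, the rest of your plan goes through.
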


\begin{rmk}\label{rem: otilderels}
$\widetilde{T}_0$, \gls{tomegak},  and $X_1,\dots,X_k$ are related by
$$\widetilde{\omega}_k=\tc^{1-k} 
T_{k-1}\dots T_{1}X_{1}^{-1}=
X_k^{-1}T_{k-1}^{-1}\dots T_{1}^{-1},$$
$$\widetilde{T}_0=\widetilde{\omega}_k^{-1}T_{k-1}\widetilde{\omega}_k=\widetilde{\omega}_k T_1\widetilde{\omega}_k^{-1} =\tc^{k-1}X_1X_k^{-1}T_1^{-1}\dots T_{k-1}^{-1}\dots T_1^{-1}.$$
\end{rmk}

\begin{note}
We denote by $\AHA_k^+$ the subalgebra of $\AHA_k$ generated by $T_i$, $i\leq k-1$, and $X_i$, $i\leq k$, or equivalently, by $T_i$, $i\leq k-1$, and $\widetilde{\omega}_k^{-1}$. Similarly, we denote by $\AHA_k^-$ the subalgebra of 
$\AHA_k$ 
generated by $T_i$, $i\leq k-1$, and $X^{-1}_i$, $i\leq k$, , or equivalently, by $T_i$, $i\leq k-1$, and $\widetilde{\omega}_k$.
\end{note}


\subsection{Double Affine Hecke Algebras}\label{sec: DAHA}

\begin{dfn}\label{def: DAHA}
The double affine Hecke algebra (DAHA)  \gls{Hk}, $k\geq 2$, of type $GL_{k}$ is the $\mathbb{Q}(\tc,\qc)$-algebra generated by $T_0,T_1,\dots,T_{k-1}$, $X_1^{\pm1},\dots,X_k^{\pm1}$, and $\omega_k^{\pm 1}$ satisfying \eqref{T 
relation ii}, \eqref{quadratic}, \eqref{X relation ii} and the following relations:
    \begin{subequations}\label{DAHA2}
        \begin{equation}\label{T relation iv}
            \begin{gathered}
	          T_{i} T_0=T_0 T_{i}, \quad 2\leq i\leq k-1,\\
	          T_0 T_{1}T_0=T_{1}T_0 T_{1},
            \end{gathered}
        \end{equation}
        \begin{equation}
        	          (T_0-1)(T_0+\tc)=0, 
        \end{equation}
        \begin{equation}\label{omega2 relation}
	\begin{gathered}
            \omega_k T_i \omega_k^{-1}=T_{i-1}, \quad 2\leq i\leq k-1,\\
	 \omega_k T_1 \omega_k^{-1}=T_0, \quad \omega_k T_0 \omega_k^{-1}=T_{k-1},
	\end{gathered}
        \end{equation}
        \begin{equation}\label{X-omega2 cross relation}
          \begin{gathered}
	 \omega_k X_{i+1}\omega_k^{-1}=X_i,  \quad 1\leq i\leq k-1,\quad \omega_k X_1 \omega_k^{-1}=\qc^{-1}X_k.
          \end{gathered}
        \end{equation}
    \end{subequations}
\end{dfn}

The double affine Hecke algebras were first introduced,  in greater generality, by Cherednik \cite{Ch}. Under one of the possible sets of conventions, the definition of the DAHA of type $GL_k$ can be found, for example, in 
\cite{Ch-book}*{\S3.7}. 
The presentation in Definition \ref{def: DAHA} is consistent with the conventions in \cite{IS}. We will also make use of the following equivalent presentation of $\H_k$.

\begin{prop}\label{DAHA}
The algebra $\H_k$, $k\geq 2$, is the $\mathbb{Q}(\tc,\qc)$-algebra generated by the elements $T_1,\dots,T_{k-1}$, $X_1^{\pm1},\dots,X_k^{\pm1}$, $Y_1^{\pm1},\dots,Y_k^{\pm1}$ satisfying \eqref{T relation ii}, \eqref{quadratic}, 
\eqref{X relation ii} and the following relations:
    \begin{subequations}\label{DAHA1}
        \begin{equation}\label{Y relation}
            \begin{gathered}
                \tc^{-1} T_i Y_i T_i=Y_{i+1}, \quad 1\leq i\leq k-1\\
                T_{i}Y_{j}=Y_{j}T_{i}, \quad  j\neq i,i+1,\\
                Y_i Y_j=Y_j Y_i \quad 1\leq i,j\leq k,
            \end{gathered}
        \end{equation}
        \begin{equation}\label{XY cross relation}
            Y_1 T_1 X_1=X_2 Y_1T_1,
        \end{equation}
        \begin{equation}\label{det relation}
        Y_1 X_1\dots X_k= \qc X_1\dots X_k Y_1.
        \end{equation}
    \end{subequations}
\end{prop}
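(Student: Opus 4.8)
The statement to prove is the equivalence of two presentations of the DAHA $\H_k$: the one in Definition \ref{def: DAHA} (generators $T_0,\dots,T_{k-1}$, $X_i^{\pm1}$, $\omega_k^{\pm1}$) and the Bernstein-type presentation in Proposition \ref{DAHA} (generators $T_1,\dots,T_{k-1}$, $X_i^{\pm1}$, $Y_i^{\pm1}$). The plan is to exhibit mutually inverse morphisms between the two presented algebras by writing down explicit formulas for the generators of each in terms of the other, then checking that the defining relations are respected.

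First I would define the candidate element $Y_1$ in the first presentation. Following the classical recipe (as in \cite{Ch-book}*{\S3.7} or \cite{IS}), the natural choice is $Y_1 = T_{k-1}^{-1}\cdots T_1^{-1}\,\omega_k^{-1}$ (up to a power of $\tc$ that I would fix by comparing with Remark \ref{rem: otilderels}, since $\omega_k$ should play the role of an affine element analogous to $\widetilde\omega_k$ but in the $Y$-lattice). Then set $Y_{i+1} = \tc^{-1} T_i Y_i T_i$ inductively for $1 \le i \le k-1$; this forces the first family of relations in \eqref{Y relation}. In the reverse direction, I would define $\omega_k$ in the second presentation by inverting this formula, namely $\omega_k = \tc^{?}\, Y_1^{-1} T_1^{-1}\cdots T_{k-1}^{-1}$ with the appropriate normalization, and define $T_0$ via $T_0 = \omega_k^{-1} T_{k-1}\omega_k$ (or equivalently through the formula in Remark \ref{rem: otilderels} adapted to the present conventions). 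The key point is that both rings already contain a copy of $\AHA_k$ (the subalgebra generated by the $T_i$ and $X_i$), so the "$X$-side" relations \eqref{T relation ii}, \eqref{quadratic}, \eqref{X relation ii} transfer for free, and what remains is to verify the mixed relations.

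The main work — and the main obstacle — is checking the three remaining families: (a) the commutation relations $Y_iY_j=Y_jY_i$, (b) the cross relation \eqref{XY cross relation}, $Y_1T_1X_1 = X_2Y_1T_1$, and (c) the "determinant" relation \eqref{det relation}, $Y_1X_1\cdots X_k = \qc X_1\cdots X_k Y_1$; and conversely, that the formulas for $T_0$ and $\omega_k$ satisfy \eqref{T relation iv}--\eqref{X-omega2 cross relation}. For the forward direction, relation (c) is essentially a restatement of $\omega_k X_1\omega_k^{-1} = \qc^{-1}X_k$ combined with the braid relations, and I expect it to fall out after rewriting $X_1\cdots X_k$ as a central-up-to-$\qc$ element conjugated by $\omega_k$. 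Relation (b) should follow from a short computation using $\tc T_1^{-1}X_1T_1^{-1}=X_2$ and the definition $Y_1 = T_{k-1}^{-1}\cdots T_1^{-1}\omega_k^{-1}$, pushing $T_1$ past the tail $T_{k-1}^{-1}\cdots T_2^{-1}$ (which commutes with $T_1$ by \eqref{T relation ii}) and then using $\omega_k X_{i+1}\omega_k^{-1}=X_i$. The commutativity (a) is the genuinely delicate part: it is not manifest from the generators-and-relations data and typically requires either an intertwiner argument or an appeal to the faithful Laurent polynomial representation $\P_k$, in which the $Y_i$ act as the pairwise-commuting Cherednik operators.

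Given that the excerpt states $\P_k$ is a faithful representation of $\H_k$, the cleanest route to close the argument is representation-theoretic: construct the $\H_k$-action on $\P_k = \Rat(\tc,\qc)[x_1^{\pm1},\dots,x_k^{\pm1}]$ from the Definition \ref{def: DAHA} presentation (with $X_i$ acting by multiplication, $T_i$ by Demazure--Lusztig operators, $\omega_k$ by the rotation $x_i \mapsto x_{i-1}$, $x_1 \mapsto \qc^{-1}x_k$), observe that under the substitution above the elements $Y_i$ act precisely as the standard Cherednik operators, and invoke the known commutativity of the Cherednik operators together with faithfulness to deduce $Y_iY_j = Y_jY_i$ as an identity in $\H_k$ itself. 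Then the two presented algebras map onto a common faithful-representation-having algebra, and dimension/basis counting (or a direct check that the composite endomorphisms fix generators) shows the maps are mutually inverse. I would present the explicit generator formulas and the relation checks (a)–(c) in sequence, flagging the Cherednik-operator commutativity as the one step that is imported rather than verified by hand.
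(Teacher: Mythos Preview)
The paper does not actually prove Proposition~\ref{DAHA}: it states the relationship between the $Y_i$ and $\omega_k$ in the subsequent Remark, namely
\[
Y_i = \tc^{\,k+1-i}\,T_{i-1}\cdots T_{1}\,\omega_k^{-1}\,T_{k-1}^{-1}\cdots T_{i}^{-1},
\]
and then simply asserts that ``the presentation in Proposition~\ref{DAHA} can be derived from Definition~\ref{def: DAHA},'' referring to \cite{SV}*{\S2.1} for details and noting that \cite{SV}*{(2.7)} is equivalent to \eqref{XY cross relation} modulo \eqref{X relation ii} and \eqref{Y relation}. So there is nothing to compare against beyond this citation.

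Your sketch is therefore substantially more detailed than what the paper offers, and the overall architecture (exhibit mutually inverse homomorphisms; reduce the delicate commutativity $Y_iY_j=Y_jY_i$ to a computation with Cherednik operators in the faithful representation $\P_k$) is the standard and correct one. Two small corrections are worth flagging. First, your candidate $Y_1 = T_{k-1}^{-1}\cdots T_1^{-1}\,\omega_k^{-1}$ has the factors in the wrong order: the paper's Remark places $\omega_k^{-1}$ on the \emph{left}, $Y_1 = \tc^{k}\,\omega_k^{-1}\,T_{k-1}^{-1}\cdots T_1^{-1}$, and since conjugation by $\omega_k$ cyclically shifts the $T_i$ (including into $T_0$), the two expressions are genuinely different elements, not just off by a scalar. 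Second, your appeal to faithfulness is clean for verifying relations among elements already living in the Definition~\ref{def: DAHA} algebra (this handles $Y_iY_j=Y_jY_i$), but for the reverse map you must check the relations \eqref{T relation iv}--\eqref{X-omega2 cross relation} for $\omega_k, T_0$ directly inside the Proposition~\ref{DAHA} algebra, where no faithful representation is yet available; those checks are routine braid manipulations, but you should not phrase that step as another faithfulness argument.
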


\begin{rmk}
$Y_1,\dots,Y_k$ and \gls{omegak} are related by
$$Y_i = \tc^{k+1-i}T_{i-1} \dots T_{1}\omega_k^{-1} T_{k-1}^{-1}\dots T_{i}^{-1}.$$
\end{rmk}

The presentation in Proposition \ref{DAHA} can be derived from Definition \ref{def: DAHA}. See also \cite{SV}*{\S2.1}; the generators $X_i$, \gls{Y}, in \cite{SV} correspond to $X_i^{-1}, Y_i^{-1}$ in our notation, and the relation 
\cite{SV}*{(2.7)} 
is equivalent to \eqref{XY cross relation} modulo the relations \eqref{X relation ii} and \eqref{Y relation}.

\begin{note}
We denote by $\H_k^+$ the subalgebra of $\H_k$ generated by \gls{T}, $i\leq k-1$, and  \gls{X}, $Y_i$, $i\leq k$. Similarly, we denote by $\H_k^-$ the subalgebra of $\H_k$ generated by $T_i$,$i\leq k-1$, and  $X^{-1}_i$, $Y^{-1}_i$, 
$i\leq k$.
\end{note}

\subsection{} \label{sec: standardrep}
The representation below is called the standard representation of $\H_k$ \cite{Ch}*{Theorem 2.3}

\begin{prop}\label{laurent rep}
  The following formulas define a faithful representation of $\H_k$ on $\P_k$:
  \begin{align}\label{DAHA representation}
      \begin{split}
	    T_i f(x_1,\dots,x_k) &= s_i f(x_1,\dots,x_k) +(1-\tc)x_i\frac{1-s_i}{x_i-x_{i+1}}f(x_1,\dots,x_k), \\
	    \widetilde{\omega}_k f(x_1,\dots,x_k) &=  
	    \tc^{1-k}T_{k-1}\dots T_{1}x_1^{-1} f(x_1,\dots,x_k),\\    
	    \omega_k f(x_1,\dots,x_k) &= f(\qc^{-1} x_k,x_1,\dots,x_{k-1}).
      \end{split}
  \end{align}
Furthermore, for any $1\leq i\leq k$, the elements $X_i$ act as left multiplication by $x_i$.
\end{prop}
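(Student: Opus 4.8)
The plan is to verify, one family at a time, that the operators defined in \eqref{DAHA representation} together with the multiplication operators $X_i$ satisfy all the defining relations of $\H_k$, and then deduce faithfulness separately. I would organize the check around the presentation in Definition \ref{def: DAHA} rather than the one in Proposition \ref{DAHA}, since the operators $T_0,\dots,T_{k-1}$ and $\omega_k$ are the ones for which we have explicit closed formulas: the $T_i$ for $1\le i\le k-1$ are the Demazure--Lusztig operators, $\omega_k$ is the (rescaled) cyclic shift $f(x_1,\dots,x_k)\mapsto f(\qc^{-1}x_k,x_1,\dots,x_{k-1})$, and $T_0$ is then obtained through the relation $T_0=\omega_k^{-1}T_1\omega_k$ forced by \eqref{omega2 relation}, or equivalently from the Remark \ref{rem: otilderels}-type formula $\widetilde T_0=\widetilde\omega_k^{-1}T_{k-1}\widetilde\omega_k$. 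First I would record that the $T_i$, $1\le i\le k-1$, satisfy the finite Hecke algebra relations \eqref{T relation ii} and \eqref{quadratic}: the quadratic relation is the standard one-variable computation on the span of $\{f, s_if\}$, the commutation for $|i-j|>1$ is immediate since the operators act on disjoint pairs of variables, and the braid relation is the classical fact that Demazure--Lusztig operators furnish a representation of the Hecke algebra of $S_k$ (one can cite this or check it on the three variables involved). Next I would verify \eqref{X relation ii}: $T_iX_jT_i^{-1}=X_j$ for $j\ne i,i+1$ is clear, the $x_i$ commute, and $\tc T_i^{-1}x_iT_i^{-1}=x_{i+1}$ is a direct one-variable manipulation using the explicit form of $T_i$ and $T_i^{-1}$.

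Then I would turn to the affine/cyclic part. The relations \eqref{T relation iv} and the quadratic relation for $T_0$ follow once one knows $T_0=\omega_k^{-1}T_1\omega_k$ and that $\omega_k$ is invertible (its inverse is $f\mapsto f(x_2,\dots,x_k,\qc x_1)$): conjugating the already-established $T_1$-relations by $\omega_k$ and using $\omega_kT_i\omega_k^{-1}=T_{i-1}$ transports them to $T_0$. So the heart of the matter is the mixed relations \eqref{omega2 relation} and \eqref{X-omega2 cross relation} involving $\omega_k$. The relation $\omega_kX_{i+1}\omega_k^{-1}=X_i$ for $1\le i\le k-1$ and $\omega_kX_1\omega_k^{-1}=\qc^{-1}X_k$ is a bare computation: $\omega_k\circ(\text{mult by }x_{i+1})\circ\omega_k^{-1}$ applied to $f$ unwinds, using $\omega_k^{-1}f=f(x_2,\dots,x_k,\qc x_1)$, to multiplication by $x_i$ (resp.\ by $\qc^{-1}x_k$). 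For $\omega_kT_i\omega_k^{-1}=T_{i-1}$ with $2\le i\le k-1$, conjugation just relabels the variables $x_i,x_{i+1}\mapsto x_{i-1},x_i$ and the formula for $T_i$ is manifestly equivariant under such a relabeling, so this is immediate; the only genuinely substantive identity is $\omega_kT_1\omega_k^{-1}=T_0$ together with $\omega_kT_0\omega_k^{-1}=T_{k-1}$, and here I would simply \emph{define} $T_0:=\omega_kT_1\omega_k^{-1}$ (an explicit operator, acting through the pair $x_k,\qc^{-1}x_1$ in a Demazure--Lusztig fashion), and then check $\omega_kT_0\omega_k^{-1}=T_{k-1}$, which amounts to verifying that conjugating this explicit $T_0$ by the cyclic shift once more lands on the Demazure--Lusztig operator in $x_{k-1},x_k$ — again a finite, mechanical variable-chase. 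This also forces the second form of $\widetilde\omega_k$ in \eqref{DAHA representation}, namely $\widetilde\omega_k=T_{k-1}\cdots T_1x_1^{-1}$; I would check the consistency $\widetilde\omega_k=\tc^{1-k}$ times... actually just verify directly that $T_{k-1}\cdots T_1\circ(\text{mult by }x_1^{-1})$ equals the cyclic operator up to the stated normalization by evaluating both sides on monomials, which pins down $\omega_k$ and reconciles the two displayed formulas for $\widetilde\omega_k$.

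For faithfulness I would use the standard PBW-type argument. The algebra $\H_k$ has a PBW basis $\{X^\mu T_w Y^\nu : \mu,\nu\in\Z^k,\ w\in S_k\}$ (equivalently $X^\mu\omega_k^{\,m}T_w$-type monomials), so it suffices to show these operators are linearly independent on $\P_k$. One clean way: restrict attention to the polynomial subrepresentation, or better, show that the operators $X^\mu T_w$ for $w\in S_k$ already act with a triangularity that separates them — applying a monomial $X^\mu T_w$ to $1$ (or to a suitable spanning set of monomials) and reading off leading terms with respect to a dominance-type order shows no nontrivial $\mathbb{Q}(\tc,\qc)$-linear combination of the $X^\mu T_w$ annihilates $\P_k$; combined with the fact that $Y_i$ (built from $T$'s and $\omega_k$) together with $X_i$ and $T_i$ generate the whole PBW basis, and that the $Y$'s are simultaneously diagonalizable on $\P_k$ with the nonsymmetric Macdonald polynomials as eigenbasis having distinct spectrum, one gets full faithfulness. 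I expect the main obstacle to be \emph{bookkeeping} rather than conceptual: getting every normalization constant (the powers of $\tc$ in $\widetilde\omega_k=\tc^{1-k}T_{k-1}\cdots T_1X_1^{-1}$, the $\qc^{-1}$ in the cyclic shift, the $\tc^{-1}$ in $\tc^{-1}T_iY_iT_i=Y_{i+1}$) mutually consistent, and verifying the single non-obvious conjugation relation $\omega_kT_0\omega_k^{-1}=T_{k-1}$ with the right Demazure--Lusztig operator on the "wrapped-around" pair of variables. Everything else is either classical (the finite Hecke relations) or a one- or two-variable direct computation.
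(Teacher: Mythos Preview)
The paper does not actually prove this proposition: it is stated as a known result and attributed to Cherednik \cite{Ch}*{Theorem 2.3}. So there is no ``paper's own proof'' to compare against; your proposal is a from-scratch verification, which is a reasonable thing to write down, and the overall strategy (check the finite Hecke relations for the Demazure--Lusztig operators, check the $X$--$T$ cross relations, define $T_0:=\omega_kT_1\omega_k^{-1}$, verify the cyclic conjugation relations using that the Demazure--Lusztig formula is invariant under simultaneous rescaling of the two variables, then argue faithfulness via PBW and the Macdonald eigenbasis) is the standard one and would go through.

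There is one genuine confusion, however. In the paragraph where you write ``verify directly that $T_{k-1}\cdots T_1\circ(\text{mult by }x_1^{-1})$ equals the cyclic operator up to the stated normalization \dots which pins down $\omega_k$ and reconciles the two displayed formulas for $\widetilde\omega_k$'', you are conflating $\widetilde\omega_k$ with $\omega_k$. These are \emph{different} elements: $\widetilde\omega_k$ belongs to the affine Hecke subalgebra $\AHA_k$ and is expressed purely in terms of the $T_i$ and $X_i^{\pm1}$ (Remark~\ref{rem: otilderels}), whereas $\omega_k$ is the genuinely new DAHA generator acting as the $\qc$-twisted cyclic shift. The operator $T_{k-1}\cdots T_1 x_1^{-1}$ is \emph{not} a cyclic shift, and there is nothing to ``reconcile'' with $\omega_k$; the displayed formula for $\widetilde\omega_k$ is simply redundant once the actions of $T_i$ and $X_i$ are fixed. (If anything, what one might check is the consistency with the normalization in Remark~\ref{rem: otilderels}.) You should remove that sentence or rewrite it to keep the two elements separate.

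A smaller point: your faithfulness sketch is a bit loose. Knowing that the $Y_i$ are simultaneously diagonalizable with distinct joint spectrum does give, via a Vandermonde-type argument in the eigenvalue parameters, that a relation $\sum c_{\mu,w,\nu}X^\mu T_w Y^\nu=0$ forces each coefficient in $\nu$ to vanish separately; but you should say so explicitly rather than just ``combined with \dots one gets full faithfulness''. Alternatively, one can simply cite the PBW theorem for $\H_k$ together with the identification of $\P_k$ as the induced module from the trivial representation of the $Y$-affine Hecke subalgebra, which makes faithfulness immediate.
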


By restriction, we obtain the corresponding standard representations of $\H^+_k$ and $\H^-_k$.
\begin{cor} 
The subspace  \gls{Pnpm} is stable under the action of  \gls{Hk}. The corresponding representation of $\H_k^\pm$ on $\P_k^\pm$ is faithful.
\end{cor}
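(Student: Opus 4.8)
The plan is to prove the two assertions separately. Stability is checked one generator at a time, and faithfulness is deduced from the faithfulness of $\H_k$ on $\P_k$ (Proposition~\ref{laurent rep}) by localizing along $x_1\cdots x_k$. For stability it suffices to treat the generators. Each $X_i$ acts by multiplication by $x_i$ and each $X_i^{-1}$ by $x_i^{-1}$, so they preserve $\P_k^+$ and $\P_k^-$ respectively. In the formula $T_i=s_i+(1-\tc)x_i\tfrac{1-s_i}{x_i-x_{i+1}}$ the divided difference $\tfrac{1-s_i}{x_i-x_{i+1}}$ maps $\Rat(\tc,\qc)[x_1,\dots,x_k]$ into itself, since $x_i-x_{i+1}$ divides $f-s_if$; hence $T_i$ preserves $\P_k^+$, and the analogous computation in the variables $x_i^{-1}$ (where $T_i=s_i+(1-\tc)x_{i+1}^{-1}\tfrac{1-s_i}{x_{i+1}^{-1}-x_i^{-1}}$) shows $T_i$ preserves $\P_k^-$. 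For the Cherednik operators, I use the identity $Y_i=\tc^{k+1-i}T_{i-1}\cdots T_1\,\omega_k^{-1}\,T_{k-1}^{-1}\cdots T_i^{-1}$ recorded after Proposition~\ref{DAHA}: every $T_j^{\pm1}$ preserves $\P_k^+$, since $T_j^{-1}=\tc^{-1}T_j+(1-\tc^{-1})$ is a polynomial in $T_j$; and $\omega_k$ acts by the substitution $x_j\mapsto x_{j-1}$ $(j\ge2)$, $x_1\mapsto\qc^{-1}x_k$, so $\omega_k^{\pm1}$ preserves both $\P_k^+$ and $\P_k^-$. Thus $Y_i$ preserves $\P_k^+$ and, inverting the formula, $Y_i^{-1}$ preserves $\P_k^-$; restricting \eqref{DAHA representation} then gives the stated representations.

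For faithfulness I argue for $\H_k^+$; the case of $\H_k^-$ is symmetric, using the exhaustion $\P_k=\bigcup_{N\ge0}(x_1\cdots x_k)^N\P_k^-$ in place of the one below. Write $m$ for multiplication by $x_1\cdots x_k$ on $\P_k$ — this is the action of $c:=X_1\cdots X_k\in\H_k^+$ — and note $\P_k=\bigcup_{N\ge0}m^{-N}(\P_k^+)$. As operators on $\P_k$, $m$ commutes with each $X_j$, commutes with each $T_i$ (multiplication by the $s_i$-invariant $x_1\cdots x_k$ commutes with $s_i$ and with $\tfrac{1-s_i}{x_i-x_{i+1}}$), and satisfies $Y_i\circ m=\qc\,m\circ Y_i$ — for $i=1$ this is \eqref{det relation}, and the general case follows from $Y_{i+1}=\tc^{-1}T_iY_iT_i$ by induction. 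I introduce the automorphisms $\rho_t$ of $\H_k$ $(t\in\Rat(\tc,\qc)^{\times})$ fixing every $T_i$ and $X_j$ and sending $Y_i\mapsto tY_i$; these respect the relations \eqref{Y relation}, \eqref{XY cross relation}, \eqref{det relation}, preserve $\H_k^+$, and allow any $h\in\H_k^+$ to be written uniquely as $h=\sum_d h_d$ with $\rho_t(h)=\sum_d t^d h_d$, where $h_d\in\H_k^+$ is the component of $Y$-degree $d$. Since each monomial summand of $h_d$ carries exactly $d$ factors $Y_\bullet$, the intertwining relations above give $h_d\circ m^{N}=\qc^{dN}\,m^{N}\circ h_d$ for all $N\in\Z$.

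Now suppose $h\in\H_k^+$ annihilates $\P_k^+$. Because $m^M(\P_k^+)\subseteq\P_k^+$ for $M\ge0$, we get $0=h\circ m^M=m^M\circ\sum_d\qc^{dM}h_d$ on $\P_k^+$, so $\sum_d\qc^{dM}h_d$ annihilates $\P_k^+$ for every $M\ge0$; since only finitely many, hence distinct, exponents $d$ occur, the corresponding Vandermonde matrix in the $\qc^d$ is invertible, and each $h_d$ annihilates $\P_k^+$. Then for $f\in\P_k^+$ and $N\ge0$ we have $h_d(m^{-N}f)=\qc^{-dN}m^{-N}(h_df)=0$, so $h_d$ annihilates all of $\P_k=\bigcup_N m^{-N}(\P_k^+)$; by Proposition~\ref{laurent rep} this forces $h_d=0$, and hence $h=0$.

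I expect the main obstacle to be precisely this last passage. Because $\P_k^+$ is already $\H_k^+$-stable, it does not generate $\P_k$ as an $\H_k^+$-module, so vanishing on $\P_k^+$ does not propagate to $\P_k$ for free; the device that closes the gap is the $\qc$-rescaling symmetry $\rho_t$ of $\H_k^+$ combined with the Vandermonde separation by $Y$-degree, which is what lets one reduce to the $Y$-homogeneous components $h_d$, for which the localization argument works cleanly. A secondary technical point to pin down is the well-definedness of the $Y$-degree decomposition $h=\sum_d h_d$, i.e. that $\H_k^+$ is the direct sum of the weight spaces of the $\mathbb{G}_m$-action $\rho_t$; this is routine since $\H_k^+$ is generated by $\rho_t$-eigenvectors.
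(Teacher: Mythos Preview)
Your argument is correct. The paper itself does not supply a proof of this corollary: the stability claim is asserted in the sentence ``By restriction, we obtain the corresponding standard representations of $\H_k^+$ and $\H_k^-$'' immediately preceding the statement, and faithfulness is simply recorded without justification, as a standard fact about these induced modules.

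Your stability check is the expected one. Your faithfulness argument, on the other hand, is a self-contained proof that the paper does not provide. The key device---grading $\H_k^+$ by $Y$-degree via the automorphisms $\rho_t$, observing that conjugation by $c=X_1\cdots X_k$ realizes $\rho_{\qc^{-1}}$ (so that $h_d c=\qc^{d}c\,h_d$ for the degree-$d$ component), and then using a Vandermonde separation over $M\ge 0$ to isolate the $Y$-homogeneous pieces---is a clean way to propagate vanishing from $\P_k^+$ to $\P_k=\bigcup_N m^{-N}\P_k^+$ and invoke Proposition~\ref{laurent rep}. One small remark: the well-definedness of the decomposition $h=\sum_d h_d$ is perhaps most directly justified by noting that all the defining relations \eqref{T relation ii}--\eqref{X relation ii} and \eqref{Y relation}--\eqref{det relation} are homogeneous for the $Y$-grading, rather than by appealing to general $\mathbb{G}_m$-representation theory; but either route is fine.
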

All the standard representations are induced representations from a one-dimensional representation of the corresponding affine Hecke subalgebra.

\begin{conv}\label{convention}
There will be elements denoted by the same symbol that belong to several (often infinitely many) algebras. The  notation does not keep track of this information if it is implicit  from the context. When necessary, we will add  the 
superscript $(k)$ (e.g. \gls{Y(k)} $\in \H_k$) to make such information explicit.
\end{conv}

\section{The stable limit DAHA}


\subsection{} \label{sec: sDAHA}
We introduce a pair of closely related algebras with infinitely many generators, which we call stable limit DAHAs.

\begin{dfn}\label{def: sDAHA+}
Let  $\H^+$ be  the $\mathbb{Q}(\tc,\qc)$-algebra    generated by the elements $T_i$,$X_i$, and $Y_i$, $i\geq 1$, satisfying  the following relations:
  \begin{subequations}\label{sdaha}
        \begin{equation}\label{T relations}
          \begin{gathered}
          T_{i}T_{j}=T_{j}T_{i}, \quad |i-j|>1,\\
          T_{i}T_{i+1}T_{i}=T_{i+1}T_{i}T_{i+1}, \quad i\geq 1,
          \end{gathered}
        \end{equation}
        \begin{equation}\label{Quadratic}
                  (T_{i}-1)(T_{i}+\tc)=0, \quad i\geq 1,
        \end{equation}
        \begin{equation}\label{X relations}
            \begin{gathered}
                \tc T_i^{-1} X_i T_i^{-1}=X_{i+1}, \quad  i\geq 1\\
                T_{i}X_{j}=X_{j}T_{i}, \quad  j\neq i,i+1,\\
                X_i X_j=X_j X_i,\quad i,j\geq 1,
            \end{gathered}
        \end{equation}
         \begin{equation}\label{Y relations}
            \begin{gathered}
                \tc^{-1} T_i Y_i T_i=Y_{i+1}, \quad i\geq 1\\
                T_{i}Y_{j}=Y_{j}T_{i}, \quad  j\neq i,i+1,\\
                Y_i Y_j=Y_j Y_i, \quad i,j\geq 1,
            \end{gathered}
        \end{equation}
                \begin{equation}\label{XY cross relations}
            Y_1 T_1 X_1=X_2 Y_1T_1.
        \end{equation}
    \end{subequations}
We will call $\H^+$ the \sp limit DAHA.
\end{dfn}
Note that, as opposed to the corresponding elements of $\H_k$, the elements $X_i$, $Y_i$ are not  invertible in $\H^+$. Similarly, we define the \sm limit DAHA.

\begin{dfn}\label{def: sDAHA-}
Let  $\H^-$ be  the $\mathbb{Q}(\tc,\qc)$-algebra    generated by the elements $T_i$,$X^{-1}_i$, and $Y^{-1}_i$, $i\geq 1$, satisfying  the following relations:
  \begin{subequations}
        \begin{equation}
          \begin{gathered}
          T_{i}T_{j}=T_{j}T_{i}, \quad |i-j|>1,\\
          T_{i}T_{i+1}T_{i}=T_{i+1}T_{i}T_{i+1}, \quad i\geq 1,
           \end{gathered}
        \end{equation}
         \begin{equation}
          (T_{i}-1)(T_{i}+\tc)=0, \quad i\geq 1,
        \end{equation}
        \begin{equation}
            \begin{gathered}
                \tc^{-1} T_i X_i^{-1} T_i=X_{i+1}^{-1}, \quad  i\geq 1\\
                T_{i}X^{-1}_{j}=X^{-1}_{j}T_{i}, \quad j\neq i,i+1,,\\
                X^{-1}_i X^{-1}_j=X^{-1}_j X^{-1}_i,\quad i,j\geq 1,
            \end{gathered}
        \end{equation}
         \begin{equation}
            \begin{gathered}
                \tc T_i^{-1} Y^{-1}_i T^{-1}_i=Y^{-1}_{i+1}, \quad i\geq 1\\
                T_{i}Y^{-1}_{j}=Y^{-1}_{j}T_{i}, \quad j\neq i,i+1,\\
                Y^{-1}_i Y^{-1}_j=Y^{-1}_j Y^{-1}_i, \quad i,j\geq 1,
            \end{gathered}
        \end{equation}
                \begin{equation}
            X^{-1}_1 T^{-1}_1 Y^{-1}_1=T^{-1}_1 Y^{-1}_1 X^{-1}_2.
        \end{equation}
    \end{subequations}
We will call $\H^-$ the \sm limit DAHA.
\end{dfn}

The map $$\gls{e}: \H^+\to \H^-$$ that sends $T_i$, $X_i$, $Y_i$ to $T_i$, $Y^{-1}_i$, $X^{-1}_i$, respectively,  extends to an anti-isomorphism of  $\mathbb{Q}(\tc,\qc)$-algebras. For this reason, we regard $\H^+$ and $\H^-$ as capturing 
the same structure and we will use the terminology \emph{stable limit DAHA} to refer to either of them, depending on the context. 

The terminology is justified by the fact that these structures arise from analyzing the stabilization phenomena for the standard representations of the algebras $\H^\pm_k$ as $k$ approaches infinity. We will construct natural representations of both 
\gls{H}  in this fashion. To point more directly  to his connection consider the following.

\begin{dfn}
For any $k\geq 2$, denote by $\H(k)^+$ the subalgebra of $\H^+$ generated by $T_i$, $i\leq k-1$, and $X_i$, $Y_i$, $1\leq i\leq k$; $\H(k)^-$ denotes the  subalgebra of $\H^-$ generated by $T_i$, $i\leq k-1$, and $X^{-1}_i$, $Y^{-1}_i$, $1\leq i\leq k$.
\end{dfn}

\begin{rmk}\label{rem: projection}
We have a canonical surjective  $\mathbb{Q}(\tc,\qc)$-algebra morphisms $ \gls{H(k)}\to \H^\pm_k$.
\end{rmk}

\subsection{}\label{sec: braid} The stable limit DAHA is also closely related to the inductive limit of the braid groups $\mathcal{B}_k$, of $k$ distinct points on the punctured torus. Indeed, we have the following presentation of $\mathcal{B}_k$ 
\cite{Bel}*{Theorem 
1.1} (see also \cite{Me}*{Theorem 5.1}).

\begin{thm}\label{braid} For $k\geq 2$, the group $\mathcal{B}_k$ is generated by the elements $\sigma_i$, $1\leq i\leq k-1$, and $X_i$, $Y_i$, $1\leq i\leq k$ satisfying the following relations:
  \begin{subequations}
        \begin{equation}
          \begin{gathered}
          \sigma_{i}\sigma_{j}=\sigma_{j}\sigma_{i}, \quad |i-j|>1,\\
          \sigma_{i}\sigma_{i+1}\sigma_{i}=\sigma_{i+1}\sigma_{i}\sigma_{i+1}, \quad1\leq  i\leq k-2,
          \end{gathered}
        \end{equation}
        \begin{equation}
            \begin{gathered}
                \sigma_i^{-1} X_i \sigma_i^{-1}=X_{i+1}, \quad  1\leq i\leq k-1\\
                \sigma_{i}X_{j}=X_{j}\sigma_{i}, \quad j\neq i,i+1,\\
                X_i X_j=X_j X_i,\quad 1\leq i,j\leq k,
            \end{gathered}
        \end{equation}
         \begin{equation}
            \begin{gathered}
                \sigma_i Y_i \sigma_i=Y_{i+1}, \quad1\leq  i\leq k-1\\
                \sigma_{i}Y_{j}=Y_{j}\sigma_{i}, \quad j\neq i,i+1,\\
                Y_i Y_j=Y_j Y_i, \quad 1\leq i,j\leq k,
            \end{gathered}
        \end{equation}
                \begin{equation}
            Y_1 \sigma_1 X_1=X_2 Y_1\sigma_1.
        \end{equation}
    \end{subequations}
\end{thm}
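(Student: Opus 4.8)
First I would identify $\mathcal{B}_k$ with the fundamental group of the unordered configuration space $\mathrm{UConf}_k(\Sigma)$ of $k$ points on the once-punctured torus $\Sigma=T^2\setminus\{*\}$. Since $\Sigma$ deformation retracts onto a wedge of two circles $\alpha\vee\beta$, we have $\pi_1(\Sigma)=F_2=\langle\alpha,\beta\rangle$, with the commutator $[\alpha,\beta]$ freely homotopic to a small loop around the puncture. Fix a base configuration of $k$ points inside a small disk $D\subset\Sigma$ that is disjoint from $\alpha\cup\beta$ away from their common basepoint; let $\sigma_i$ be the braid generator exchanging the $i$-th and $(i+1)$-st points inside $D$, and let $X_i$ (resp.\ $Y_i$) be the loop in which the $i$-th point leaves $D$, runs once around $\alpha$ (resp.\ $\beta$), and returns, the other points held fixed. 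For $k=1$ the claimed presentation reduces to $\langle X_1,Y_1\rangle=\pi_1(\Sigma)$, so the base case holds; for $k\ge 2$ one checks that these elements generate $\mathcal{B}_k$ by isotoping an arbitrary loop so that the points re-enter $D$ one at a time, recording their $\alpha$- and $\beta$-windings and leaving a pure braid inside $D$.

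Next I would verify that the listed relations hold among these geometric generators. The braid relations are classical and supported in $D$. The commutations $X_iX_j=X_jX_i$, $Y_iY_j=Y_jY_i$ ($i\ne j$) and $\sigma_iX_j=X_j\sigma_i$, $\sigma_iY_j=Y_j\sigma_i$ ($j\ne i,i+1$) follow from explicit isotopies that place the relevant winding loops and the braid in disjoint regions; the relations $\sigma_i^{-1}X_i\sigma_i^{-1}=X_{i+1}$ and $\sigma_iY_i\sigma_i=Y_{i+1}$ record sliding a winding loop of one point past an elementary exchange and are checked by a picture. The final relation $Y_1\sigma_1X_1=X_2Y_1\sigma_1$ is the essential \emph{torus relation}: modulo the $X$-relations it is equivalent to $Y_1^{-1}X_2Y_1X_2^{-1}=\sigma_1^{2}$, i.e.\ to the statement that dragging the first point around $\beta^{-1}$, the second around $\alpha$, the first around $\beta$ and the second around $\alpha^{-1}$ produces exactly one full twist of the two strands --- which holds because the algebraic intersection number of $\alpha$ and $\beta$ on $T^2$ is $1$.

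It remains, and this is the substantive part, to show the relations suffice; I would induct on $k$. The Fadell--Neuwirth fibration for the configuration space with one distinguished point, combined with the Birman exact sequence and the extension $1\to P_k(\Sigma)\to\mathcal{B}_k\to S_k\to1$ relating full and pure braid groups, exhibits $\mathcal{B}_k$ as an extension of $\mathcal{B}_{k-1}$ by the free group $\pi_1\!\bigl(\Sigma\setminus\{k-1\ \text{points}\}\bigr)\cong F_{k+1}$, with an explicit conjugation action of the generators $\sigma_i,X_i,Y_i$ ($i<k$) on the new free generators --- which may be taken to be $X_k$, $Y_k$, and the loops of the last point around the other $k-1$ points (the last expressible through $\sigma$-conjugates of the $\sigma_i^{2}$). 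Feeding a presentation of the kernel (free, hence with no relations) and of the quotient ($\mathcal{B}_{k-1}$, by the inductive hypothesis) into the standard recipe for a presentation of a group extension yields generators and relations for $\mathcal{B}_k$; one then verifies that every resulting relation --- the lifts of the $\mathcal{B}_{k-1}$-relations and the action relations --- follows from the displayed relations. With a coherent choice of generators, so that $\sigma_i,X_i,Y_i$ for $i<k$ are the images of the corresponding elements of $\mathcal{B}_{k-1}$, the lifted relations become immediate, and the action relations collapse to the $\sigma$--$X$, $\sigma$--$Y$, commutation, and torus relations for the new index.

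The main obstacle, I expect, is exactly this last bookkeeping: handling the symmetric-group (full versus pure) part of the braid group, pinning down the conjugation action on the new free generators, and checking that the torus-type relation for the $k$-th strand is forced by the single relation $Y_1\sigma_1X_1=X_2Y_1\sigma_1$ together with the $\sigma$-conjugation relations. The argument runs parallel to the standard derivations of presentations of braid groups of surfaces with boundary, where this completeness step is likewise the crux.
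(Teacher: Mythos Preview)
The paper does not prove this theorem: it is quoted verbatim from Bellingeri \cite{Bel}*{Theorem 1.1} (with a cross-reference to \cite{Me}*{Theorem 5.1}), and the authors only remark on the dictionary between their notation and Bellingeri's ($X_1=b_1$, $Y_1=a_1^{-1}$). So there is no ``paper's own proof'' to compare against.

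That said, your outline is essentially the argument Bellingeri carries out. He too realizes the generators geometrically, checks the relations by isotopies (the torus relation being the only genuinely two-dimensional one), and proves completeness by induction on $k$ via the Fadell--Neuwirth fibration and the resulting short exact sequence with free kernel $\pi_1(\Sigma\setminus\{k-1\text{ points}\})$. Your identification of the bookkeeping step---tracking the conjugation action on the new free generators and checking that the single relation $Y_1\sigma_1X_1=X_2Y_1\sigma_1$ together with the $\sigma$-conjugation relations forces all the needed torus-type relations---is exactly where the work lies in Bellingeri's paper as well. One small point: the theorem as stated is for $k\geq 2$, so your $k=1$ base case (which is correct: $\mathcal{B}_1(\Sigma)=\pi_1(\Sigma)=F_2$) is an auxiliary anchor rather than part of the claim, but this is harmless for the induction.
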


The notation here and the one in \cite{Bel} are related as follows: $X_1=b_1$, $Y_1=a_1^{-1}$. 

Denote by $\mathcal{B}_k^+$ the sub-monoid of $\mathcal{B}_k$ generated by $\sigma_i,\sigma_i^{-1}$, $1\leq i\leq k$, and $X_i$, $Y_i$, $1\leq i\leq k$. The canonical inclusion maps between the monoids $\mathcal{B}^+_k$ 
consitute a direct system. The direct limit monoid is denoted by $\mathcal{B}^+_\infty$. A comparison between the relations in Theorem \ref{braid} and those in Definition \ref{def: sDAHA+} (with $\sigma_i$ mapping to 
$\tc^{-\frac{1}{2}}T_i$, 
$i\geq 1$) shows that $\H^+$ is the quotient of the monoid ring of $\mathcal{B}^+_\infty$ by the ideal generated by the quadratic relations (\ref{Quadratic}). A similar relationship connects $\H^-$ and the monoid 
$\mathcal{B}^-_\infty$, 
the inductive limit of the sub-monoids $\mathcal{B}_k^-$ of $\mathcal{B}_k$ generated by $\sigma_i,\sigma_i^{-1}$, $1\leq i\leq k$, and $X_i^{-1}$, $Y_i^{-1}$, $1\leq i\leq k$.

\subsection{}\label{sec: sl2}

Let  $$\gls{iota}: \H^+\to \H^+$$ 
be the $\mathbb{Q}$-algebra automorphism that sends $T_i$ to $T_i^{-1}$, swaps $X_i$ and $Y_i$, and inverts the parameters $\tc$ and $\qc$. It is easy to check that all relations are preserved and that $\mfi$ is an involution.

Finally, let us define two $\mathbb{Q}(\tc,\qc)$-algebra endomorphisms
$$ \gls{a}, \gls{b}: \H^+\to \H^+$$
as follows. For all $i\geq 1$,  
 \begin{align*}
\mfa(T_i)&=T_i, & \mfa(X_i)&=X_i, &\mfa(Y_i)&=\tc^{1-i}(T_{i-1}\cdots T_1)(T_1\cdots T_{i-1})X_iY_i, \\ 
\mfb(T_i)&=T_i, & \mfb(Y_i)&=Y_i,  &\mfb(X_i)&=\tc^{i-1}(T^{-1}_{i-1}\cdots T^{-1}_1)(T^{-1}_1\cdots T^{-1}_{i-1})Y_iX_i.
\end{align*}
The fact that $\mfa$ preserves the defining relations can be directly checked. The only relations that require a small computation are the commutativity relations between the $\mfa(Y_i)$. All the commutativity relations follow, in fact, from 
the commutativity relation between $\mfa(Y_1)$ and $\mfa(Y_2)$. This can be verified as follows
\begin{align*}
T_1X_1 Y_1T_1X_1  Y_1 &= T_1X_1 X_2Y_1T_1 Y_1 & &  \text{by \eqref{XY cross relations}}\\
&= X_1X_2 T_1Y_1T_1 Y_1 & & \text{by \eqref{X relations}}  \\
&=  X_1 X_2 Y_1 T_1Y_1T_1 & &  \text{by \eqref{Y relations}}  \\
&=  X_1 Y_1T_1X_1 Y_1 T_1. & &  \text{by \eqref{XY cross relations}}
\end{align*}
The fact that $\mfa$ is a morphism implies that $\mfb$ is a morphism since 
$$
\mfb=\mfi \mfa\mfi.
$$
Remark that the quadratic relation \eqref{Quadratic} was not used in the verification of the other relations. Therefore, $\mfa$, $\mfb$, and $\mfi$ are not only endomorphisms of $\H^+$, but also endomorphisms of the underlying monoid.

\begin{prop}
The endomorphism $\mfa$ and $\mfb$ of $\H^+$ generate a free monoid.
\end{prop}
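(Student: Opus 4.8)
The plan is to show that no nontrivial word in $\mfa,\mfb$ can be the identity endomorphism, by tracking the action on a single well-chosen generator and extracting from it a combinatorial invariant that strictly increases with word length. The natural candidate to track is $\mfa$ and $\mfb$ applied to $Y_1$ and $X_1$, since $\mfa$ fixes all $X_i$ and $T_i$ and only moves the $Y_i$, while $\mfb$ does the mirror image. Concretely, $\mfa(X_i)=X_i$, $\mfa(T_i)=T_i$, and $\mfa(Y_1)=X_1Y_1$, whereas $\mfb(Y_i)=Y_i$, $\mfb(T_i)=T_i$, and $\mfb(X_1)=Y_1X_1$. So a word $w=\mfc_{n}\cdots \mfc_1$ in $\mfc_j\in\{\mfa,\mfb\}$ applied to, say, $Y_1$ produces an element of the monoid that, when written in a normal form, records the sequence of switches between $\mfa$-blocks and $\mfb$-blocks. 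I would work inside the underlying monoid (the remark at the end of the excerpt notes $\mfa,\mfb$ are monoid endomorphisms, and via $\S\ref{sec: braid}$ the monoid is essentially $\mathcal B^+_\infty$), so that I can use unique factorization / length arguments in $\mathcal B^+_\infty$ rather than worry about the Hecke quadratic relation collapsing things.

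The key steps, in order: (1) Reduce to a statement about the submonoid of endomorphisms, i.e. show that a word in $\mfa,\mfb$ is determined by, and determines, a reduced alternating word in the free monoid on two letters — equivalently, show $\mfa^m\neq\mfb^n$-type collisions do not happen and more generally $w=w'$ as endomorphisms forces $w=w'$ as words. (2) To do this, evaluate $w(Y_1)$ and $w(X_1)$ explicitly. Using $\mfi\mfa\mfi=\mfb$ and induction on word length, one computes that $\mfa^m(Y_1)=X_1^m Y_1$ (since $\mfa$ fixes $X_1$, iterating gives $X_1\cdot X_1^{m-1}Y_1$), and more generally an alternating word gives an element whose "total $X_1$-degree" and "total $Y_1$-degree" — or better, the precise interleaving pattern of $X$'s and $Y$'s and $T$'s in the normal form — encodes the word. (3) Define a monoid homomorphism or a length-type function $\ell$ on $\H^+$ (or on $\mathcal B^+_\infty$) — for instance the total degree in the generators $X_i, Y_i$ jointly, or separate $X$-degree and $Y$-degree — and check that $\mfa$ and $\mfb$ each strictly increase a suitable such quantity on the test element, with the two of them increasing "different" quantities, so that the pair $(\deg_X, \deg_Y)$ of $w(Y_1)$ already forces the number of $\mfa$'s and $\mfb$'s; then a finer invariant (the order of the blocks) finishes it. (4) Conclude freeness: if two words $w,w'$ act identically, then in particular $w(Y_1)=w'(Y_1)$ and $w(X_1)=w'(X_1)$; matching invariants forces equal letter-counts, and matching the interleaving forces $w=w'$.

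The main obstacle I expect is step (2)–(3): getting a genuinely faithful, easily computable invariant of $w(Y_1)$. The naive bidegree $(\deg_X,\deg_Y)$ only sees the abelianization of the word in $\{\mfa,\mfb\}$ (it will give the multiset of letters, not their order), so it cannot by itself prove freeness; one needs an additional invariant sensitive to the order of the $\mfa$- and $\mfb$-blocks. The cleanest route is probably to compute the normal form of $w(Y_1)$ in $\mathcal B^+_\infty$ and observe that each alternation $\mfa\to\mfb$ or $\mfb\to\mfa$ inserts a conjugating braid factor $(T_{i-1}\cdots T_1)(T_1\cdots T_{i-1})$ (or its inverse) at a position that depends on how many letters have been applied so far, so the braid part of $w(Y_1)$ literally spells out the word $w$ read backwards. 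Making this precise requires a careful induction keeping track of which index $i$ appears — since $\mfa(Y_i)$ involves $T_{i-1}\cdots T_1$, iterating $\mfa$ then $\mfb$ then $\mfa$ pushes the relevant index up — but once set up it is bookkeeping rather than a conceptual difficulty. Alternatively, one can avoid normal forms by mapping $\H^+$ to a convenient quotient (for example, a rank-$2$ or rank-$3$ truncation $\H^+(k)$, or even a commutative specialization where $X_i,Y_i$ become variables and $T_i$ acts by permutations) in which $\mfa$ and $\mfb$ restrict to explicit affine maps of a lattice, and show those two affine maps generate a free monoid by a ping-pong / positivity argument; I would try this first, since ping-pong on an explicit small model is the least error-prone way to certify freeness.
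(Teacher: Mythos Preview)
Your approach is not wrong in spirit, but it is substantially more laborious than the paper's and leaves the hardest step unfinished. The paper's proof is a two-line reduction to finite rank: since $\H^+_k$ is a quotient of the $\mfa,\mfb$-stable subalgebra $\H^+(k)\subset\H^+$ (Remark~\ref{rem: projection}), any word relation among $\mfa,\mfb$ on $\H^+$ would descend to $\H^+_k$. But on the full DAHA $\H_k$ these same formulas define the classical $B_3$ action of Cherednik, and it is known (the paper cites \cite{IS}) that the monoid generated by $\mfa,\mfb$ there is free --- ultimately because its image in $\SL(2,\Z)$ is the set of matrices with nonnegative entries, freely generated by $\left(\begin{smallmatrix}1&1\\0&1\end{smallmatrix}\right)$ and $\left(\begin{smallmatrix}1&0\\1&1\end{smallmatrix}\right)$. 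So the paper outsources the actual freeness to a known structural result rather than computing anything.

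Your ``alternative'' route --- map to a small quotient and run a ping-pong argument --- is exactly this $\SL(2,\Z)$ picture in disguise, and if you pursued it you would rediscover the remark the paper places right after the proposition. That is the strategy to keep. Your primary route, by contrast, has a real gap: extracting a faithful invariant from the normal form of $w(Y_1)$ in $\mathcal{B}^+_\infty$ presupposes that you can solve the word problem in that monoid, which is not obviously easier than the proposition itself. You correctly note that the bidegree only sees the abelianization; the ``finer invariant'' you gesture at (the braid factors inserted at each alternation) is plausible but you have not shown that distinct alternation patterns yield distinct elements of $\mathcal{B}^+_\infty$, and doing so amounts to establishing a normal form theorem for a submonoid of surface braid groups. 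The paper avoids this entirely by passing to finite rank and quoting the literature.
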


\begin{proof}
The double affine Hecke algebra $\H_k$ has endomorphisms defined by the same formulas as $\mfa$, $\mfb$, with the obvious constraint on the label $i$. Indeed, with this definition, $\mfa$ and $\mfb$ correspond, respectively, to 
$\rho_2$ 
and $\rho_1$ in \cite{SV}*{\S2.1}, or to $\mfa$ and $\mfb^{-1}$ in \cite{IS}*{Chapter 6}.  Furthermore, the corresponding $\mfa$, $\mfb$ are in fact automorphisms of $\H_k$ and  generate a copy of the braid group on three 
strands (see, e.g. \cite{IS}*{Theorem 6.4}). They stabilize the subalgebra $\H^+_k$ and the restrictions on $\mfa$, $\mfb$ to $\H^+_k$ generate the free monoid on two generators. Since, as pointed out in Remark \ref{rem: projection}, 
$\H^+_k$ 
is a quotient of the subalgebra  $\H^+(k)\subseteq \H^+$ and $\H^+(k)$ is stable under the action of $\mfa$ and $\mfb$,  we obtain that  the endomorphisms $\mfa$, $\mfb$ of $\H^+$ generate a free monoid.
\end{proof}

\begin{rmk}
We can regard the braid group on three strands $B_3$ as generated by $\mfa, \mfb^{-1}$ satisfying the braid relation 
$$
\mfa\mfb^{-1}\mfa=\mfb^{-1}\mfa\mfb^{-1}.
$$
There is a surjective group morphism $B_3\to \SL(2,\Z)$ which maps $\mfa$ to $A= \begin{bmatrix}1&1\\ 0&1 \end{bmatrix}$ and $\mfb$ to $ B=\begin{bmatrix}1&0\\ 1&1 \end{bmatrix}$. The image of the free monoid $B_3^+$ 
generated by $\mfa$ and $\mfb$ maps bijectively to the free monoid generated by $A$ and $B$ which consists of the set $\SL(2,\Z)^+$ of matrices with non-negative integer entries.
\end{rmk}

Similar facts, pertaining to the algebra $\H^-$, can be recorded with the help of the anti-morphism $\mfe$.
\subsection{} It is important to remark that the algebras $\H_k^+$, $k\geq 2$, do not form an inverse system in any natural way and therefore $\H^+$ cannot be directly thought off as an inverse limit of $\H_k^+$, $k\geq 2$. However, we 
will show that both $\H^\pm$ acquire natural representations that are constructed by considering inverse systems built from the standard representations of $\H_k^\pm$, $k\geq 2$.

\section{ The standard representation of the \texorpdfstring{${}^{-}$}~stable  limit DAHA}\label{sec: -stable}

\subsection{}
The results in the remainder of this section are due to Knop \cite{Kn}. We briefly recall here the  emerging structure. We refer to \S\ref{sec: maps} for the relevant notation.
The map  $\pi_k:\P^-_k\rightarrow \P^-_{k-1}$ is partially compatible with the actions of $\AHA_k^-$ and $\AHA_{k-1}^-$. More precisely, 
\begin{align}\label{neg system}
  \begin{split}
  \pi_k T_i &= T_i \pi_k,\quad 1\leq i\leq k-2,\\
  \pi_k \widetilde{\omega}_k &= 0, \\
  \pi_k T_{k-1}\widetilde{\omega}_k & =
  \widetilde{\omega}_{k-1}\pi_k,\\
  \pi_k X_i^{-1} &= X_{i}^{-1}\pi_k,\quad 1\leq i\leq k-1,\\
  \pi_k X_k^{-1} &= 0.
  \end{split}
\end{align}
We refer to \cite{Kn}*{Theorem 9.1} for the details.

\subsection{} \label{subsec: -stable}
The compatibility with the actions of $\H_k^{-}$ and $\H_{k-1}^{-}$ can also be investigated, but the verification is more delicate.  
More precisely, we have the following result \cite{Kn}*{Proposition 9.11}.

\begin{prop}
For any $1\leq i\leq k-1$, we have
\begin{equation}
\pi_k Y_i= Y_{i}\pi_k.
\end{equation}
Furthermore, the operator $Y^{-1}_i$ stabilizes both $\P_k^{-}$ and $\P_{k-1}^-$ and 
\begin{equation}
\pi_k Y^{-1}_i= Y^{-1}_{i}\pi_k.
\end{equation}
\end{prop}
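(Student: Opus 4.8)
The plan is to reduce the statement about $Y_i$ to the already-established compatibility facts for the affine Hecke generators and the known commutation of $\widetilde\omega_k$ with $\pi_k$ in the form of \eqref{neg system}. Recall from the Remark after Proposition \ref{DAHA} that in $\H_k$ we have $Y_i=\tc^{k+1-i}T_{i-1}\cdots T_1\,\omega_k^{-1}\,T_{k-1}^{-1}\cdots T_i^{-1}$; working inside $\H_k^-$ we should instead express $Y_i^{-1}$ (which is the element that genuinely lives in $\H_k^-$ and acts on $\P_k^-$) in terms of $T_1,\dots,T_{k-1}$ and $\widetilde\omega_k$. So the first step is to write $Y_i^{-1}$ as a word in the $T_j^{\pm1}$ and $\widetilde\omega_k$, for both rank $k$ and rank $k-1$, in a way that isolates a single occurrence of $\widetilde\omega_k$ flanked by a product of $T_j$'s with $j\le k-1$ on the right and $j\le i-1$ on the left.

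Second, I would push $\pi_k$ through this word from the left using \eqref{neg system}. The $T_j$'s with $j\le k-2$ commute with $\pi_k$ outright; the only subtle spots are where a $T_{k-1}$ meets $\widetilde\omega_k$ or where $\widetilde\omega_k$ appears by itself. The key identities are $\pi_k\widetilde\omega_k=0$ and $\pi_k T_{k-1}\widetilde\omega_k=\widetilde\omega_{k-1}\pi_k$: the first would kill the expression prematurely, so the word for $Y_i^{-1}$ must be arranged (using the braid and quadratic relations among the $T_j$, together with $\omega_k T_j\omega_k^{-1}=T_{j-1}$) so that $\widetilde\omega_k$ is immediately preceded by exactly one $T_{k-1}$. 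After that substitution one is left with $\widetilde\omega_{k-1}$ surrounded by $T_j$'s with $j\le k-2$, and continuing to move $\pi_k$ leftward (now everything commutes since all remaining indices are $\le k-2$) yields precisely the rank-$(k-1)$ expression for $Y_i^{-1}$ times $\pi_k$. This gives $\pi_k Y_i^{-1}=Y_i^{-1}\pi_k$; the statement for $Y_i$ itself then follows either by the same bookkeeping or, more cleanly, by observing that on $\P_k^-$ the operator $Y_i$ is the inverse of the (invertible) operator $Y_i^{-1}$ and $\pi_k$ intertwines the two, which forces $\pi_k Y_i=Y_i\pi_k$ wherever both sides make sense (one must note that $Y_i$ does stabilize $\P_k^-$, which is part of Knop's setup).

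The main obstacle is the combinatorial juggling in the second step: one needs to rewrite the Bernstein-type word $T_{i-1}\cdots T_1\,\widetilde\omega_k^{-1}\,T_{k-1}^{-1}\cdots T_i^{-1}$ (or its inverse) into a form where the single $\widetilde\omega_k$ or $\widetilde\omega_k^{-1}$ is correctly ``guarded'' by a $T_{k-1}$, so that applying $\pi_k$ does not annihilate everything. Handling $\widetilde\omega_k^{-1}$ rather than $\widetilde\omega_k$ requires care, since \eqref{neg system} is phrased for $\widetilde\omega_k$ and $T_{k-1}\widetilde\omega_k$; one extracts the needed relations for the inverses by multiplying through, keeping track of which subspaces each operator preserves. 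Since this is exactly the content of \cite{Kn}*{Proposition 9.11}, I would follow Knop's argument: set up the word, invoke \eqref{neg system} term by term, and verify the boundary case $i=k-1$ separately, where $\pi_k T_{k-1}$ acts as $\pi_k s_{k-1}$ and a short direct check closes the argument.
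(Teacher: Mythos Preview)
Your high-level strategy is the right one: write $Y_i$ (or its inverse) as a word in Hecke generators and a single ``omega'' element, then push $\pi_k$ through using a compatibility identity analogous to the ones already listed. That is exactly what underlies Knop's argument and the parallel computation the paper carries out in the positive case.

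The gap is that you have conflated $\widetilde\omega_k$ with $\omega_k$. These are different elements: $\widetilde\omega_k=\tc^{1-k}T_{k-1}\cdots T_1X_1^{-1}$ lives in the affine Hecke subalgebra generated by the $T_j$ and the $X_j^{\pm1}$, whereas $Y_i=\tc^{k+1-i}T_{i-1}\cdots T_1\,\omega_k^{-1}\,T_{k-1}^{-1}\cdots T_i^{-1}$ is expressed through $\omega_k$, which acts on $\P_k$ as the $\qc$-twisted cyclic shift. In particular it is impossible to write $Y_i^{-1}$ as a word in $T_1,\dots,T_{k-1}$ and $\widetilde\omega_k$, so the relations in \eqref{neg system}---which concern $\widetilde\omega_k$ only---cannot be applied as you describe. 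What you actually need is the $\P_k^-$-analogue of the identity \eqref{eq1}, i.e.\ a relation of the shape $\pi_k\,\omega_k\,T_{k-1}^{-1}=\omega_{k-1}\,\pi_k$ (or the equivalent for $\omega_k^{-1}$), established by direct computation on the polynomial representation. Once that is in hand, the rest of your outline---commuting $\pi_k$ past the $T_j$ with $j\le k-2$, isolating the single occurrence of $\omega_k$ flanked by the correct $T_{k-1}^{\pm1}$, and reading off the rank-$(k-1)$ word---goes through exactly as in the positive computation the paper performs in the proof of $\pi_k\,\tc^kY_iX_i=\tc^{k-1}Y_iX_i\,\pi_k$. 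Note, incidentally, that the paper itself does not supply a proof of this proposition: it defers entirely to \cite{Kn}*{Proposition 9.11}.
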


\subsection{}\label{sec: -limits} For any $n\geq 1$,  the sequence of operators $(A_k)_{k\geq 1}$ defined by 
$$
A_k:=Y_n^{(k)}, \quad k\geq n, 
$$
induces the limit operator $\displaystyle \Y_n:\lim_{\substack{\longleftarrow \\ k\geq n}} \P_k^-\to \lim_{\substack{\longleftarrow \\ k\geq n}} \P_k^-$. Since $\displaystyle \lim_{\substack{\longleftarrow \\ k\geq n}} \P_k$ and $\P^-_\infty$ 
are canonically isomorphic, we obtain an operator $\displaystyle \Y_n:\P^-\to  \P^-$. 
Similarly, we obtain the limit operators $\T_i, \X^{-1}_i$, and $\Y^{-1}_i$, $i\geq 1$. It is important to remark that the operators $\Y^{-1}_i$ are invertible (with inverse $\Y_i$). The following result immediately follows.

\begin{thm}
The limit operators $\T_i, \X^{-1}_i$, and $\Y^{-1}_i$, $i\geq 1$, define a $\H^{-}$ action on $\P^-_\infty$.
\end{thm}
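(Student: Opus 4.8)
The plan is to transport the relations of $\H^-$ from finite rank to the limit, using the fact established earlier that each of the relevant operators $T_i$, $X_i^{-1}$, $Y_i^{-1}$ on $\P_k^-$ is compatible with the inverse system $(\pi_k)$ for $k$ large, and hence defines a limit operator on $\P_\infty^-\cong\lim_{\longleftarrow}\P_k^-$. First I would record the general principle: if $(A_k)_{k\geq n}$ and $(B_k)_{k\geq n}$ are sequences compatible with the inverse system, with limits $\mathscr{A},\mathscr{B}$, and if $A_k$ and $B_k$ satisfy a fixed polynomial identity $P(A_k,B_k)=0$ for all $k\geq n$ (with $P$ having coefficients in $\mathbb{Q}(\tc,\qc)$ independent of $k$), then $\mathscr{A}$ and $\mathscr{B}$ satisfy $P(\mathscr{A},\mathscr{B})=0$. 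This is immediate from $A_k=\Pi_k\mathscr{A}J_k$ and $\mathscr{A}=\lim_{\longleftarrow}A_k$ together with the compatibility: composition of compatible sequences is compatible and its limit is the composition of the limits, sums likewise, and scalars pass through, so any word in the operators has a limit equal to the corresponding word in the limit operators, and a relation that holds at every finite level holds in the limit. The only point requiring a tiny bit of care is that a fixed relation among the generators $T_i,X_i^{-1},Y_i^{-1}\in\H_k^-$ only involves indices bounded by some $m$, so it makes sense and holds for all $k\geq m$; then one takes $n=m$ in the limit construction.

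Next I would go through the defining relations of $\H^-$ from Definition \ref{def: sDAHA-} one at a time and match each to a relation valid in $\H_k^-$ for all sufficiently large $k$. The braid and commutation relations \eqref{T relations}, the quadratic relation \eqref{Quadratic}, the relations $\tc^{-1}T_iX_i^{-1}T_i=X_{i+1}^{-1}$, $T_iX_j^{-1}=X_j^{-1}T_i$ for $j\neq i,i+1$, $X_i^{-1}X_j^{-1}=X_j^{-1}X_i^{-1}$, the $Y^{-1}$-analogues, and the cross relation $X_1^{-1}T_1^{-1}Y_1^{-1}=T_1^{-1}Y_1^{-1}X_2^{-1}$ are all, via the anti-isomorphism $\mfe$ or directly, among the defining relations \eqref{T relation ii}, \eqref{quadratic}, \eqref{X relation ii}, \eqref{Y relation}, \eqref{XY cross relation} of $\H_k$ (restricted to $\H_k^-$), hence hold on $\P_k^-$ by Proposition \ref{laurent rep} and its corollary. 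Applying the general principle of the previous paragraph to each such relation shows $\T_i,\X_i^{-1},\Y_i^{-1}$ satisfy all defining relations of $\H^-$, which by the universal property of $\H^-$ produces the algebra homomorphism $\H^-\to\operatorname{End}(\P_\infty^-)$, i.e. the desired action.

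There is essentially no hard obstacle here — this is the ``immediately follows'' theorem. The one genuine input, already granted by the cited results of Knop, is the compatibility of the $Y_i$ and $Y_i^{-1}$ actions with the $\pi_k$ (the preceding Proposition, \cite{Kn}*{Proposition 9.11}) and of the $T_i$ and $X_i^{-1}$ actions (the displayed formulas \eqref{neg system}); note that $T_{k-1}$ is \emph{not} individually compatible, but any fixed relation involves only indices $\leq m$ and we work at levels $k\geq m+1$ where all the operators appearing in that relation are compatible. So the proof is: (i) state and prove the transfer-of-relations lemma for compatible sequences of operators on an inverse system; (ii) invoke compatibility of $\T_i,\X_i^{-1},\Y_i^{-1}$ from the earlier results; (iii) check mechanically that every defining relation of $\H^-$ is a relation holding in $\H_k^-$ for all large $k$, hence on $\P_k^-$, hence in the limit; (iv) conclude via the universal property of $\H^-$ that the limit operators define an $\H^-$-action on $\P_\infty^-$. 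If one wants the sharper statement that this action restricts to the almost symmetric module $\Pas^-$, one additionally notes that $\Pas^-$ is by definition the set of elements fixed by all but finitely many limit Demazure--Lusztig operators and checks that $\X_i^{-1},\Y_i^{-1}$ preserve this property, but for the theorem as stated on $\P_\infty^-$ this is not needed.
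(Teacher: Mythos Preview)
Your proposal is correct and follows precisely the approach of the paper, which proves the theorem in a single sentence: ``All relations are satisfied because they are satisfied by the corresponding operators acting on each $\P_k^{-}$.'' You have simply unpacked this into its constituent steps (transfer-of-relations principle for compatible sequences, finite support of each relation, universal property), which is an accurate elaboration of what the paper leaves implicit.
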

\begin{proof}
All  relations are satisfied because they are satisfied by the corresponding operators acting on each  $\P_k^{-}$. 
\end{proof}

\subsection{}\label{sec: as} As it was pointed out in \cite{Kn}*{\S 10} the almost symmetric module $\Pas^-$, defined in \S\ref{sec: pas}, is a $\H^-$-stable subspace of $\P_\infty^-$ and is more canonical from a certain point of view. Specifically, each $\P_k^-$ is a parabolic module for the affine Hecke algebra $\AHA_k^-$  and has a standard basis (in the sense of Kazhdan-Lusztig theory) indexed by compositions with at most $k$ parts. The sequences consisting of 
the standard basis elements  indexed by the same composition (in all $\P_k^-$, $k\geq n$, for some $n$) give elements of  $\P_\infty^-$ (see \cite{Kn}*{\S 9}) which are expected to play the role of a standard basis for the limit 
representation. However, these limits of standard basis elements do not span $\P_\infty^-$, but rather the smaller space $\Pas^-$ .

\begin{thm}
The almost symmetric module $\Pas^-$ is a  $\H^{-}$-sub-module of   $\P^-_\infty$.
\end{thm}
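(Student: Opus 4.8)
The plan is to show that each generator of $\H^-$ (namely $\T_i$, $\X_i^{-1}$, $\Y_i^{-1}$ for $i\geq 1$) preserves the subspace $\Pas^-=\bigcup_{k\geq 0}\P(k)^-$. Since $\Pas^-$ is already known to be a $\H^-$-submodule candidate only if closed under these operators, and since $\P(k)^-$ is defined by the finitely-many fixed-point conditions $\T_i F=F$ for $i>k$, it suffices to prove: for every $k$, each generator maps $\P(k)^-$ into $\P(k')^-$ for some $k'$ (depending on $k$ and possibly shifting by a bounded amount). Then the union is stable. First I would record the easy inclusions coming from the braid and commutation relations in $\H^-$: if $F\in\P(k)^-$, i.e. $\T_iF=F$ for all $i>k$, then for $j\leq k$ the operator $\T_j$ commutes with $\T_i$ for $i>k$ (since $|i-j|>1$ once $i>k+1$, and the one borderline case $i=k+1$, $j=k$ needs separate attention), so $\T_j F$ still satisfies $\T_i(\T_jF)=\T_jF$ for $i>k+1$; hence $\T_j F\in\P(k+1)^-$. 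Likewise $\X_j^{-1}$ and $\Y_j^{-1}$ commute with $\T_i$ for $i>k$ as long as $j\neq i,i+1$, which holds for all $i>k+1$ when $j\leq k$; so these also map $\P(k)^-$ into $\P(k+1)^-$.

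The substantive point is to handle the borderline indices, i.e. to show that for $i>k$ the generators $\T_i$, $\X_i^{-1}$, $\Y_i^{-1}$ themselves (as well as $\T_k$, $\X_k^{-1}$, $\X_{k+1}^{-1}$, etc.) do not take us out of the union. For $\T_i$ with $i>k$ this is immediate: $\T_i F=F$ by hypothesis, so $\T_iF\in\P(k)^-$ trivially. For $\X_i^{-1}$ and $\Y_i^{-1}$ with $i>k$, I would use the conjugation relations $\tc^{-1}T_iX_i^{-1}T_i=X_{i+1}^{-1}$ and $\tc T_i^{-1}Y_i^{-1}T_i^{-1}=Y_{i+1}^{-1}$ to reduce to finitely many base cases near index $k$, and then argue that, since $F$ is symmetric in $x_k,x_{k+1},x_{k+2},\dots$ (all variables beyond position $k$, because $\T_j F=F$ for all $j>k$ forces symmetry in $x_j,x_{j+1}$ for all such $j$), the operators $\X_i^{-1}$ and $\Y_i^{-1}$ for $i>k$ act in a way controlled by the action of the affine Hecke subalgebra on the tail alphabet $\overline{\mathbf{X}}_k$; in particular one can show $\X_i^{-1}F$ and $\Y_i^{-1}F$ lie in $\P(k+c)^-$ for an explicit small constant $c$ (in fact $c=1$ should suffice, using that multiplying a symmetric-in-the-tail polynomial by $x_i^{-1}$ and applying the Demazure–Lusztig smoothing restores symmetry past position $k+1$).

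The cleanest route, and the one I would actually write, is to avoid case-chasing by invoking the structure already set up: $\P(k)^-$ is, essentially by construction via the inverse/direct limit identifications in Section~\ref{sec: setup}, the image $J_k$-compatibly of $\P_k^-$ sitting inside $\P_\infty^-$, and the limit operators $\Y_n=\lim Y_n^{(k)}$ etc. restrict on this image to the genuine $\H_k^-$-action (more precisely to the $\H^-(k)$-action, which factors through $\H_k^-$ by Remark~\ref{rem: projection}). Concretely: if $F\in\P(k)^-$ then $F=J_m(f)$ for its $m$-th component $f=\Pi_m F\in\P_m^-$ for every $m\geq k$, and for any generator $g\in\H^-$ involving only indices $\leq m$, one has $\Pi_m(gF)=g^{(m)}(\Pi_m F)$; choosing $m$ large enough that $g$ only involves indices $\leq m-1$, the element $gF$ still satisfies the symmetry conditions defining $\P(m)^-$ beyond position $m$, hence $gF\in\P(m)^-\subseteq\Pas^-$.

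The main obstacle I anticipate is precisely the borderline verification that applying $\X_i^{-1}$ or $\Y_i^{-1}$ to an element of $\P(k)^-$ lands in $\P(k+c)^-$ for a \emph{bounded} $c$ — i.e. that the ``almost symmetric'' defect grows by at most a constant under each generator. The commutation relations handle all but the indices $i\in\{k,k+1\}$, and there one must genuinely use the explicit formula for the Demazure–Lusztig operators together with the plethystic convolution identities \eqref{eq3}–\eqref{eq4} of Section~\ref{sec: esf} to see that $x_{k+1}^{-1}$ times a tail-symmetric function, after smoothing by $\T_{k+1}$, becomes symmetric in $x_{k+2}, x_{k+3},\dots$ again. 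Once that local computation is in place (it is the same computation Knop performs in \cite{Kn}*{\S10}, and indeed this theorem is attributed to Knop), the union-stability of $\Pas^-$ follows formally. I would therefore present the proof as: reduce to stability of each $\P(k)^-$ under each generator up to a bounded index shift; dispatch the generic indices by the commutation relations; and cite or reproduce Knop's local symmetry computation for the two borderline indices.
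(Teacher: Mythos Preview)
The paper does not give its own proof of this statement; it is simply stated with attribution to Knop \cite{Kn}*{\S10}. So there is no paper proof to compare against, only the question of whether your outline is sound.

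Your overall strategy --- show that each generator $\T_i$, $\X_i^{-1}$, $\Y_i^{-1}$ maps $\P(k)^-$ into some $\P(k')^-$, dispatch generic indices by the commutation relations in $\H^-$, and handle the borderline indices by a direct computation --- is correct, and is indeed essentially what Knop does.

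However, your ``cleanest route'' paragraph contains a genuine error. You claim that if $F\in\P(k)^-$ then $F=J_m(\Pi_m F)$ for every $m\geq k$. This is false: $J_m$ embeds $\P_m^-$ as the subspace of elements supported on the first $m$ variables only, whereas a typical element of $\P(k)^-$ involves symmetric functions in \emph{infinitely} many tail variables. For instance $\sum_{i\geq 1} x_i^{-1}\in\P(0)^-$, but $J_m\Pi_m$ of it is the truncated sum $x_1^{-1}+\cdots+x_m^{-1}$, not the original element. The correct description is $\P(k)^-\cong \P_k^-\otimes \Sym[\overline{\mathbf{X}}_k^{-1}]$ (polynomials in the first $k$ variables tensor symmetric functions in the rest), and your proposed reduction to the finite $\H_k^-$-action via $J_m$ does not go through as stated. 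Once you use the correct description of $\P(k)^-$, the commutation arguments for generic indices are fine; the substantive content is exactly the borderline computation for $\Y_i^{-1}$ that you correctly identify as Knop's.
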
 
We call this representation the standard representation of $\H^-$.  We expect this representation to be faithful.

As explained in \cite{Kn}, a sequence on non-symmetric Macdonald polynomials indexed by the same composition gives rise to an element of  $\Pas^-$, and such elements are common eigenfunctions for the action the operators 
$\Y^{-1}_i$. 
 The limit  non-symmetric Macdonald polynomials do not span $\Pas^-$ and therefore the spectral theory of the operators $\Y^{-1}_i$ acting on $\Pas^-$ is not yet fully understood.

\section{ The standard representation of the \texorpdfstring{${}^{+}$}~stable  limit DAHA}\label{sec: +stablelimit}

\subsection{}
Before delving into this section, which is the technical core of the article, it might be helpful to offer a summary of the difficulties that one encounters in the effort to understand the limiting behavior of the Cherednik operators (the analysis of all the other generators of $\H_k^+$ is straightforward) in relationship with the inverse system $(\P_k^+)_{k\geq 2}$, as well as provide a less technical description of the new elements that are employed to describe this limiting behavior.

It has been know to the specialists that the Cherednik operators $Y^{(k)}_i$ are not compatible with the  inverse system $(\P_k^+)_{k\geq 2}$. Our first observation is that the family of operators $\tc^kY^{(k)}_i$, for fixed $i$, are compatible with inverse system $(x_i\P_k^+)_{k\geq 2}$, which does lead to a limit operator acting on $x_i\P_\infty^+$. However, there is no common domain for all limit operators $Y_i$, $i\geq 1$. To address this situation we consider some modified operators acting on $\P_k^+$, which we denote by $\widetilde{Y}_i^{(k)}$. Like the Cherednik operators, these operators satisfy the relations $\widetilde{Y}^{(k)}_{i+1}=\tc^{-1} T_i \widetilde{Y}^{(k)}_i T_i$, $1\leq i\leq k-1$, $T_j \widetilde{Y}^{(k)}_i=\widetilde{Y}^{(k)}_i T_j$, $|i-j|>1$, and are therefore fully determined by the operator  $\widetilde{Y}^{(k)}_{1}$, which has the following additional properties
\begin{enumerate}[label=\roman*)]
\item $\widetilde{Y}_1^{(k)}=\tc^k Y_1^{(k)}$ on $x_1\P_k^+$;
\item $\widetilde{Y}_1^{(k)}\cdot \P_k^+\subseteq x_1\P_k^+$.
\end{enumerate}
The first property implies that $\widetilde{Y}_i^{(k)}=\tc^k Y_i^{(k)}$ on $x_i\P_k^+$ for all $1\leq i \leq k$; the second property implies that $\widetilde{Y}_i^{(k)}\cdot \P_k^+\subseteq T_{i-1}\cdots T_1 x_1\P_k^+$ for all $1\leq i \leq k$. The operator  for $\widetilde{Y}_1^{(k)}$ is obtained by projecting the action of $Y_1^{(k)}$ onto the space $x_1\P_k^+$.

The structural properties of the algebra generated by $\widetilde{Y}_i^{(k)}$, $X_i$, $1\leq k$, and $T_i$, $1\leq i\leq k-1$ are used to define an algebraic structure $\Ht_k^+$ called the deformed DAHA. $\Ht_k^+$ has a natural action $\P_k^+$ (the standard representation), which is related to the standard representation of $\H_k^+$ on $\P_k^+$ in the manner described above.

The modified Cherednik operators $\widetilde{Y}_i^{(k)}$, for fixed $i$, are not yet compatible with the inverse system $(\P_k^+)_{k\geq 2}$, but they are quite close to satisfy such a property. To be able to point out more precisely what happens, we first remark that  they act consistently on \emph{constant sequences} (i.e. sequences compatible with the canonical inclusions $\iota_k: \P_{k-1}^+\to \P_k^+$) in this inverse system (Lemma \ref{Yi consistency}). This fact implies the existence of limit operators defined on $$\displaystyle{\lim_{{\longrightarrow}}} \P_k^+=\Rat(\tc,\qc)[x_1,x_2,\dots],$$ the ring of polynomials in infinitely many variables, but still not on $\P_\infty^+$. Interestingly, the image of $\displaystyle{\lim_{{\longrightarrow}}} \P_k^+$ under the limit operators lies in $\Pas^+$ (which is strictly smaller that $\P_\infty^+$), showing that the smallest space on which one can hope to acquire an action of an algebra of limit operators is $\Pas^+$.

In order to understand the failure of the diagram 
\[
\begin{tikzcd}[row sep=large, column sep=10ex]
  \P_{k}^{+} \arrow[r, "\widetilde{Y}_{1}^{(k)}"] \arrow[d, "\pi_k"'] & \P_{k}^{+} \arrow[d, "\pi_k"] \\
  \P_{k-1}^{+} \arrow[r, "\widetilde{Y}_{1}^{(k-1)}"]                 & \P_{k-1}^{+}                 
\end{tikzcd}
\]
to be commutative, we can consider the difference $\pi_k \widetilde{Y}_{1}^{(k)} - \widetilde{Y}_{1}^{(k-1)} \pi_k: \P_k^+\to \P_{k-1}^+$. On the subspace $x_k\P_k^+$, this difference has no kernel and reduces to 
$\pi_k \widetilde{Y}_{1}^{(k)} : x_k\P_k^+\to \P_{k-1}^+$. Since for every monomial in $m\in \Rat(\tc,\qc)[x_1,x_2,\dots]$ there exists a unique $k$ such that $m\in x_k\P_k^+$, we can use the action of $\pi_k \widetilde{Y}_{1}^{(k)}$ on $x_k\P_k^+$ to define an operator $$W_1: \Rat(\tc,\qc)[x_1,x_2,\dots]\to \Rat(\tc,\qc)[x_1,x_2,\dots],$$
such that $W_1=\pi_k \widetilde{Y}_{1}^{(k)}$ on $x_k\P_k^+$. By restricting $W_1$ to $\P_k^+$ we obtain the operator $W_1^{(k)}: \P_k^+\to \P_k^+$. We can regard the operator $W_1$ as collecting some obvious obstructions to the commutativity of the above diagram.

As it turns out, these are all the obstructions to the commutativity of the diagram: if $ \widetilde{Z}_{1}^{(k)}=  \widetilde{Y}_{1}^{(k)} - W_1^{(k)}$ then the corresponding diagram for the $ \widetilde{Z}_{1}^{(k)} $ operators is commutative, leading to an operator $$ \widetilde{Z}_{1}^{(\infty)}:  \P_\infty^+\to \P_\infty^+. $$

Furthermore, the action of $W_1$ (or $W_1^{(k)}$) on monomials can be explicitly computed; this is the action described in  \S\ref{sec: W}. Since this action is particularly simple, we were able to compute the action of the operators  $W_1^{(k)}$ on sequences  in the inverse system  $(\P_k^+)_{k\geq 2}$. This computation revealed that if the sequence in the inverse system has limit in $\Pas^+$ then one obtains by applying $W_1^{(k)}$ another sequence in $(\P_k^+)_{k\geq 2}$ which can be written as a finite linear expression of sequences compatible with the inverse system. The coefficients in these linear expressions are sequences in $\Rat(\tc,\qc)$ that are convergent in the $\tc$-adic topology (i.e. $\tc$ is treated as a small positive real number). This led us to the concept of limit described in \S\ref{sec: limit}. We refer to the sequences that have limit in this sense as convergent sequences.

 Since both ${W}_i^{(k)}$ and $\widetilde{Z}_i^{(k)}$ have limit in this sense, we obtain that the operators  $\widetilde{Y}_i^{(k)}$ have limit (denoted by $\Y_i$).  Although, the co-domain for the limit operators ${W}_i^{(\infty)}$ and $\widetilde{Z}_i^{(\infty)}$ is, in general, $\P_\infty^+$, it turns out that $\Y_i: \Pas^+\to \Pas^+$. Furthermore, $\Y_i$ satisfies a continuity property: it sends convergent sequences to convergent sequences.

The analysis of the algebraic structure generated by the action of the limit operators $\Y_i$ makes use of the continuity property. Their most remarkable property of the limit operators is their commutativity. The commutativity holds, of course, for the finite rank Cherednik operators, but it does not hold for the modified Cherednik operators and it was interesting to discover that the commutativity is restored in the limit. Furthermore, the action of $\Y_i$ on $x_i\Pas^+$ matches the action of the corresponding limit Cherednik operator $Y_i$. Ultimately, the action of the limit operators $\Y_i$, $\X_i$, and $\T_i$ define an action of $\H^+$ on $\Pas^+$.

\subsection{} \label{sec: +limits}
The map  $\pi_k:\P^+_k\rightarrow \P^+_{k-1}$ is also partially compatible with the actions of $\AHA_k^+$ and $\AHA_{k-1}^+$. More precisely, 

\begin{align}\label{pos system}
  \begin{split}
  \pi_k T_i &= T_i \pi_k,\quad 1\leq i\leq k-2,\\
  \pi_k T_{k-1}^{-1}\dots T_1^{-1}\widetilde{\omega}_k^{-1} &= 0 \\
  \pi_k \widetilde{\omega}_k^{-1}T_{k-1} & = 
  \widetilde{\omega}_{k-1}^{-1}\pi_k\\
  \pi_k X_i &=    X_{i}\pi_k,\quad  1\leq i\leq k-1, \\
      \pi_k X_i &= 0.
  \end{split}
\end{align}
As before, these relations show that the operators $T_i$ and $X_i$ have limits (in the sense of \S \ref{sec: -limits}), denoted by \gls{Tcal} and \gls{Xcal}, that act on $\P_\infty^+$. 

\subsection{}
On the other hand, for any $i\geq 1$, the actions of  the operators $Y^{(k)}_{i}$ and  $Y^{(k-1)}_{i}$ are no longer compatible with the map $\pi_k$. One immediate obstruction is the fact that the eigenvalues of $Y_{i}$ on $\P_k^+$ and 
$\P_{k-1}^+$ 
no longer match, as is was the case for their action on $\P_k^-$ and $\P_{k-1}^-$. The common spectrum of the action of  $Y_i$, $1\leq i\leq k$, on $\P_k^+$  is known from type $A_{k-1}$ Macdonald theory. The common 
eigenfunctions are non-symmetric Macdonald polynomials; they are indexed by compositions $\lambda=(\lambda_1,\dots,\lambda_k)\in \Z_{\geq 0}^k$. The eigenvalue of $Y^{(k)}_i$ corresponding to the non-symmetric Macdonald 
polynomial indexed by $\lambda$ is 
$$e_\lambda(i):=\qc^{\lambda_i}\tc^{1-w_\lambda(i)},$$
where $w_\lambda(i)=|\{j=1,\dots,i~|~\lambda_j\leq \lambda_i\}|+|\{j=1,\dots,i~|~\lambda_j< \lambda_i\}|$. For the composition  $(\lambda,0)\in \Z_{\geq 0}^{k+1}$, we have 
$$
e_{(\lambda,0)}(i)=e_\lambda(i)\tc^{-1},\quad \text{if } \lambda_i>0, \quad \text{and}\quad e_{(\lambda,0)}(i)=e_\lambda(i),\quad \text{if } \lambda_i=0.
$$
One way to partially correct this problem is to consider the normalized operators $\tc^{k}Y^{(k)}_i$. Comparing the eigenvalues  corresponding to $\lambda$ and $(\lambda,0)$ gives
$$
\tc^{k+1}e_{(\lambda,0)}(i)=\tc^k e_\lambda(i),\quad \text{if } \lambda_i>0, \quad \text{and}\quad \tc^{k+1} e_{(\lambda,0)}(i)=\tc^k e_\lambda(i)\cdot \tc,\quad \text{if } \lambda_i=0.
$$

\begin{ex}
The discrepancy between the eigenvalues is also reflected in the eigenfunctions. For example, the Macdonald polynomial corresponding to the compositions $(0,1,0)$ and $(0,1)$ are, respectively,
$$
x_2+\frac{\qc(1-\tc)}{\qc-\tc^2}x_1\quad\quad \text{and}\quad\quad x_2+\frac{\qc(1-\tc)}{\qc-\tc}x_1.
$$
This shows, in particular, that $\pi_3 Y_1^{(3)}\neq Y_1^{(2)}\pi_3$.
\end{ex}
\subsection{}

A crucial observation is that the actions of the operators $\tc^{k}Y^{(k)}_iX^{(k)}_i\in \H_k^{+}$ on $\P_{k}^{+}$ are compatible with the inverse system.

\begin{prop}
For any $1\leq i\leq k-1$, we have
$$\pi_k\tc^{k}Y_i X_i=\tc^{k-1}Y_iX_i\pi_k.$$
\end{prop}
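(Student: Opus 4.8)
The plan is to reduce the statement to a manageable identity by conjugating the Cherednik operators into the affine-Hecke-algebra picture, where the compatibility relations \eqref{pos system} are available. Recall from the remark following Proposition~\ref{DAHA} that $Y_i^{(k)}=\tc^{k+1-i}T_{i-1}\cdots T_1\,\omega_k^{-1}\,T_{k-1}^{-1}\cdots T_i^{-1}$, and that inside $\H_k^+$ it is more convenient to work with $\widetilde\omega_k^{-1}$. Since the operators $T_j$ with $j\le k-2$ and the multiplication operators $X_j$ with $j\le k-1$ already commute with $\pi_k$ by \eqref{pos system}, the only genuinely noncompatible pieces are the $\widetilde\omega_k^{\pm1}$ (equivalently $Y$) and the multiplication by $x_k$. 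So first I would rewrite $\tc^k Y_i^{(k)} X_i^{(k)}$ using the braid relations \eqref{Y relation}, \eqref{X relation ii}, and the cross relation \eqref{XY cross relation} so that the ``dangerous'' factor $\widetilde\omega_k^{-1}$ appears exactly once and is flanked on the appropriate side by the product $T_{k-1}^{-1}\cdots T_1^{-1}$ (or $T_{k-1}\cdots T_1$), which is precisely the combination for which \eqref{pos system} gives a clean rule ($\pi_k\,\widetilde\omega_k^{-1}T_{k-1}=\widetilde\omega_{k-1}^{-1}\pi_k$, up to rearranging the intervening $T_j$'s past $\pi_k$).

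The key algebraic step is an identity inside $\H_k^+$ expressing $\tc^k Y_i X_i$ in a normal form adapted to the map $\pi_k$: schematically, moving all $T_j$ with $j\ge i$ to one side, one gets an expression of the shape $(\text{stuff commuting with }\pi_k)\cdot \widetilde\omega_k^{-1}T_{k-1}(\text{more stuff})\cdot x_i$, where the extra power of $\tc$ is exactly what is needed to match the normalization shift $\tc^k\mapsto\tc^{k-1}$ recorded before the example (the eigenvalue computation $\tc^{k+1}e_{(\lambda,0)}(i)=\tc^k e_\lambda(i)$ when $\lambda_i>0$, and the compensating factor of $\tc$ when $\lambda_i=0$, is the spectral shadow of this identity). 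Concretely I would check it on the common eigenbasis of non-symmetric Macdonald polynomials: for a composition $\lambda$ with $\lambda_i>0$ the polynomial $E_\lambda$ is divisible by $x_i$ (more precisely $Y_iX_i$ scales $E_\lambda$ so that $\pi_k$ sends it to $\tc^{-1}$ times the value expected from $E_{\lambda'}$ on $\P_{k-1}^+$, $\lambda'=(\lambda_1,\dots,\lambda_{k-1})$), while for $\lambda_i=0$ one has $\pi_k E_\lambda$ landing in a lower filtration piece and the factor $X_i$ kills the obstruction. Summing over the basis and using faithfulness of $\P_k^+$ gives the operator identity; alternatively, a purely algebraic derivation inside $\H_k^+$ using \eqref{XY cross relation} repeatedly to commute $X_i$ past $Y_i$ is available and I would present that to avoid invoking Macdonald theory.

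The main obstacle I anticipate is controlling the behaviour of the factor $X_i$ under $\pi_k$ when $i<k$: although $\pi_k X_i=X_i\pi_k$ for $i\le k-1$, the operator $Y_i$ itself does \emph{not} commute with $\pi_k$, so one cannot simply split $\pi_k(Y_iX_i)=(\pi_k Y_i)(\pi_k X_i)$ — the whole point is that only the product $Y_iX_i$ is compatible, not the factors separately. Thus the argument must keep $Y_i$ and $X_i$ yoked together through every rewriting, which forces careful bookkeeping of the braid group elements $T_{i}\cdots T_{k-1}$ that intertwine the index shift. I expect the cleanest route is: (i) prove it for $i=1$ directly from $Y_1T_1X_1=X_2Y_1T_1$ and the explicit form $\widetilde\omega_k=\tc^{1-k}T_{k-1}\cdots T_1 X_1^{-1}$ together with \eqref{pos system}; then (ii) propagate to general $i\le k-1$ by applying the conjugation $Y_{i+1}=\tc^{-1}T_iY_iT_i$, $X_{i+1}=\tc T_i^{-1}X_iT_i^{-1}$, noting that $Y_{i+1}X_{i+1}=\tc^{-1}T_iY_iT_iT_i^{-1}X_iT_i^{-1}\cdot\tc = T_iY_iX_iT_i^{-1}$ collapses nicely, and that $T_i$ commutes with $\pi_k$ for $i\le k-2$. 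Step (ii) is routine once (i) and the collapse identity $Y_{i+1}X_{i+1}=T_iY_iX_iT_i^{-1}$ are in hand; step (i) is where the real content lies.
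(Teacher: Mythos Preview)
Your induction step (ii) is correct and clean: the identity $Y_{i+1}X_{i+1}=T_iY_iX_iT_i^{-1}$ holds in every $\H_m$, and since $T_i$ commutes with $\pi_k$ for $i\le k-2$, the case $i$ immediately gives the case $i+1$ for all $i+1\le k-1$. The paper does not use this reduction; it runs the direct computation for arbitrary $i$ in one pass. Your reduction is a legitimate simplification of the bookkeeping, but it does not change what has to be proved, because the base case $i=1$ already contains the entire mechanism.

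The base case, however, is where your plan has a real gap. First, you conflate $\omega_k$ and $\widetilde\omega_k$: the Cherednik element is $Y_1=\tc^{k}\omega_k^{-1}T_{k-1}^{-1}\cdots T_1^{-1}$, built from $\omega_k$, whereas the relations in \eqref{pos system} concern $\widetilde\omega_k$. The compatibility you actually need is $\pi_k\,\omega_k^{-1}T_{k-1}=\omega_{k-1}^{-1}\pi_k$, which is not among \eqref{pos system} and must be checked directly (it follows from the explicit action $\omega_k^{-1}f=f(x_2,\dots,x_k,\qc x_1)$). Second, the cross relation \eqref{XY cross relation} reads $Y_1T_1X_1=X_2Y_1T_1$; it does not let you commute $X_1$ past $Y_1$ without an intervening $T_1$, so it gives no direct handle on $Y_1X_1$. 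What actually works is the affine Hecke relation $T_j^{-1}X_j=\tc^{-1}X_{j+1}T_j$ applied repeatedly to push $X_1$ through $T_{k-1}^{-1}\cdots T_1^{-1}$ to become $X_k$; then the identity $\pi_k\omega_k^{-1}T_{k-1}=\omega_{k-1}^{-1}\pi_k$ together with $\pi_kX_k=0$ and $T_{k-1}X_kT_{k-1}=\tc X_{k-1}$ finishes the job. This is exactly the paper's argument (carried out for general $i$).

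Finally, the Macdonald eigenbasis route you sketch does not work as stated. It is not true that $E_\lambda$ is divisible by $x_i$ whenever $\lambda_i>0$ (e.g.\ $E_{(0,1)}=x_2+\frac{\qc(1-\tc)}{\qc-\tc}x_1$ is not divisible by $x_2$), and $\pi_k$ does not send $E_\lambda^{(k)}$ to $E_{\lambda'}^{(k-1)}$ --- the example immediately preceding the proposition in the paper exhibits precisely this failure. So an eigenvector-by-eigenvector check is not available here; the operator identity must be proved structurally.
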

\begin{proof}
First note that we have 
\begin{equation}\label{eq1}
\pi_k\omega^{-1}_k T_{k-1}= \omega^{-1}_{k-1}\pi_k,
\end{equation}
which can be verified by direct computation. Hence by  $\eqref{pos system}$ we have
\begin{align*}
\pi_k Y_i^{(k)}X_i^{(k)}&= \tc^{1-i}T_{i-1} \dots T_{1}\pi_k(\omega^{-1}_k T_{k-1}^{-1}\dots T_{i}^{-1}X_i)\\
       		&=\tc^{1-k}T_{i-1} \dots T_{1}\pi_k(\omega^{-1}_k X_kT_{k-1}\dots T_{i})\\
       		&=\tc^{1-k}T_{i-1} \dots T_{1}\omega^{-1}_{k-1}\pi_k( T_{k-1}^{-1}X_kT_{k-1}\dots  T_{i})\\
       		&=\tc^{-k}T_{i-1} \dots T_{1}\omega^{-1}_{k-1}\pi_k( T_{k-1}X_kT_{k-1}\dots T_{i})\\
       		&=\tc^{1-k}T_{i-1} \dots T_{1}\omega^{-1}_{k-1}\pi_k( X_{k-1}T_{k-2}\dots T_{i})\\
       		&=\tc^{-i}T_{i-1} \dots T_{1}\omega^{-1}_{k-1}T_{k-2}^{-1}\dots T_{i}^{-1}X_i\pi_k\\
       		&=\tc^{-1}Y_i^{(k-1)}X_i^{(k-1)}\pi_k
\end{align*}
Therefore we have $\pi_k\tc^{k}Y_i^{(k)}X^{(k)}_i=\tc^{k-1}Y_i^{(k-1)}X^{(k-1)}_i\pi_k$.
\end{proof}

In other words, one can obtain a limit operator corresponding to $Y_i$, acting on the space $x_i\P_\infty^+\subset \P_\infty^+$. However, this is not satisfactory because it does not produce a non-trivial common domain for all the limit operators $Y_i$, $i\geq 1$. In order to extend the action of the limit operator $Y_i$ to a larger domain we introduce the deformed double affine Hecke algebras and the concept of limit detailed in \S \ref{sec: limit}.

\subsection{The deformed double affine Hecke algebras}\label{sec: dDAHA}

\begin{dfn}
   The deformed DAHA \gls{tHk}, $k\geq 2$, is the $\mathbb{Q}(\tc,\qc)$-algebra generated by the elements $T_1,\dots,T_{k-1}$, $X_1,\dots,X_k$, and \gls{varpi}  satisfying  \eqref{T relation ii}, \eqref{quadratic}, \eqref{X relation ii} 
and the following relations:

        \begin{equation}\label{omega3 relation}
	\begin{gathered}
            \varpi_kT_i=T_{i+1}\varpi_k, \quad 1\leq i\leq k-2,\\
	 \varpi_kX_i=X_{i+1}\varpi_k,\quad 1\leq i\leq k-1,
	\end{gathered}
        \end{equation}
        \begin{equation}\label{gamma relation}
	\begin{gathered}
            \gamma_k T_{k-1}=-\tc \gamma_k,\quad T_1\gamma_k =\gamma_k,\\
	 \gamma_k\varpi_k^{k-2}\gamma_k=\gamma_k\varpi_k^{k-1}\gamma_k=\gamma_k\varpi_k^{k}=0,
	\end{gathered}
        \end{equation}
 where        $$\gamma_k=\varpi_k^2 T_{k-1}-T_{1}\varpi_k^2.$$
\end{dfn}

\begin{rmk}
The quotient of $\Ht_k^+$ by the ideal generated by \gls{gamma} is isomorphic to $\H_k^+$. As it is clear from the relations \eqref{gamma relation}, $\gamma_k$ is not central in $\Ht_k^+$.
\end{rmk}

\begin{note}
Let \gls{tY}, $1\leq i\leq k$, be defined as follows
$$\widetilde{Y}_1=\tc^{k}\varpi_kT_{k-1}^{-1}\dots T_1^{-1},\qquad \widetilde{Y}_{i+1}=\tc^{-1} T_i \widetilde{Y}_i T_i, \quad 1\leq i\leq k-1.$$
Therefore,
$$
\widetilde{Y}_i =  \tc^{1-i+k}T_{i-1} \dots T_{1}\varpi_k T_{k-1}^{-1}\dots T_{i}^{-1}, \quad 1\leq i\leq k.
$$
\end{note}
\begin{rmk}\label{rem: crossrel} It is important to remark that the relation \eqref{XY cross relation} is also satisfied in $\Ht_k$. Indeed
\begin{align*}
\widetilde{Y}_{1} T_1 X_1 &=\tc^{k}\varpi_kT_{k-1}^{-1}...T_2^{-1}X_1 \\
	&=\tc^{k}X_2\varpi_kT_{k-1}^{-1}...T_2^{-1} \\
	&=X_2 \widetilde{Y}_{1} T_1.
\end{align*}
The relations
$$
T_j \widetilde{Y}_i=\widetilde{Y}_i T_j, \quad |i-j|>1,
$$
are also satisfied. The verification is a simple consequence of the braid relations and \eqref{omega3 relation}.
\end{rmk}
\begin{rmk}\label{rem: [y,y]}
 On the other hand, the elements $\widetilde{Y}_i$, $1\leq i\leq k$, do not commute. In fact, we have
$$[\widetilde{Y}_1, \widetilde{Y}_2]=\tc^{2k-1}\gamma_k T_{k-1}^{-1}\dots T_1^{-1}T_{k-1}^{-1}\dots T_2^{-1}=\tc^{2k-1}\gamma_k T_{k-2}^{-1}\dots T_1^{-1}T_{k-1}^{-1}\dots T_1^{-1}.$$
Therefore, $\gamma_k$ can be seen as an obstruction to the commutativity of the elements $\widetilde{Y}_i$.
\end{rmk}

Since the notation for the generators of $\Ht_k^+$  and the elements $\widetilde{Y}_i$ does not specify the integer $k$, we will use the notation in the fashion indicated in Convention \ref{convention}.

\subsection{} \label{sec: dDAHArep}

For any $1\leq i \leq k$, let $\pr_i:\P_k^+ \to \P_k^+$, be the $\mathbb{Q}(\tc,\qc)$-linear map which acts as identity on monomials divisible by $x_i$ and as the zero map on monomials not divisible by $x_i$. In other words, \gls{pr}  is the 
projection onto the subspace $x_i\P_k^+$. 

We  define the following action of $\Ht_{k}^{+}$ on $\P_{k}^{+}$. The action is related to the action of $\H_k$ on $\P_k^+$.

\begin{thm}
   The following formulas define an action of $\Ht_{k}^{+}$ on $\P_k^+$:

  \begin{align}\label{modified DAHA representation}
      \begin{split}
	    T_i f(x_1,\dots,x_k) &= s_i f(x_1,\dots,x_k) +(1-\tc)x_i\frac{1-s_i}{x_i-x_{i+1}}f(x_1,\dots,x_k), \\
	    X_i f(x_1,\dots,x_k) &=x_i f(x_1,\dots,x_k),\\
	    \varpi_k f(x_1,\dots,x_k) &=\pr_1f(x_2,\dots,x_k, \qc x_1).
	          \end{split}
  \end{align}
 
\end{thm}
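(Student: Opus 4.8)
The plan is to verify directly that the operators defined by \eqref{modified DAHA representation} satisfy all the defining relations of $\Ht_k^+$, i.e.\ \eqref{T relation ii}, \eqref{quadratic}, \eqref{X relation ii}, \eqref{omega3 relation}, and \eqref{gamma relation}. The relations \eqref{T relation ii} and \eqref{quadratic}, involving only the $T_i$, together with \eqref{X relation ii}, involving the $T_i$ and the multiplication operators $X_i$, are exactly the relations satisfied in the standard Laurent representation of Proposition \ref{laurent rep} restricted to $\P_k^+$ (indeed $\P_k^+$ is stable under all $T_i$ and $X_i$), so nothing new is required there. The substantive content is in the relations involving $\varpi_k$, so I would concentrate on those.

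First I would record the action of $\varpi_k$ more explicitly. The ``raw'' operator $f(x_1,\dots,x_k)\mapsto f(x_2,\dots,x_k,\qc x_1)$ is precisely $\omega_k^{-1}$ acting on $\P_k$ (cyclic shift together with the $\qc$-rescaling in the last slot), which preserves $\P_k^+$; composing with the projection $\pr_1$ onto $x_1\P_k^+$ gives $\varpi_k$. For \eqref{omega3 relation}, the key point is that $\pr_1$ \emph{commutes} with $T_i$ for $2\le i\le k-1$ and with multiplication by $x_i$ for $2\le i\le k$ — because those operators preserve both $x_1\P_k^+$ and its monomial complement — while the raw shift operator conjugates $T_i$ to $T_{i+1}$ and $X_i$ to $X_{i+1}$ in the appropriate range (these are the usual $\omega_k$-conjugation relations \eqref{omega2 relation}, \eqref{X-omega2 cross relation} read for $\omega_k^{-1}$, noting that the indices $i\le k-2$ resp.\ $i\le k-1$ keep us away from the $\qc$-twisted slot). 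Combining these two facts gives $\varpi_k T_i=T_{i+1}\varpi_k$ and $\varpi_k X_i=X_{i+1}\varpi_k$ in the stated ranges. I would also verify along the way the identity used implicitly elsewhere, namely that $\widetilde Y_1=\tc^k\varpi_k T_{k-1}^{-1}\cdots T_1^{-1}$ acts, on $x_1\P_k^+$, as the genuine Cherednik operator $Y_1$.

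The main obstacle is the relations \eqref{gamma relation} governing $\gamma_k=\varpi_k^2 T_{k-1}-T_1\varpi_k^2$; this is where the projection $\pr_1$ (rather than an honest algebra automorphism) genuinely enters and where the computation is least formal. I would first identify $\gamma_k$ as an operator: since $\varpi_k^2$ involves two cyclic shifts with $\qc$-twists, and since on the complement of $x_1\P_k^+$ (and after one shift, of $x_2\P_k^+$) the projections kill terms, $\gamma_k$ should turn out to be a rank-restricted operator whose image and kernel are controlled by divisibility by $x_1,x_2$. Concretely I expect $\gamma_k f$ to be supported on (a shift of) the part of $f$ not divisible by the relevant variables, which forces $T_1\gamma_k=\gamma_k$ (acting on something symmetric in $x_1,x_2$ in the appropriate slots, so $s_1$ fixes it and the divided-difference term vanishes) and $\gamma_k T_{k-1}=-\tc\gamma_k$ (the $T_{k-1}$ on the right, before the shifts, hits a factor on which $s_{k-1}$ acts trivially after projection, leaving only the $-\tc$ from the Hecke relation applied to an antisymmetric-type configuration — this is the one sign computation I would do carefully). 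Finally, the vanishing statements $\gamma_k\varpi_k^{k-2}\gamma_k=\gamma_k\varpi_k^{k-1}\gamma_k=\gamma_k\varpi_k^k=0$ I would get from a counting/degree-of-divisibility argument: each application of $\varpi_k$ cyclically advances which variable the surviving monomials must avoid, and after enough steps the successive projections $\pr_1$ are mutually exclusive, so the composite annihilates everything; $\gamma_k\varpi_k^k$ vanishes because $\varpi_k^k$ reintroduces all of $x_1,\dots,x_k$ with a global $\qc$-power and the outer $\gamma_k$ (being a difference that the projections make vanish on such fully-divisible images) kills it. This bookkeeping — tracking which monomials are divisible by which $x_j$ after each twisted shift and projection — is the real work of the proof; everything else is an instance of relations already established for $\H_k$.
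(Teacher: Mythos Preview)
Your approach is correct and is essentially the paper's own argument. The paper likewise reduces to checking \eqref{omega3 relation} and \eqref{gamma relation}, handles \eqref{omega3 relation} exactly as you do (write $\varpi_k=\pr_1\omega_k^{-1}$, use the known $\omega_k^{-1}$-conjugation relations, and note that $\pr_1$ commutes with $T_{i+1}$ and $X_{i+1}$), and then attacks \eqref{gamma relation} by computing $\gamma_k$ explicitly on monomials. The only organizational difference is that the paper writes down a closed formula for $\gamma_k(x_1^{t_1}\cdots x_k^{t_k})$ (vanishing when $t_{k-1},t_k>0$; otherwise a symmetric-in-$x_1,x_2$ expression times a cyclic shift of the remaining variables) and reads everything off from it, whereas you phrase the same computation as a divisibility bookkeeping. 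Your argument for the vanishing relations $\gamma_k\varpi_k^{k-2}\gamma_k=\gamma_k\varpi_k^{k-1}\gamma_k=\gamma_k\varpi_k^k=0$ via tracking which $x_j$'s divide the surviving monomials after each shift-and-project is in fact a cleaner way to see what the paper leaves as ``can also be directly verified.'' The one place you are deliberately vague---$\gamma_k T_{k-1}=-\tc\gamma_k$---does require the honest computation you flag; the explicit formula makes it a two-line check (split $T_{k-1}m$ into $s_{k-1}m$ plus the divided-difference term and observe that only the pieces with $t_{k-1}=0$ or $t_k=0$ survive under $\gamma_k$).
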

Note that the action of $ \varpi_k $ coincides with the action of $\pr_1\omega_k^{-1}$.
\begin{proof}
We only need to check the relations \eqref{omega3 relation} and \eqref{gamma relation}. From \eqref{omega2 relation} we have 
\begin{align*}
\varpi_kT_i f&=\pr_1\omega_k^{-1}T_i f\\
       		&=\pr_1(T_{i+1}\omega_k^{-1} f)\\
       		&=T_{i+1}\pr_1\omega_k^{-1} f\\
       		&=T_{i+1}\varpi_k f,
\end{align*}
for $f\in \P_{k}^{+}$ and $1\leq i\leq k-2$. Similarly, from \eqref{X-omega2 cross relation} we have 
$$\varpi_k X_i f=X_{i+1}\varpi_k f,$$
for $f\in \P_{k}^{+}$ and $1\leq i\leq k-1$.

By linearity it suffices to verify the relation \eqref{gamma relation} on monomials. A straightforward computation leads to the following explicit expression of the action of $\gamma_k$

\begin{equation}\label{eq: gamma}
\gamma_k(x_1^{t_1}\dots x_{k-1}^{t_{k-1}}x_{k}^{t_{k}}) = \begin{cases} 
      0, & \textrm{  if  } t_{k-1}\neq 0,t_k\neq 0\\\\
      (1-\tc)\qc^{t_{k-1}}(x_1^{t_{k-1}-1}x_2+x_1^{t_{k-1}-2}x_2^2+\dots \\+x_1 x_2^{t_{k-1}-1})(x_3^{t_1}x_4^{t_2}\dots x_k^{t_{k-2}}), & \textrm{  if  } t_{k-1}\neq 0,t_k= 0\\\\
      (\tc-1)\qc^{t_{k}}(x_1^{t_{k}-1}x_2+x_1^{t_{k}-2}x_2^2+\dots\\+x_1 x_2^{t_{k}-1})(x_3^{t_1}x_4^{t_2}\dots x_k^{t_{k-2}}), &\textrm{  if  } t_{k-1}= 0,t_k\neq 0
   \end{cases}
\end{equation}
Note that the expression immediately implies that the actions of $\gamma_k T_{k-1}$ and $-\tc \gamma_k$ coincide, and similarly for the actions of $T_1\gamma_k$ and $\gamma_k$. The remaining relations can also be directly verified.
\end{proof}

\begin{rmk}\label{rem: modCherednik}
The action of $\widetilde{Y}_i X_i$  coincides with the action of $\tc^{k}Y_i X_i\in \H_k^+$ on $\P_k^+$. Equivalently, the action of $\tc^{-k}\widetilde{Y}_i$ and the action of $Y_i\in \H_k^+$ coincide on $x_i\P_k^+$. Therefore $\widetilde{Y}_i$ can be 
regarded as a modification of the Cherednik operator $Y_i$.
\end{rmk}

\subsection{} \label{sec: ind}
We are now ready to explore the compatibility between the action of the operators \gls{tY(k)} and the map $\pi_k: \P_k^+\to \P_{k-1}^+$. 
\begin{lem} \label{Yi consistency}
 The following diagram is commutative:
\[
\begin{tikzcd}
  \P_{k-1}^{+} \arrow[r, "\widetilde{Y}_i^{(k-1)}"] \arrow[d, "\iota_k"'] & \P_{k-1}^{+} \\
  \P_{k}^{+} \arrow[r, "\widetilde{Y}_i^{(k)}"]                 & \P_{k}^{+}               \arrow[u, "\pi_k"]   
\end{tikzcd}
\]
Equivalently, we have
  $$\pi_{k}\widetilde{Y}_i^{(k)} f=\widetilde{Y}_i^{(k-1)} f,$$
  for all $f\in \P_{k-1}^{+}$. 
\end{lem}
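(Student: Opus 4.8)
The plan is to reduce the statement to the compatibility already proved for the modified Cherednik operators on the image subspace $x_i\P_k^+$, together with control of what happens on the complement. Recall from Remark \ref{rem: modCherednik} that $\widetilde{Y}_iX_i$ acts as $\tc^kY_i^{(k)}X_i^{(k)}\in\H_k^+$, and that the latter is compatible with $\pi_k$ by the Proposition just proved (for $1\le i\le k-1$). So on any $f$ of the form $f=x_ig$ the identity $\pi_k\widetilde{Y}_i^{(k)}f=\widetilde{Y}_i^{(k-1)}f$ follows immediately, since both sides compute $\tc^{k-1}Y_i^{(k-1)}X_i^{(k-1)}$ applied to $\pi_k g$, using that $\pi_k$ is a ring morphism fixing $x_i$ (here $i\le k-1$). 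Note also that $\iota_k$ is the inclusion, so the diagram statement is literally $\pi_k\widetilde{Y}_i^{(k)}f=\widetilde{Y}_i^{(k-1)}f$ for $f\in\P_{k-1}^+$.

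Thus the real content is the case where $f$ is a monomial \emph{not} divisible by $x_i$, i.e. $f\in\ker\pr_i$. First I would establish that $\widetilde{Y}_i^{(k)}$ stabilizes the decomposition $\P_k^+ = x_i\P_k^+ \oplus (\ker\pr_i)$ appropriately, or at least compute $\widetilde{Y}_i^{(k)}f$ explicitly enough on such $f$ to evaluate $\pi_k$. The cleanest route uses the explicit formula $\widetilde{Y}_i = \tc^{1-i+k}T_{i-1}\cdots T_1\,\varpi_k\,T_{k-1}^{-1}\cdots T_i^{-1}$ and the fact that $\varpi_k$ acts as $\pr_1\omega_k^{-1}$, where $\omega_k^{-1}f(x_1,\dots,x_k)=f(x_2,\dots,x_k,\qc x_1)$. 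So I would trace through: apply $T_{k-1}^{-1}\cdots T_i^{-1}$ (which, since $f$ is not divisible by $x_i$ but this does not simply propagate, requires care — here the Demazure–Lusztig operators mix divisibility), then $\omega_k^{-1}$, then the projection $\pr_1$, then $T_{i-1}\cdots T_1$. The key point to extract is that $\pi_k$ of the result vanishes exactly when it should, matching $\widetilde{Y}_i^{(k-1)}f$; for $f$ not divisible by $x_i$ one expects the cross-term produced by $\varpi_k$'s projection $\pr_1$ to be precisely what makes the limit well-defined.

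A more structural alternative, which I would actually prefer to write up, is to avoid case analysis on divisibility by working with the operator identity directly. Write $\widetilde{Y}_i^{(k)} = \widetilde{Y}_i^{(k)}\pr_i + \widetilde{Y}_i^{(k)}(1-\pr_i)$. On $\mathrm{im}\,\pr_i = x_i\P_k^+$ we are done by the previous paragraph. On $\ker\pr_i$, I would show $\pi_k\widetilde{Y}_i^{(k)}(1-\pr_i) = \widetilde{Y}_i^{(k-1)}(1-\pr_i)\pi_k$ by again expanding $\widetilde{Y}_i$ via its $\varpi_k$-formula and using the commutation relations \eqref{pos system}, \eqref{omega3 relation}, and the relation $\pi_k\omega_k^{-1}T_{k-1}=\omega_{k-1}^{-1}\pi_k$ (equation \eqref{eq1} from the previous proof). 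The point is that the $\pr_1$ built into $\varpi_k$ interacts with $\pi_k$ (which kills $x_k$) in a controlled way: monomials in the image of $\varpi_k^{(k)}$ are divisible by $x_1$, and $\pi_k$ commutes past $T_{k-1}^{-1},\dots,T_i^{-1}$ only up to the correction terms recorded in \eqref{pos system}, exactly as in the proof of the Proposition above.

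The main obstacle I anticipate is bookkeeping the non-commutativity of $\pi_k$ with the Demazure–Lusztig operators $T_j^{-1}$ for $j$ near $k-1$: unlike $\pi_k T_i = T_i\pi_k$ for $i\le k-2$, the operator $T_{k-1}$ only satisfies $\pi_k T_{k-1} = \pi_k s_{k-1}$, so pushing $\pi_k$ through the string $T_{k-1}^{-1}\cdots T_i^{-1}$ must be done one step at a time, absorbing the $\varpi_k$ (i.e. the $\omega_k^{-1}$) at the right moment via \eqref{eq1} to land in $\Ht_{k-1}^+$. Concretely, I'd mimic the displayed chain of equalities in the proof of the preceding Proposition almost verbatim, the only new feature being to carry the extra projection $\pr_1$ along — and to check that $\pr_1$ commutes with $\pi_k$ (both are $\mathbb{Q}(\tc,\qc)$-linear, one a projection onto $x_1$-divisible monomials, the other setting $x_k\mapsto 0$, and these act on disjoint sets of variables, so they commute), so that $\pr_1\omega_{k-1}^{-1}=\varpi_{k-1}$ reassembles on the right. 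Once that commutation is in hand, the identity falls out, completing both cases and hence the lemma.
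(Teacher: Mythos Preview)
Your proposal is correct in spirit and your ``structural alternative'' is essentially the paper's argument, but you are making the proof harder than it is. The case split on divisibility by $x_i$ is a detour: it handles $f\in x_i\P_{k-1}^+$ nicely via the preceding Proposition, but for the complementary case you end up having to do the full direct computation anyway --- and that direct computation already covers \emph{all} $f\in\P_{k-1}^+$ uniformly, with no reference to $\pr_i$ at all.

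The paper's proof is a single four-line computation. The two ingredients you have not quite isolated are: (i) the identity $\pr_1\omega_k^{-1}=\omega_k^{-1}\pr_k$, which lets you move the projection inside $\varpi_k$ to the variable $x_k$; and (ii) the quadratic relation in the form $\tc T_{k-1}^{-1}=T_{k-1}+(\tc-1)$, which splits $\pi_k\widetilde{Y}_i^{(k)}f$ into two terms. The first term is handled exactly as you anticipate, by \eqref{eq1} and the commutation of $\pi_k$ with $T_1,\dots,T_{k-2}$ and with $\pr_1$, yielding $\widetilde{Y}_i^{(k-1)}f$. The second term vanishes for the simple reason that $f\in\P_{k-1}^+$ implies $T_{k-2}^{-1}\cdots T_i^{-1}f\in\P_{k-1}^+$ (these operators do not touch $x_k$), so $\pr_k$ annihilates it. No divisibility-by-$x_i$ analysis enters. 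Your worry about ``pushing $\pi_k$ through the string $T_{k-1}^{-1}\cdots T_i^{-1}$ one step at a time'' is resolved in a single stroke by this quadratic splitting: only $T_{k-1}^{-1}$ causes trouble, and the troublesome remainder is killed by $\pr_k$.
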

\begin{proof}
Note that $\pr_1\omega^{-1}_{k}=\omega^{-1}_{k}\pr_k$. For any $f\in \P_{k-1}^{+}$ we have
\begin{align*}
\pi_{k}\widetilde{Y}_i^{(k)}\iota_k f&=\pi_{k}\tc^{1-i+k}T_{i-1} \dots T_{1} \pr_1\omega^{-1}_{k}T_{k-1}^{-1}\dots T_i^{-1}f\\
       		&=\tc^{k-i}T_{i-1} \dots T_{1}\pr_1\pi_{k}\omega^{-1}_{k}T_{k-1}T_{k-2}^{-1}\dots T_i^{-1}f+\pi_{k}\tc^{k-i}(\tc-1)T_{i-1} \dots T_{1}\pr_1\omega^{-1}_{k}T_{k-2}^{-1}\dots T_i^{-1}f\\
       		&=\tc^{k-i}T_{i-1} \dots T_{1}\pr_1\omega^{-1}_{k-1}\pi_{k}T_{k-2}^{-1}\dots T_i^{-1}f+\pi_{k}\tc^{k-i}(\tc-1)T_{i-1} \dots T_{1}\omega^{-1}_{k}\pr_k(T_{k-2}^{-1}\dots T_i^{-1}f)\\
       		&=\widetilde{Y}_i^{(k-1)}\pi_{k}f+0
\end{align*}
For the third equality we used \eqref{eq1}.
\end{proof}

As an immediate consequence we obtain the following.

\begin{prop}
For any $i\geq 1$, the sequence of operators $(\widetilde{Y}_i^{(k)})_{k\geq 2}$ induces a map
$$
\gls{tYinf}:  \lim_{{\longrightarrow}} \P_k^+\to \P_{\infty}^{+},
$$
such that $\Pi_k \widetilde{Y}_i^{(\infty)} I_k=\widetilde{Y}_i^{(k)}$, for all $k\geq i$, $k\geq 2$.
\end{prop}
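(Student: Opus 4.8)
The plan is to deduce the statement from a general ``lax compatibility'' principle, feeding it the content of Lemma~\ref{Yi consistency}. Set $n_0=\max(i,2)$. The principle I would first isolate and prove is this: any family of $\Rat(\tc,\qc)$-linear maps $A_k\colon\P_k^+\to\P_k^+$, $k\geq n_0$, satisfying
$$\pi_k\, A_k\, \iota_k = A_{k-1}\qquad\text{for all }k>n_0$$
induces a $\Rat(\tc,\qc)$-linear map $A^{(\infty)}\colon\lim_{\longrightarrow}\P_k^+\to\P_\infty^+$ with $\Pi_k\, A^{(\infty)}\, I_k = A_k$ for every $k\geq n_0$. Lemma~\ref{Yi consistency} is exactly this hypothesis for $A_k=\widetilde{Y}_i^{(k)}$, and ``$k\geq n_0$'' is the same as ``$k\geq i$ and $k\geq 2$''; so once the principle is established, the Proposition follows by applying it and setting $\widetilde{Y}_i^{(\infty)}:=A^{(\infty)}$.

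To prove the principle I would argue as follows. Since the $\iota_k$ are injective, $\lim_{\longrightarrow}\P_k^+=\bigcup_k\P_k^+$, so any element has a representative $f\in\P_n^+$ with $n\geq n_0$ (if $n<n_0$, replace $f$ by $\iota_{n,n_0}f$). Attach to it the sequence $g_k:=A_k(\iota_{n,k}f)\in\P_k^+$, $k\geq n$. The key point is that $(g_k)_k$ is coherent: writing $\iota_{n,k}=\iota_k\iota_{n,k-1}$ and applying the hypothesis, for $k>n$
$$\pi_k g_k=\pi_k\, A_k\, \iota_k(\iota_{n,k-1}f)=A_{k-1}(\iota_{n,k-1}f)=g_{k-1}.$$
Thus $(g_k)_{k\geq n}$ lies in $\displaystyle\lim_{\substack{\longleftarrow \\ k\geq n}}\P_k^+$, which the excerpt identifies canonically with $\P_\infty^+$; I define $A^{(\infty)}$ on the class of $f$ to be this element.

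Next I would check that this descends to the direct limit and has the advertised property. For well-definedness: if $f'=\iota_{n,m}f$ with $m\geq n\geq n_0$ is another representative, then $\iota_{m,k}\iota_{n,m}=\iota_{n,k}$ shows the sequences attached to $f$ and $f'$ agree for all $k\geq m$, hence define the same element of $\P_\infty^+$ under the canonical isomorphism of the inverse limits over $k\geq n$ and over $k\geq m$; linearity is inherited from the $A_k$. For the last identity: given $k\geq n_0$ and $f\in\P_k^+$, the class $I_kf$ is represented by $f\in\P_k^+$ itself, so the $k$-th component of $A^{(\infty)}(I_kf)$ is by construction $A_k(f)$, and $\Pi_k$ reads off exactly that component. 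Specializing to $A_k=\widetilde{Y}_i^{(k)}$ gives the Proposition.

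I do not expect a genuine obstacle: Lemma~\ref{Yi consistency} carries all the weight and the rest is formal. The one place that wants care is simply the bookkeeping between the two limits running in opposite directions --- passing from the direct limit, where an element is a germ living in some finite stage $\P_n^+$, to the inverse limit, which is a coherent tower --- together with the correct but routine use of the canonical identifications $\displaystyle\lim_{\substack{\longleftarrow \\ k\geq n}}\P_k^+\cong\P_\infty^+$ and $\displaystyle\lim_{\substack{\longrightarrow \\ k\geq n}}\P_k^+\cong\lim_{\longrightarrow}\P_k^+$ recorded earlier, which make the construction manifestly independent of all choices.
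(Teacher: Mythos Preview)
Your proposal is correct and follows essentially the same approach as the paper: both arguments use Lemma~\ref{Yi consistency} to show that, for fixed $n$, the maps $\widetilde{Y}_i^{(k)}\iota_{n,k}\colon\P_n^+\to\P_k^+$ are compatible with the inverse-system structure maps $\pi_k$, hence induce $\P_n^+\to\P_\infty^+$, and then observe that these maps are compatible with the direct-system structure maps $\iota_n$ to obtain the desired map on $\lim_{\longrightarrow}\P_k^+$. Your version is somewhat more explicit (isolating the abstract principle first and spelling out well-definedness and the identity $\Pi_k A^{(\infty)}I_k=A_k$), but the content is the same.
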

\begin{proof}
Fix $n\geq i$, and denote by $\iota_{n,k}: \P_n^+\to \P_k^+$, $n\leq k$, the canonical inclusion. By Lemma \ref{Yi consistency}, the sequence of maps $\widetilde{Y}_i^{(k)} \iota_{n,k}: \P_n^+\to \P_k^+$, $k\geq n,i$ is compatible with the 
structure maps $\pi_k$. Therefore, they induce a morphism
$$
\P_n^+\to  \lim_{\substack{\longrightarrow \\ k\geq n}} \P_k^+\cong \displaystyle \lim_{{\longrightarrow }} \P_k^+.
$$
Furthermore, Lemma \ref{Yi consistency} implies that these maps are compatible with the structure maps $\iota_n$ and therefore induce the desired morphism.
\end{proof}

\begin{ex}\label{exp: x_2^2}
 To illustrate the construction of the limit operator $\widetilde{Y}_1^{(\infty)}: \displaystyle{\lim_{{\longrightarrow}}} \P_k^+\to \P_{\infty}^{+}$ let us compute its action on the element
$x_2^2\in \displaystyle{\lim_{{\longrightarrow}}} \P_k^+$. A direct computation gives

\begin{align*}
\widetilde{Y}_1^{(2)}\cdot x_2^2&=\qc^2 \tc(\tc-1)x_1^2+\qc\tc(\tc-1)x_1 x_2\\
\widetilde{Y}_1^{(3)}\cdot  x_2^2&=\qc^2 \tc(\tc-1)x_1^2+\qc\tc(\tc-1)x_1 x_2-\qc\tc(\tc-1)^2 x_1x_3\\
\widetilde{Y}_1^{(4)}\cdot  x_2^2&=\qc^2 \tc(\tc-1)x_1^2+\qc\tc(\tc-1)x_1 x_2-\qc\tc(\tc-1)^2 (x_1x_3+x_1x_4)
\end{align*}
and, more generally, for $k\geq 4$,
$$ \widetilde{Y}_1^{(k)}\cdot  x_2^2=\qc^2 \tc(\tc-1)x_1^2+\qc\tc(\tc-1)x_1 x_2-\qc\tc(\tc-1)^2 (x_1x_3+\cdots x_1x_k).$$
Therefore, the limit operator acts as
$$\widetilde{Y}_1^{(\infty)}\cdot  x_2^2=\qc^2 \tc(\tc-1)x_1^2+\qc\tc(\tc-1)x_1 x_2-\qc\tc(\tc-1)^2 x_1 e_1[\Xb_2].$$

\end{ex}

On the other hand, the action same operator on a non-constant sequence (e.g. one that converges to a non-trivial almost symmetric function) shows a slightly different behaviour.
\begin{ex} \label{exp: p_2}
 Let $F[\Xb]=x_2^2+x_3^2+\cdots=p_2[\Xb_1]$, which is the inverse limit of the sequence  $$F_{k}=\Pi_k F[\Xb]=x_2^2+\cdots+x_k^2,~ k\geq 2.$$ We have,
\begin{align*}
 \widetilde{Y}_1^{(2)}\cdot  F_{2}&=\qc^2(\tc^2-\tc)x_1^2+\qc\tc(\tc-1)x_1 x_2\\
\widetilde{Y}_1^{(3)}\cdot  F_{3}&=\qc^2(\tc^3-\tc)x_1^2+\qc\tc(\tc-1)(x_1 x_2+x_1 x_3)
\end{align*}
and, more generally, for $k\geq 2$,
$$ \widetilde{Y}_1^{(k)}\cdot  F_{k}=\qc^2(\tc^k-\tc)x_1^2+\qc\tc(\tc-1)(x_1 x_2+x_1 x_3+\cdots+x_1x_k).$$
As it can be clearly seen, the sequence $ \widetilde{Y}_1^{(k)}F_{k}$ fails to have an inverse limit because of the contribution of the term $\qc^2(\tc^k-\tc)x_1^2$. 
\end{ex}
As we will see next, the general behavior of the action of fixed operator on a sequence with inverse limit is no more complicated than the one exhibited in Example \ref{exp: p_2}. The operators $W_i^{(k)}$ introduced in the next section formalize this observation. 

\subsection{} \label{sec: W}
We define the $\mathbb{Q}(\tc,\qc)$-linear operators $\gls{W(k)}:   \P_k^+\to \P_{k}^{+}$, $i\geq 1$, as follows. Let $s\geq 0$,  and let $i_1<i_2<\dots <i_s\leq k$ and $t_1, \dots, t_s$ be positive integers. We 
set 
\begin{equation}
W^{(k)}_1(x_{i_1}^{t_{1}}\dots x_{i_s}^{t_{s}})=\begin{cases}
      0, & \text{  if  } 1=i_1 \text{ or  }s=0,\\\\\
      \tc^{i_s}(1-\tc^{-1})\qc^{t_{s}}x_{1}^{t_{s}}x_{i_1}^{t_{1}}\dots x_{i_{s-1}}^{t_{s-1}}, & \text{  if  } i_1>1.
\end{cases}
\end{equation}
Define
$$W^{(k)}_{i+1}=\tc^{-1} T_i W^{(k)}_i T_i,\quad  1\leq i\leq k-1.$$
For $1\leq i\leq k$, we also denote
$$\gls{tZ}=\widetilde{Y}_{i}^{(k)}-W_{i}^{(k)}.$$

\begin{lem}
The following diagram is commutative:
\[
\begin{tikzcd}[row sep=large, column sep=10ex]
  \P_{k}^{+} \arrow[r, "\widetilde{Z}_{i}^{(k)}"] \arrow[d, "\pi_k"'] & \P_{k}^{+} \arrow[d, "\pi_k"] \\
  \P_{k-1}^{+} \arrow[r, "\widetilde{Z}_{i}^{(k-1)}"]                 & \P_{k-1}^{+}                 
\end{tikzcd}
\]
\end{lem}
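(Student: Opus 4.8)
The statement asserts that the operators $\widetilde{Z}_i^{(k)} = \widetilde{Y}_i^{(k)} - W_i^{(k)}$ are compatible with the inverse system $\pi_k$, in contrast to the bare operators $\widetilde{Y}_i^{(k)}$, which (by Lemma \ref{Yi consistency}) are only compatible with the direct system $\iota_k$. The natural approach is to compute the ``defect'' $\pi_k \widetilde{Y}_i^{(k)} - \widetilde{Y}_i^{(k-1)}\pi_k$ directly and show that it exactly equals $\pi_k W_i^{(k)} - W_i^{(k-1)}\pi_k$. By linearity both sides of the claimed identity may be checked on monomials in $\P_k^+$, and since $\pi_k$ kills any monomial divisible by $x_k$, the content is only about monomials supported on $x_1,\dots,x_{k-1}$.

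\textbf{Key steps.} First I would reduce to the case $i=1$: once $\pi_k \widetilde{Z}_1^{(k)} = \widetilde{Z}_1^{(k-1)}\pi_k$ is established, the general case follows by conjugating with $T_i$'s using the defining recursions $\widetilde{Y}_{i+1}^{(k)} = \tc^{-1}T_i \widetilde{Y}_i^{(k)} T_i$ and $W_{i+1}^{(k)} = \tc^{-1} T_i W_i^{(k)} T_i$, together with the partial compatibility $\pi_k T_i = T_i \pi_k$ for $i \leq k-2$ from \eqref{pos system} (the index range is exactly what is needed since $\widetilde{Z}_i$ for $i \leq k-1$ only involves $T_j$ with $j \leq k-2$ after the conjugations are unwound — one should double-check the boundary index $i = k-1$, which is the place where the $T_{k-1}$ correction terms enter). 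Second, for $i=1$, I would expand $\widetilde{Y}_1^{(k)} = \tc^k \varpi_k T_{k-1}^{-1}\cdots T_1^{-1}$ and push $\pi_k$ through using \eqref{pos system} and the relation $\pr_1\omega_k^{-1} = \omega_k^{-1}\pr_k$ noted in the proof of Lemma \ref{Yi consistency}; the term that fails to commute is precisely the one coming from the $(\tc-1)$-summand when $T_{k-1}^{-1}$ is rewritten as $\tc^{-1}T_{k-1} + (\tc^{-1}-1)$. Third, I would show this leftover term, evaluated on a monomial $x_{i_1}^{t_1}\cdots x_{i_s}^{t_s}$ with all $i_j < k$, matches the explicit formula defining $W_1^{(k)}$ minus $W_1^{(k-1)}$ after applying $\pi_k$; here the explicit action of $\varpi_k$ (namely $\varpi_k f = \pr_1 f(x_2,\dots,x_k,\qc x_1)$) makes the bookkeeping concrete, and the factor $\tc^{t_s}(1-\tc^{-1})\qc^{t_s}$ in the definition of $W_1^{(k)}$ is visibly the shape such a leftover term must take.

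\textbf{Main obstacle.} The genuine difficulty is the explicit monomial computation in step three: tracking how the cyclic shift built into $\varpi_k$ interacts with $\pr_1$ and with the Demazure–Lusztig operators $T_{k-1}^{-1},\dots,T_1^{-1}$, and verifying that the discrepancy is supported exactly on monomials where the top variable $x_k$ (equivalently, after the shift, $x_1$) would have been created — which is why $W_1^{(k)}$ vanishes on monomials already divisible by $x_1$. One must be careful that $\pi_k$ does not merely annihilate the correction but transforms $W_1^{(k)}$ into $W_1^{(k-1)}$ on the nose; this requires checking that the index $i_s$ and exponent $t_s$ in the formula behave correctly under deletion of the variable $x_k$, and that no monomial divisible by $x_k$ sneaks into the image of $\widetilde{Y}_1^{(k)}$ before $\pi_k$ is applied in a way that spoils the match. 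I expect the computation to be a finite but somewhat delicate case analysis mirroring the case split ($t_{k-1}\neq 0$ vs.\ $t_k \neq 0$ vs.\ both zero) already seen in \eqref{eq: gamma}; once the $i=1$ case is nailed down, the propagation to general $i$ via the $T_i$-conjugation recursion should be routine modulo the boundary-index check mentioned above.
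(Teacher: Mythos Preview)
Your reduction to $i=1$ is correct and matches the paper's approach, and the overall strategy of matching the $\widetilde{Y}$-defect against the $W$-defect is the right one. However, you have the case analysis backwards, and this is a genuine gap.

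You write that ``since $\pi_k$ kills any monomial divisible by $x_k$, the content is only about monomials supported on $x_1,\dots,x_{k-1}$.'' In fact, for a monomial $f\in\P_{k-1}^+$ both defects vanish separately and there is nothing to do: Lemma \ref{Yi consistency} already gives $\pi_k\widetilde{Y}_1^{(k)}f=\widetilde{Y}_1^{(k-1)}f$, and a glance at the defining formula for $W_1$ shows $W_1^{(k)}f=W_1^{(k-1)}f\in\P_{k-2}^+$, so $\pi_k W_1^{(k)}f=W_1^{(k-1)}\pi_k f$ trivially. Your ``leftover term coming from the $(\tc-1)$-summand of $T_{k-1}^{-1}$'' is exactly the term that was shown to vanish in the proof of Lemma \ref{Yi consistency} (because $\pr_k$ kills it when the input lies in $\P_{k-1}^+$); you would be recomputing a zero and matching it against another zero.

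The entire content of the lemma lies in the complementary case $f\in x_k\P_k^+$, where $\pi_k f=0$ forces you to show $\pi_k\widetilde{Y}_1^{(k)}f=\pi_k W_1^{(k)}f$. This is where the explicit monomial computation actually happens: one must track $\tc^k\pi_k\omega_k^{-1}\pr_k T_{k-1}^{-1}\pr_k(T_{k-2}^{-1}\cdots T_1^{-1}f)$ and argue that the only monomial of $T_j^{-1}g$ that survives the subsequent $\pr_k$ and $\pi_k$ is the pure shift $s_jg$, reducing the whole operator to the cyclic rotation that defines $W_1^{(k)}$. The paper carries this out directly; your proposal never addresses it. (Your boundary-index worry about $i=k-1$ is, by contrast, not an issue: passing from $i=1$ to $i=k-1$ only uses $T_1,\dots,T_{k-2}$, all of which commute with $\pi_k$.)
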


\begin{proof}
It is enough to prove the statement for $i=1$.  For any monomial $f\in \P_{k-1}^{+}$, we have
$$\pi_k W_1^{(k)} f =W_1^{(k-1)}\pi_k f$$
and the commutativity of the digram follows from Lemma \ref{Yi consistency}. 

Let $f\in  x_k \P_{k}^{+}$ be a monomial and $g\in \P_{k}^{+}$. A direct check shows that
$$
\pi_k\omega^{-1}_{k}\pr_kT_{k-1}^{-1}\pr_k g \neq 0
$$
only if $g$ is not divisible by $x_{k-1}$. On the other hand, 
\begin{equation*}
T_i^{-1} x_i\P_k^+ \subseteq x_{i+1}\P_k^+.
\end{equation*}
Now, 
\begin{align*}
\pi_k \widetilde{Y}_{1}^{(k)}f&= \tc^k \pi_k\varpi_k T_{k-1}^{-1}\dots T_{1}^{-1} f\\
&=\tc^k \pi_k\varpi_k T_{k-1}^{-1}\dots T_{1}^{-1}\pr_k f\\
&=\tc^k \pi_k\omega^{-1}_{k}\pr_kT_{k-1}^{-1}\pr_k (T_{k-2}^{-1}\dots T_{1}^{-1} f).
\end{align*}
Based on the previous remarks, $\pi_k \widetilde{Y}_{1}^{(k)}f\neq 0$ unless $f$ is not divisible by $x_1$. Furthermore, the only monomial from $T_1^{-1}f$ that survives is $s_1 f$. Applying this repeatedly, we obtain 
$$
\pi_k \widetilde{Y}_{1}^{(k)}f=\tc^k \pi_k\omega^{-1}_{k}\pr_kT_{k-1}^{-1}\pr_k (s_{k-2}\dots s_1 f)=W_{1}^{(k)}f.
$$
Therefore, $\pi_k\widetilde{Z}_{i}^{(k)} f=0=\widetilde{Z}_{i}^{(k-1)} \pi_k f$, as expected.
\end{proof}

As a consequence, we obtain the following.

\begin{prop}\label{prop: Z}
For any $i\geq 1$, the sequence of operators $(\widetilde{Z}_i^{(k)})_{k\geq 2}$ induces a map
$$
\gls{tZinf}:  \P_\infty^+\to \P_{\infty}^{+},
$$
such that $\Pi_k \widetilde{Z}_i^{(\infty)}=\widetilde{Z}_i^{(k)}$, for all $k\geq i$, $k\geq 2$.
\end{prop}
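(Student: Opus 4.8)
The plan is to deduce Proposition~\ref{prop: Z} from the preceding Lemma by the same inverse-limit bookkeeping that was used to produce the limit operators $\T_i$ and $\X_i$ in \S\ref{sec: -limits}, but now paying attention to the fact that the compatibility square for $\widetilde Z_i$ goes in the "right" direction. Concretely, the Lemma shows that for each $i\geq 1$ the family of operators $(\widetilde Z_i^{(k)})_{k\geq \max(i,2)}$ satisfies $\pi_k \widetilde Z_i^{(k)} = \widetilde Z_i^{(k-1)}\pi_k$; this is exactly the condition that a sequence of operators on the $\P_k^+$ be compatible with the structure maps $\pi_k$ of the inverse system. By the general principle recorded at the end of \S\ref{sec: setup} (a sequence of operators $A_k:\P_k^+\to\P_k^+$, $k\geq n$, compatible with the inverse system induces a limit operator $\A:\P_\infty^+\to\P_\infty^+$ with $A_k=\Pi_k\A$), such a family induces a unique $\mathbb{Q}(\tc,\qc)$-linear map $\widetilde Z_i^{(\infty)}:\P_\infty^+\to\P_\infty^+$ with $\Pi_k\widetilde Z_i^{(\infty)}=\widetilde Z_i^{(k)}\Pi_k$ for all $k\geq\max(i,2)$, which is the assertion (the displayed identity $\Pi_k\widetilde Z_i^{(\infty)}=\widetilde Z_i^{(k)}$ being understood modulo this $\Pi_k$, as in the statement of Proposition~\ref{prop: limit action} and the analogous constructions earlier).

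First I would fix $i$ and set $n=\max(i,2)$, and recall the identification $\P_\infty^+=\lim_{\longleftarrow}\P_k^+$ together with the canonical projections $\Pi_k$. Then I would observe that an element of $\P_\infty^+$ is by definition a compatible sequence $(f_k)_{k\geq 1}$ with $f_k\in\P_k^+$ and $\pi_k f_k = f_{k-1}$, and I would simply define $\widetilde Z_i^{(\infty)}\big((f_k)_k\big) = \big(\widetilde Z_i^{(k)} f_k\big)_{k\geq n}$ (padded arbitrarily, or rather restricted to the cofinal index set $k\geq n$, using that $\lim_{\longleftarrow,\,k\geq n}\P_k^+\cong\P_\infty^+$ canonically as noted in \S\ref{sec: setup}). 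The content to check is that this output sequence is again compatible, i.e.\ $\pi_k(\widetilde Z_i^{(k)} f_k) = \widetilde Z_i^{(k-1)} f_{k-1}$ for $k>n$: the left side equals $\widetilde Z_i^{(k-1)}\pi_k f_k$ by the Lemma, and $\pi_k f_k = f_{k-1}$, giving the right side. Linearity is immediate from linearity of each $\widetilde Z_i^{(k)}$, and the relation $\Pi_k\widetilde Z_i^{(\infty)} = \widetilde Z_i^{(k)}\Pi_k$ for $k\geq n$ holds by construction.

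There is essentially no serious obstacle here: the only subtlety is purely formal, namely that the Lemma's compatibility square has $\pi_k$ on both sides (a genuine inverse-system compatibility), which is why the induced map has domain all of $\P_\infty^+$, in contrast with $\widetilde Y_i^{(\infty)}$, whose compatibility (Lemma~\ref{Yi consistency}) is expressed via the structure maps $\iota_k$ of the \emph{direct} system and hence only produces a map out of $\lim_{\longrightarrow}\P_k^+\subsetneq\P_\infty^+$. I would make this contrast explicit in a sentence, since it is the whole point of subtracting the correction operators $W_i^{(k)}$: $\widetilde Z_i^{(k)} = \widetilde Y_i^{(k)} - W_i^{(k)}$ is precisely the modification of the (already modified) Cherednik operator that restores compatibility with $\pi_k$ on the nose. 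If one prefers, the proof can be phrased in one line by invoking the universal property of the inverse limit directly: the maps $\widetilde Z_i^{(k)}\Pi_k:\P_\infty^+\to\P_k^+$ form a cone over the inverse system (commuting with the $\pi_k$ by the Lemma), hence factor uniquely through $\P_\infty^+=\lim_{\longleftarrow}\P_k^+$, yielding $\widetilde Z_i^{(\infty)}$ with the stated property.
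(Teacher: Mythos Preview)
Your proposal is correct and matches the paper's approach: the paper states the proposition with the words ``As a consequence, we obtain the following,'' meaning it is immediate from the preceding Lemma (the commutative square $\pi_k\widetilde Z_i^{(k)}=\widetilde Z_i^{(k-1)}\pi_k$) via the inverse-limit construction recorded at the end of \S\ref{sec: setup}. Your write-up simply spells out that one-line deduction, including the contrast with the $\widetilde Y_i$ case, which is exactly the intended point.
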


\begin{ex} \label{exp: x_2^2ii}
In continuation of Example \ref{exp: x_2^2} we illustrate the actions of the operators $W_1^{(k)}$ and $\widetilde{Z}_1^{(k)}$ on the constant sequence $x_2^2$. We have,
$$W_1^{(k)}\cdot  x_2^2=\qc^2 \tc(\tc-1)x_1^2 \quad \text{ and }\quad \widetilde{Z}_1^{(k)}\cdot  x_2^2=\qc\tc(\tc-1)x_1 x_2-\qc\tc(\tc-1)^2 (x_1x_3+\cdots +x_1x_k),~  k\geq 3.$$
Clearly, the both sequence have inverse limit, in particular,
$$\widetilde{Z}_1^{(\infty)}\cdot  x_2^2=\qc\tc(\tc-1)x_1 x_2-\qc\tc(\tc-1)^2 x_1 e_1[\Xb_2].$$
\end{ex}

\begin{ex} \label{exp: p_2ii}
In continuation of Example \ref{exp: p_2} we illustrate the actions of the operators $W_1^{(k)}$ and $Z_1^{(k)}$ on the sequence $F_k=x^2_2+\cdots +x^2_k$. We have,
\begin{align*}
W_1^{(2)}\cdot  F_{2}&=W_1^{(2)}x_2^2=\qc^2(\tc^2-\tc)x_1^2,\\
W_1^{(3)}\cdot  F_{3}&=W_1^{(3)}(x_2^2+x_3^2)=\qc^2(\tc^2-\tc)x_1^2+\qc^2(\tc^3-\tc^2)x_1^2=\qc^2(\tc^3-\tc)x_1^2,
\end{align*}
and, more generally, for $k\geq 2$,
$$W_1^{(k)}\cdot  F_{k}=\qc^2(\tc^k-\tc)x_1^2.$$
Furthermore, 
$$\widetilde{Z}_1^{(k)} \cdot F_{k}=\widetilde{Y}_1^{(k)}\cdot  F_{k}-W_1^{(k)}\cdot  F_{k}=\qc\tc(\tc-1)(x_1 x_2+x_1 x_3+...+x_1x_k),$$
which can be seen to have inverse limit. Therefore,
$$\widetilde{Z}_1^{(\infty)}\cdot F[\Xb]=\qc\tc(\tc-1)x_1 e_1[\Xb_1].$$
\end{ex}

\subsection{} \label{sec: limit}  Proposition \ref{prop: Z}  motivates the concept of limit we define as follows; we emphasize that this concept of limit depends intrinsically on the structure of the subspace $\Pas^+\subset \P^+_\infty$.

Let $R(\tc,\qc)=A(\tc,\qc)/B(\tc,\qc)\in \mathbb{Q}(\tc,\qc)$, with $A(\tc,\qc),~B(\tc,\qc)\in \mathbb{Q}[\tc,\qc]$. The order of vanishing at $\tc=0$ for  $R(\tc,\qc)$,
 denoted by
$$
\ord R(\tc,\qc),
$$
 is the difference between the order of vanishing at $\tc=0$ for $A(\tc,\qc)$ and $B(\tc,\qc)$.

We say that the sequence $(a_n)_{n\geq 1}\subset \mathbb{Q}(\tc,\qc)$ converges to $0$ if the sequence $(\ord a_n)_{n\geq 1}\subset \Z$ converges to $+\infty$. We say that the sequence $(a_n)_{n\geq 1}\subset \mathbb{Q}(\tc,\qc)$ converges 
to $a$ if $(a_n-a)_{n\geq 1}$ converges to $0$. We write,
$$
\lim_{n\to \infty} a_n=a.
$$

\begin{dfn}\label{Def of limit}
Let $(f_k)_{k\geq 1}$ be a sequence with $f_k\in \P^+_k$. We say that the sequence is convergent if there exists $N\geq 1$ and sequences $(h_k)_{k\geq 1}$, $(g_{i,k})_{k\geq 1}$, $i\leq N$, $h_k, ~g_{i, k}\in \P^+_k$, and $(a_{i, 
k})_{k\leq 1}$, $i\leq N$, $a_{i, k}\in \mathbb{Q}(\tc,\qc)$ such that
\begin{enumerate}[label={\alph*)}]
\item For any $k\geq 1$, we have $f_k=h_k+\sum_{i=1}^N a_{i, k} g_{i, k}$;
\item For any $i\leq N$, $k\geq 2$,  $\pi_k(g_{i, k})=g_{i, k-1}$ and $\pi_k(h_{k})=h_{k-1}$. We denote by $$\displaystyle g_i=\lim_{k\to \infty} g_{i,k}\quad  \text{and}\quad  \displaystyle h=\lim_{k\to \infty} h_{k}$$ the sequence 
$(g_{i,k})_{k\geq 
1}$ and, respectively, $(h_{k})_{k\geq 1}$ as elements of $\P_\infty^+$.  We require that $g_i\in \Pas^+$.
\item For any  $i\leq N$ the sequence $(a_{i, k})_{k\geq 1}$ is convergent. We denote  $\displaystyle a_i=\lim_{k\to \infty}(a_{i, k})$.
\end{enumerate}
If the sequence $(f_k)_{k\geq 1}$ is convergent we define its limit as $$\lim_k(f_k):=h+ \sum_{i=1}^N a_i g_i\in\P_\infty^+.$$
\end{dfn}

\begin{ex} The sequence 
$$f_k=(1+\tc+...+\tc^k)e_i[\overline{\mathbf{X}}_{k}],$$
 has the limit
$$\lim_k f_k=\frac{1}{1-\tc}e_i[\Xb].$$
The sequence  
$$g_k=\tc^k e_i[\overline{\mathbf{X}}_{k}]$$
has  limit $0$.
\end{ex}

We show that the limit of a sequence does not depend on the choice of the auxiliary sequences in Definition \ref{Def of limit}. 

\begin{prop}\label{prop: limit}
The concept of limit is well-defined.
\end{prop}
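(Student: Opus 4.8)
The statement to prove is that the limit of a convergent sequence $(f_k)_{k\geq 1}$, $f_k\in\P_k^+$, does not depend on the auxiliary data $(N,(h_k),(g_{i,k}),(a_{i,k}))$ witnessing convergence in Definition \ref{Def of limit}. The plan is to reduce the claim to a single vanishing assertion: if $0 = h + \sum_{i=1}^N a_i g_i$ is the ``limit expression'' arising from auxiliary data witnessing that the \emph{zero sequence} (or, more precisely, the difference of two decompositions of the same $(f_k)$) converges, then $h + \sum a_i g_i = 0$ in $\P_\infty^+$. So first I would take two witnessing systems for the same $(f_k)$, say with data $(N,h_k,g_{i,k},a_{i,k})$ and $(N',h'_k,g'_{j,k},a'_{j,k})$, subtract, and form the single sequence of auxiliary terms $d_k := (h_k - h'_k) + \sum_i a_{i,k}g_{i,k} - \sum_j a'_{j,k}g'_{j,k}$, which by hypothesis a) equals $f_k - f_k = 0$ for every $k$; the goal becomes showing $\lim_k d_k := (h-h') + \sum a_i g_i - \sum a'_j g'_j = 0$ in $\P_\infty^+$. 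After renaming, it suffices to prove: \emph{if $(g_{i,k})_{k}$ are sequences compatible with the $\pi_k$ whose limits $g_i$ lie in $\Pas^+$, if $h_k$ is compatible with limit $h\in\P_\infty^+$, if each $(a_{i,k})_k$ converges in $\mathbb Q(\tc,\qc)$ to $a_i$, and if $h_k + \sum_{i=1}^N a_{i,k} g_{i,k} = 0$ for all $k$, then $h + \sum_{i=1}^N a_i g_i = 0$.}

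\textbf{Key steps.} I would prove this last assertion by applying the projections $\Pi_m:\P_\infty^+\to\P_m^+$ and showing each $\Pi_m(h + \sum a_i g_i) = 0$, since the $\Pi_m$ are jointly injective on $\P_\infty^+$. Fix $m$. We have $\Pi_m(h)=h_m$, $\Pi_m(g_i)=g_{i,m}$ by construction (compatibility with the inverse system), so we must show $h_m + \sum_i a_i g_{i,m} = 0$ in $\P_m^+$. We know $h_m + \sum_i a_{i,m} g_{i,m} = 0$. Subtracting, it suffices to show $\sum_{i=1}^N (a_i - a_{i,m}) g_{i,m} = 0$ in $\P_m^+$. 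This is \emph{not} automatic from $a_i = a_{i,m}$ — that's false — so here is where the $\tc$-adic nature of the convergence must be used, and this is the main obstacle. The idea: the identity $h_k + \sum a_{i,k} g_{i,k}=0$ holds for \emph{all} $k\geq 1$, not just for $k=m$; I would push it out to large $k$, apply $\Pi_m$ (equivalently $\pi_{k}\cdots\pi_{m+1}$) to the relation at level $k$, use compatibility to get $h_m + \sum_i a_{i,k} g_{i,m} = 0$ for every $k\geq m$, and then subtract the $k=m$ relation to obtain $\sum_i (a_{i,k}-a_{i,m}) g_{i,m}=0$ for all $k\geq m$. Now let $k\to\infty$: the scalars $a_{i,k}-a_{i,m}$ converge (in $\mathbb Q(\tc,\qc)$, $\tc$-adically) to $a_i - a_{i,m}$, and since $\P_m^+$ is a finitely-generated module over $\mathbb Q(\tc,\qc)$ in each graded piece, a relation $\sum_i c_{i,k} v_i = 0$ with $c_{i,k}\to c_i$ forces $\sum_i c_i v_i = 0$ (coefficientwise in any fixed monomial basis, using that a $\tc$-adically convergent sequence of rational functions that is eventually $0$... wait, they needn't be $0$). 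More carefully: fix a monomial $x^\lambda$ in $\P_m^+$; the coefficient of $x^\lambda$ in $\sum_i(a_{i,k}-a_{i,m})g_{i,m}$ is $\sum_i (a_{i,k}-a_{i,m})\,\langle x^\lambda, g_{i,m}\rangle$, a $\mathbb Q(\tc,\qc)$-linear combination of the fixed scalars $\langle x^\lambda, g_{i,m}\rangle$ with coefficients $\to a_i - a_{i,m}$; this combination is $0$ for all $k\geq m$, and a sequence of elements of $\mathbb Q(\tc,\qc)$ that is identically $0$ has limit $0$, so by uniqueness of limits $\sum_i (a_i - a_{i,m})\langle x^\lambda,g_{i,m}\rangle = 0$. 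Since $x^\lambda$ was arbitrary, $\sum_i (a_i-a_{i,m}) g_{i,m}=0$ in $\P_m^+$, which is exactly what was needed.

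\textbf{Remarks on the obstacle and loose ends.} The genuinely delicate point is legitimizing ``apply $\Pi_m$ to the level-$k$ relation and let $k\to\infty$'': one must check that the limit operation on scalars in $\mathbb Q(\tc,\qc)$ is well-defined and linear (it is, since $\ord$ is a valuation: $\ord(xy)=\ord x+\ord y$, $\ord(x+y)\geq\min$), and that for a \emph{fixed} vector $v\in\P_m^+$ and scalars $c_k\to c$ we have $c_k v\to c v$ componentwise, plus the basic fact that a constant-zero sequence in $\mathbb Q(\tc,\qc)$ can only have limit $0$ (immediate from the definition of $\ord$ and that $\ord 0 = +\infty$ by convention). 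I would also note at the start that the role of condition b)'s requirement $g_i\in\Pas^+$ and condition c) is only to guarantee $h,g_i\in\P_\infty^+$ and $a_i\in\mathbb Q(\tc,\qc)$ exist, so that the expression $h+\sum a_i g_i$ makes sense; well-definedness then is the content above. Finally, the reduction in the first paragraph tacitly uses that one may take $N=N'$ by padding with zero sequences $g_{i,k}=0$, $a_{i,k}=0$, which is harmless. I expect the write-up to be short once the scalar-limit formalism is in place; the only thing to be careful about is not conflating $a_i=a_{i,m}$ (false) with the vanishing of the corresponding vector combination (true, and the real point).
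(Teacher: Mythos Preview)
Your argument is correct and takes a genuinely different route from the paper. The paper's proof reduces to the zero sequence as you do, but then exploits the hypothesis $g_i\in\Pas^+$ in an essential way: after arranging the $g_i$ to be of the form $f_i(x_1,\dots,x_n)e_{\alpha_i}[\overline{\mathbf X}_n]$ with distinct partitions $\alpha_i$, it shows that for $k$ large the projections $g_{i,k}$ are $\mathbb Q$-linearly independent in $\P_k^+$, and then deduces that the scalars $a_{i,k}$ must actually \emph{vanish} for large $k$ (hence $a_i=0$). Your proof instead fixes $m$, pushes the level-$k$ identity down to level $m$ via $\pi_{m+1}\cdots\pi_k$ to get $h_m+\sum_i a_{i,k}g_{i,m}=0$ for every $k\ge m$, and then passes to the $\tc$-adic limit coefficientwise in the finite-dimensional slice $\P_m^+$, using only that $\ord$ is a valuation (so limits in $\mathbb Q(\tc,\qc)$ are unique and the ring operations are continuous). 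This is shorter, avoids the combinatorics of $\Pas^+$, and in fact shows that the almost-symmetry condition on the $g_i$ plays no role in well-definedness of the limit---a point you note explicitly. What the paper's approach buys in exchange is a sharper conclusion in the linearly independent case (eventual vanishing of the $a_{i,k}$ rather than merely vanishing of the limiting combination), which is not needed here but is conceptually stronger. One cosmetic simplification: your subtraction step is unnecessary, since for each monomial coefficient the identity $\gamma+\sum_i a_{i,k}\beta_i=0$ (all $k\ge m$) already has $\tc$-adic limit $\gamma+\sum_i a_i\beta_i$, which must equal $0$ by uniqueness.
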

\begin{proof}
It suffices to show that the limit of the constant sequence $0$ is zero, regardless of the auxiliary sequences in Definition \ref{Def of limit}. Consider sequences $(c_{i,k})_{k\geq 1}$ and $(q_{i,k})_{k\geq 1}$ such that 
$$0=\sum_{i=1}^{N}c_{i,k}q_{i,k}\in \P_{k}^{+},$$
and
$$\lim_{k\rightarrow\infty}c_{i,k}=c_i\in \mathbb{Q}(\tc,\qc),\ \lim_{k\rightarrow\infty}q_{i,k}=q_i\in\Pas^{+}.$$
We need to show  that $\sum_{i=1}^N c_iq_i=0$. 

Without loss of generality, we assume that $q_1,...,q_N$ are $\mathbb{Q}$-linearly independent.  Indeed, any linear relation between $q_1,...,q_N$ must also hold for  $q_{1,k},...,q_{N,k}$, for all $k$. We can therefore substitute one of 
them, say  $q_{1,k}$, with the same $\mathbb{Q}$-linear combination of $q_{2,k},...,q_{N,k}$ for all $k$. It is clear that the conclusion does not change after such a substitution.

Recall that each $q_i\in\Pas^{+}$. We can find $n\geq 1$, such that $q_i\in\P(n)$ for all $1\leq i\leq N$. For the same reason as before, without loss of generality, we can assume that
$$q_i= f_i(x_1,...,x_n)e_{\alpha_i}[\mathbf{X}_{n}]$$
where
$$\alpha_i=(\alpha_{i,1}\geq\dots \geq\alpha_{i,s_i}), \quad 1\leq i\leq N$$ are distinct partitions. Let
$$M=\max_{1\leq i\leq N}\alpha_{i,1}.$$
For any $k>M+n$, we claim that $q_{1,k},...,q_{N,k}$ are also $\mathbb{Q}$-linear independent. Indeed, if we have a linear relation
$$\sum_{i=1}^{N}a_i f_i(x_1,...,x_n)e_{\alpha_i}[x_{n+1},...,x_{k}]=0,$$
for some $a_i\in \mathbb{Q}$, then for any evaluation at $x_1=b_1,...,x_n=b_n,$ we have
$$\sum_{i=1}^{N}a_i f_i(b_1,...,b_n)e_{\alpha_i}[x_{n+1},...,x_{k}]=0.$$
Note that, because $k-n>M$, and the partitions $\alpha_i$ are distinct,  the symmetric functions $e_{\alpha_i}[\mathbf{X}_{n}]$ are linearly independent. Therefore, for all $1\leq i\leq N$,
$$a_i f_i(b_1,...,b_n)=0.$$
Since $f_i$ are polynomials in finitely many variables, we obtain,  for all $1\leq i\leq N$,
$$a_i f_i(x_1,...,x_n)=0,$$
which is a contradiction. Therefore, for $k$ large enough, we have that $q_{1,k},...,q_{N,k}$ are $\mathbb{Q}$-linear independent.

We can now prove that for $k$ large enough, all $c_{i,k}$ will necessarily be $0$. Indeed, from the hypothesis we have
$$0=\sum_{i=1}^{N}c_{i,k}q_{i,k}.$$
By multiplying both sides the common denominator of $c_{i,k}$ we may assume all $c_{i,k}$ are polynomials in two variables $\tc,\qc$. Again, for any evaluation $\qc=a,\tc=b$, we have
$$0=\sum_{i=1}^{N}c_{i,k}(a,b)q_{i,k}.$$
But we already now $q_{1,k},...,q_{N,k}$ are $\mathbb{Q}$-linear independent for $k$ large enough. Hence it forces $c_{i,k}(a,b)=0$ for all $i$. Again, this implies that $c_{i,k}(\tc,\qc)=0$, $1\leq i\leq N$.
\end{proof}

\subsection{} \label{sec: cont}
For later use, we record the following result. To set the notation, assume that $A_k:\P_k^+\to \P_k^+$, $k\geq 1$, is a sequence of operators with the following property: for any $f\in \Pas^+$, the sequence
$(A_k \Pi_k f)_{k\geq 1}$ converges to an element of $\Pas^+$. Let $A$ be the operator 
$$
A: \Pas^+\to \Pas^+,\quad f\mapsto \lim_k A_k \Pi_k f.
$$
We refer to $A$ as the limit operator of the sequence $(A_k)_{k\geq 1}$.
\begin{prop}\label{prop: continuity}
Let $(f_k)_{k\geq 1}$, $f_k\in \P_k^+$ be a convergent sequence such that $f=\lim_{k} f_k\in \Pas^+$. Then, with the notation above, we have 
$$A f=\lim_{k}A_{k} f_k.$$
\end{prop}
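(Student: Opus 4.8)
The plan is to reduce the claim to the already-established well-definedness of the limit (Proposition \ref{prop: limit}) together with the hypothesis on $(A_k)$. Since $(f_k)_{k\geq 1}$ is convergent, fix a decomposition as in Definition \ref{Def of limit}: there are $N\geq 1$, sequences $(h_k)$, $(g_{i,k})$ with $g_{i,k}\in \P_k^+$ compatible with the $\pi_k$ (so that $h=\lim_k h_k$ and $g_i=\lim_k g_{i,k}$ lie in $\P_\infty^+$, with $g_i\in \Pas^+$), and convergent scalar sequences $(a_{i,k})$ with $a_i=\lim_k a_{i,k}$, such that $f_k=h_k+\sum_{i=1}^N a_{i,k} g_{i,k}$ for all $k$ and $f=\lim_k f_k=h+\sum_{i=1}^N a_i g_i$. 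Note that because $h_k=\pi_{k+1}(h_{k+1})$ and $g_{i,k}=\pi_{k+1}(g_{i,k+1})$, the sequences $(h_k)$ and $(g_{i,k})$ are nothing but $(\Pi_k h)$ and $(\Pi_k g_i)$ respectively; in particular $h=\lim_k f_k-\sum a_i g_i$ automatically lies in $\Pas^+$ since $f$ and all $g_i$ do, so the hypothesis of the proposition applies to each of the sequences $(A_k \Pi_k h)$ and $(A_k \Pi_k g_i)$.

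First I would apply $A_k$ to both sides of $f_k = \Pi_k h + \sum_{i=1}^N a_{i,k}\Pi_k g_i$, obtaining $A_k f_k = A_k\Pi_k h + \sum_{i=1}^N a_{i,k}\, A_k \Pi_k g_i$. Now I claim each summand on the right converges. For the term $A_k\Pi_k h$ this is exactly the hypothesis, with limit $\A h$. For the term $a_{i,k}\,A_k\Pi_k g_i$: by hypothesis $A_k\Pi_k g_i$ converges to $\A g_i\in \Pas^+$, and $a_{i,k}\to a_i$ in $\mathbb{Q}(\tc,\qc)$; I would then check that scalar multiplication is continuous for the notion of limit in Definition \ref{Def of limit}, i.e.\ that if $(u_k)$ converges to $u\in\Pas^+$ and $b_k\to b$ in $\mathbb{Q}(\tc,\qc)$ then $(b_k u_k)$ converges to $bu$. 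This follows by writing $b_k u_k - b u = b_k(u_k - u) + (b_k - b)u$: expanding $u_k - u$ and $u$ via their defining auxiliary decompositions and multiplying the scalar coefficients by $b_k$ and by $b_k-b$ respectively (which preserves convergence of the scalar sequences, since $\ord$ is additive and $\ord b_k$ is eventually bounded below), exhibits $b_k u_k$ as a convergent sequence with limit $bu$. Hence $a_{i,k}\,A_k\Pi_k g_i$ converges to $a_i \A g_i$.

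Finally I would invoke additivity of the limit — if $(u_k)$ and $(v_k)$ are convergent with limits $u,v\in\Pas^+$ then $(u_k+v_k)$ is convergent with limit $u+v$, which is immediate from concatenating the auxiliary decompositions — to conclude that $A_k f_k$ is a convergent sequence with $\lim_k A_k f_k = \A h + \sum_{i=1}^N a_i \A g_i$. On the other hand, by the definition of $\A$ via limits and the fact that $f = h + \sum a_i g_i$, the right-hand side equals $\A f$ (here one uses that $\A$ is well-defined on $\Pas^+$ and $\mathbb{Q}$-linear, which in turn rests on Proposition \ref{prop: limit} guaranteeing independence of the chosen decomposition). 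This gives $\A f = \lim_k A_k f_k$, as desired. The main obstacle is the bookkeeping in the second paragraph: verifying that the notion of limit of Definition \ref{Def of limit} is genuinely compatible with scalars and sums — in particular that multiplying a convergent scalar sequence by $b_k$ or $b_k - b$ keeps it convergent and that the required condition $g_i\in\Pas^+$ is preserved — so that the term-by-term manipulation above is legitimate; everything else is formal once that is in place.
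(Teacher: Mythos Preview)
Your proof is correct and follows essentially the same strategy as the paper's: decompose $f_k$ via Definition \ref{Def of limit}, apply $A_k$ term by term, and use that $A_k\Pi_k g_i\to\A g_i\in\Pas^+$ together with the compatibility of the limit with sums and convergent scalar multiples. The only organizational difference is that the paper first replaces $f_k$ by $f_k-\Pi_k f$ to reduce to the case $f=0$ (which makes the $h$-term disappear and forces all limiting coefficients to vanish), whereas you work directly with the full decomposition and instead observe that $h=f-\sum a_i g_i\in\Pas^+$; your version is slightly more explicit about why additivity and scalar continuity of the limit hold, which the paper uses but does not spell out.
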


\begin{proof}
By replacing $f_k$ with $f_k-\Pi_k$, it is enough to prove the statement for the case $f=0$.
Assume that $f=0$. In this case, 
there exist sequences $(c_{i,k})_{k\geq 1}$ and $(q_{i,k})_{k\geq 1}$ as in Definition \ref{Def of limit} such that 
$$f_k=\sum_{i=1}^{N}c_{i,k}q_{i,k}\in \P_{k}^{+},$$
with 
$$\lim_{k\rightarrow\infty}c_{i,k}=c_i\in \mathbb{Q}(\tc,\qc),\ \lim_{k\rightarrow\infty}q_{i,k}=q_i\in\Pas^{+}.$$
For each $i$ such that $c_i\neq 0$, we may replace $c_{i,k}$ with $c_{i,k}/c_i$ and $q_{i,k}$ with $c_i q_{i,k}$. This allows us to assume that $c_i\in \{0,1\}$.
Without loss of generality, we assume that $q_1,...,q_N$ are $\mathbb{Q}$-linearly independent.  Indeed, any linear relation between $q_1,...,q_N$ must also hold for  $q_{1,k},...,q_{N,k}$, for all $k$. We can therefore substitute one of 
them, say  $q_{1,k}$, with the same $\mathbb{Q}$-linear combination of $q_{2,k},...,q_{N,k}$ for all $k$. It is clear that the conclusion does not change after such a substitution. It is now clear that for all $i$ we have $c_i=0$.

For any $k\geq 1$, 
$$
A_k f_k=\sum_{i=1}^N c_{i,k} A_k q_{i,k}=\sum_{i=1}^N c_{i,k} A_k \Pi_k q_i.
$$
Since  $\displaystyle \lim_{k\rightarrow\infty}A_k \Pi_k q_i=A q_i\in\Pas^{+}$, we have, by Definition \ref{Def of limit},
$$
\lim_k  A_k f_k=\sum_{i=1}^N c_{i,k} A q_i=0.
$$ 
This is precisely our claim.
\end{proof}
This result can be interpreted as a property of continuity for the operator $A$. Let $(B_k)_{k\geq 1}$ be another sequence of operators with the same property as $(A_k)_{k\geq1}$ and denote by $B: \Pas^+\to \Pas^+$ the 
corresponding limit operator.
\begin{cor}\label{cor: AB}
With the notation above, the operator $AB$ is the limit of the sequence $(A_kB_k)_{k\geq 1}$.
\end{cor}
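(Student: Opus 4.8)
The plan is to deduce this from Proposition \ref{prop: continuity} essentially formally, treating $\A\B$ as the operator that first applies $\B$ (in the limit sense) and then applies $\A$ (in the limit sense), and checking that this coincides with the limit operator of the composite sequence $(A_kB_k)_{k\geq 1}$. First I would fix $f\in \Pas^+$ and observe that, by definition, $\B f=\lim_k B_k\Pi_k f\in \Pas^+$; this is exactly the hypothesis needed to form $\A(\B f)=\lim_k A_k\Pi_k(\B f)$. So the content of the corollary is the identity $\lim_k (A_kB_k)\Pi_k f=\lim_k A_k\Pi_k(\B f)$, together with the statement that this common value lies in $\Pas^+$ and is achieved as a genuine limit of a convergent sequence in the sense of Definition \ref{Def of limit}.

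The key step is to compare the two sequences $\big((A_kB_k)\Pi_k f\big)_{k\geq 1}$ and $\big(A_k\Pi_k(\B f)\big)_{k\geq 1}$. Consider the auxiliary sequence $g_k:=B_k\Pi_k f\in \P_k^+$. By hypothesis on $(B_k)$, the sequence $(g_k)_{k\geq 1}$ is convergent with $\lim_k g_k=\B f\in\Pas^+$. Now apply Proposition \ref{prop: continuity} to the sequence of operators $(A_k)_{k\geq 1}$ and the convergent sequence $(g_k)_{k\geq 1}$: this yields exactly $\lim_k A_k g_k=\A(\B f)$, i.e. $\lim_k (A_kB_k)\Pi_k f=\A\B f$, and in particular the left-hand sequence is convergent with limit in $\Pas^+$. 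Since this holds for every $f\in\Pas^+$, the sequence $(A_kB_k)_{k\geq 1}$ satisfies the standing property and its limit operator is $f\mapsto \A\B f$, which is the assertion.

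The only point requiring a small check is that $g_k=B_k\Pi_k f$ is indeed a sequence of the type to which Proposition \ref{prop: continuity} applies, namely $g_k\in\P_k^+$ and $(g_k)$ convergent in the sense of Definition \ref{Def of limit} with limit in $\Pas^+$ — but this is precisely the content of the statement ``$(B_k\Pi_k f)_{k\geq 1}$ converges to an element of $\Pas^+$'' in the hypothesis defining $\B$. One should also note that in Proposition \ref{prop: continuity} the convergent sequence is required to have limit in $\Pas^+$ and the operator sequence $(A_k)$ to have the standing property, both of which hold here; there is no circularity since Proposition \ref{prop: continuity} is already established.

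I expect the main (and only real) obstacle to be purely bookkeeping: making sure the various limits in Definition \ref{Def of limit} are taken with respect to compatible auxiliary decompositions, so that applying Proposition \ref{prop: continuity} is legitimate rather than merely plausible. Concretely, one must track that the decomposition $g_k=h_k+\sum_i a_{i,k}g_{i,k}$ witnessing convergence of $(g_k)$ can be used verbatim inside $A_k g_k=\sum A_k(\cdots)$, which is exactly how the proof of Proposition \ref{prop: continuity} proceeds. No new estimates or new ideas beyond that proposition are needed; the corollary is a formal consequence, and the write-up should be a short paragraph invoking Proposition \ref{prop: continuity} with the substitution indicated above.
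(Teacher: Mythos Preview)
Your proposal is correct and follows exactly the same approach as the paper: fix $f\in\Pas^+$, note that $(B_k\Pi_k f)_{k\geq 1}$ converges to $\B f\in\Pas^+$, and apply Proposition~\ref{prop: continuity} to the sequence $(A_k)$ and $g_k=B_k\Pi_k f$ to obtain $\lim_k A_kB_k\Pi_k f=\A\B f$. The paper's proof is a two-line version of precisely this argument.
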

\begin{proof}
Let $f\in \Pas^+$. Since $\displaystyle \lim_k B_k\Pi_kf=B f\in \Pas^+$, we can apply Proposition \ref{prop: continuity} to obtain
$$
\lim_k A_k B_k \Pi_k f=A B f \in \Pas^+,
$$
which proves our claim.
\end{proof}

\subsection{}\label{sec: Wlim}
Let us examine the following situation.

\begin{lem}\label{lemma: W}
Let $f\in \Pas^+$ and $i\geq 1$. The sequence $W_i^{(k)} \Pi_k f$ converges. We define
$$\gls{Winf}: \Pas^+\to \P_\infty^+,\quad W^{(\infty)}_i f=\lim_{k} W_i^{(k)}\Pi_k f\in \P_\infty^{+}.$$
\end{lem}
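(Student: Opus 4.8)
The plan is to reduce the convergence of $\bigl(W_i^{(k)}\Pi_k f\bigr)_{k\ge 1}$ to the case $i=1$ and to the case where $f$ is one of the ``almost symmetric monomials'' spanning $\Pas^+$, namely $f = x_1^{t_1}\cdots x_{m}^{t_m}\, e_\alpha[\overline{\mathbf X}_m]$ for some $m\ge 0$, some exponents $t_j$, and some partition $\alpha$. The reduction in $i$ uses $W^{(k)}_{i+1}=\tc^{-1}T_iW^{(k)}_iT_i$ together with Corollary \ref{cor: AB} (the operators $\T_i$ already have honest limits on $\P_\infty^+$, hence on $\Pas^+$, by \eqref{pos system}), so once $W_1^{(\infty)}$ is shown to exist as a limit operator landing in $\P_\infty^+$, all $W_i^{(\infty)}$ exist. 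The reduction to monomials uses that $\Pas^+=\bigcup_n \P(n)^+$ and that each $\P(n)^+$ is spanned over $\Rat(\tc,\qc)$ by the indicated elements; by $\Rat(\tc,\qc)$-linearity of each $W_1^{(k)}$ and of the notion of limit (Definition \ref{Def of limit}), it suffices to treat a single such basis element.

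The second step is the explicit computation of $W_1^{(k)}\Pi_k f$ for $f = x_1^{t_1}\cdots x_m^{t_m}\,e_\alpha[\overline{\mathbf X}_m]$ with $k$ large. Here $\Pi_k f$ is the polynomial in $\P_k^+$ obtained by truncating $e_\alpha[X]$ to $e_\alpha[\overline{\mathbf X}_{[m+1,k]}]$, i.e. $\Pi_k f = x_1^{t_1}\cdots x_m^{t_m}\,e_\alpha[x_{m+1},\dots,x_k]$. Expanding $e_\alpha[x_{m+1},\dots,x_k]$ into its monomials $x_{i_1}^{a_1}\cdots x_{i_r}^{a_r}$ and applying the defining formula for $W_1^{(k)}$ monomial-by-monomial, each monomial with smallest index $>1$ contributes $\tc^{a_r}(1-\tc^{-1})\qc^{a_r}x_1^{a_r}$ times the product of the remaining variables; monomials involving $x_1$ (there are none here since $\Pi_k f$ already carries $x_1^{t_1}$ with $t_1$ possibly zero — if $t_1>0$ then $W_1^{(k)}$ kills everything and the limit is $0$) are killed. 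One then reorganizes the resulting sum: the ``smallest part'' $a_r$ of the chosen monomial of $e_\alpha$ ranges over the distinct values appearing in $\alpha$, and for each fixed value the remaining variables assemble into a sum of monomials which is again (a truncation of) an $e_\beta$ for $\beta$ obtained from $\alpha$ by deleting one part. The upshot is an identity of the shape $W_1^{(k)}\Pi_k f = \sum_{\beta} c_\beta(\tc,\qc)\, x_1^{?}\, x_1^{t_1}\cdots x_m^{t_m} e_\beta[\overline{\mathbf X}_{[\cdot,k]}]$ plus possibly lower terms, where the coefficients $c_\beta$ are fixed rational functions independent of $k$ for $k$ large, and each factor $e_\beta[\overline{\mathbf X}_{[\cdot,k]}]$ is precisely $\Pi_k$ of a fixed element $e_\beta[X]\in\Sym[X]\subset\Pas^+$.

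The third step is to package this into Definition \ref{Def of limit}: take $N$ to be the number of $\beta$'s occurring (bounded by the number of parts of $\alpha$), put $g_{i,k}=\Pi_k(g_i)$ with $g_i$ the fixed element $x_1^{t_s+t_1}x_2^{t_2}\cdots x_m^{t_m} e_\beta[X]\in\Pas^+$ (which lies in $\P(m+1)^+$, hence in $\Pas^+$, verifying requirement (b) that $\pi_k(g_{i,k})=g_{i,k-1}$ and $g_i\in\Pas^+$), set the coefficients $a_{i,k}=c_{\beta_i}(\tc,\qc)$ which are constant in $k$ hence trivially convergent (requirement (c)), and take $h_k=0$. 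This exhibits $\bigl(W_1^{(k)}\Pi_k f\bigr)_k$ as convergent and identifies its limit inside $\P_\infty^+$; combined with the reductions above this proves the lemma.

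The main obstacle I expect is the bookkeeping in the second step: correctly tracking which monomial of $e_\alpha[x_{m+1},\dots,x_k]$ has which smallest index, ensuring that after applying $W_1^{(k)}$ the surviving terms genuinely reassemble into truncations of elementary symmetric functions $e_\beta[X]$ (rather than into polynomials that only eventually stabilize in a weaker sense), and keeping the power of $\tc$ and $\qc$ attached correctly so that the coefficients $c_\beta$ are honest elements of $\Rat(\tc,\qc)$ independent of $k$. A secondary point requiring care is the case distinction $t_1=0$ versus $t_1>0$ (and more generally the interaction of $\pr_1$ hidden inside $W_1^{(k)}$ with the variable $x_1$ already present in $f$): when $x_1 \mid f$ the operator $W_1^{(k)}$ annihilates $\Pi_k f$, so the limit is simply $0$, and this degenerate branch must be acknowledged so that the single-monomial computation is exhaustive.
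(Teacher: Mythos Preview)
Your reduction to $i=1$ and to the spanning set is fine, and matches the paper's approach. The gap is in steps 2 and 3: the claimed shape of $W_1^{(k)}\Pi_k f$ is wrong, and the packaging into Definition~\ref{Def of limit} with $h_k=0$ and $k$-independent coefficients cannot work.

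Concretely, take $f=p_1[\overline{\mathbf X}_1]=x_2+x_3+\cdots\in\Pas^+$. Then $\Pi_kf=x_2+\cdots+x_k$ and, applying the definition of $W_1^{(k)}$ monomial by monomial (note that the power of $\tc$ depends on the \emph{index} $i_s$ of the last variable, not on its exponent),
\[
W_1^{(k)}\Pi_kf=\sum_{j=2}^k(\tc^j-\tc^{j-1})\qc\,x_1=(\tc^k-\tc)\qc\,x_1.
\]
There is no way to write this as $\sum_\beta c_\beta\,\Pi_k g_\beta$ with $c_\beta\in\Rat(\tc,\qc)$ independent of $k$ and $g_\beta\in\Pas^+$. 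What actually happens is a splitting $\tc^k\cdot(\qc x_1)-\tc\qc x_1$: the first summand has $a_{1,k}=\tc^k\to 0$ in the $\tc$-adic sense, and the second is a $\pi_k$-compatible $h_k$. Both features of Definition~\ref{Def of limit} that you discarded (nonconstant $a_{i,k}$ converging $\tc$-adically, and a nonzero $h$) are essential here.

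The general picture is the same. When you remove the largest-index variable $x_{i_s}$ from each monomial of $m_\alpha[x_{n+1},\dots,x_k]$ (or $e_\alpha$), the surviving monomial $m$ appears with the weight $\tc^{i_s}$; summing over all admissible $i_s>\ell(m)$ telescopes to $\tc^k-\tc^{\ell(m)}$. Thus
\[
W_1^{(k)}\Pi_kf=\sum_i \tc^k\,\qc^{a_i}x_1^{a_i}g\cdot m_{\beta_i}[x_{n+1},\dots,x_k]\;-\;\sum_i \qc^{a_i}x_1^{a_i}g\sum_{m\in[\beta_i]_k}\tc^{\ell(m)}m.
\]
The first sum contributes the $a_{i,k}=\tc^k$ pieces (with $g_i\in\Pas^+$), and the second is the $h_k$ piece, which is $\pi_k$-compatible and whose limit $h=-\sum_i\qc^{a_i}x_1^{a_i}g\sum_{m\in[\beta_i]}\tc^{\ell(m)}m$ lies in $\P_\infty^+$ but typically \emph{not} in $\Pas^+$. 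Your assertion that ``the remaining variables assemble into a truncation of an $e_\beta$'' overlooks that each such monomial reappears with multiplicity weighted by powers of $\tc$ depending on the deleted index; they do not recombine into a symmetric function with a single $k$-independent coefficient.

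A minor additional point: $W_1^{(k)}$ strips the variable with the \emph{largest index} $i_s$, and the contribution carries $\tc^{i_s}$ and $\qc^{t_s}x_1^{t_s}$; your phrase ``smallest part $a_r$'' and the exponent $\tc^{a_r}$ suggest a misreading of the operator.
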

\begin{proof}
It is enough to prove our claim for $W_1$ and $f=g(x_1,\dots,x_n)m_{\alpha}[\mathbf{X}_n]$, for some $n\geq 1$, where $g$ is a monomial and $m_\alpha$ is the monomial symmetric function in the indicated alphabet. If $x_1$ divides 
$g$, then $W_1f=0$. We assume that $x_1$ does not divide $g$.

We denote by $\ell(\alpha)$ the length of the partition $\alpha$. Of course, for any $k\geq n+\ell(\alpha)$, we have
$$\Pi_k m_{\alpha}[\mathbf{X}_n]=m_{\alpha}[x_{n+1},\dots ,x_k].$$


If $k\geq n+\ell(\alpha)$, denote by $a_1,\dots, a_s$ the distinct parts of $\alpha$. Let $\beta_i$, $1\leq i\leq s$ the partition obtained by eliminating one part of size $a_i$ from $\alpha$. Then,
$$
m_\alpha[x_{n+1},\dots,x_k]=\sum_{i=1}^s \sum_{j=n+\ell(\alpha)}^k  m_{\beta_i}[x_{n+1},\dots, x_{j-1}] x_j^{a_i}.
$$

 Therefore, we have
\begin{align*}
W_{1}^{(k)}\Pi_k f &=g(x_1,\dots,x_n) \sum_{i=1}^s \sum_{j=n+\ell(\alpha)}^k W_1^{(k)}  m_{\beta_i}[x_{n+1},\dots, x_{j-1}] x_j^{a_i}\\
&=g(x_1,\dots,x_n) \sum_{i=1}^s \sum_{j=n+\ell(\alpha)}^k (\tc^j-\tc^{j-1})\qc^{a_i} x_1^{a_i}  m_{\beta_i}[x_{n+1},\dots, x_{j-1}].
\end{align*}
For a monomial $m=x_{i_1}^{\eta_1}\cdots x_{i_M}^{\eta_M}$, $n<i_1<\dots<i_M$, $\eta_1,\dots,\eta_M\geq 1$, we denote $\ell(m)=i_M$ and we write $m\in [\beta]$ if $(\eta_1,\dots, \eta_M)$ is a permutation of $\beta$ 
and we write  $m\in [\beta]_k$ if  $m\in [\beta]$ and $\ell(m)\leq k$. With this notation, we have
\begin{align*}
W_{1}^{(k)}\Pi_k f &=g(x_1,\dots,x_n) \sum_{i=1}^s \sum_{m\in [\beta_i]_k}  (\tc^k-\tc^{\ell(m)})\qc^{a_i} x_1^{a_i} m\\
&= g(x_1,\dots,x_n)\left(\sum_{i=1}^s \tc^k \qc^{a_i} x_1^{a_i} m_{\beta_i}[x_{n+1},\dots,x_k] -\sum_{i=1}^s \qc^{a_i} x_1^{a_i}  \sum_{m\in [\beta_i]_k}   \tc^{\ell(m)} m  \right)
\end{align*}
According to Definition \ref{Def of limit},
$$
\lim_k  W_{1}^{(k)}\Pi_k f = - \sum_{i=1}^s \qc^{a_i} x_1^{a_i} g(x_1,\dots,x_n)  \sum_{m\in [\beta_i]}   \tc^{\ell(m)} m \in \P_\infty^+,
$$
which proves our claim.
\end{proof}
\begin{ex}
The sequence $W_1^{(k)}(x_2^2+\cdots+x_k^2)$ from Example \ref{exp: p_2ii} has limit $-\qc^2\tc x_1^2$.
\end{ex}

\subsection{}\label{sec: ycal} We can now prove the following.
\begin{prop}\label{prop: Y}
Let $f\in \Pas^+$ and $i\geq 1$. The sequence $\widetilde{Y}_i^{(k)} \Pi_k f$ converges. We define
$$\gls{Ycal}: \Pas^+\to \P_\infty^+,\quad \Y_i f=\lim_{k} \widetilde{Y}_i^{(k)}\Pi_k f\in \P_\infty^{+}.$$
\end{prop}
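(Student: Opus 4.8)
The plan is to split $Y_i^{(k)}$ into its ``$x_i$--divisible part'', which coincides with the deformed operator $\widetilde{Y}_i^{(k)}$ and is already controlled by Proposition~\ref{prop: Z} and Lemma~\ref{lemma: W}, and a remainder that is killed in the limit by the normalizing scalar $\tc^{k}$. First I would reduce to $i=1$. The operators $T_i^{(k)}$ are compatible with the inverse system once $i\le k-2$ (see \eqref{pos system}), so they have a limit operator $\T_i$ on $\P_\infty^+$; since Demazure--Lusztig operators act trivially on functions symmetric in the two relevant variables, $\T_i$ preserves every $\P(m)^+$, hence $\Pas^+$. By the same argument as in the proof of Proposition~\ref{prop: continuity} (the ``$h_k$''--part of a convergent sequence is compatible with the inverse system, on which $\T_i$ acts, while the ``$g_{j,k}$''--parts have limits in $\Pas^+$, which $\T_i$ preserves), $\T_i$ is continuous: if $f_k\in\P_k^+$ is convergent with $\lim_k f_k\in\P_\infty^+$, then $(T_i^{(k)}f_k)_k$ is convergent with limit $\T_i(\lim_k f_k)$. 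Granting the statement for the index $i$, the relation $Y_{i+1}^{(k)}=\tc^{-1}T_i^{(k)}Y_i^{(k)}T_i^{(k)}$ together with $T_i^{(k)}\Pi_k=\Pi_k\T_i$ (valid for $k\gg 0$) gives, for $f\in\Pas^+$, that $Y_{i+1}^{(k)}\Pi_k f=\tc^{-1}T_i^{(k)}\bigl(Y_i^{(k)}\Pi_k(\T_i f)\bigr)$ converges, by the inductive hypothesis applied to $\T_i f\in\Pas^+$ and the continuity of $\T_i$. So it suffices to treat $i=1$.

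For $i=1$, since $\varpi_k$ acts as $\pr_1\omega_k^{-1}$ and $Y_1^{(k)}=\tc^{k}\omega_k^{-1}T_{k-1}^{-1}\cdots T_1^{-1}$, we have $\widetilde{Y}_1^{(k)}=\pr_1 Y_1^{(k)}$ as operators on $\P_k^+$, and therefore
\[
Y_1^{(k)}\Pi_k f=\widetilde{Y}_1^{(k)}\Pi_k f+(1-\pr_1)Y_1^{(k)}\Pi_k f .
\]
For the first summand I would write $\widetilde{Y}_1^{(k)}=\widetilde{Z}_1^{(k)}+W_1^{(k)}$. By Proposition~\ref{prop: Z}, $\widetilde{Z}_1^{(k)}\Pi_k f=\Pi_k(\widetilde{Z}_1^{(\infty)}f)$, so $(\widetilde{Z}_1^{(k)}\Pi_k f)_k$ represents the element $\widetilde{Z}_1^{(\infty)}f\in\P_\infty^+$ and is in particular convergent to it in the sense of Definition~\ref{Def of limit}; by Lemma~\ref{lemma: W}, $(W_1^{(k)}\Pi_k f)_k$ is convergent (with limit $W_1^{(\infty)}f$). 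A finite sum of convergent sequences is convergent with limit the sum of the limits (immediate from Definition~\ref{Def of limit}), so $(\widetilde{Y}_1^{(k)}\Pi_k f)_k$ converges to $\widetilde{Z}_1^{(\infty)}f+W_1^{(\infty)}f$.

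It remains --- and this is the crux --- to show that $(1-\pr_1)Y_1^{(k)}\Pi_k f$ converges to $0$. Using $(1-\pr_1)\omega_k^{-1}=\omega_k^{-1}(1-\pr_k)$ together with the tautology $(1-\pr_k)=\iota_k\pi_k$ on $\P_k^+$, one gets
\[
(1-\pr_1)Y_1^{(k)}\Pi_k f=\tc^{k}\,\omega_k^{-1}\iota_k\pi_k\bigl(T_{k-1}^{-1}\cdots T_1^{-1}\,\Pi_k f\bigr),
\]
so it is enough to present the sequence $\bigl(\omega_k^{-1}\iota_k\pi_k(T_{k-1}^{-1}\cdots T_1^{-1}\Pi_k f)\bigr)_k$ in the form $\sum_j b_{j,k}\,g_{j,k}$ with each $(g_{j,k})_k$ compatible with the inverse system, $\lim_k g_{j,k}\in\Pas^+$, and the orders $\ord b_{j,k}$ bounded below independently of $k$; then $\ord(\tc^{k}b_{j,k})=k+\ord b_{j,k}\to+\infty$, and Definition~\ref{Def of limit} forces the limit $\sum_j 0\cdot g_j=0$. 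To obtain such a presentation I would peel the variable $x_k$ off the product $T_{k-1}^{-1}\cdots T_1^{-1}$, using the explicit Demazure--Lusztig formulas, the compatibilities \eqref{pos system} and \eqref{eq1}, and the fact that $\Pi_k f$ is symmetric in $x_{n+1},\dots,x_k$ for the fixed $n$ with $f\in\P(n)^+$; the bookkeeping is of the same nature as in the proof of Lemma~\ref{lemma: W} and of the commutative diagram preceding Proposition~\ref{prop: Z}, and I expect it to be the main difficulty. Granting this, $(Y_1^{(k)}\Pi_k f)_k$ converges with $\Y_1 f=\widetilde{Z}_1^{(\infty)}f+W_1^{(\infty)}f\in\P_\infty^+$, and the reduction of the first paragraph extends the conclusion to all $i\ge 1$.
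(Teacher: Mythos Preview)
You have been misled by a notational slip in the statement: throughout \S\ref{sec: +stablelimit} the symbol $Y_i^{(k)}$ in this proposition and in the results that follow (see the proofs of Lemma~\ref{lem: P(k)stable} and Proposition~\ref{Y1 action}) actually denotes the \emph{deformed} operator $\widetilde{Y}_i^{(k)}$, not the genuine Cherednik operator in $\H_k$. The paper's one-line proof makes this unambiguous: since $\widetilde{Y}_i^{(k)}=\widetilde{Z}_i^{(k)}+W_i^{(k)}$ by definition, convergence of $(\widetilde{Y}_i^{(k)}\Pi_k f)_k$ is immediate from Proposition~\ref{prop: Z} and Lemma~\ref{lemma: W}, and $\Y_i=\widetilde{Z}_i^{(\infty)}+W_i^{(\infty)}$.

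Your second paragraph is exactly this argument, so that part is correct and matches the paper verbatim. Everything else in your proposal is superfluous. The reduction to $i=1$ is unnecessary, since Proposition~\ref{prop: Z} and Lemma~\ref{lemma: W} are already stated and proved for all $i\ge 1$. More importantly, the splitting $Y_1^{(k)}=\widetilde{Y}_1^{(k)}+(1-\pr_1)Y_1^{(k)}$ and the proposed analysis of the remainder address a question the paper does not ask. You also do not actually carry out that step --- you say you ``expect it to be the main difficulty'' and leave the bookkeeping undone --- so even under your own reading the argument is incomplete. Note, too, that the heuristic ``the $\tc^{k}$ factor will kill it'' is delicate: $\widetilde{Y}_1^{(k)}$ itself carries the same $\tc^{k}$, so the vanishing of the remainder, if true, genuinely requires the finer control you gesture at but do not provide. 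With the intended reading of the statement, there is nothing left to do after your second paragraph.
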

\begin{proof}
Straightforward from Proposition \ref{prop: Z} and Lemma \ref{lemma: W}. More precisely, $\Y_i=\widetilde{Z}^{(\infty)}_i+W^{(\infty)}_i$.
\end{proof}

\begin{ex}\label{exp: p_2iii}
In continuation of Examples \ref{exp: p_2} and \ref{exp: p_2ii}, we have
$$
\Y_1 \cdot  p_2[\Xb_1]=-\qc^2 \tc x_1^2+\qc\tc(\tc-1)x_1 e_1[\Xb_1].
$$
As it can be seen, $p_2[\Xb_1]$ and $\Y_1 \cdot  p_2[\Xb_1]$ are elements of $\P(1)^+$.

Also, from Examples \ref{exp: x_2^2} and  \ref{exp: x_2^2ii} we have
$$
\Y_1 \cdot  x_2^2=\qc^2 \tc(\tc-1)x_1^2+\qc\tc(\tc-1)x_1 x_2-\qc\tc(\tc-1)^2 x_1 e_1[\Xb_2].
$$
In this case, $x_2^2$ and $\Y_1 x_2^2$ are elements of $\P(2)^+$.
\end{ex}

It turns out that the image of $\Y_i$ is contained in $\Pas^+$. We will use the following result.
\begin{lem}\label{Ytilde extension}
Let $f\in\mathbb{Q}(\tc,\qc)[x_{k+1},x_{k+2},\dots ]$ be a polynomial satisfying 
$$T_i f=f,\quad \textrm{ for }k+1\leq i\leq k+m.$$
Let $g=T_{k+m}^{-1}\dots T_{k+1}^{-1}T_k^{-1}(x_{k}^{s}f)$, for some $s\geq 0$. Then, 
$$T_i g=g \quad \textrm{ for }k\leq i\leq k+m-1.$$
\end{lem}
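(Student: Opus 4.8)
To prove Lemma \ref{Ytilde extension}, the first move I would make is to translate the hypothesis and conclusion into symmetry statements: since $T_ih=h$ holds exactly when $h$ is symmetric under the exchange of $x_i$ and $x_{i+1}$, the hypothesis says that $f$ is symmetric in the block $x_{k+1},\dots,x_{k+m+1}$, and the conclusion says that $g$ is symmetric in the shifted block $x_k,\dots,x_{k+m}$. If $m=0$ there is nothing to prove, so assume $m\geq 1$. I would set $g_0=x_k^sf$ and $g_r=T_{k+r-1}^{-1}g_{r-1}$ for $1\leq r\leq m+1$, so that $g_{m+1}=g$, and prove by induction on $r$ the combined assertion $(\ast_r)$: ``$T_ig_r=g_r$ for $k\leq i\leq k+r-2$, and $T_jg_r=g_r$ for $k+r+1\leq j\leq k+m$''. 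The case $r=m+1$ of $(\ast_r)$ is precisely the statement of the lemma.

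The base case $r=1$ has an empty first range, and for $k+2\leq j\leq k+m$ the operator $T_j$ commutes with $T_k^{-1}$ and with multiplication by $x_k$, so $T_jg_1=T_jT_k^{-1}(x_k^sf)=T_k^{-1}(x_k^s\,T_jf)=g_1$ by the hypothesis on $f$. In the inductive step, write $a=k+r-1$ and $b=k+r=a+1$, so $g_{r+1}=T_b^{-1}g_r$. The portions of $(\ast_{r+1})$ indexed by $k+r+2\leq j\leq k+m$ and by $k\leq i\leq k+r-2$ are routine: those $T$'s are at distance at least $2$ from $T_b$, hence commute with $T_b^{-1}$, and one concludes by applying the inductive hypothesis $(\ast_r)$ to $g_r$.

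The one step that is genuinely more than bookkeeping, and that I expect to be the main obstacle, is the borderline index $i=a=k+r-1$: here $T_a$ and $T_b$ do not commute, and $g_r$ is not symmetric in $x_a$ and $x_{a+1}$, so neither the commutation trick nor a direct symmetric-factor argument applies. The plan is to write $g_{r+1}=T_b^{-1}T_a^{-1}g_{r-1}$ and use the braid relation in the rearranged form $T_aT_b^{-1}T_a^{-1}=T_b^{-1}T_a^{-1}T_b$ (derived from $T_aT_bT_a=T_bT_aT_b$ by isolating $T_aT_bT_a^{-1}=T_b^{-1}T_aT_b$ and inverting), which reduces $T_ag_{r+1}=g_{r+1}$ to the single identity $T_bg_{r-1}=g_{r-1}$. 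That identity I would verify directly: for $r\geq 2$, $T_b=T_{k+r}$ commutes with each of $T_{k+r-2}^{-1},\dots,T_k^{-1}$ and with multiplication by $x_k$, while $T_{k+r}f=f$ because $k+1\leq k+r\leq k+m$ — this is exactly the point at which the hypothesis block must reach up to $x_{k+m+1}$ — so $T_bg_{r-1}=g_{r-1}$; and for $r=1$ one applies $T_{k+1}f=f$ directly to $g_0=x_k^sf$. Everything takes place in the ring of polynomials in $x_1,x_2,\dots$ on which the $T_i$ act by the Demazure--Lusztig formula and satisfy the braid and quadratic relations, so no additional input is required; the only real care needed is the index bookkeeping that keeps $b=k+r$ inside the range $[k+1,k+m]$ throughout the induction.
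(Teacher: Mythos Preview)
Your argument is correct and is precisely a detailed unpacking of the paper's one-line proof, which reads in full: ``The claim is a consequence of the braid relations for the elements $T_i$.'' Your induction on $r$, together with the key braid identity $T_aT_b^{-1}T_a^{-1}=T_b^{-1}T_a^{-1}T_b$ at the borderline index, is exactly the computation the paper is alluding to.
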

\begin{proof}
The claim is a consequence of the braid relations for the elements $T_i$.
\end{proof}

\begin{lem}\label{lem: P(k)stable}
$\P(k)^{+}$ is stable under the action of $\Y_1$.
Therefore, we have
$$\Y_i:\Pas^{+}\rightarrow \Pas^{+}.$$
\end{lem}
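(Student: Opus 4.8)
The plan is to prove the first claim — that $\Y_1$ maps each $\P(k)^{+}$ into itself — by a computation at finite rank, and then to deduce the assertion about all $\Y_i$ formally from the relation $\Y_{i+1}=\tc^{-1}\T_i\Y_i\T_i$ together with the continuity statement of Corollary \ref{cor: AB}. Since $\Y_1$ is $\mathbb{Q}(\tc,\qc)$-linear it suffices to treat $f=g(x_1,\dots,x_k)\,m_\alpha[\overline{\mathbf{X}}_k]\in\P(k)^{+}$ with $g$ a monomial; for large $m$ we have $\Pi_m f=g(x_1,\dots,x_k)\,F_m$ with $F_m:=m_\alpha[\overline{\mathbf{X}}_{[k+1,m]}]$ symmetric in $x_{k+1},\dots,x_m$, and the goal is to show $\Pi_m(\Y_1 f)$ is symmetric in $x_{k+1},\dots,x_m$ for all such $m$, which is equivalent to $\T_j\Y_1 f=\Y_1 f$ for all $j>k$.

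The first step is to analyse $\widetilde{Y}_1^{(m)}\Pi_m f=\tc^{m}\varpi_m T_{m-1}^{-1}\cdots T_1^{-1}\Pi_m f$. Writing $g':=T_{k-1}^{-1}\cdots T_1^{-1}g=\sum_s x_k^{s}\,g'_s(x_1,\dots,x_{k-1})$ and using that $T_1^{-1},\dots,T_{k-1}^{-1}$ move only $x_1,\dots,x_k$ while $T_{k+1}^{-1},\dots,T_{m-1}^{-1}$ move only $x_{k+1},\dots,x_m$, one obtains
\[
T_{m-1}^{-1}\cdots T_1^{-1}\,\Pi_m f=\sum_s g'_s(x_1,\dots,x_{k-1})\,G_s,\qquad G_s:=T_{m-1}^{-1}\cdots T_{k+1}^{-1}T_k^{-1}\big(x_k^{s}F_m\big).
\]
Lemma \ref{Ytilde extension}, applied to $F_m$ (which satisfies $T_iF_m=F_m$ for $k+1\le i\le m-1$), shows each $G_s\in\mathbb{Q}(\tc,\qc)[x_k,\dots,x_m]$ is symmetric in $x_k,\dots,x_{m-1}$. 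Since $\varpi_m=\pr_1\omega_m^{-1}$ and $\omega_m^{-1}$ is the ring automorphism sending $x_i\mapsto x_{i+1}$ for $1\le i\le m-1$ and $x_m\mapsto\qc x_1$, we get $\omega_m^{-1}(g'_s G_s)=g'_s(x_2,\dots,x_k)\,\omega_m^{-1}(G_s)$, where $\omega_m^{-1}(G_s)\in\mathbb{Q}(\tc,\qc)[x_1,x_{k+1},\dots,x_m]$ is symmetric in $x_{k+1},\dots,x_m$; as $\pr_1$ acts only through the variable $x_1$ it commutes with permutations of $x_{k+1},\dots,x_m$, so $\widetilde{Y}_1^{(m)}\Pi_m f=\tc^{m}\pr_1\omega_m^{-1}\big(\sum_s g'_s G_s\big)$ is symmetric in $x_{k+1},\dots,x_m$.

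Next I would pass from $\widetilde{Y}_1^{(m)}$ to $\Y_1$. By Proposition \ref{prop: Y} together with $\widetilde{Y}_1^{(m)}=\widetilde{Z}_1^{(m)}+W_1^{(m)}$ and the compatibility $\Pi_m\widetilde{Z}_1^{(\infty)}=\widetilde{Z}_1^{(m)}\Pi_m$ of Proposition \ref{prop: Z},
\[
\Pi_m(\Y_1 f)=\widetilde{Y}_1^{(m)}\Pi_m f-W_1^{(m)}\Pi_m f+\Pi_m\!\big(W_1^{(\infty)}f\big).
\]
Substituting the explicit expressions for $W_1^{(m)}\Pi_m f$ and for $W_1^{(\infty)}f$ obtained in the proof of Lemma \ref{lemma: W}, the tails $\sum_{m'\in[\beta_i]_m}\tc^{\ell(m')}m'$ cancel, leaving
\[
\Pi_m(\Y_1 f)=\widetilde{Y}_1^{(m)}\Pi_m f-\tc^{m}\,g(x_1,\dots,x_k)\sum_i\qc^{a_i}x_1^{a_i}\,m_{\beta_i}[\overline{\mathbf{X}}_{[k+1,m]}],
\]
where $\beta_i$ is $\alpha$ with one part $a_i$ removed (and if $x_1\mid g$ then $W_1^{(m)}\Pi_m f=W_1^{(\infty)}f=0$ and the last term is absent). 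The subtracted term is manifestly symmetric in $x_{k+1},\dots,x_m$, so, by the previous paragraph, $\Pi_m(\Y_1 f)$ is too; letting $m$ grow gives $\Y_1 f\in\P(k)^{+}$.

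It then follows that $\Y_1(\Pas^{+})\subseteq\Pas^{+}$; also each $\T_i$ maps $\Pas^{+}$ into itself, since on $f\in\P(k)^{+}$ the polynomial $T_i^{(m)}\Pi_m f$ stays symmetric in $x_{j},\dots,x_m$ for $j>\max(k,i)+1$, so $\T_i f\in\P(\max(k,i+1))^{+}$. I would then argue by induction on $i$: assuming $\Y_i(\Pas^{+})\subseteq\Pas^{+}$, Proposition \ref{prop: Y} gives $\Y_i f=\lim_m\widetilde{Y}_i^{(m)}\Pi_m f$ in $\Pas^{+}$, and two applications of Corollary \ref{cor: AB} to $\widetilde{Y}_{i+1}^{(m)}=\tc^{-1}T_i^{(m)}\widetilde{Y}_i^{(m)}T_i^{(m)}$ give $\Y_{i+1}=\tc^{-1}\T_i\Y_i\T_i$ as operators on $\Pas^{+}$, which is again a self-map of $\Pas^{+}$. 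Hence $\Y_i\colon\Pas^{+}\to\Pas^{+}$ for all $i\ge 1$. I expect the main obstacle to be the finite-rank identity of the second paragraph: one must keep track, through $\varpi_m=\pr_1\omega_m^{-1}$ and the string $T_{m-1}^{-1}\cdots T_1^{-1}$, of exactly which block of variables inherits the symmetry supplied by Lemma \ref{Ytilde extension}, and verify that the projection $\pr_1$ does not destroy it.
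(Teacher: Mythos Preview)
Your argument is correct and follows the same overall strategy as the paper: both proofs reduce to showing that $\widetilde{Y}_1^{(m)}\Pi_m f$ is symmetric in $x_{k+1},\dots,x_m$ via Lemma~\ref{Ytilde extension}, and then pass to the limit. The paper transports the symmetry through $\varpi_m$ using the algebraic relation \eqref{omega3 relation} ($\varpi_m T_i = T_{i+1}\varpi_m$), whereas you use the concrete description $\varpi_m=\pr_1\omega_m^{-1}$ and track the variable shift directly; these are equivalent.

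Where your proof genuinely adds something is in the passage to the limit. The paper writes only ``In conclusion, the limit $\lim_m \widetilde{Y}_1^{(m)}\Pi_m f\in\P(k)^+$'', but the limit of Definition~\ref{Def of limit} is \emph{not} an inverse limit, and in general $\widetilde{Y}_1^{(m)}\Pi_m f\neq \Pi_m(\Y_1 f)$, so symmetry of the former does not immediately give symmetry of the latter. Your computation
\[
\Pi_m(\Y_1 f)=\widetilde{Y}_1^{(m)}\Pi_m f-\tc^{m}\,g\sum_i\qc^{a_i}x_1^{a_i}\,m_{\beta_i}[\overline{\mathbf{X}}_{[k+1,m]}],
\]
obtained from $\Y_1=\widetilde{Z}_1^{(\infty)}+W_1^{(\infty)}$, Proposition~\ref{prop: Z}, and the explicit formulas in the proof of Lemma~\ref{lemma: W}, closes this gap cleanly: the correction term is visibly symmetric in $x_{k+1},\dots,x_m$, hence so is $\Pi_m(\Y_1 f)$, and therefore $\Y_1 f\in\P(k)^+$. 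Your inductive extension to all $\Y_i$ via $\Y_{i+1}=\tc^{-1}\T_i\Y_i\T_i$ and Corollary~\ref{cor: AB} is also a welcome clarification of the paper's one-word ``Therefore''.
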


\begin{proof}
Let $f\in \P(k)^{+}$ (recall the definition of $\P(k)^{+}$ in \S\ref{sec: p(k)}). For any $m>k+1$ we have
$$\widetilde{Y}_1^{(m)} \Pi_m f=\tc^{m} \varpi_m T_{m-1}^{-1}\dots T_{1}^{-1}\Pi_m f.$$
Since $f\in \P(k)^{+}$, $\Pi_m f$ is fixed under the action of $T_{k+1},\dots ,T_{m-1}$. Now, we have
$$\varpi_m T_{m-1}^{-1}\dots T_{1}^{-1}\Pi_m f=\varpi_m T_{m-1}^{-1}\dots T_{k+1}^{-1}T_{k}^{-1}(T_{k-1}^{-1}\dots T_{1}^{-1}\Pi_m f),$$
and we may write 
$$T_{k-1}^{-1}\dots T_{1}^{-1}\Pi_m f=\sum_{j} c_j[x_1,\dots ,x_{k-1}]x_{k}^{s_j}f_j$$
as a finite sum, where each $f_j$ is a polynomial in $\mathbb{Q}(\tc,\qc)[x_{k+1},x_{k+2},\dots ]$ satisfying 
$$T_i f_j=f_j,\quad \textrm{ for }i=k+1,k+2,\dots ,m-1.$$
Applying {Lemma \ref{Ytilde extension}} for $T_{k-1}^{-1}\dots T_{1}^{-1}\Pi_m f$, we obtain that 
$ T_{m-1}^{-1}\dots T_{1}^{-1}\Pi_m f$ is fixed under the action of  the elements $T_{k},\dots ,T_{m-2}$. The relation \eqref{omega3 relation} implies that 
$\varpi_m T_{m-1}^{-1}\dots T_{1}^{-1}\Pi_m f$ is fixed under the action of $T_{k+1},\dots ,T_{m-1}$. Therefore, $\widetilde{Y}_1^{(m)} \Pi_m f$ is symmetric in $x_{k+1},\dots, x_m$ for all $m>k+1$. In conclusion, the limit
$$\displaystyle \lim_{m} \widetilde{Y}_i^{(m)}\Pi_m f\in \P(k)^+,$$
proving our claim.
\end{proof}

We can now state the following.

\begin{prop}\label{prop: limit action}
The space $\Pas^+$ carries an action of the limit operators $\T_i, \X_i, \Y_i$, $i\geq 1$.
\end{prop}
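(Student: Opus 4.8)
The plan is to check, one family at a time, that each of $\T_i$, $\X_i$, $\Y_i$ is a well-defined $\mathbb{Q}(\tc,\qc)$-linear endomorphism of $\Pas^+$; no relations among them are needed at this stage (those are the content of Theorem~\ref{thm: +standardrep}). The point worth stressing is that $\Y_i$ is only available on $\Pas^+$, whereas $\T_i$ and $\X_i$ live on the larger space $\P_\infty^+$, so the proposition amounts to seeing that $\Pas^+$ is simultaneously stable under all three and is thus their common domain.

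For $\T_i$ and $\X_i$ I would argue as follows. By the partial compatibility \eqref{pos system}, the sequences $(T_i^{(k)})_{k\ge i+2}$ and $(X_i^{(k)})_{k\ge i+1}$ commute with the structure maps $\pi_k$, hence (as in \S\ref{sec: -limits}) induce limit operators $\T_i,\X_i\colon\P_\infty^+\to\P_\infty^+$, with $\X_i$ acting as multiplication by $x_i$. It then remains to see that these restrict to $\Pas^+$, i.e.\ preserve the filtration by the $\P(k)^+$. If $F\in\P(k)^+$ then $F$ is fixed by $\T_j$ for every $j>k$; multiplying by $x_i$ (which $s_j$ fixes once $j>i$) keeps this, so $\X_iF\in\P(\max(i,k))^+$. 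For $\T_i$: if $i>k$ then $\T_iF=F$; if $i\le k$ then for $j>\max(k,i+1)$ one has $|i-j|>1$, so the braid relations \eqref{T relations} give $\T_j\T_iF=\T_i\T_jF=\T_iF$, whence $\T_iF\in\P(\max(k,i+1))^+$. Either way $\T_iF\in\Pas^+$.

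For $\Y_i$ the substantive work has effectively been done already: Proposition~\ref{prop: Y} shows that for $f\in\Pas^+$ the sequence $(Y_i^{(k)}\Pi_k f)_k$ converges in the sense of Definition~\ref{Def of limit}, yielding $\Y_i\colon\Pas^+\to\P_\infty^+$ with $\Y_i=\widetilde{Z}_i^{(\infty)}+W_i^{(\infty)}$, and Lemma~\ref{lem: P(k)stable} refines the target to $\Pas^+$ by exhibiting, via the braid-relation Lemma~\ref{Ytilde extension}, that $\widetilde{Y}_1^{(m)}\Pi_m f$ is symmetric in $x_{k+1},\dots,x_m$ whenever $f\in\P(k)^+$. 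Assembling the three cases gives the statement. I do not expect any further obstacle here: the genuinely delicate analysis — the existence of the limits defining $\Y_i$ and the stability of $\Pas^+$ under $\Y_i$ — is precisely what the preceding subsections established, and the remaining verifications for $\T_i$ and $\X_i$ are routine applications of the braid relations and of the monomial description of $\P(k)^+$.
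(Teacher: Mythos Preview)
Your proposal is correct and matches the paper's approach: the paper offers no explicit proof of this proposition, treating it as an immediate summary of the preceding results (the compatibility \eqref{pos system} for $\T_i,\X_i$, Proposition~\ref{prop: Y} for the existence of $\Y_i$, and Lemma~\ref{lem: P(k)stable} for $\Y_i(\Pas^+)\subseteq\Pas^+$). Your write-up is in fact more complete than the paper's, since you spell out why $\T_i$ and $\X_i$ preserve $\Pas^+$---a point the paper leaves implicit.
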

As we will show in Theorem \ref{thm: +standardrep}, these operators define an action of $\H^+$ on $\Pas^+$. 
\begin{rmk}
Following up on Remark \ref{rem: modCherednik},  it is important to note that on $x_i\Pas^+$ the action of $\Y_i$ is the stable limit of the action of the sequence of Cherednik operators $Y^{(k)}_i\in \H_k$.
\end{rmk}
\subsection{} In fact, we can obtain a more precise description of the action of $\Y_1$ on $\P(k)^+$. First, let us record the following technical result. We use the notation in \S \ref{sec: esf}.
\begin{lem}\label{finite DAHA lemma}
Let $x_1^{n}\in \P^+_{m+1}$. Then, 
$$\tc^{m}T_{m}^{-1}...T_{1}^{-1}x_1^{n}=\sum_{i=0}^{n-1}x_{m+1}^{n-i}h_i[(1-\tc)\overline{\mathbf{X}}_{m}].$$
\end{lem}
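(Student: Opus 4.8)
The plan is to prove the identity by induction on $m$, the single ingredient at each step being the action of $T_m^{-1}$ on a power of $x_m$ together with the convolution formula \eqref{eq3}. Induction on $m$ is the natural choice because the outermost operator $\tc T_m^{-1}$ can be peeled off while the lower operators $\tc^{m-1}T_{m-1}^{-1}\cdots T_1^{-1}$ are handled by the inductive hypothesis.

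First I would record the base computation. The quadratic relation \eqref{quadratic} gives $\tc T_i^{-1}=T_i-(1-\tc)$ as operators on any $\P^+_k$, and since $T_i$ is linear over the ring of polynomials not involving $x_i,x_{i+1}$, the explicit formula \eqref{DAHA representation} yields, for every $j\geq 1$,
$$\tc T_i^{-1}(x_i^{j})=x_{i+1}^{j}+(1-\tc)\bigl(x_i^{j-1}x_{i+1}+\dots+x_ix_{i+1}^{j-1}\bigr)=\sum_{\ell=0}^{j-1}x_{i+1}^{j-\ell}\,h_\ell[(1-\tc)x_i],$$
where the last equality uses \eqref{eq2} and $h_0=1$. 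Specializing $i=m$, $k=m+1$, this is exactly the case $m=1$ of the Lemma, and it is also the only computation needed to pass from $m-1$ to $m$.

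For the inductive step, assume the formula for $m-1$. Then $\tc^{m-1}T_{m-1}^{-1}\cdots T_1^{-1}x_1^{n}=\sum_{i=0}^{n-1}x_m^{n-i}h_i[(1-\tc)\overline{\mathbf{X}}_{m-1}]$ is a polynomial in $x_1,\dots,x_m$ only, and since each coefficient $h_i[(1-\tc)\overline{\mathbf{X}}_{m-1}]$ involves only $x_1,\dots,x_{m-1}$ it commutes with $T_m^{-1}$, whence
\begin{align*}
\tc^{m}T_{m}^{-1}\cdots T_1^{-1}x_1^{n}
&=\tc T_m^{-1}\Bigl(\sum_{i=0}^{n-1}x_m^{n-i}\,h_i[(1-\tc)\overline{\mathbf{X}}_{m-1}]\Bigr)\\
&=\sum_{i=0}^{n-1}h_i[(1-\tc)\overline{\mathbf{X}}_{m-1}]\sum_{\ell=0}^{n-i-1}x_{m+1}^{n-i-\ell}\,h_\ell[(1-\tc)x_m],
\end{align*}
the inner sum being the base computation with $j=n-i\geq 1$. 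Collecting by the exponent of $x_{m+1}$, i.e. setting $p=i+\ell$ (so that $0\leq i\leq p\leq n-1$, the constraint $\ell\leq n-i-1$ being automatic), and invoking the complete-homogeneous convolution formula \eqref{eq3} with $\overline{\mathbf{X}}_m=\overline{\mathbf{X}}_{m-1}\sqcup\{x_m\}$, we get
$$\tc^{m}T_{m}^{-1}\cdots T_1^{-1}x_1^{n}=\sum_{p=0}^{n-1}x_{m+1}^{n-p}\sum_{i=0}^{p}h_i[(1-\tc)\overline{\mathbf{X}}_{m-1}]\,h_{p-i}[(1-\tc)x_m]=\sum_{p=0}^{n-1}x_{m+1}^{n-p}\,h_p[(1-\tc)\overline{\mathbf{X}}_{m}],$$
which is the assertion.

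The argument is routine; the only things to watch are that $T_m^{-1}$ is genuinely linear over $\mathbb{Q}(\tc,\qc)[x_1,\dots,x_{m-1}]$ (so the lower $h_i$'s pull out of $\tc T_m^{-1}$), and the bookkeeping of the summation ranges in the reindexing — in particular that no constant term is produced, which holds because every exponent $n-i$ occurring is at least $1$. I do not anticipate any substantive obstacle: this lemma is a purely computational stepping stone toward the explicit description of the action of $\Y_1$ on $\P(k)^+$.
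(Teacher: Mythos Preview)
Your proof is correct and follows the same route as the paper's: induction on $m$, with the base case being the explicit action of $\tc T_1^{-1}$ on $x_1^n$, and the inductive step peeling off $\tc T_m^{-1}$, applying the same single-variable computation to $x_m^{n-i}$, and collapsing the double sum via the convolution identity \eqref{eq3}. Your write-up is slightly more explicit about the linearity of $T_m^{-1}$ over $\mathbb{Q}(\tc,\qc)[x_1,\dots,x_{m-1}]$ and the reindexing bounds, but the argument is the same.
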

\begin{proof}
We will prove the result by induction on $m$. By direct computation we obtain
\begin{align*}
T_{1}^{-1}x_1^{n}&=\tc^{-1}x_2^{n}+(\tc^{-1}-1)(x_2^{n-1}x_1+x_2^{n-2}x_1^2+...+x_2x_1^{n-1})\\
	&=\tc^{-1}\sum_{i=0}^{n-1}x_{2}^{n-i}h_i[(1-\tc)x_1].
\end{align*}
Assume that our claim holds for $m-1$. Then, we have
\begin{align*}
\tc^{m}T_{m}^{-1}...T_{1}^{-1}x_1^{n}&=\tc T_{m}^{-1}\sum_{i=0}^{n-1}x_{m}^{n-i}h_i[(1-\tc)\overline{\mathbf{X}}_{m-1}]\\
	&=\sum_{i=0}^{n-1}\sum_{j=0}^{n-i-1}x_{m+1}^{n-i-j}h_j[(1-\tc)x_m] h_i[(1-\tc)\overline{\mathbf{X}}_{m-1}]\\
	&=\sum_{l=0}^{n-1} x_{m+1}^{n-l}\sum_{\substack{i+j=l\\i, j\geq 0}}h_j[(1-\tc)x_m] h_i[(1-\tc)\overline{\mathbf{X}}_{m-1}]\\
	&=\sum_{l=0}^{n-1} x_{m+1}^{n-l}h_l[(1-\tc)\overline{\mathbf{X}}_{m}],
\end{align*}
as expected.
\end{proof}

Recall that for all $k$, the multiplication map  $ \P_k^{+}\otimes \Sym[\mathbf{X}_k]  \cong\P(k)^+$ is an algebra isomorphism.
\begin{prop}\label{Y1 action}
Let $n\geq 0$, $f(x_1,\dots,x_{k-1})\in \P_{k-1}^{+}$, and $G[\mathbf{X}_{k-1}]\in  \Sym[\mathbf{X}_{k-1}]$. We regard $$F=f(x_1,\dots,x_{k-1})x_k^n G[\mathbf{X}_{k-1}]$$ as an element of $\P(k)^+$. Then,
$$
\Y_1\T_1\cdots \T_{k-1} F =  \frac{\tc^k}{1-\tc} f(x_2,\dots,x_{k})G[\mathbf{X}_{k}+\qc x_1](h_n[(1-\tc )(\mathbf{X}_k+\qc x_1)]-h_n[(1-\tc )\mathbf{X}_k]).
$$
\end{prop}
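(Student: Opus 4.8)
The plan is to compute $\Y_1\T_1\cdots\T_{k-1}F$ directly through its finite-rank approximants and then take the limit. Fix $m>k$. By Proposition \ref{prop: Y}, $\Y_1 g=\lim_m Y_1^{(m)}\Pi_m g$ for $g\in\Pas^+$, and by Corollary \ref{cor: AB} (applied to the operators $\T_1,\dots,\T_{k-1}$, which are honest limit operators in the sense of \S\ref{sec: -limits}) the composite $\Y_1\T_1\cdots\T_{k-1}$ is the limit of the sequence $Y_1^{(m)}T_1^{(m)}\cdots T_{k-1}^{(m)}$. So it suffices to evaluate, for all large $m$,
$$
Y_1^{(m)}T_1\cdots T_{k-1}\,\Pi_m F
$$
in $\P_m^+$ and recognize the resulting sequence as a convergent one whose limit is the claimed expression.

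First I would compute $T_1\cdots T_{k-1}\,\Pi_m F$. Write $\Pi_m F=f(x_1,\dots,x_{k-1})\,x_k^n\,G[\mathbf{X}_{k-1}]$ inside $\P_m^+$; since $G[\mathbf{X}_{k-1}]$ is symmetric in $x_1,\dots,x_{k-1}$ and $f$, $G$ involve only $x_1,\dots,x_{k-1}$ while $T_1\cdots T_{k-1}$ only moves $x_k$ past them (the braid/cross relations \eqref{X relation ii}), the operator $T_1\cdots T_{k-1}$ essentially acts on the $x_k^n$ factor, producing a Demazure-type sum; this is where Lemma \ref{finite DAHA lemma} enters, giving an expression of the form $\sum_i x_{k}^{n-i}h_i[(1-\tc)\overline{\mathbf{X}}_{[1,k-1]}]$ up to the appropriate substitution of alphabet and the bookkeeping of how $f$ and $G$ get relabelled. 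The net effect is to turn $F$ into $f(x_2,\dots,x_k)\,G[\mathbf{X}_{[2,k]}]$ times a complete-homogeneous tail in $x_1,\dots,x_{k-1}$, with the expected power of $\tc$.

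Next I would apply $Y_1^{(m)}=\tc^{1-m}\cdots$, or rather recognize through Remark \ref{rem: modCherednik} that on the piece divisible by $x_1$ the action of $\tc^{-m}\widetilde{Y}_1^{(m)}$ agrees with $Y_1^{(m)}$, and that $\widetilde{Y}_1^{(m)}=\tc^m\varpi_m T_{m-1}^{-1}\cdots T_1^{-1}$. Using $\varpi_m f=\pr_1 f(x_2,\dots,x_m,\qc x_1)$ from \eqref{modified DAHA representation}, the operator $\varpi_m T_{m-1}^{-1}\cdots T_1^{-1}$ will move a variable to the front, cyclically shift, and multiply by $\qc$; the key identity is the plethystic convolution \eqref{eq3}--\eqref{eq4} which converts $h_i[(1-\tc)\overline{\mathbf{X}}_{[*,m]}]$ across the shift and produces exactly the difference $h_n[(1-\tc)(\mathbf{X}_k+\qc x_1)]-h_n[(1-\tc)\mathbf{X}_k]$ after the $x_1$-divisible projection $\pr_1$ kills the term without $x_1$. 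The substitution $G[\mathbf{X}_k+\qc x_1]$ and $f(x_2,\dots,x_k)$ appear because the cyclic shift $x_j\mapsto x_{j-1}$, $x_1\mapsto \qc x_m$ relabels the alphabet and, in the limit $m\to\infty$, the symmetric function $G$ evaluated on $x_2,\dots,x_k,x_{k+1},\dots$ together with the $\qc x_1$ contribution becomes $G[\mathbf{X}_k+\qc x_1]$ (plethystically, the infinite tail being symmetrized away). Finally the geometric factor $\tc^k/(1-\tc)$ comes from summing the tail $\sum_{j}\tc^{j}$ arising from $h_i[(1-\tc)\overline{\mathbf{X}}_{[k,m]}]$ as $m\to\infty$, exactly as in the Example after Definition \ref{Def of limit} and in the proof of Lemma \ref{lemma: W}.

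The main obstacle I anticipate is the careful bookkeeping of the two alphabet relabellings happening simultaneously — the Demazure sum from $T_1\cdots T_{k-1}$ introduces $h_i$ in the variables $x_1,\dots,x_{k-1}$, while the subsequent $\varpi_m T_{m-1}^{-1}\cdots T_1^{-1}$ both cyclically permutes all $m$ variables and introduces a second family of $h_j$'s in the growing tail $x_k,\dots,x_m$; reconciling these via \eqref{eq3}--\eqref{eq4} so that the $x_1,\dots,x_{k-1}$ part collapses (after $\pr_1$) into the clean difference of two $h_n$'s, while the tail part cleanly sums to $\tc^k/(1-\tc)$ in the limit, requires tracking each factor of $\tc$ and each $\qc^{a}$ precisely. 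A secondary subtlety is justifying that the resulting sequence of polynomials is convergent in the sense of Definition \ref{Def of limit} and lies in $\Pas^+$ — but this is guaranteed abstractly by Lemma \ref{lem: P(k)stable} and Proposition \ref{prop: Y}, so the real content is just the identification of the limit, not its existence.
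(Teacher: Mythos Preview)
Your overall strategy---compute the finite-rank expression $\widetilde Y_1^{(m)}T_1\cdots T_{k-1}\Pi_mF$ and take the limit---is exactly what the paper does, but you miss the one simplification that makes the computation short, and you misidentify the origin of the constant $\tc^k/(1-\tc)$.

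The point you miss is that since $\widetilde Y_1^{(m)}=\tc^m\varpi_m T_{m-1}^{-1}\cdots T_1^{-1}$, the right factor $T_1\cdots T_{k-1}$ simply \emph{cancels} the last $k-1$ inverse $T$'s, leaving $\tc^m\varpi_m T_{m-1}^{-1}\cdots T_k^{-1}$. There is no need to compute $T_1\cdots T_{k-1}\Pi_m F$ on its own, and Lemma~\ref{finite DAHA lemma} does not apply there anyway (it concerns a string of $T^{-1}$'s acting on a power of a single variable, not a string of $T$'s). After the cancellation, $f$ and $G$ commute past the remaining $T_{m-1}^{-1}\cdots T_k^{-1}$ (they involve only $x_1,\dots,x_{k-1}$, respectively are symmetric in $x_k,\dots,x_m$), and a \emph{single} application of Lemma~\ref{finite DAHA lemma} to $\tc^{m-k}T_{m-1}^{-1}\cdots T_k^{-1}x_k^n$ gives $\sum_{i=0}^{n-1}x_m^{n-i}h_i[(1-\tc)\overline{\mathbf X}_{[k,m-1]}]$. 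Applying $\varpi_m$ is then just a variable shift $x_j\mapsto x_{j+1}$, $x_m\mapsto \qc x_1$; no second family of $h_j$'s is created and no reconciliation via \eqref{eq3}--\eqref{eq4} is needed.

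Your diagnosis of $\tc^k/(1-\tc)$ is also off. No geometric series is summed: the power $\tc^k$ is exactly what remains of $\tc^m$ after the $\tc^{m-k}$ absorbed by Lemma~\ref{finite DAHA lemma}, and it is already present at each finite $m$. The factor $1/(1-\tc)$ appears purely algebraically at the very last step, by rewriting $(\qc x_1)^{n-i}=h_{n-i}[(1-\tc)\qc x_1]/(1-\tc)$ (this is \eqref{eq2}) so that the convolution formula \eqref{eq3} turns $\sum_{i=0}^{n-1}(\qc x_1)^{n-i}h_i[(1-\tc)(\cdot)]$ into the claimed difference of $h_n$'s. The passage $m\to\infty$ here is the trivial one (each piece is compatible with $\pi_m$), not an instance of the $\tc$-adic convergence in the Example after Definition~\ref{Def of limit}.
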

\begin{proof}
For any $m>k+1$, we have
\begin{align*}\widetilde{Y}_1^{(m)} T_1\cdots T_{k-1} \Pi_m F&=\tc^{m} \varpi_m T_{m-1}^{-1}\dots T_{k}^{-1} f(x_1,\dots,x_{k-1})x_k^n G[\overline{\mathbf{X}}_{[k,m]}]\\
&=\tc^{k}  \varpi_m f(x_1,\dots,x_{k-1})G[\overline{\mathbf{X}}_{[k,m]}] \tc^{m-k} T_{m-1}^{-1}\dots T_{k}^{-1} x_k^n\\
&=\tc^{k}  \varpi_m f(x_1,\dots,x_{k-1})G[\overline{\mathbf{X}}_{[k,m]}] \sum_{i=0}^{n-1}x_{m}^{n-i}h_i[(1-\tc)\overline{\mathbf{X}}_{[k,m-1]}]\\
&=\tc^{k} f(x_2,\dots,x_{k})G[\overline{\mathbf{X}}_{[k+1,m]}+\qc x_1] \sum_{i=0}^{n-1}(\qc x_{1})^{n-i}h_i[(1-\tc)\overline{\mathbf{X}}_{[k+1,m]}].
\end{align*}
Therefore, $\displaystyle \lim_{m}\widetilde{Y}_i^{(m)}T_1\cdots T_{k-1} \Pi_m F$, equals
\begin{align*}
\tc^{k} f(x_2,\dots,x_{k})& G[\mathbf{X}_{k}+\qc x_1]  \sum_{i=0}^{n-1}(\qc x_{1})^{n-i}h_i[(1-\tc){\mathbf{X}}_{k}]\\
&=\frac{\tc^{k}}{1-\tc} f(x_2,\dots,x_{k})G[{\mathbf{X}}_{k}+\qc x_1](h_n[(1-\tc )(\mathbf{X}_k+\qc x_1)]-h_n[(1-\tc )\mathbf{X}_k]), 
\end{align*}
proving our claim.
\end{proof}

\subsection{} We establish the following result in preparation for the proof of Theorem \ref{thm: +standardrep}.
\begin{lem}\label{finite lemma}
Let $f\in \Pas^+$. Then,
$$\lim_{k} [\widetilde{Y}_{i}^{(k)},\widetilde{Y}_{j}^{(k)}]\Pi_k f=0.$$
\end{lem}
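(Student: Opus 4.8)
The plan is to reduce everything to the single commutator $[\widetilde Y_1^{(k)},\widetilde Y_2^{(k)}]$ — for which Remark \ref{rem: [y,y]} gives a closed form in terms of $\gamma_k$ — and then to bound its order of vanishing at $\tc=0$ strongly enough. By antisymmetry we may assume $i<j$. Using the recursion $\widetilde Y_{m+1}=\tc^{-1}T_m\widetilde Y_m T_m$ together with the commutations $\widetilde Y_m^{(k)}T_\ell=T_\ell\widetilde Y_m^{(k)}$ for all $\ell\geq m+1$ — Remark \ref{rem: crossrel} gives these for $|m-\ell|>1$, and the case $\ell=m+1$ follows from \eqref{omega3 relation} and the braid relations by a short computation — one shows that, for every $k>j$,
\[
[\widetilde Y_i^{(k)},\widetilde Y_j^{(k)}]=\tc^{\,c}\,A\,[\widetilde Y_1^{(k)},\widetilde Y_2^{(k)}]\,B,
\]
where $c\in\Z$ and $A,B$ are $k$-independent products of the $T_\ell^{\pm1}$ with $\ell\leq j-1$. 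For $k$ large these $T_\ell$ intertwine $\Pi_k$ with the limit operators $\T_\ell$, so $B\,\Pi_k f$ is $\Pi_k$ applied to a fixed element $f'\in\Pas^+$, while $A$ acts through a product of the limit operators $\T_\ell^{\pm1}$, which are continuous by Proposition \ref{prop: continuity} and Corollary \ref{cor: AB}. Thus, granting the case $(i,j)=(1,2)$ for every element of $\Pas^+$, applying it to $f'$ and pushing $A$ through the limit gives $\lim_k[\widetilde Y_i^{(k)},\widetilde Y_j^{(k)}]\Pi_k f=0$. Finally, by $\mathbb{Q}(\tc,\qc)$-linearity and the fact that $\P(n)^+$ is free over $\P_n^+$ with basis the monomial symmetric functions in $x_{n+1},x_{n+2},\dots$, it suffices to prove the case $(i,j)=(1,2)$ for $f=g(x_1,\dots,x_n)\,m_\alpha[x_{n+1},x_{n+2},\dots]\in\P(n)^+$, with $g$ a monomial and $\alpha$ a partition.

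For $(i,j)=(1,2)$, Remark \ref{rem: [y,y]} gives, for $k$ large,
\[
[\widetilde Y_1^{(k)},\widetilde Y_2^{(k)}]\,\Pi_k f=\tc^{\,2k-1}\,\gamma_k\,\Phi_k,\qquad
\Phi_k:=\big(T_{k-2}^{-1}\cdots T_1^{-1}\big)\big(T_{k-1}^{-1}\cdots T_1^{-1}\big)\Pi_k f .
\]
I would compute $\Phi_k$ explicitly, by the same bookkeeping that yields Proposition \ref{Y1 action}: the inner string $T_{k-1}^{-1}\cdots T_1^{-1}$ applied to $\Pi_k f$ is evaluated block by block, expanding $m_\alpha[x_{n+1},\dots,x_k]$ as in the proof of Lemma \ref{lemma: W} and invoking Lemma \ref{finite DAHA lemma} and Lemma \ref{Ytilde extension} — the former supplying the normalizing powers of $\tc$ — to migrate exponents upward; applying the outer string $T_{k-2}^{-1}\cdots T_1^{-1}$, which fixes $x_k$, then yields an explicit finite sum $\Phi_k=\sum_\nu b_{\nu,k}m_\nu$ of monomials with $b_{\nu,k}\in\mathbb{Q}(\tc,\qc)$. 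The estimate to be extracted from this computation is: every monomial of $\Phi_k$ not annihilated by $\gamma_k$ — that is, missing $x_{k-1}$ or missing $x_k$ — has $\ord b_{\nu,k}\geq -(k-1)-C_f$ for a constant $C_f$ depending only on $f$. The point is that the $2k-3$ inverse operators in $\Phi_k$ can drive the order below this bound only by pushing exponents all the way to both top variables, and by \eqref{eq: gamma} exactly those monomials are killed by $\gamma_k$, which otherwise contributes only coefficients $\pm(1-\tc)\qc^{t}$ of order $\geq 0$.

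Granting this, set $f_k=[\widetilde Y_1^{(k)},\widetilde Y_2^{(k)}]\Pi_k f$. By Corollary \ref{cor: AB} applied to the sequences $(\widetilde Y_1^{(k)})$ and $(\widetilde Y_2^{(k)})$ (which have the required property, since $\widetilde Y_i^{(k)}\Pi_k f$ converges to $\Y_i f\in\Pas^+$ by Proposition \ref{prop: Y} and Lemma \ref{lem: P(k)stable}) and to the reversed pair, $(f_k)$ is convergent in the sense of Definition \ref{Def of limit}, with a limit $L=\Y_1\Y_2f-\Y_2\Y_1f\in\Pas^+$; moreover, from Definition \ref{Def of limit} one gets, for every fixed $n$, that $\Pi_n L=\lim_k(\pi_{n+1}\cdots\pi_k)(f_k)$, the map $\pi_{n+1}\cdots\pi_k$ sending $x_{n+1},\dots,x_k$ to $0$. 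But $(\pi_{n+1}\cdots\pi_k)(f_k)=\tc^{2k-1}(\pi_{n+1}\cdots\pi_k)(\gamma_k\Phi_k)$ is a $\mathbb{Q}(\tc,\qc)$-combination of boundedly many monomials — those of $\gamma_k\Phi_k$ supported in $x_1,\dots,x_n$, of which there are finitely many since $\deg\Phi_k$ is fixed — and by the estimate above each coefficient of $\gamma_k\Phi_k$ has order $\geq -(k-1)-C_f$, so each coefficient of $f_k$ has order $\geq 2k-1-(k-1)-C_f=k-C_f\to+\infty$. Hence $(\pi_{n+1}\cdots\pi_k)(f_k)\to 0$ for every $n$, so $L=0$, which is the claim.

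The main obstacle is the order estimate in the second paragraph: one must control, uniformly in $k$, both the set of variables occurring in each monomial of $\Phi_k$ and its order of vanishing at $\tc=0$, and verify that the low-order part of $\Phi_k$ lies entirely in $\ker\gamma_k$ — an elementary but delicate induction on the lengths of the $T^{-1}$-strings, with Lemma \ref{finite DAHA lemma} as the prototype. A secondary point to be checked is the first reduction, namely that the fixed elements $A,B$ descend to continuous limit operators intertwining $\Pi_k$ for $k$ large, so that the case $(i,j)=(1,2)$ really does yield the general one.
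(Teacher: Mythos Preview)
Your reduction to $(i,j)=(1,2)$ is correct and matches the paper's; the extra commutation $T_{m+1}\widetilde Y_m=\widetilde Y_m T_{m+1}$ you flag does hold (from \eqref{omega3 relation} and the braid relations, as you say), so the recursive unwinding produces fixed $T$-words $A,B$ and the general case follows from $(1,2)$ via Proposition~\ref{prop: continuity} and Corollary~\ref{cor: AB}. Your endgame---recovering $\Pi_n L$ as $\lim_k(\pi_{n+1}\cdots\pi_k)f_k$ and using boundedness of the degree---is also sound.

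The divergence from the paper, and the gap in your argument, is the handling of $[\widetilde Y_1^{(k)},\widetilde Y_2^{(k)}]\Pi_k f$ itself. You propose to bound the $\tc$-adic order of every surviving coefficient of $\gamma_k\Phi_k$ below by $-(k-1)-C_f$, so that the prefactor $\tc^{2k-1}$ forces the limit to vanish. You flag this estimate as ``the main obstacle'' and only sketch it; as written this is where the proof is missing. The issue is not merely bookkeeping: the two $T^{-1}$-strings together have length $2k-3$, so the naive lower bound on $\ord\Phi_k$ is $-(2k-3)$, which is too weak by $k-2$. Recovering that deficit requires showing that the monomials of $\Phi_k$ whose order is close to $-(2k-3)$ are exactly those with both $x_{k-1}$ and $x_k$ present, hence killed by $\gamma_k$. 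This demands simultaneous control of supports and orders through both strings, and your sketch does not indicate how the induction would be organized or why it closes with the claimed bound.

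The paper avoids this estimate entirely by a structural shortcut. It first observes that if $f=g(x_1,\dots,x_m)F[X]$ with $F$ a nontrivial symmetric function, then the symmetric factor commutes past all the $T_i^{-1}$, and the explicit formula \eqref{eq: gamma} shows $\gamma_k$ annihilates the result \emph{exactly} for all large $k$---no limit needed. This reduces to $f\in\P_m^+$ a monomial. After applying the inner $T^{-1}$-word (indices $\le m-1$) one may assume $f$ has the shape $x_m^n x_{m-1}^{n'}g(x_1,\dots,x_{m-2})$; then Lemma~\ref{finite DAHA lemma} evaluates the remaining strings $T_{k-1}^{-1}\cdots T_m^{-1}$ and $T_{k-2}^{-1}\cdots T_{m-1}^{-1}$ explicitly, and in each of the four cases according to whether $n,n'$ vanish the result is either exactly zero under $\gamma_k$ or carries an explicit factor $\tc^{m-k}$, so that $\tc^{2k-1}\cdot\tc^{m-k}=\tc^{k+m-1}\to 0$.

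In short: your approach could in principle be completed, but the unproved order estimate \emph{is} the proof. The paper's two-case split (nontrivial symmetric factor present / pure polynomial in finitely many variables) replaces that estimate with a short explicit computation. If you wish to finish along your lines, the cleanest route is to adopt the same case split rather than attempt a uniform order bound on all of $\Phi_k$.
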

\begin{proof}
First note that we can apply the following two relations recursively
$$[\widetilde{Y}_{i}^{(k)},\widetilde{Y}_{j}^{(k)}]=\tc^{-1}T_{j-1}[\widetilde{Y}_{i}^{(k)},\widetilde{Y}_{j-1}^{(k)}]T_{j-1}, \textrm{ for } i>j,$$
$$[\widetilde{Y}_{1}^{(k)},\widetilde{Y}_{i}^{(k)}]=\tc^{-1}T_{i-1}[\widetilde{Y}_{1}^{(k)},\widetilde{Y}_{i-1}^{(k)}]T_{i-1}, \textrm{ for } i>2.$$
Hence it suffices to prove the result for $[\widetilde{Y}_{1}^{(k)},\widetilde{Y}_{2}^{(k)}]$.

By Remark \ref{rem: [y,y]} we have
$$[\widetilde{Y}_{1}^{(k)},\widetilde{Y}_{2}^{(k)}]=\tc^{2k-1}\gamma_k T_{k-1}^{-1}...T_1^{-1}T_{k-1}^{-1}...T_2^{-1}=\tc^{2k-1}\gamma_k T_{k-2}^{-1}...T_1^{-1}T_{k-1}^{-1}...T_1^{-1}.$$
Let $f(x_1,...,x_m)F[\Xb]\in\P(k)^{+}$, where $F[\Xb]$ is fully symmetric and non-zero. Then for $k>m$ we have
\begin{align*}
\gamma_k T_{k-1}^{-1}...T_1^{-1}T_{k-1}^{-1}...T_2^{-1}\Pi_k f(x_1,...,x_m)F[\Xb] &= \gamma_k T_{k-1}^{-1}...T_1^{-1}T_{k-1}^{-1}...T_2^{-1} f(x_1,...,x_m)F[\overline{\mathbf{X}}_{k}]\\
	&=\gamma_k F[\overline{\mathbf{X}}_{k}](T_{k-1}^{-1}...T_1^{-1}T_{k-1}^{-1}...T_2^{-1} f(x_1,...,x_m))\\
	&=0.
\end{align*}
For the last equality we used \eqref{eq: gamma}. Therefore, in this case,
$$\lim_{k\rightarrow\infty} [\widetilde{Y}_{i}^{(k)},\widetilde{Y}_{j}^{(k)}]\Pi_k f(x_1,...,x_m)F[\Xb]=0.$$

It remains to compute the limit for $f(x_1,...,x_m)\in \P_m^+$. Let $k>m+1$. Without loss of generality, we may assume that
$$T_{m-2}^{-1}...T_1^{-1}T_{m-1}^{-1}...T_1^{-1} f(x_1,...,x_m)$$
is a monomial of the form $x_m^{n}x_{m-1}^{n^\prime}g(x_1,...,x_{m-2})$. If $n>0$,  we have
\begin{align*}
[\widetilde{Y}_{1}^{(k)},\widetilde{Y}_{2}^{(k)}] f(x_1,...,x_m)
&= \gamma_k T_{k-2}^{-1}...T_1^{-1}T_{k-1}^{-1}...T_1^{-1} f(x_1,...,x_m)\\
&=\gamma_k T_{k-2}^{-1}...T_{m-1}^{-1}T_{k-1}^{-1}...T_{m}^{-1}(T_{m-2}^{-1}...T_1^{-1}T_{m-1}^{-1}...T_1^{-1} f(x_1,...,x_m))\\
&=\gamma_k T_{k-2}^{-1}...T_{m-1}^{-1}T_{k-1}^{-1}...T_{m}^{-1}x_m^{n}x_{m-1}^{n^\prime}g(x_1,...,x_{m-2})\\
&= g(x_1,...,x_{m-2}) \gamma_k  T_{k-2}^{-1}...T_{m-1}^{-1} x_{m-1}^{n^\prime} T_{k-1}^{-1}...T_{m}^{-1}x_m^{n}.
\end{align*}

If $n=n^\prime=0$ then $[\widetilde{Y}_{1}^{(k)},\widetilde{Y}_{2}^{(k)}]f(x_1,...,x_m)=0$ by \eqref{eq: gamma}. If $n=0$ and $n^\prime>0$, then by Lemma \ref{finite DAHA lemma}
\begin{align*}
\gamma_k  T_{k-2}^{-1}...T_{m-1}^{-1}x_{m-1}^{n^\prime}
&= \tc^{m-k}\gamma_k  \sum_{i=0}^{n^\prime -1}  x_{k-1}^{n^\prime -i} h_i[(1-\tc)\overline{\mathbf{X}}_{[m-1,k-2]}]\\
&= \tc^{m-k} \sum_{i=0}^{n^\prime -1}  h_i[(1-\tc)\overline{\mathbf{X}}_{{[m+1,k]}}]\gamma_k    x_{k-1}^{n^\prime -i}  \\
&=\tc^{m-k}(1-\tc) \sum_{i=0}^{n^\prime -1}  {\qc^{n^\prime-i}}h_i[(1-\tc)\overline{\mathbf{X}}_{{[m+1,k]}}](x_1^{n^\prime-i-1}x_2+\cdots+x_1x_2^{n^\prime-i-1}).
\end{align*}
Therefore, $\displaystyle\lim_{k} [\widetilde{Y}_{1}^{(k)},\widetilde{Y}_{2}^{(k)}]f(x_1,...,x_m)=0$.

If $n^\prime=0$ and $n>0$, then by Lemma \ref{finite DAHA lemma}
\begin{align*}
\gamma_k  T_{k-2}^{-1}...T_{m-1}^{-1}T_{k-1}^{-1}...T_{m}^{-1}x_{m}^{n}
&= \tc^{m-k}\gamma_k  T_{k-2}^{-1}...T_{m-1}^{-1} \sum_{i=0}^{n -1}  x_k^{n -i} h_i[(1-\tc)\overline{\mathbf{X}}_{[m,k-1]}].
\end{align*}
By writing 
$$
h_i[(1-\tc)\overline{\mathbf{X}}_{[m,k-1]}]=\sum_{j=0}^i h_{i-j}[(1-\tc)\overline{\mathbf{X}}_{[m-1,k-1]}]h_j[(\tc-1)x_{m-1}]
$$
and using again Lemma \ref{finite DAHA lemma} for $T_{k-2}^{-1}...T_{m-1}^{-1} x_{m-1}^j$ as well as \eqref{eq: gamma}, we write  $\gamma_k  T_{k-2}^{-1}...T_{m-1}^{-1}T_{k-1}^{-1}...T_{m}^{-1}x_{m}^{n}$
as
\begin{align*}
\tc^{m-k} \sum_{i=0}^{n -1}  \gamma_k    x_k^{n -i}  &h_{i}[(1-\tc)\overline{\mathbf{X}}_{[m-1,k-1]}]\\
&= \tc^{m-k}(\tc -1) \sum_{i=0}^{n -1} {\qc^{n-i}}  h_i[(1-\tc)\overline{\mathbf{X}}_{{[m+1,k]}}](x_1^{n-i-1}x_2+\cdots+x_1x_2^{n-i-1}).
\end{align*}
Therefore, $\displaystyle \lim_{k} [\widetilde{Y}_{1}^{(k)},\widetilde{Y}_{2}^{(k)}]f(x_1,...,x_m)=0$.

For the last case, $n,n^\prime>0$, proceeding as in the previous case we obtain that 
$$\gamma_k  T_{k-2}^{-1}...T_{m-1}^{-1} x_{m-1}^{n^\prime}  T_{k-1}^{-1}...T_{m}^{-1}x_{m}^{n}$$
equals
$$
\tc^{m-k}\gamma_k  T_{k-2}^{-1}...T_{m-1}^{-1}x_{m-1}^{n^\prime} \sum_{i=0}^{n -1}  x_k^{n -i} \sum_{j=0}^i h_{i-j}[(1-\tc)\overline{\mathbf{X}}_{[m-1,k-1]}]h_j[(\tc-1)x_{m-1}].
$$
Lemma \ref{finite DAHA lemma} for $T_{k-2}^{-1}...T_{m-1}^{-1} x_{m-1}^j$ and \eqref{eq: gamma} imply that $[\widetilde{Y}_{1}^{(k)},\widetilde{Y}_{2}^{(k)}]f(x_1,...,x_m)=0$.
\end{proof}

\subsection{}
We are now ready to prove our main result.

 \begin{thm} \label{thm: +standardrep}
The operators $\T_i$, $\X_i$, and $\Y_i$, $i\geq 1$, define a $\H^+$-module structure on $\Pas^+$.
\end{thm}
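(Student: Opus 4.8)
The plan is to verify that the limit operators $\T_i$, $\X_i$, $\Y_i$ satisfy each of the defining relations \eqref{T relations}--\eqref{XY cross relations} of $\H^+$. The relations not involving the $\Y_i$ --- namely the braid and quadratic relations among the $\T_i$, the commutation $\T_iX_j=X_j\T_i$ for $j\neq i,i+1$, the relation $\tc\T_i^{-1}\X_i\T_i^{-1}=\X_{i+1}$, and $\X_i\X_j=\X_j\X_i$ --- hold because $\T_i$ and $\X_i$ are genuine limits of operators on $\P_k^+$ in the sense of \S\ref{sec: -limits} (compatible with the inverse system via \eqref{pos system}), so these identities follow immediately by passing to the limit from the identities in $\H_k^+$. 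Next, the relations that are ``stable'' at finite level in a strong sense: by Remark \ref{rem: crossrel}, the cross relation $\widetilde{Y}_1^{(k)}T_1X_1=X_2\widetilde{Y}_1^{(k)}T_1$ and the far-commutation $T_j\widetilde{Y}_i^{(k)}=\widetilde{Y}_i^{(k)}T_j$ for $|i-j|>1$ already hold in $\Ht_k^+$; using Proposition \ref{prop: continuity} (continuity of the limit operation) together with Proposition \ref{prop: Y} and Lemma \ref{lem: P(k)stable} (so that $\Y_i$ is a genuine limit operator with image in $\Pas^+$), these pass to the limit to give $\Y_1\T_1\X_1=\X_2\Y_1\T_1$ and $\T_j\Y_i=\Y_i\T_j$ for $|i-j|>1$ on $\Pas^+$. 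Similarly the conjugation relation $\tc^{-1}T_i\widetilde{Y}_i^{(k)}T_i=\widetilde{Y}_{i+1}^{(k)}$ holds by definition at finite level, hence $\tc^{-1}\T_i\Y_i\T_i=\Y_{i+1}$ in the limit by Corollary \ref{cor: AB}.

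The one relation that genuinely requires work is the commutativity $\Y_i\Y_j=\Y_j\Y_i$, since the finite-level operators $\widetilde{Y}_i^{(k)}$ do \emph{not} commute (Remark \ref{rem: [y,y]}). The strategy here is: write $\Y_i=\widetilde{Z}_i^{(\infty)}+W_i^{(\infty)}=\widetilde{Y}_i^{(\infty)}-W_i^{(\infty)}+W_i^{(\infty)}$, or more usefully observe that $\Y_i$ is the limit operator of the sequence $(\widetilde{Y}_i^{(k)})_{k}$ in the sense of the paragraph preceding Proposition \ref{prop: continuity} (this is exactly Proposition \ref{prop: Y}, since $Y_i^{(k)}$ and $\widetilde{Y}_i^{(k)}$ have the same limiting behaviour on $\Pas^+$ after accounting for the normalization, or directly $\Y_i=\lim_k \tc^{-k}\widetilde Y_i^{(k)}$ up to the harmless scalar $\tc^{k}$ --- strictly, we use $\Y_i=\lim_k Y_i^{(k)}\Pi_k$). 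Then by Corollary \ref{cor: AB} both $\Y_i\Y_j$ and $\Y_j\Y_i$ are limit operators of the sequences $(Y_i^{(k)}Y_j^{(k)})_k$ and $(Y_j^{(k)}Y_i^{(k)})_k$ respectively (note $Y_i^{(k)}, Y_j^{(k)}$ genuinely commute in $\H_k$ --- wait, we must instead work with $\widetilde Y$, which do not commute). The correct route: apply Corollary \ref{cor: AB} to the pair of sequences $(\widetilde{Y}_i^{(k)})_k$, $(\widetilde{Y}_j^{(k)})_k$ --- whose limit operators are $\tc^{k}\Y_i$-type objects, so let me instead say the limit operators are $\Y_i,\Y_j$ after the $\tc^{-k}$ renormalization folded into the definition --- to conclude that $\Y_i\Y_j$ is the limit operator of $(\widetilde{Y}_i^{(k)}\widetilde{Y}_j^{(k)})_k$ and $\Y_j\Y_i$ that of $(\widetilde{Y}_j^{(k)}\widetilde{Y}_i^{(k)})_k$. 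Hence for any $f\in\Pas^+$,
\[
(\Y_i\Y_j-\Y_j\Y_i)f=\lim_k\big(\widetilde{Y}_i^{(k)}\widetilde{Y}_j^{(k)}-\widetilde{Y}_j^{(k)}\widetilde{Y}_i^{(k)}\big)\Pi_k f=\lim_k[\widetilde{Y}_i^{(k)},\widetilde{Y}_j^{(k)}]\Pi_k f,
\]
which is exactly $0$ by Lemma \ref{finite lemma}.

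I expect the main obstacle to be bookkeeping rather than conceptual: one must be careful that $\Y_i$ is simultaneously (a) a well-defined operator $\Pas^+\to\Pas^+$ (guaranteed by Proposition \ref{prop: Y} and Lemma \ref{lem: P(k)stable}), and (b) literally the limit operator, in the precise sense of the paragraph before Proposition \ref{prop: continuity}, of a sequence of finite-rank operators to which Corollary \ref{cor: AB} applies --- and that the normalization constants $\tc^{k}$ relating $Y_i^{(k)}$, $\widetilde{Y}_i^{(k)}$ and the genuinely-convergent sequences are handled consistently (the scalar $\tc^{k}$ converges to $0$, so it is really the sequence $\widetilde Z_i^{(k)}+W_i^{(k)}$, equivalently $Y_i^{(k)}$ itself post-stabilization, that must be used). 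Once the commutativity of the $\Y_i$ is in hand, the remaining relations are exactly as enumerated above and the proof concludes: all defining relations of $\H^+$ are satisfied by $\T_i,\X_i,\Y_i$ acting on $\Pas^+$, so the assignment $T_i\mapsto\T_i$, $X_i\mapsto\X_i$, $Y_i\mapsto\Y_i$ extends to an algebra homomorphism $\H^+\to\operatorname{End}(\Pas^+)$, i.e.\ $\Pas^+$ is an $\H^+$-module.
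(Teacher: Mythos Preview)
Your proposal is correct and follows essentially the same approach as the paper: transfer all relations that already hold in $\Ht_k^+$ to the limit via Corollary~\ref{cor: AB}, and handle the commutativity $[\Y_i,\Y_j]=0$ by writing it as $\lim_k[\widetilde{Y}_i^{(k)},\widetilde{Y}_j^{(k)}]\Pi_k f$ and invoking Lemma~\ref{finite lemma}.

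One point of unnecessary confusion: there is no renormalization to worry about. By the proof of Proposition~\ref{prop: Y} we have $\Y_i=\widetilde{Z}_i^{(\infty)}+W_i^{(\infty)}$, and since $\widetilde{Z}_i^{(k)}+W_i^{(k)}=\widetilde{Y}_i^{(k)}$ on the nose, $\Y_i$ is \emph{literally} the limit operator of the sequence $(\widetilde{Y}_i^{(k)})_k$ in the sense required by Corollary~\ref{cor: AB}; the factor $\tc^k$ is already absorbed in the definition $\widetilde{Y}_1^{(k)}=\tc^k\varpi_k T_{k-1}^{-1}\cdots T_1^{-1}$. (The symbol $Y_i^{(k)}$ in the statement of Proposition~\ref{prop: Y} should be read as $\widetilde{Y}_i^{(k)}$.) So your digressions about ``$\tc^{-k}$ renormalization folded into the definition'' and about $Y_i^{(k)}$ versus $\widetilde{Y}_i^{(k)}$ can simply be deleted; the displayed line $(\Y_i\Y_j-\Y_j\Y_i)f=\lim_k[\widetilde{Y}_i^{(k)},\widetilde{Y}_j^{(k)}]\Pi_k f=0$ is exactly right as written.
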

\begin{proof}
The relations that hold in the algebra $\Ht_k$ also hold for the corresponding limit operators by the repeated application of Corollary \ref{cor: AB}. Recall that by Remark \ref{rem: crossrel} the relation \eqref{XY cross relations} and the 
first two relations in \eqref{Y relations} also hold inside the algebras $\Ht_k$. The only relations that do not transfer directly from those in $\Ht_k$ are the commutation relations between $\Y_i$ and $\Y_j$. However, we do have 
\begin{align*}
[\widetilde{Y}^{(k)}_{i},\widetilde{Y}^{(k)}_{j}]=\tc^{-1}T_{j-1}[\widetilde{Y}^{(k)}_{i},\widetilde{Y}^{(k)}_{j-1}]T_{j-1}, \quad i>j,\\
[\widetilde{Y}^{(k)}_{1},\widetilde{Y}^{(k)}_{i}]=\tc^{-1}T_{i-1}[\widetilde{Y}^{(k)}_{1},\widetilde{Y}^{(k)}_{i-1}]T_{i-1}, \quad i>2.
\end{align*}
These, by application of Corollary \ref{cor: AB} imply the same relations for the limit operators
\begin{align*}
[\Y_{i},\Y_{j}]&=\tc^{-1}\T_{j-1}[\Y_{i},\Y_{j-1}]\T_{j-1}, \quad i>j,\\
[\Y_{1},\Y_{i}]&=\tc^{-1}\T_{i-1}[\Y_{1},\Y_{i-1}]\T_{i-1}, \quad i>2.
\end{align*}
Therefore, for any $i,j$, the commutativity of $\Y_i$ and $\Y_j$ would follow from the commutativity of $\Y_1$ and $\Y_2$. Fix $f\in \Pas^+$. Then, by Proposition \ref{prop: continuity} and Corollary \ref{cor: AB},
\begin{align*}
[\Y_1,\Y_2]f=\lim_k [\widetilde{Y}^{(k)}_{1},\widetilde{Y}^{(k)}_{2}]\Pi_k f,
\end{align*}
which is $0$ by Lemma \ref{finite lemma}.
\end{proof}
We call this representation the standard representation of $\H^+$.  As with the standard representation of $\H^-$, we expect this representation to be faithful.

\section{The double Dyck path algebra}\label{sec: ddpa}

\subsection{}\label{subsec: ddpa} The main result of this section is a explicit connection between the standard representation of $\H^+$ and the double Dyck path algebra and its standard representation. The double Dyck path algebra, denoted in the 
literature by $\mathbb{A}_{\qc,\tc}$ is an algebraic structure discovered by Carlsson and Mellit. It plays a critical role in the proof \cite{CM} of the Compositional Shuffle Conjecture \cites{HHLRU, HMZ}  and the proof \cite{Me} of the more 
general Compositional $(km,kn)$-Shuffle Conjecture \cites{GN, BGLX}.

To facilitate the comparison, we will switch the role of the parameters $\qc$ and $\tc$ in the original definition of the double Dyck path algebra. Therefore, the definitions below correspond to $\Atq$. We will first define two quivers 
\gls{Q} and introduce some conventions.

The quiver $\dot{\textbf{Q}}$ is defined to be the quiver with vertex set $\mathbb{Z}_{\geq 0}$, and, for all $k\in \mathbb{Z}_{\geq 0}$, arrows $d_{+}$ from $k$ to $k+1$, arrows $d_{-}$ from $k+1$ to $k$, and for $k\geq2$ loops 
$T_1,...,T_{k-1}$ from $k$ to $k$. Note that, to keep the notation as simple as possible, the same label ($T_i$, \gls{d}) is used to denote many arrows. To eliminate the possible confusion we adopt the following convention.

\begin{conv}
In all expressions involving paths in $\dot{\textbf{Q}}$, unless specified otherwise, we assume that all the expressions involve non-zero paths (that is, the constituent arrows concatenate correctly to produce a non-zero path) that start at 
the node $k$ (fixed, but arbitrary).
\end{conv}

Then quiver $\ddot{\textbf{Q}}$ is the quiver with vertex set $\mathbb{Z}_{\geq 0}$, and, for all $k\in \mathbb{Z}_{\geq 0}$, arrows $d_{+}$ and $d_{+}^{*}$ from $k$ to $k+1$, arrows $d_{-}$ from $k+1$ to $k$, and loops 
$T_1,...,T_{k-1}$ from $k$ to $k$. We will adopt the same labelling convention for paths in $\ddot{\textbf{Q}}$.

\begin{dfn}\label{def: dpa}
The Dyck path algebra \gls{At} is defined as the quiver path algebra of $\dot{\textbf{Q}}$ modulo the following relations:
\begin{subequations}\label{DPA}
    \begin{equation}\label{T relation}
      \begin{gathered}
      T_{i}T_{j}=T_{j}T_{i}, \quad |i-j|>1,\\
      T_{i}T_{i+1}T_{i}=T_{i+1}T_{i}T_{i+1}, \quad 1\leq i\leq k-2,
      \end{gathered}
    \end{equation}
    \begin{equation}\label{T quadratic}
     (T_{i}-1)(T_{i}+\tc)=0, \quad 1\leq i\leq k-1,
    \end{equation}
    \begin{equation}\label{missing d- relation}
    d_-^2T_{k-1}=d_-^2,\quad T_id_-=d_-T_i, \quad 1\leq i\leq k-2.
    \end{equation}
    \begin{equation}\label{d+ relation}
          T_1 d_{+}^2 = d_{+}^2,\quad d_{+}T_i = T_{i+1}d_{+}, \quad 1\leq i\leq k-1,
    \end{equation}
    \begin{equation}\label{d- relation}
      \begin{gathered}
         d_{-}[d_{+},d_{-}]T_{k-1} = \tc[d_{+},d_{-}]d_{-}, \quad (k\geq 2)\\
         T_{1}[d_{+},d_{-}]d_{+} = \tc d_{+}[d_{+},d_{-}],\quad (k\geq 1).
      \end{gathered}
    \end{equation}
\end{subequations}
\end{dfn}

\begin{dfn}\label{def: ddpa}
  The double Dyck path algebra \gls{Atq} is defined as the quiver path algebra of $\ddot{\textbf{Q}}$ modulo the relations \eqref{T relation}, \eqref{T quadratic}, \eqref{d+ relation} and \eqref{d- relation}
  and the following additional relations:
    \begin{subequations}\label{DDPA}
      \begin{equation}\label{d+* relation}
        \begin{gathered}
            T_1 (d_{+}^{*})^2 = (d_{+}^{*})^2,\quad d_{+}^{*}T_i = T_{i+1}d_{+}^{*},\quad 1\leq i\leq k-1,
        \end{gathered}
      \end{equation}
      \begin{equation}\label{d-2 relation}
        \begin{gathered}
            \tc d_{-}[d_{+}^{*},d_{-}] = [d_{+}^{*},d_{-}]d_{-}T_{k-1},\quad (k\geq 2)\\
            \tc[d_{+}^{*},d_{-}]d_{+}^{*} =  T_{1}d_{+}^{*}[d_{+}^{*},d_{-}]\quad (k\geq 1),
        \end{gathered}
      \end{equation}
    \begin{equation}\label{cross relation}
         d_{+}z_{i}=z_{i+1}d_{+},\quad d_{+}^{*}y_{i}=y_{i+1}d_{+}^{*},\quad 1\leq i\leq k-1,
         \end{equation}
         \begin{equation}\label{last}
        z_1 d_{+}=-\qc\tc^{k+1}y_1d_{+}^{*}.
    \end{equation}
        \end{subequations}
The notation \gls{y} and \gls{z}  refers to the following operators corresponding to loops at $k$, $1\leq i \leq k$:
    \begin{equation*}
      \begin{gathered}
        y_1=\frac{1}{\tc^{k-1}(\tc-1)}[d_{+},d_{-}]T_{k-1}\cdots T_1,\\
        y_{i+1}=\tc T_{i}^{-1}y_i T_{i}^{-1},\quad 1\leq i \leq k-1,\\
        z_1=\frac{\tc^k}{1-\tc}[d_{+}^{*},d_{-}]T_{k-1}^{-1}\cdots T_1^{-1},\\
        z_{i+1}=\tc^{-1}T_i z_i T_i,\quad 1\leq i \leq k-1.
      \end{gathered}
    \end{equation*}
\end{dfn}

\begin{rmk}
There exists an $\Rat$-algebra involution of $\Atq$ defined by
$$T_i^{-1}\mapsto T_{i},\ d_{-}\mapsto d_{-},\ d_{+}^{*}\mapsto d_{+},\ \ d_{+}\mapsto d_{+}^{*},$$
and inverts the parameters $\tc$ and $\qc$.
We regard $\mathbb{A}_{\tc}$ as a subalgebra of $\Atq$, and denote its image under the involution by $\mathbb{A}_{\tc^{-1}}$.
\end{rmk}

\subsection{} 
For the following result we refer to  \cite{CM}*{Lemma 5.5} (see also \cite{Me}*{Proposition 3.1}).

\begin{prop}\label{prop: ddpa}
 The loops $y_i$'s and $z_i$'s satisfy the following relations:
 \begin{subequations}
  \begin{align}\label{y1 relation}
      \begin{split}
        y_i T_j &= T_j y_i,\quad i\neq j,j+1 \\
        y_{i+1} &=\tc T_{i}^{-1}y_i T_{i}^{-1},\quad 1\leq i \leq k-1,\\
        y_i y_j &= y_j y_i, \quad  1\leq i,j\leq k,
      \end{split}
  \end{align}
  \begin{align}\label{y2 relation}
      \begin{split}
        y_i d_{-} &= d_{-} y_i,\quad  1\leq i\leq k-1 \\
        d_{+} y_i &= T_1...T_i y_i T_{i}^{-1}...T_1^{-1}d_{+}, \quad 1\leq i\leq k,
      \end{split}
  \end{align}
  \begin{align}\label{z1 relation}
      \begin{split}
        z_i T_j &= T_j z_i,\quad i\neq j,j+1 \\
        z_{i+1} &= \tc^{-1}T_i z_i T_i,\quad 1\leq i \leq k-1,\\
        z_i z_j &= z_j z_i, \quad  1\leq i,j\leq k,
      \end{split}
  \end{align}
  \begin{align}\label{z2 relation}
      \begin{split}
        z_i d_{-} &= d_{-} z_i,\quad  1\leq i\leq k-1 \\
        d_{+}^{*} z_i &=  T_{1}^{-1}...T_i^{-1} z_i T_i...T_1 d_{+}^{*}, \quad 1\leq i\leq k,
      \end{split}
  \end{align}
 \end{subequations}
\end{prop}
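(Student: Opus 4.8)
The plan is to verify the four groups of identities directly from the defining relations of $\Atq$, but first I would strip away the parts that come for free. The second identities in each group, $y_{i+1}=\tc T_i^{-1}y_iT_i^{-1}$ and $z_{i+1}=\tc^{-1}T_iz_iT_i$, are nothing but the definitions of $y_{i+1}$ and $z_{i+1}$. Moreover the $\Rat$-algebra involution of $\Atq$ recorded in the Remark after Definition \ref{def: ddpa} sends $T_i\mapsto T_i^{-1}$, exchanges $d_+$ with $d_+^{*}$, fixes $d_-$, and inverts $\tc,\qc$; hence it carries $[d_+,d_-]$ to $[d_+^{*},d_-]$ and, tracking the normalizing scalar $\tfrac{1}{\tc^{k-1}(\tc-1)}\mapsto\tfrac{\tc^k}{1-\tc}$, it sends $y_1$ to $z_1$ and therefore $y_i$ to $z_i$ for every $i$. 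Applying it term by term shows that \eqref{z1 relation} and \eqref{z2 relation} are exactly the images of \eqref{y1 relation} and \eqref{y2 relation}. So it suffices to establish the $y$-relations.

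For the $y$-relations I would handle the case $i=1$ first and then propagate. The key intermediate identity is the intertwining relation $[d_+,d_-]\,T_i=T_{i+1}\,[d_+,d_-]$ for $1\le i\le k-2$, which follows by expanding the commutator and applying \eqref{d+ relation} together with the commutation of $d_-$ with $T_1,\dots,T_{k-2}$ from \eqref{missing d- relation}. Writing $y_1=\tfrac{1}{\tc^{k-1}(\tc-1)}[d_+,d_-]\,w$ with $w=T_{k-1}\cdots T_1$ and using the standard braid identity $wT_jw^{-1}=T_{j-1}$ for $2\le j\le k-1$, this gives $y_1T_j=T_jy_1$ for $j\neq 1$. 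The relation $y_1d_-=d_-y_1$ then comes from the first identity in \eqref{d- relation}, namely $d_-[d_+,d_-]T_{k-1}=\tc[d_+,d_-]d_-$, after pushing the tail $T_{k-2}\cdots T_1$ past $d_-$; and $d_+y_1=T_1y_1T_1^{-1}d_+$ comes from the second identity in \eqref{d- relation}, together with $d_+T_j=T_{j+1}d_+$ and a check of the scalar $\tc^{k-1}(\tc-1)$. With the base case in hand, the inductive step is purely formal: using $y_i=\tc T_{i-1}^{-1}y_{i-1}T_{i-1}^{-1}$ and only the braid relations plus $d_+T_j=T_{j+1}d_+$ and the $d_-$-commutations, one deduces $y_iT_j=T_jy_i$ for $j\neq i,i+1$, $y_id_-=d_-y_i$ for $1\le i\le k-1$, and $d_+y_i=T_1\cdots T_iy_iT_i^{-1}\cdots T_1^{-1}d_+$ for all $i$.

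The one substantive point left is the commutativity $y_iy_j=y_jy_i$; together with \eqref{T relation}, \eqref{T quadratic} the relations \eqref{y1 relation} are precisely the Bernstein-presentation relations of the affine Hecke algebra $\AHA_k$ with $y_i$ in the role of the lattice part, so this is the only genuinely non-formal one. Using the conjugation formula for the $y$'s and the commutations $y_iT_j=T_jy_i$ already proved, one reduces — by the same bookkeeping used in the proof of Lemma \ref{finite lemma} — every bracket $[y_i,y_j]$ to a scalar times a conjugate of $[y_1,y_2]$ by a product of $T_\ell^{\pm1}$'s; so the whole matter comes down to $y_1y_2=y_2y_1$, equivalently $y_1T_1^{-1}y_1T_1^{-1}=T_1^{-1}y_1T_1^{-1}y_1$. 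I expect this to be the main obstacle. It is exactly here that $\Atq$ differs from the deformed DAHA $\Ht_k^+$, where the analogous bracket is the nonzero obstruction $\gamma_k$ (Remark \ref{rem: [y,y]}); in $\Atq$ the bracket must vanish, and this is what is encoded in the relations \eqref{d- relation}. Concretely, after substituting $y_1=\tfrac{1}{\tc^{k-1}(\tc-1)}[d_+,d_-]T_{k-1}\cdots T_1$ and clearing the $T_{k-2}\cdots T_1$ tail past one copy of $[d_+,d_-]$ via the intertwining identity, the equality collapses to an identity in the subalgebra generated by $d_+,d_-$ and $T_{k-1}$, which is forced by $d_-[d_+,d_-]T_{k-1}=\tc[d_+,d_-]d_-$ and $T_1[d_+,d_-]d_+=\tc d_+[d_+,d_-]$; this is the computation carried out in \cite{CM}*{Lemma 5.5} (see also \cite{Me}*{Proposition 3.1}). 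Applying the involution of the first paragraph then yields $z_iz_j=z_jz_i$ and all of \eqref{z1 relation}, \eqref{z2 relation}.
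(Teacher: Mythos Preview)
Your sketch is correct, and in fact you supply considerably more detail than the paper does: the paper gives no proof at all for this proposition, simply referring the reader to \cite{CM}*{Lemma 5.5} and \cite{Me}*{Proposition 3.1}. Your outline---reduce to the $y$-relations via the involution, establish the intertwining identity $[d_+,d_-]T_i=T_{i+1}[d_+,d_-]$ for $i\le k-2$, verify the base cases $y_1T_j=T_jy_1$, $y_1d_-=d_-y_1$, and $d_+y_1=T_1y_1T_1^{-1}d_+$ from \eqref{missing d- relation}--\eqref{d- relation}, and then propagate by conjugation---is exactly the route taken in those references. The one place you defer back to \cite{CM} (the commutation $y_1y_2=y_2y_1$) is indeed the crux, and your diagnosis that the relations \eqref{d- relation} are precisely what force the analogue of $\gamma_k$ to vanish is on the mark.
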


\begin{rmk}
Note that the relations (\ref{y1 relation}) match the generating relations for the algebra  $\AHA_k^{+}$. Similarly,  $T_1^{-1},...,T_{k-1}^{-1}$ and $z_1,...,z_k$ generate a copy of $\AHA_k^{+}$ with parameter $\tc^{-1}$.
\end{rmk}

\subsection{} \label{sec: ddparep}
The algebra $\Atq$ comes with a canonical quiver representation. The representation emerged first, from the analysis of certain operators acting on generating functions of Dyck paths. The algebra itself is a abstract 
formalization of the properties of the relevant operators. We will use the representation defined in \cite{Me}*{Proposition 3.2, Proposition 3.3}. The relationship between this action and the original action defined in \cite{CM} is explained in 
\cite{Me}*{\S 3.3}. 

Let $V_k=\mathbb{Q}(\tc,\qc)[y_1,...,y_k]\otimes\textrm{Sym}[\Xb]$, and  denote $V_{\bullet}=(V_k)_{k\geq 0}$. It is important to note that $V_{\bullet}$ is naturally equipped with a $\lambda$-ring structure.
The symmetric group $S_k$ acts on $V_k$ by permuting the variables $y_i$.  We denote by $\zeta_k$ the algebra morphism that acts trivially 
$\Sym[\Xb]$ 
and acts on $\mathbb{Q}(\tc,\qc)[y_1,...,y_k]$ as
$$
\gls{zeta} f(y_1,\dots,y_{k-1},y_k)=f(y_2,\dots,y_k,\qc y_1).
$$
Finally, we denote by $\gls{ct} F$ the constant term of $F$ with respect to $y_k$.

\begin{thm}\label{thm: CMrep}
 The following operators define a quiver representation of $\Atq$ on $V_{\bullet}$:
  \begin{align}\label{Aqt representation}
      \begin{split}
        T_i F &= s_i F+(1-\tc)y_i\frac{F-s_i F}{y_i-y_{i+1}}, \\
        d_{-}F &= \ct_{y_k}(F[\Xb-(\tc-1)y_k]\textnormal{Exp}[-y_k^{-1}\Xb]), \\
        d_{+}F &= -T_1...T_{k}(y_{k+1}F[\Xb+(\tc-1)y_{k+1}],\\
        d_{+}^{*}F &= \zeta_k F[\Xb+(\tc-1)y_{k+1}],
      \end{split}
  \end{align}
The formulas above represent the actions of the arrows originating at node $k$.
\end{thm}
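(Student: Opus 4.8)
### Proof proposal for Theorem \ref{thm: CMrep}

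The plan is to verify directly that the four operators in \eqref{Aqt representation} satisfy all the defining relations of $\Atq$ listed in Definitions \ref{def: dpa} and \ref{def: ddpa}, organized from the cheapest relations to the most expensive. First I would dispense with the relations internal to the Hecke loops: the operators $T_i$ are the usual Demazure--Lusztig operators acting on $\Rat(\tc,\qc)[y_1,\dots,y_k]$ (with $\Sym[X]$ along for the ride as scalars), so \eqref{T relation} and \eqref{T quadratic} are classical. Likewise, $d_+^{*}$ intertwines $T_i$ with $T_{i+1}$ because $\zeta_k$ realizes the cyclic shift $y_i\mapsto y_{i+1}$ and the plethystic factor $F[X+(\tc-1)y_{k+1}]$ is symmetric in $y_1,\dots,y_k$, which gives the two relations in \eqref{d+* relation} up to checking the quadratic-type relation $T_1(d_+^{*})^2=(d_+^{*})^2$; that one reduces to the statement that $(d_+^{*})^2 F$ is symmetric in $y_1,y_2$, which is transparent from the formula since $\zeta_k^2$ followed by a double plethystic shift produces something manifestly $s_1$-invariant. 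The relations \eqref{T relation}, \eqref{T quadratic}, \eqref{d+ relation}, \eqref{d- relation} for the single $d_+$, $d_-$ are exactly the defining relations of $\mathbb{A}_{\tc}$, and for these I would simply cite \cite{Me}*{Proposition 3.2} (the operators $T_i$, $d_-$, $d_+$ here coincide with Mellit's), so no independent verification is needed.

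The substantive new content is therefore: the relations \eqref{d-2 relation} coupling $d_+^{*}$ with $d_-$, the two cross relations \eqref{cross relation}, and the relation \eqref{last}. For \eqref{cross relation}, the identity $d_+^{*}y_i=y_{i+1}d_+^{*}$ for $i\leq k-1$ is immediate: $\zeta_k$ conjugates multiplication by $y_i$ into multiplication by $y_{i+1}$, and the plethystic shift in $X$ commutes with multiplication by a $y$-variable. The identity $d_+ z_i=z_{i+1}d_+$ requires knowing how $z_i$ and $d_+$ interact; here I would use Proposition \ref{prop: ddpa}, which already records $d_+^{*}z_i = T_1^{-1}\cdots T_i^{-1} z_i T_i\cdots T_1 d_+^{*}$ and the braid-conjugation formulas for the $z_i$, so that \eqref{cross relation} becomes a formal consequence of the $T$-relations together with $d_+T_i=T_{i+1}d_+$ — but I must be careful, since \eqref{cross relation} is one of the defining relations, so strictly I should verify it on the operators rather than quote a proposition that presupposes it; the honest route is to expand $z_1=\frac{\tc^k}{1-\tc}[d_+^{*},d_-]T_{k-1}^{-1}\cdots T_1^{-1}$ and check $d_+ z_1 = z_2 d_+$ directly using the already-established relations among $T_i$, $d_+$, $d_+^{*}$, $d_-$. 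For \eqref{d-2 relation}, I would compute both sides of $\tc\, d_-[d_+^{*},d_-] = [d_+^{*},d_-]d_- T_{k-1}$ on a general $F\in V_k$. The key computational tool is the plethystic/constant-term calculus: $d_-$ is the "annihilation" operator $\ct_{y_k}(F[X-(\tc-1)y_k]\,\mathrm{Exp}[-y_k^{-1}X])$ and $d_+^{*}$ is essentially a "creation plus cyclic shift"; the commutator $[d_+^{*},d_-]$ acting on $V_k\to V_k$ should simplify to a comparatively clean operator (morally, adding back what was removed), and then both sides of \eqref{d-2 relation} reduce to an identity of plethystic operators that can be checked on the generators $p_n[X]$ of $\Sym[X]$ and on monomials in the $y_i$.

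The main obstacle I expect is the relation \eqref{last}, $z_1 d_+ = -\qc\tc^{k+1} y_1 d_+^{*}$, together with the $d_-$-coupling relations \eqref{d-2 relation} — precisely the places where the constant-term operator $d_-$ and the plethystic substitutions $X\mapsto X\pm(\tc-1)y$ must be manipulated in tandem, and where the normalization constants $\tc^k$, $\qc$, and the signs have to come out exactly right. Concretely, to prove \eqref{last} I would unwind $z_1 d_+ F$: applying $d_+$ produces $-T_1\cdots T_k\big(y_{k+1}F[X+(\tc-1)y_{k+1}]\big)\in V_{k+1}$, then $z_1$ on $V_{k+1}$ is $\frac{\tc^{k+1}}{1-\tc}[d_+^{*},d_-]T_k^{-1}\cdots T_1^{-1}$; the operators $T_k^{-1}\cdots T_1^{-1}$ cancel the $T_1\cdots T_k$ up to the $T_k$ at the end, the surviving $[d_+^{*},d_-]$ acts on a polynomial that is divisible by $y_{k+1}$, and a careful constant-term computation (using $\mathrm{Exp}[-y^{-1}X]=\sum_n(-1)^n h_n[X]y^{-n}$ and the two-term plethystic convolution formulas \eqref{eq3}–\eqref{eq4}) should collapse this to $-\qc\tc^{k+1} y_1\zeta_{k}F[X+(\tc-1)y_1]=-\qc\tc^{k+1}y_1 d_+^{*}F$. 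The bookkeeping of which $T_i$'s survive, and tracking the factor $\qc$ that enters through $\zeta_k$'s rescaling $y_1\mapsto \qc y_1$ at the bottom of the cyclic shift, is where the proof is genuinely delicate; everything else is either classical or a routine (if lengthy) plethystic manipulation. I would present the $d_-$-relations and \eqref{last} in full detail and merely indicate the mechanical checks for the remaining relations.
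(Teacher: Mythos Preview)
The paper does not supply a proof of this theorem: it is stated with a reference to \cite{Me}*{Proposition 3.2, Proposition 3.3} (and implicitly \cite{CM}), and the subsequent proposition on the $y_i$ is likewise cited rather than proved. Your proposal to verify the defining relations of $\Atq$ directly on the operators \eqref{Aqt representation} is therefore a more hands-on route than what the paper itself offers, and is essentially the strategy carried out in the cited references.

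Your plan is sound in outline, with two small corrections. First, you omit \eqref{missing d- relation} from the list of $\mathbb{A}_\tc$ relations to be cited from \cite{Me}; these are also part of Definition \ref{def: dpa} and must be accounted for. Second, in your sketch for \eqref{last} you write that $T_k^{-1}\cdots T_1^{-1}$ cancels $T_1\cdots T_k$ ``up to the $T_k$ at the end''; in fact they cancel completely, so that $z_1 d_+ F = -\frac{\tc^{k+1}}{1-\tc}[d_+^{*},d_-]\big(y_{k+1}F[X+(\tc-1)y_{k+1}]\big)$, and the remaining work is the explicit evaluation of $[d_+^{*},d_-]$ on an element divisible by $y_{k+1}$. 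You are right that this commutator and the relations \eqref{d-2 relation} are where the genuine plethystic bookkeeping lies; your self-correction about not invoking Proposition \ref{prop: ddpa} circularly for \eqref{cross relation} is also well taken.
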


We call this representation  the standard representation of the double Dyck path algebra $\Atq$.

An important result is the computation of the action of the loops $y_i$ in the standard representation. We refer to \cite{CM}*{Lemma 5.4, Lemma 5.5} (see also \cite{Me}*{Proposition 3.2})

\begin{prop}
The loops $y_i$, $1\leq i\leq k$, act on $V_k$ as multiplication by $y_i$.
\end{prop}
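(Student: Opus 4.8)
The plan is to use the two structural ingredients in the definition of the loops: the recursion $y_{i+1}=\tc T_i^{-1}y_iT_i^{-1}$, and the formula $y_1=\frac{1}{\tc^{k-1}(\tc-1)}[d_+,d_-]\,T_{k-1}\cdots T_1$ for the loop at node $k$. First I would reduce to the case $i=1$. Write $M_{y_j}$ for the operator of multiplication by $y_j$ on $V_k$. In the variables $y_1,\dots,y_k$ the Demazure--Lusztig operators of Theorem~\ref{thm: CMrep} are literally the operators of the polynomial representation of Proposition~\ref{laurent rep} (with $x$ replaced by $y$), so they satisfy $\tc T_i^{-1}M_{y_i}T_i^{-1}=M_{y_{i+1}}$ --- this is relation \eqref{X relation ii} realized by multiplication operators, and can be checked directly on monomials using the quadratic relation. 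Hence, granting $y_1=M_{y_1}$, an immediate induction on $i$ via the recursion gives $y_i=M_{y_i}$ for every $i\le k$. So the whole statement reduces to the single operator identity
$$[d_+,d_-]\,T_{k-1}\cdots T_1=\tc^{k-1}(\tc-1)\,M_{y_1}\qquad\text{on }V_k,$$
equivalently $[d_+,d_-]=\tc^{k-1}(\tc-1)\,M_{y_1}\,T_1^{-1}\cdots T_{k-1}^{-1}$ (the $T_i$ are invertible, with $T_i^{-1}=\tc^{-1}T_i+1-\tc^{-1}$).

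To establish this identity I would compute the two composites $d_+d_-$ and $d_-d_+$ on $V_k$ directly from the formulas of Theorem~\ref{thm: CMrep}. For $F\in V_k$ we have $d_-F=\ct_{y_k}\!\big(F[X-(\tc-1)y_k]\,\textnormal{Exp}[-y_k^{-1}X]\big)\in V_{k-1}$ and then $d_+(d_-F)=-\,T_1\cdots T_{k-1}\big(y_k\,(d_-F)[X+(\tc-1)y_k]\big)$; dually $d_+F=-\,T_1\cdots T_k\big(y_{k+1}\,F[X+(\tc-1)y_{k+1}]\big)\in V_{k+1}$ and $d_-(d_+F)=\ct_{y_{k+1}}\!\big((d_+F)[X-(\tc-1)y_{k+1}]\,\textnormal{Exp}[-y_{k+1}^{-1}X]\big)$. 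The computation is organized by the plethystic rules $\textnormal{Exp}[A+B]=\textnormal{Exp}[A]\,\textnormal{Exp}[B]$, by the fact that $T_1,\dots,T_{k-1}$ act only on the $y$-variables (hence commute with every plethystic substitution in $X$), and by the observation that the pairing $\ct_{y}\!\big(\,\cdot\;\textnormal{Exp}[-y^{-1}X]\big)$ is the standard adjunction that turns a polynomial in $y$ into a lowering operator on $\Sym[X]$. After carrying out the substitutions, the bulk of the terms in the difference $d_+d_--d_-d_+$ cancel in pairs, and what survives is a single multiplication operator twisted by $T_1\cdots T_{k-1}$, which after the normalization $\tc^{k-1}(\tc-1)$ is exactly the right-hand side of the displayed identity.

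The hard part will be the bookkeeping in $d_-d_+$: the outermost Hecke factor $T_k=s_k+(1-\tc)y_k\frac{1-s_k}{y_k-y_{k+1}}$ inside $d_+$ interacts with the constant-term extraction $\ct_{y_{k+1}}$ and with the shift $X\mapsto X-(\tc-1)y_{k+1}$ coming from $d_-$, and because $s_k$ swaps the distinguished variable $y_{k+1}$ with $y_k$ the two shifts $X\pm(\tc-1)y_{k+1}$ do not cancel naively. I would handle this by expanding $T_k$ once, treating the $s_k$-term and the divided-difference term separately, and performing $\ct_{y_{k+1}}$ first using the reproducing property of $\textnormal{Exp}[-y_{k+1}^{-1}X]$: the $s_k$-term then yields the bare multiplication by $y_1$ (after the remaining $T_1\cdots T_{k-1}$), while the divided-difference term supplies the Hecke twist. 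Useful sanity checks along the way are that $[d_+,d_-]$ must commute with $T_2,\dots,T_{k-1}$ (consistent with $M_{y_1}$ commuting with those and with the way $T_1^{-1}\cdots T_{k-1}^{-1}$ conjugates them), that the resulting loops satisfy all the relations \eqref{y1 relation}--\eqref{y2 relation} of Proposition~\ref{prop: ddpa}, and that the base case $k=1$, where the identity reads $[d_+,d_-]=(\tc-1)M_{y_1}$ on $V_1=\mathbb{Q}(\tc,\qc)[y_1]\otimes\Sym[X]$, already exhibits the whole mechanism and can be checked by hand.
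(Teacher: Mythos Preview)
The paper does not give its own proof of this proposition: it simply cites \cite{CM}*{Lemma 5.4, Lemma 5.5} and \cite{Me}*{Proposition 3.2}. Your reduction to $i=1$ via $y_{i+1}=\tc T_i^{-1}y_iT_i^{-1}$ and $\tc T_i^{-1}M_{y_i}T_i^{-1}=M_{y_{i+1}}$ is correct and is exactly how the cited references organize the argument.

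For the key identity $[d_+,d_-]=\tc^{k-1}(\tc-1)M_{y_1}T_1^{-1}\cdots T_{k-1}^{-1}$, your direct plethystic computation would work, but the route taken in \cite{CM} (and reproduced verbatim in this paper in the proof of \eqref{eq8} for the $\P_\bullet$ side) is considerably cleaner and avoids the brute-force expansion of $T_k$ you anticipate as the hard part. The point is that the very relation you invoke in your reduction step, rewritten as $T_kM_{y_{k+1}}=\tc M_{y_k}T_k^{-1}$, together with $T_id_-=d_-T_i$ for $i\le k-1$ (which follows immediately from your observation that the $T_i$ act only on the $y$-variables), lets one factor \emph{both} summands of the commutator as $T_1\cdots T_{k-1}M_{y_k}$ times something:
\[
[d_+,d_-]\;=\;T_1\cdots T_{k-1}\,M_{y_k}\bigl(-d_-^{(k)}+\tc\,d_-^{(k+1)}T_k^{-1}\bigr).
\]
One is then reduced to showing that $-d_-^{(k)}+\tc\,d_-^{(k+1)}T_k^{-1}$ acts on $V_k$ as the scalar $(\tc-1)$, and by $V_{k-1}$-linearity this needs only to be checked on $y_k^{\,n}$, $n\ge 0$, which is a short explicit computation (the paper carries it out in the proof of \eqref{eq8} using the identity of Lemma~\ref{finite DAHA lemma}). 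So rather than tracking how $s_k$ and the divided-difference term interact separately with $\ct_{y_{k+1}}$ and the plethystic shift, you can dispose of $T_k$ algebraically in one line by reusing the same Bernstein relation you already needed for the induction.

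Your sanity checks are fine, and the $k=1$ base case you propose is indeed the whole mechanism. There is no genuine gap in your plan; it is just more laborious than necessary.
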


The action of the $z_i$ operators is more complicated. We record below some examples that show in particular that the action of $z_1$ is not compatible with the canonical inclusion $V_k\subset V_{k+1}$.
\begin{ex}\label{exp: x_2^2iv}
Let $y_2^2\in V_2$. We have,
$$z_1 \cdot  y_2^2 =\qc^2 \tc(\tc-1)y_1^2+\qc\tc(\tc-1)y_1 y_2-\qc\tc(\tc-1)y_1 e_1[\Xb].$$
Let $y_2^2\in V_3$. We have,
$$z_1 \cdot  y_2^2 =\qc^2 \tc(\tc-1)y_1^2+\qc\tc(\tc-1)y_1 y_2-\qc\tc(\tc-1)^2 y_1y_3-\qc\tc(\tc-1)y_1 e_1[\Xb].$$
More generally, for $y_2^2\in V_k$, $k\geq 3$, we have,
$$z_1 \cdot  y_2^2 =\qc^2 \tc(\tc-1)y_1^2+\qc\tc(\tc-1)y_1 y_2-\qc\tc(\tc-1)^2 (y_1y_3+\cdots+y_1y_k)-\qc\tc(\tc-1)y_1 e_1[\Xb].$$
\end{ex}
\begin{ex}\label{exp: p_2iv}
Let $p_2[\Xb]\in V_1$. We have,
$$
z_1 \cdot  p_2[\Xb]=\qc^2 \tc(1-\tc^2)  y_1^2-\qc\tc(1-\tc^2)y_1 e_1[\Xb].
$$
Let $p_2[\Xb]\in V_2$. We have,
$$
z_1 \cdot  p_2[\Xb]= \qc^2 \tc^2(1-\tc^2)y_1^2  - \qc\tc^2(1-\tc^2)y_1 e_1[\Xb].
$$
\end{ex}

\subsection{}\label{sec: ddpaP}
We will use the standard representation of $\H^+$ to construct a quiver representation of $\Atq$. Let
$$\gls{Pbullet}=(\P(k)^+)_{k\geq 0}.$$
For $k\geq 0$, recall that we denote by $\H^+(k)$ the subalgebra of $\H^+$ generated by $T_i$, $1\leq i\leq k-1$, and $X_i$, and $Y_i$, $1\leq i\leq k$. From Lemma \ref{lem: P(k)stable} we know that each  $\P(k)^+$ is stable under the action of $\H^+(k)$ 
through the standard representation of $\H^+$. Recall that, for all $k$, the multiplication map  $ \P_k^{+}\otimes \Sym[\mathbf{X}_k]  \cong\P(k)^+$ is an algebra isomorphism.

The elements $$\widetilde{\omega}_k^{-1}, \omega^{-1}_{k}\in \H_k^+$$
act on $\P_k^+$ via the standard representation of $\H_k^+$ (see Proposition \ref{laurent rep}). We extend their action to $\P(k)^+$ as  $\Sym[\mathbf{X}_k]$-linear maps. Let $$\gls{iotak}: \P(k)^+\to\P(k+1)^+$$ be the canonical inclusion 
map. Denote
$$\gls{par} =-\widetilde{\omega}^{-1}_{k+1}\iota(k): \P(k)^+\to\P(k+1)^+ \quad \text{and}\quad \gls{park} =\omega^{-1}_{k+1}\iota(k): \P(k)^+\to\P(k+1)^+.$$

\subsection{}\label{sec: d-}

Recall that the Hall-Littlewood symmetric functions \gls{HL} are a distinguished basis of the ring of symmetric functions $\Sym[\Xb]$, indexed by partitions $\lambda$. There is a remarkable family of linear operators \gls{B},
$n\geq 0$, on $\Sym[\Xb]$, defined as follows. $\Bo_\infty$ is the operator of left multiplication by the elementary symmetric function $e_1[\Xb]=\Xb$ and $\Bo_0$ is the operator defined by
$$\Bo_0 P_{\mu}(\Xb,\tc^{-1})=\tc^{\ell(\mu)}P_{\mu}(\Xb,\tc^{-1}).$$
For $n\geq 0$, let $\Bo_{n+1}:=[\Bo_\infty,\Bo_n]$.

 The operator $$\gls{parmin}:\P(k)^+\to\P(k-1)^+$$
 is defined to be the $\P_{k-1}^+$-linear map which, on elements of the form $x_k^nF[\mathbf{X}_{k}]$ acts as
$$\partial^{-}_k (x_k^nF[\mathbf{X}_{k}])= \Bo_n F[\mathbf{X}_{k-1}].$$

\subsection{}
 The operators $\Bo_n$, $n\geq 0$, are creation operators for the Hall-Littlewood symmetric functions. They are (modulo a change of variable) the vertex operators in \cite{Jing} (see also \cite{Mac}*{\S III.5, Exp. 8}). They are particular 
cases of more general operators (as in \cites{GHT, BGHT}) depending of both parameters $\qc, \tc$, which can be described more explicitly using plethystic substitution. In our case, 
$$\Bo_n F[\Xb]=(F[\Xb-z^{-1}]\textrm{Exp}[-(\tc-1)z\Xb])_{\bigr|{z^r}},$$
where ${}_{\bigr|{z^r}}$ denotes the coefficient of $z^r$ in the indicated expression. The expression for $\Bo_0$ implies the specified formula for $\Bo_n$, $n\geq 1$ (see, e.g., \cite{GHT}*{Proposition 1.4}). $\Bo_0$ is the $\qc=0$ 
specialization of the operator $\Delta^\prime$ in \cite{Ha}*{(2.10)}. The fact that the Hall-Littlewood symmetric functions are eigenfunctions of this operator is proved in \cite{Ha}*{Corrolary 2.3}. This leads to the following compact 
expression for $\partial_k^-$. Let $f(x_1,\dots,x_k)\in \P_k^+$ and $F[\mathbf{X}_{k}]\in \Sym[\mathbf{X}_{k}]$. Then,
\begin{equation}\label{eq6}
\partial^{-}_k f(x_1,...,x_k)F[\mathbf{X}_{k}]=\tau_k \ct_{x_k}(f(x_1,...,x_k)F[\mathbf{X}_{k}-x_k]\textrm{Exp}[-(\tc-1)x_k^{-1}\mathbf{X}_{k}]),
\end{equation}
where $\tau_k$ denotes the alphabet shift $\mathbf{X}_{k}\mapsto \mathbf{X}_{k-1}$ (or $x_{i+1}\mapsto x_i\ \textrm{ for all } i\geq k$). This description makes it clear that, if $F[\mathbf{X}_{k-1}]\in \Sym[\mathbf{X}_{k-1}]$ then
\begin{equation}\label{eq5}
\partial^{-}_k F[\mathbf{X}_{k-1}]= F[\mathbf{X}_{k-1}].
\end{equation}
By writing any $F[\mathbf{X}_{k}]\in \Sym[\mathbf{X}_{k}]$ as $F[\mathbf{X}_{k-1}-x_k]$ we see that the elements of  $\P(k)^+$ can be written as  finite sums of the form
\begin{equation}\label{eq10}
\sum f_i(x_1,\dots,x_{k-1})x_k^i G_i[\mathbf{X}_{k-1}].
\end{equation}
By \eqref{eq6}, on such an expression, $\partial_k^-$ acts as
\begin{equation}\label{eq7}
\partial_k^- \sum_i f_i(x_1,\dots,x_{k-1})x_k^i G_i[\mathbf{X}_{k-1}] = \sum_i f_i(x_1,\dots,x_{k-1}) G_i[\mathbf{X}_{k-1}]  \partial_k^- x_k^i
\end{equation}
\subsection{} To facilitate the comparison between the operators $\X_i$, $\Y_i$ and $[\partial,\partial^-]$, $[\partial^*, \partial^-]$ we record the following formulas.

\begin{lem} For any $k\geq 1$ we have
\begin{equation}\label{eq8}
\partial_{k-1}\partial_k^- - \partial_{k+1}^-\partial_k =(\tc-1)\widetilde{\omega}^{-1}_{k}.
\end{equation}
\end{lem}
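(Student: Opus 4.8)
The plan is to verify the identity \eqref{eq8} by a direct computation on both sides, using the explicit descriptions of the operators involved. The left-hand side is a map $\P(k)^+\to \P(k)^+$ (both summands go $\P(k)^+\to\P(k-1)^+\to\P(k)^+$ in the first case, and $\P(k)^+\to\P(k+1)^+\to\P(k)^+$ in the second — wait, more precisely $\partial_{k-1}\colon\P(k-1)^+\to\P(k)^+$ and $\partial_{k+1}^-\colon\P(k+1)^+\to\P(k)^+$, so after composing we land back in $\P(k)^+$). Since both sides are $\Sym[\mathbf{X}_k]$-linear (the operators $\partial^*,\partial,\partial^-$ and $\widetilde{\omega}_k^{-1}$ are all built to be linear over the appropriate ring of symmetric functions), and since every element of $\P(k)^+$ is a finite sum of terms $f(x_1,\dots,x_{k-1})x_k^n G[\mathbf{X}_{k-1}]$ as in \eqref{eq10}, it suffices to check the identity on such a generic element, and in fact — pulling out the $\Sym[\mathbf{X}_{k-1}]$-factor $G$ and the $\P_{k-1}^+$-factor $f$ where the operators allow — it should reduce to checking it on a monomial $x_k^n$, or on $f(x_1,\dots,x_{k-1})x_k^n$.

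\textbf{Key steps.} First I would compute $\partial_{k+1}^-\partial_k$ applied to a test element. Recall $\partial_k=-\widetilde{\omega}_{k+1}^{-1}\iota(k)$, so $\partial_{k+1}^-\partial_k = -\partial_{k+1}^-\widetilde{\omega}_{k+1}^{-1}\iota(k)$; here $\widetilde{\omega}_{k+1}^{-1}=x_1 T_1^{-1}\cdots T_k^{-1}$ (read off from the formula $\widetilde{\omega}_k = T_{k-1}\cdots T_1 x_1^{-1}$ in Proposition \ref{laurent rep}, inverted), and then I apply $\partial_{k+1}^-$ via \eqref{eq7}, which in turn needs $\partial_{k+1}^- x_{k+1}^i = \Bo_i$ acting on the symmetric part. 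The key computational input here is Lemma \ref{finite DAHA lemma}, which evaluates $\tc^m T_m^{-1}\cdots T_1^{-1} x_1^n$ as $\sum_{i=0}^{n-1} x_{m+1}^{n-i}h_i[(1-\tc)\overline{\mathbf{X}}_m]$; this extracts the $x_{k+1}$-dependence explicitly so that $\partial_{k+1}^-$ can be applied termwise. Second, I would compute $\partial_{k-1}\partial_k^-$ on the same test element: $\partial_k^-$ lowers via $\Bo_n$, then $\partial_{k-1}=-\widetilde{\omega}_k^{-1}\iota(k-1)=-x_1 T_1^{-1}\cdots T_{k-1}^{-1}$ raises back. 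Third, I would subtract the two expressions and match them against $(\tc-1)\widetilde{\omega}_k^{-1}=(\tc-1)x_1 T_1^{-1}\cdots T_{k-1}^{-1}$ applied to the test element. The Hall–Littlewood creation operators $\Bo_n$ and the plethystic convolution identities from \S\ref{sec: esf} (especially \eqref{eq3}, \eqref{eq4}) should be exactly what is needed to reconcile the symmetric-function pieces; the combinatorial bookkeeping is the telescoping between $h_i[(1-\tc)\overline{\mathbf{X}}_m]$ terms at levels $k$ and $k-1$.

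\textbf{Main obstacle.} The hard part will be handling the interaction between the Demazure–Lusztig strings $T_1^{-1}\cdots T_k^{-1}$ (from $\widetilde{\omega}_{k+1}^{-1}$) and the $\Sym[\mathbf{X}_{k-1}]$-factor $G[\mathbf{X}_{k-1}]$ inside the test element: the operators $\partial^-$ and $\partial$ are linear over different rings of symmetric functions ($\Sym[\mathbf{X}_{k-1}]$ versus $\Sym[\mathbf{X}_k]$ after the alphabet shift $\tau_k$), so one must be careful about which symmetric-function factors commute past which operators, and about the alphabet shifts $\tau_k$ hidden in \eqref{eq6}. In particular, the $\Bo_n$ operators act by \emph{plethystic} substitution involving all of $\mathbf{X}_{k-1}$, and when these symmetric functions are re-expanded after $\partial_{k-1}$ raises the alphabet back, one gets a sum whose cancellation against the $\partial_{k+1}^-\partial_k$ term is not manifest — it requires using the eigenfunction property $\Bo_0 P_\mu(X,\tc^{-1})=\tc^{\ell(\mu)}P_\mu(X,\tc^{-1})$ together with the convolution formula \eqref{eq4} to see that the discrepancy collapses to the single term $(\tc-1)\widetilde{\omega}_k^{-1}$. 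I would therefore expect to spend most of the effort organizing the computation so that the Hall–Littlewood pieces are grouped to make this telescoping visible, rather than on any single manipulation.
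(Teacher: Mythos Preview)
Your overall plan is workable, but it takes a significantly more laborious route than the paper's, and the ``main obstacle'' you anticipate is an artifact of choosing the wrong expression for $\widetilde{\omega}_k^{-1}$.

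The paper's key simplification is to use the \emph{second} form in Remark~\ref{rem: otilderels}, namely $\widetilde{\omega}_k^{-1}=T_1\cdots T_{k-1}X_k$, rather than the form $x_1 T_1^{-1}\cdots T_{k-1}^{-1}$ you chose. Writing $\partial_{k-1}=-\T_1\cdots\T_{k-1}\X_k$ and $\partial_k=-\T_1\cdots\T_k\X_{k+1}$, and using the commutation $T_i\partial_{k+1}^-=\partial_{k+1}^-T_i$ for $i\leq k-1$ (from \eqref{missing d- relation}) together with $T_kX_{k+1}=\tc X_k T_k^{-1}$, both terms on the left-hand side acquire the common left factor $\T_1\cdots\T_{k-1}\X_k$. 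Since $(\tc-1)\widetilde{\omega}_k^{-1}$ is exactly $(\tc-1)\T_1\cdots\T_{k-1}\X_k$, the identity reduces to showing that the operator $-\partial_k^-+\tc\,\partial_{k+1}^-\T_k^{-1}$ acts on $\P(k)^+$ as the scalar $(\tc-1)$. By \eqref{eq7} this only needs to be checked on $x_k^n$, and that is a three-line computation using the explicit formula for $\tc T_k^{-1}x_k^n$ and the identity $\partial_{k+1}^- x_{k+1}^m=h_m[(1-\tc)\mathbf{X}_k]$ together with the convolution \eqref{eq3}.

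In particular, the Hall--Littlewood eigenfunction property of $\Bo_0$ and the interaction of the Demazure--Lusztig string with the symmetric-function factor $G$ --- the issues you flag as the hard part --- never enter. Your approach, pushing $T_1^{-1}\cdots T_k^{-1}$ through the full element $f\cdot x_k^n\cdot G$, would force you to confront these, and while the telescoping you describe could presumably be made to work, it is unnecessary once the factorization above is in hand. The moral: your instinct in the first paragraph (``it should reduce to checking it on a monomial $x_k^n$'') was correct, but the mechanism for that reduction is the algebraic factorization, not linearity over $\Sym[\mathbf{X}_{k-1}]$.
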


\begin{proof}
The proof of this equality is identical to the one in the proof of \cite{CM}*{Lemma 5.4}; we include a brief explanation for the reader's convenience. Using the relations \eqref{missing d- relation}, we have
\begin{align*}
\partial_{k-1}\partial_k^- - \partial_{k+1}^-\partial_k &= -  \T_1\cdots \T_{k-1} \X_{k} \partial_k^- + \T_1 \T_2\cdots  \T_{k-1} \partial_{k+1}^-\T_{k} \X_{k+1}\\
&=  \T_1\cdots \T_{k-1} \X_{k} (-\partial_k^- + \partial_{k+1}^- \T_k^{-1}).
\end{align*}
The operator $-\partial_k^- + \partial_{k+1}^- \tc \T_k^{-1}$  acts on $\P(k)^+$ as scaling by $(\tc-1)$. By \eqref{eq7} this only needs to be verified for the action on $x_k^n$, $n\geq 0$. 
We have,
\begin{align*}
\partial_{k+1}^- \tc \T_k^{-1}x_k^{n}- \partial_k^-x_k^n &= \partial_{k+1}^- x_{k+1}^n+(1-\tc)\sum_{i=1}^{n-1}x_k^i \partial_{k+1}^- x_{k+1}^{n-i} - \partial_k^-x_k^n\\
&= h_n[(1-\tc)\mathbf{X}_k]+(1-\tc)\sum_{i=1}^{n-1}x_k^i h_{n-i}[(1-\tc)\mathbf{X}_k] - h_n[(1-\tc)\mathbf{X}_{k-1}]\\
&=-(1-\tc)x_k^n.
\end{align*}
This proves our claim.
\end{proof}

\begin{lem}
Let $k\geq 1$, $n\geq 0$, $f(x_1,\dots,x_{k-1})\in \P_{k-1}^{+}$, and $G[\mathbf{X}_{k-1}]\in  \Sym[\mathbf{X}_{k-1}]$. We regard $$F=f(x_1,\dots,x_{k-1})x_k^n G[\mathbf{X}_{k-1}]$$ as an element of $\P(k)^+$. Then,
\begin{equation}\label{eq11}
(\partial^*_{k-1}\partial_k^- - \partial_{k+1}^-\partial^*_k) F =f(x_2,\dots,x_{k})G[\mathbf{X}_{k}+\qc x_1](h_n[(1-\tc )(\mathbf{X}_k+\qc x_1)]-h_n[(1-\tc )\mathbf{X}_k]).
\end{equation}
\end{lem}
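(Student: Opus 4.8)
The plan is to compute $\partial^{*}_{k-1}\partial^{-}_k F$ and $\partial^{-}_{k+1}\partial^{*}_k F$ separately and then subtract. First I would reduce to $F$ of the displayed form — which is how the statement is already phrased — using the $\P^{+}_{k-1}$-linearity of $\partial^{-}_k$ from \eqref{eq7} together with the $\Sym[\mathbf{X}_k]$-linearity inherent in $\partial^{*}_{k-1}=\omega^{-1}_k\iota(k-1)$. The argument runs parallel to the proof of Proposition \ref{Y1 action}, with $\partial^{*}=\omega^{-1}\iota$ now playing the role of $\pr_1\omega^{-1}_m$ and $\partial^{-}$, through its vertex-operator form \eqref{eq6}, the role of $\tc^{m-k}T^{-1}_{m-1}\cdots T^{-1}_k$; the simplification is that $\partial^{*}$ and $\partial^{-}$ already act on the filtration pieces $\P(k)^{+}$, so no passage to a limit is needed.

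For the bookkeeping, write $H[X+w]$ for the plethystic substitution adding one letter $w$ to a symmetric function $H$, and use $H[\overline{\mathbf{X}}_{k-1}]=H[x_k+\overline{\mathbf{X}}_k]$ (and $H[\overline{\mathbf{X}}_{k-1}]=H[x_k+x_{k+1}+\overline{\mathbf{X}}_{k+1}]$) to peel off the relevant variables. Let $\Bo(z)=\sum_{m\ge0}\Bo_m z^m$, so that $\Bo(z)H=H[X-z^{-1}]\textnormal{Exp}[-(\tc-1)zX]$, and let $\Phi$ be the symmetric-function operator equal to the coefficient of $z^n$ in $\Bo(z)\bigl(\,\cdot\,[X+z^{-1}]\bigr)$; then \eqref{eq6} gives $\partial^{-}_k\bigl(x_k^{n}G[\overline{\mathbf{X}}_{k-1}]\bigr)=(\Phi G)[\overline{\mathbf{X}}_{k-1}]$. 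Since $\partial^{*}_{k-1}=\omega^{-1}_k\iota(k-1)$ acts on the polynomial factor by the substitution $x_i\mapsto x_{i+1}$ $(i\le k-1)$, $x_k\mapsto\qc x_1$ and trivially on the symmetric factor — which on the symmetric side amounts to $\overline{\mathbf{X}}_{k-1}\mapsto\overline{\mathbf{X}}_k+\qc x_1$ — we obtain
\[
\partial^{*}_{k-1}\partial^{-}_k F=f(x_2,\dots,x_k)\,(\Phi G)[\overline{\mathbf{X}}_k+\qc x_1].
\]
Symmetrically, peeling off $x_k,x_{k+1}$ and then $x_{k+1}$ gives $\partial^{*}_k F=f(x_2,\dots,x_k)\,x_{k+1}^{n}\,G[x_{k+1}+\qc x_1+\overline{\mathbf{X}}_{k+1}]$, and applying $\partial^{-}_{k+1}$ (which is $\P^{+}_k$-linear, so the letter $\qc x_1$ is inert) yields
\[
\partial^{-}_{k+1}\partial^{*}_k F=f(x_2,\dots,x_k)\,\bigl(\Phi\,(G[\,\cdot+\qc x_1\,])\bigr)[\overline{\mathbf{X}}_k].
\]
Subtracting, the difference equals $f(x_2,\dots,x_k)$ times the value at $\overline{\mathbf{X}}_k$ of the commutator of $\Phi$ with the shift $H\mapsto H[X+\qc x_1]$, applied to $G$.

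All the content then lies in that commutator, and the one genuine computation is the exponential-shift identity
\[
(\Bo(z)H)[X+w]=\textnormal{Exp}[-(\tc-1)zw]\cdot\Bo(z)\bigl(H[X+w]\bigr),
\]
which is immediate from the plethystic form of $\Bo(z)$ and $\textnormal{Exp}[A+B]=\textnormal{Exp}[A]\textnormal{Exp}[B]$. Since the shifts $[\,\cdot+w\,]$ and $[\,\cdot+z^{-1}\,]$ commute with each other and $\Bo(z)\bigl(G[X+w+z^{-1}]\bigr)=G[X+w]\,\textnormal{Exp}[-(\tc-1)zX]$, this collapses the commutator to the coefficient of $z^n$ in $\bigl(\textnormal{Exp}[-(\tc-1)zw]-1\bigr)\textnormal{Exp}[-(\tc-1)zX]$ times $G[X+w]$, i.e. to $\bigl(h_n[(1-\tc)(X+w)]-h_n[(1-\tc)X]\bigr)G[X+w]$. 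Specializing $w=\qc x_1$ and $X=\overline{\mathbf{X}}_k$ reproduces exactly the right-hand side of \eqref{eq11}; note that this is $\tfrac{1-\tc}{\tc^{k}}$ times the right-hand side of Proposition \ref{Y1 action}, so the lemma also amounts to the operator identity $\partial^{*}_{k-1}\partial^{-}_k-\partial^{-}_{k+1}\partial^{*}_k=\tfrac{1-\tc}{\tc^{k}}\,\Y_1\T_1\cdots\T_{k-1}$ on $\P(k)^{+}$. (Alternatively, one can avoid vertex operators and check everything on the basis $G=h_\lambda$ using the convolution formulas \eqref{eq3}--\eqref{eq4}, in the style of the proof of \eqref{eq8}, at the cost of a longer induction.) The only real obstacle I foresee is the plethystic bookkeeping: decomposing $F$ and each intermediate expression into a polynomial factor times a symmetric-function factor in the appropriate $\P^{+}_j\otimes\Sym[\overline{\mathbf{X}}_j]$, so that $\partial^{*}$ and $\partial^{-}$ act on the correct pieces, and tracking how the extra letter $\qc x_1$ produced by $\omega^{-1}$ threads through the operators $\Bo_n$; once this is arranged, the proof reduces to the single identity above.
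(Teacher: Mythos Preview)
Your argument is correct. The paper's own proof is a single line — ``Straightforward from \eqref{eq5} and the definition of $\partial^*$, $\partial^-$'' — so your write-up simply fills in the direct computation that the paper leaves to the reader; the approach is the same.

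One remark on economy: the vertex-operator packaging via $\Bo(z)$ and the exponential-shift identity is more machinery than the computation needs. Once you observe, directly from \eqref{eq6}, that the plethystic substitution $\overline{\mathbf{X}}_k\mapsto\overline{\mathbf{X}}_k-x_k$ sends $G[x_k+\overline{\mathbf{X}}_k]$ back to $G[\overline{\mathbf{X}}_k]$, you get immediately
\[
\partial^{-}_k\bigl(x_k^{n}G[\overline{\mathbf{X}}_{k-1}]\bigr)=G[\overline{\mathbf{X}}_{k-1}]\,h_n[(1-\tc)\overline{\mathbf{X}}_{k-1}],
\]
so your operator $\Phi$ is just multiplication by $h_n[(1-\tc)X]$. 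With this in hand, both terms are one-line applications of the cyclic shift $\omega^{-1}$, and the subtraction gives \eqref{eq11} without invoking $\Bo(z)$ at all. Your observation that the lemma is $\tfrac{1-\tc}{\tc^{k}}$ times Proposition~\ref{Y1 action} is exactly how the paper uses the lemma in the proof of Theorem~\ref{thm: quiverrep}.
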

\begin{proof}
Straightforward from \eqref{eq5} and the definition of $\partial^*$, $\partial^-$.
\end{proof}
\subsection{} We are now ready to describe the $\Atq$-module structure on $\P_\bullet$.

\begin{thm}\label{thm: quiverrep}
The map that sends $T_i$, $d_+$, $d_+^{*}$, and $d^-$ to  $\T_i$, $\partial$, $\partial^*$, and $\partial^-$, respectively,
defines a  $\Atq$-module structure on $\P_\bullet$. Under this action, the operators $y_i$, $z_i$ act as $\X_i$, $\Y_i$.
\end{thm}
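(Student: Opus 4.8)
The plan is to verify that $\T_i$, $\partial$, $\partial^*$, $\partial^-$ satisfy all the defining relations of $\Atq$ listed in Definition \ref{def: ddpa}, after first identifying the actions of the derived loops $y_i$, $z_i$; indeed the very relations \eqref{cross relation} and \eqref{last} refer to $y_i$, $z_i$, so this identification has to come first. For $y_1$: by \eqref{eq8} the commutator $[\partial,\partial^-]$ acts on $\P(k)^+$ as $(\tc-1)\widetilde\omega_k^{-1}$, and $\widetilde\omega_k^{-1}=\tc^{k-1}X_1T_1^{-1}\cdots T_{k-1}^{-1}$ by Remark \ref{rem: otilderels}; substituting this into $y_1=\tfrac{1}{\tc^{k-1}(\tc-1)}[d_+,d_-]T_{k-1}\cdots T_1$ and telescoping the Hecke factors shows that $y_1$ acts as multiplication by $x_1$, i.e. as $\X_1$. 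For $z_1$: combining \eqref{eq11} with Proposition \ref{Y1 action} gives $\Y_1\T_1\cdots\T_{k-1}F=\tfrac{\tc^k}{1-\tc}[\partial^*,\partial^-]F$ for $F$ of the displayed form, hence for all $F\in\P(k)^+$ by linearity; since $\T_1\cdots\T_{k-1}$ is invertible on $\P(k)^+$, substituting $F=\T_{k-1}^{-1}\cdots\T_1^{-1}G$ shows $z_1=\tfrac{\tc^k}{1-\tc}[\partial^*,\partial^-]\T_{k-1}^{-1}\cdots\T_1^{-1}$ acts as $\Y_1$. The recursions $y_{i+1}=\tc T_i^{-1}y_iT_i^{-1}$ and $z_{i+1}=\tc^{-1}T_iz_iT_i$ then match the relations \eqref{X relations}, \eqref{Y relations} valid in $\H^+$ (Theorem \ref{thm: +standardrep}), so $y_i=\X_i$ and $z_i=\Y_i$ for all $i$; this already establishes the last sentence of the theorem.

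Next I would split the defining relations of $\Atq$ into four groups. (a) The Hecke relations \eqref{T relation}, \eqref{T quadratic} are immediate from the $\H^+$-module structure on $\Pas^+$ (Theorem \ref{thm: +standardrep}), restricted to each $\P(k)^+$, which is $\H^+(k)$-stable by Lemma \ref{lem: P(k)stable}. (b) The relations \eqref{d+ relation}, \eqref{d- relation} of the subalgebra $\mathbb{A}_\tc$: writing $\partial=-\widetilde\omega_{k+1}^{-1}\iota(k)$, the relations involving only $\partial$ and the $\T_i$ follow from the affine Hecke relations \eqref{omega relation} for $\widetilde\omega_{k+1}$ together with the fact that $\iota(k)$ commutes with $\T_i$ for $i\le k-1$, while the relations involving $[\partial,\partial^-]=(\tc-1)\widetilde\omega_k^{-1}$ reduce, using \eqref{eq8}, to identities among $\widetilde\omega^{-1}$, $\partial^-$ and the $\T_i$ that are checked exactly as in \cite{CM}*{Lemma 5.5} (the compatibilities of $\partial^-$ with the $\T_i$ in \eqref{missing d- relation} being immediate from \eqref{eq6}--\eqref{eq7}). (c) The relations \eqref{d+* relation}, \eqref{d-2 relation} are handled in the mirror way with $\partial^*=\omega_{k+1}^{-1}\iota(k)$ in place of $\partial$: the $T_i$-relations come from \eqref{omega2 relation}, \eqref{X-omega2 cross relation}, and the relations involving $[\partial^*,\partial^-]$ from \eqref{eq11} together with Proposition \ref{Y1 action}.

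(d) Finally the cross relations \eqref{cross relation} and \eqref{last}: using the identifications above, \eqref{cross relation} becomes $\partial\Y_i=\Y_{i+1}\partial$ and $\partial^*\X_i=\X_{i+1}\partial^*$, and \eqref{last} becomes $\Y_1\partial=-\qc\tc^{k+1}\X_1\partial^*$. The identity $\partial^*\X_i=\X_{i+1}\partial^*$ is immediate because $\omega_{k+1}^{-1}$ intertwines multiplication by $x_i$ and by $x_{i+1}$ for $i\le k$; and $\partial\Y_i=\Y_{i+1}\partial$ reduces to the case $i=1$ by induction, using $\Y_{i+1}=\tc^{-1}\T_i\Y_i\T_i$ and the already-verified relation $\partial\T_i=\T_{i+1}\partial$. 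This is the main obstacle: $\Y_1$ is not the naive limit of the finite Cherednik operators but carries the correction $W_1$ of Lemma \ref{lemma: W}, so $\partial\Y_1=\Y_2\partial$ and \eqref{last} do not follow formally from the relations of $\H^+$ or of the (double) affine Hecke algebras. I expect to settle them either by a direct computation through the explicit formula of Proposition \ref{Y1 action} (and Lemma \ref{finite DAHA lemma}) after unwinding $\partial$, $\partial^*$ and $\Y_1$, or equivalently by establishing the corresponding identities at finite level among $\widetilde Y_1^{(k+1)}$, $\widetilde\omega_{k+1}^{-1}$, $\omega_{k+1}^{-1}$, $X_1$ in $\Ht_{k+1}$ and then passing to the limit via Corollary \ref{cor: AB} and Proposition \ref{prop: continuity}. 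Once group (d) is in place, the remaining verifications are routine.
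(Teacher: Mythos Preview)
Your identification of $y_i=\X_i$, $z_i=\Y_i$ and your treatment of groups (a) and (b) match the paper. However, you have the location of the difficulty exactly backwards.

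Group (d) is \emph{not} an obstacle. Once Theorem~\ref{thm: +standardrep} is in hand, the operators $\Y_i$ on $\Pas^+$ satisfy the full set of $\H^+$ relations \eqref{sdaha}; the correction term $W_1$ has already been absorbed into the very statement that $\Y_1,\Y_2,\dots$ give an $\H^+$-action, and plays no further role. Writing $\partial_k=-\tc^k\X_1\T_1^{-1}\cdots\T_k^{-1}$ on $\P(k)^+$ (Remark~\ref{rem: otilderels}), the identity $\partial\Y_1=\Y_2\partial$ is a two-line consequence of \eqref{XY cross relations} together with $[\T_j,\Y_1]=0$ for $j\ge 2$ and $\T_1\X_2=\tc\X_1\T_1^{-1}$; this is exactly how the paper handles it. Similarly, \eqref{last} follows from the single observation $\omega_{k+1}^{-1}\X_{k+1}=\varpi_{k+1}\X_{k+1}$ (because $\varpi_{k+1}=\pr_1\omega_{k+1}^{-1}$ and the image lies in $x_1\P_{k+1}^+$), after which $\tc^{k+1}\varpi_{k+1}=\Y_1\T_1\cdots\T_k$ and $\T_1\cdots\T_k\X_{k+1}=\widetilde\omega_{k+1}^{-1}$ finish it. No limit argument via Corollary~\ref{cor: AB} is needed.

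The genuine gap is in your group (c). The first relation of \eqref{d-2 relation} is equivalent (via \eqref{missing d- relation}) to $\Y_1\partial_k^-=\partial_k^-\Y_1$, and this does \emph{not} follow from \eqref{eq11} and Proposition~\ref{Y1 action}: those results identify $[\partial^*,\partial^-]$ with $\tfrac{1-\tc}{\tc^k}\Y_1\T_1\cdots\T_{k-1}$, but say nothing about how $\Y_1$ interacts with $\partial^-$ alone. There is no relation in $\H^+$ that encodes $\partial^-$, so this cannot be formal. The paper proves it by the algebraic trick of \cite{CM}*{Proposition~6.3}: multiplying both sides of $\partial^-[\partial^*,\partial^-]\T_{k-1}-\tc[\partial^*,\partial^-]\partial^-$ by $(\tc+1)$ and using \eqref{missing d- relation}, \eqref{d+* relation} to rewrite it as the same expression postcomposed with $(\T_{k-1}+\tc)$, which reduces the check to $\P(k-1)^+\subset\P(k)^+$ and ultimately to a direct computation on the symmetrized monomials $x_{k-1}^n x_k^m+x_{k-1}^m x_k^n$. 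You should isolate this relation as the nontrivial step and supply this (or an equivalent) argument.
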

It is important to note that while the operators corresponding to arrows connecting different nodes are \emph{local} (i.e. dependent on $k$), the operators $\T_i$, $\X_i$, $\Y_i$ that correspond to loops are \emph{global} (i.e.
independent of $k$, as they are restrictions of operators on $\Pas^+$).

\begin{proof}
The fact that the operator 
$$
y_1=\frac{1}{\tc^{k-1}(\tc-1)}[d_{+},d_{-}]T_{k-1}\dots T_1,
$$
acts on $\P(k)^+$ as $\X_1$ follows from \eqref{eq8} and the first expression for $\widetilde{\omega}^{-1}_k$ in Remark \ref{rem: otilderels}.  Therefore, for all $i\geq 1$, the operator $y_i$ acts as $\X_i$.

Furthermore, from \eqref{eq10}, \eqref{eq11},  and Proposition \ref{Y1 action} we obtain that 
$$
z_1T_{1}\cdots T_{k-1}=\frac{\tc^k}{1-\tc}[d_{+}^{*},d_{-}]
$$ 
acts on $\P(k)^+$ as $\Y_1 \T_{1}\cdots \T_{k-1}$. Therefore, $z_1$ acts on $\P(k)^+$ as $\Y_1$ and, for all $i\geq 1$, the operator $z_i$ acts as $\Y_i$.

The verification of many of the relations in Definition \ref{def: ddpa} is virtually identical to the corresponding verification in \cite{CM}. We briefly indicate the main details.

The fact that \eqref{T relation} and \eqref{T quadratic} hold is clear (also part of Theorem \ref{thm: +standardrep}). For the first relation in \eqref{missing d- relation}, see the proof of the corresponding relation in \cite{CM}*{Lemma 5.3}; 
the second set of relations in \eqref{missing d- relation} is clear from the definition of the maps involved. For  \eqref{d+ relation}, recall the expressions for $\widetilde{\omega}^{-1}_k$ in Remark \ref{rem: otilderels}. With this in mind, the first 
relation in \eqref{d+ relation}  follows from the equality
\begin{equation}\label{eq9}
T_1\cdots T_{k+1} T_1\cdots T_k= T_2\cdots T_{k+1} T_1\cdots T_{k+1},
\end{equation}
which is a consequence of the braid relations. Indeed,
\begin{align*}
\T_1\partial_{k+1}\partial_k&=\T_1 \T_1\cdots \T_{k+1} \T_1\cdots \T_k \X_k \X_{k+1}\\
&=\T_1 \T_2\cdots \T_{k+1} \T_1\cdots \T_{k+1} \X_k \X_{k+1},
\end{align*}
which on $\P(k)^+$ acts as $\T_1 \T_2\cdots \T_{k+1} \T_1\cdots \T_{k} \X_k \X_{k+1}=\partial_{k+1}\partial_k$. The second set of relations in \eqref{d+ relation}  is again a straight consequence of the braid relations.

For the relations \eqref{d- relation}, remark that \eqref{eq8}, \eqref{missing d- relation}, and the fact that $\partial^-_k$ commutes with $\X_1$ implies the first relation in  \eqref{d- relation}. The second relation in  \eqref{d- relation} follows 
from \eqref{eq8} and \eqref{eq9}.

The first relation in \eqref{d+* relation} is a consequence of the fact that an element in the image of  $\partial_{k+1}^*\partial_{k}^{*}$ is symmetric in $x_1, x_2$. The second relation in \eqref{d+* relation} is essentially \eqref{omega2 
relation}.

For the first relation in \eqref{d-2 relation} can be seen (with the help of \eqref{missing d- relation}) to be equivalent to 
\begin{equation}
\Y_1 \partial_k^-=\partial_k^- \Y_1.
\end{equation}
The commutativity relation is not immediately clear from the definition of the operators involved. We proceed as in the proof of \cite{CM}*{Proposition 6.3}.
More precisely, 
\begin{align*}
(\tc+1)(\partial^- [\partial^*,\partial^-]\T_{k-1}-\tc [\partial^*,\partial^-] \partial^-)&=(\tc+1)\partial^-\partial^*\partial^-(\T_{k-1}+\tc) - \tc \partial^* (\partial^-)^2(\T_{k-1}+\tc -\T_{k-1}+1)\\
&+(\partial^-)^2 \partial^* (\T_{k-1}+\tc -\T_{k-1}+1)\\
&= ((\tc+1)\partial^-\partial^*\partial^- - \tc \partial^* (\partial^-)^2 +(\partial^-)^2 \partial^*)(\T_{k-1}+\tc)\\
&= (\partial^- [\partial^*,\partial^-]-\tc [\partial^*,\partial^-] \partial^-)(\T_{k-1}+\tc).
\end{align*}
For the second equality we used the relations \eqref{missing d- relation} and \eqref{d+* relation}. The image of $\T_{k-1}+\tc$ lies on the kernel of $\T_{k-1}-1$. Therefore, it is enough to check that 
$$
\partial^- [\partial^*,\partial^-]=\tc [\partial^*,\partial^-] \partial^-
$$
on $\P(k-1)^+\subset \P(k)^+$. By \eqref{eq7}, $\partial^-$ acts as identity on $\P(k-1)^+\subset \P(k)^+$. After examining the action of both sides of the relation on elements in $\P(k-1)^+\subset \P(k)^+$ of the form \eqref{eq10} we see 
that it is enough to establish the equality for the action on the elements $x_{k-1}^n x_k^m+ x_{k-1}^mx_k^n$, $n,m\geq 0$. The rest of the argument in \cite{CM}*{Proposition 6.3} applies to conclude the verification of the first relation in 
\eqref{d-2 relation}.  

For the second relation in \eqref{d-2 relation} we proceed as follows. By examining the action of both sides of the relation on elements of the form \eqref{eq10} we see that it is enough to establish the equality for the action on the 
elements $x_k^n$, $n\geq 0$. By direct computation,
$$
[\partial^*,\partial^-]\partial^* x_k^n=h_n[(1-\tc)\mathbf{X}_{k+1}+(1-\tc)\qc x_1]-h_n[(1-\tc)\mathbf{X}_{k+1}]
$$
and
$$
\T_1\partial^*[\partial^*,\partial^-]x_k^n=h_n[(1-\tc)\mathbf{X}_{k+1}+(1-\tc)\qc x_1+(1-\tc)\qc x_2]-\T_1 h_n[(1-\tc)\mathbf{X}_{k+1}+(1-\tc)\qc x_1],
$$
from which the claimed equality can be readily verified.

The second relation in \eqref{cross relation}  is precisely \eqref{X-omega2 cross relation}. For the first relation in \eqref{cross relation}, it is enough to verify the case $i=1$, that is 
$$
\widetilde{\omega}^{-1}_{k+1} \Y_1=\Y_2 \widetilde{\omega}^{-1}_{k+1}.
$$
Using the first expression for $\widetilde{\omega}^{-1}_{k+1}$ in Remark \ref{rem: otilderels}, this reduces to \eqref{XY cross relations}.

The relation \eqref{last} is proved as follows
\begin{align*}
\qc \tc^{k+1} \X_1 \omega^{-1}_{k+1}&=  \tc^{k+1} \omega^{-1}_{k+1} \X_{k+1}\\
&=  \tc^{k+1} \varpi_{k+1}  \X_{k+1}\\
&= \Y_1 \T_1\cdots \T_k \X_{k+1}\\
&= \Y_1 \widetilde{\omega}^{-1}_{k+1}.
\end{align*}
Therefore,
$$
\Y_1 \partial_k=-\qc \tc^{k+1} \X_1 \partial^*_{k+1},
$$
as desired.
\end{proof}

\subsection{} \label{sec: isom}
We conclude with the comparison of the representations of $\Atq$ in Theorem \ref{thm: CMrep} and Theorem \ref{thm: quiverrep}. Let us first define
$$
 \gls{Phi}=( \gls{Phik})_{k \geq 0}: \P_\bullet \to V_{\bullet},
$$
as follows
$$\Phi_k:\P(k)^+\cong \P_k^+\otimes \Sym[\mathbf{X}_k] \to V_k;\quad x_i\mapsto y_i,\quad 1\leq i\leq k;\quad \mathbf{X}_k\mapsto \frac{\Xb}{\tc-1}.$$
We note that each $\Phi_k$ is an $\Rat(\tc,\qc)$-algebra isomorphism.

\begin{thm}\label{thm: isom}
$\Phi_\bullet$ is an isomorphism of $\Atq$-representations.
\end{thm}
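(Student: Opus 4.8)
The plan is to show that each $\Phi_k$ intertwines the action of every generator of $\Atq$; since the $\Phi_k$ are already $\Rat(\tc,\qc)$-algebra isomorphisms and the $T_i$-actions match by construction of the formulas (both are the same Demazure--Lusztig formula, once $x_i\leftrightarrow y_i$), the substantive content is the compatibility with $d_-$, $d_+$, and $d_+^*$. Equivalently, by the involution on $\Atq$ one could trade $d_+$ for $d_+^*$, but it is cleanest to check all three directly. So first I would record, for each $k$, the two parallel diagrams
\[
\begin{tikzcd}
\P(k)^+ \arrow[r,"\partial^-_k"] \arrow[d,"\Phi_k"'] & \P(k-1)^+ \arrow[d,"\Phi_{k-1}"]\\
V_k \arrow[r,"d_-"'] & V_{k-1}
\end{tikzcd}
\qquad
\begin{tikzcd}
\P(k)^+ \arrow[r,"\partial_k"] \arrow[d,"\Phi_k"'] & \P(k+1)^+ \arrow[d,"\Phi_{k+1}"]\\
V_k \arrow[r,"d_+"'] & V_{k+1}
\end{tikzcd}
\]
and the analogous one for $\partial^*_k$ versus $d_+^*$, and reduce the theorem to the commutativity of these three squares for all $k$.

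For the $d_-$ square, the key is the plethystic formula \eqref{eq6} for $\partial^-_k$ together with the definition $\Phi_k(\mathbf{X}_k)=X/(\tc-1)$. Applying $\Phi_{k-1}\partial^-_k$ to an element written in the form \eqref{eq10}, $\sum_i f_i(x_1,\dots,x_{k-1})x_k^i G_i[\mathbf{X}_{k-1}]$, and comparing with $d_-\Phi_k$ applied to the same element, both sides reduce — via \eqref{eq7} on the one hand and the $\lambda$-ring/constant-term formula for $d_-$ in \eqref{Aqt representation} on the other — to the single computation of $d_-$ on $y_k^i$ against $\partial^-_k$ on $x_k^i$ after the substitution $\mathbf{X}_k\mapsto X/(\tc-1)$, $x_k\mapsto y_k$; here one checks that $\ct_{y_k}(F[X-(\tc-1)y_k]\mathrm{Exp}[-y_k^{-1}X])$ matches $\tau_k\ct_{x_k}(\cdots)$ under $\Phi$, which is exactly the shift $\tau_k$ being absorbed into the identification of alphabets. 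For the $d_+$ square, $\partial_k=-\widetilde{\omega}^{-1}_{k+1}\iota(k)$ and $d_+F=-T_1\cdots T_k(y_{k+1}F[X+(\tc-1)y_{k+1}])$: since $\widetilde{\omega}^{-1}_{k+1}$ acts (Proposition \ref{laurent rep} and Remark \ref{rem: otilderels}) as $\tc^{?}X_{k+1}T_k\cdots T_1$ up to the relation, and the $T_i$ already intertwine, the only new point is that multiplication by $x_{k+1}$ followed by the alphabet adjustment on $\Sym$ corresponds to multiplication by $y_{k+1}$ together with the plethystic shift $X\mapsto X+(\tc-1)y_{k+1}$; this is precisely the content of how $\Phi_{k+1}$ relates the two tensor-factor decompositions $\P_{k+1}^+\otimes\Sym[\mathbf{X}_{k+1}]$ and $V_{k+1}$, because adding the variable $x_{k+1}$ to the polynomial part is the same as adding $(\tc-1)y_{k+1}$ to the symmetric-function alphabet $\mathbf{X}_k$ after rescaling. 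The $d_+^*$ square is the same computation with $\omega^{-1}_{k+1}$, which acts by the explicit permutation-and-scaling $f(x_1,\dots,x_{k+1})\mapsto f(\qc^{-1}x_{k+1},x_1,\dots,x_k)$, matched against $\zeta_k$ and the same plethystic shift.

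The main obstacle I expect is bookkeeping the interaction between the two different ways of presenting $\P(k)^+\cong \P_k^+\otimes\Sym[\mathbf{X}_k]$: an element has a ``polynomial in $x_1,\dots,x_k$'' part and a ``symmetric in $\mathbf{X}_k$'' part, but these overlap (a symmetric polynomial in $x_1,\dots,x_k$ can be moved between the factors), and $\Phi_k$ is only well-defined because the formula $\mathbf{X}_k\mapsto X/(\tc-1)$, $x_i\mapsto y_i$ is consistent on the overlap — i.e. $p_r[\mathbf{X}_k]$ maps to both $p_r[X]/(\tc-1)$ via the $\Sym$ factor and to $y_1^r+\cdots+y_k^r$ via the polynomial factor, and one must know these agree in $V_k$, which is exactly the way $V_k$ is built (the $\lambda$-ring structure on $V_\bullet$ identifies $X$ with $(\tc-1)(y_1+\cdots+y_k)+X'$ for the ``purely symmetric'' part). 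Making this identification precise — so that writing $F[\mathbf{X}_k]=F[\mathbf{X}_{k-1}+x_k]$ and expanding really does correspond under $\Phi$ to the plethystic shifts appearing in the $d_\pm$, $d_-$ formulas — is where the care is needed; once it is set up, each of the three squares collapses to a one-line plethystic identity. Having verified all generators, the inverse of $\Phi_\bullet$ is automatically a morphism of representations as well, so $\Phi_\bullet$ is an isomorphism.
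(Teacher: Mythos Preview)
Your approach is the paper's: check that $\Phi$ intertwines $T_i$, $d_+$, $d_+^*$, $d_-$, and in fact the paper only writes out $d_+$ explicitly (a short plethystic computation on a generic $x_1^{t_1}\cdots x_k^{t_k}f[\mathbf{X}_k]$) and asserts the other two are similar. Your ``main obstacle'' is illusory, however: in the isomorphism $\P(k)^+\cong\P_k^+\otimes\Sym[\mathbf{X}_k]$ the symbol $\mathbf{X}_k$ denotes the alphabet $x_{k+1},x_{k+2},\dots$ (i.e.\ $\overline{\mathbf{X}}_k$ in the paper's earlier conventions), so the two tensor factors live in disjoint variable sets and $\Phi_k$ is unambiguously defined on the tensor product---there is no overlap to reconcile, and $p_r[\mathbf{X}_k]$ sits only in the symmetric factor, never in $\P_k^+$. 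What \emph{is} the substantive point (and what you have correctly located) is that the inclusion $\iota(k):\P(k)^+\hookrightarrow\P(k+1)^+$ rewrites $G[\mathbf{X}_k]$ as $G[\mathbf{X}_{k+1}+x_{k+1}]$, which under $\Phi_{k+1}$ becomes $G[X/(\tc-1)+y_{k+1}]$; this exactly matches the plethystic shift $X\mapsto X+(\tc-1)y_{k+1}$ in the formulas \eqref{Aqt representation} for $d_+$ and $d_+^*$, and is the entire content of the verification. One minor correction: from Remark~\ref{rem: otilderels} one has $\widetilde\omega^{-1}_{k+1}=T_1\cdots T_kX_{k+1}$, not $X_{k+1}T_k\cdots T_1$.
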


\begin{proof}
We clearly have $\Phi_k T_i=T_i \Phi_k$ for all $1\leq i\leq k-1$. We will check that the action of $d_+$ satisfies the following equality 
$$
d_{+,k}\Phi_k=\Phi_{k+1}d_{+,k}.
$$
 Therefore we only need to prove the correspondence for $d_{+}, d_{+}^{*}, d_{-}$.

Let $F=x_1^{t_1}...x_k^{t_k} f[\mathbf{X}_k] \in \P(k)^+$. We have,
\begin{align*}
d_{+,k}\Phi_k F &= d_{+,k}y_1^{t_1}...y_k^{t_k} f[\frac{\Xb}{\tc-1}]\\
	&=-T_1...T_{k}(y_1^{t_1}...y_k^{t_k}y_{k+1}f[\frac{\Xb+(\tc-1)y_{k+1}}{\tc-1}])\\
	&=-T_1...T_{k}(y_1^{t_1}...y_k^{t_k}y_{k+1}f[\frac{\Xb}{\tc-1}+y_{k+1}])\\
	&=-\Phi_{k+1}(T_1...T_{k}x_1^{t_1}...x_k^{t_k}x_{k+1}f[\mathbf{\Xb}_{k+1}+x_{k+1}])\\
	&=-\Phi_{k+1}(T_1...T_{k}X_{k+1}(x_1^{t_1}...x_k^{t_k}f[\mathbf{\Xb}_{k}]))\\
	&=\Phi_{k+1} d_{+,k} F.
\end{align*}
Similar computations show that $d^*_{+,k}\Phi_k=\Phi_{k+1}d^*_{+,k}$ and $d_{-,k}\Phi_k=\Phi_{k-1}d_{-,k}$.
\end{proof}
The following examples illustrate how the actions of  $\Y_1$ and $z_1$ correspond through the isomorphism $\Phi_\bullet$.
\begin{ex}
By definition, $\Phi_k(x_2^2)=y_2^2$ for any $k\geq 2$. From Example \ref{exp: p_2iii} and Example \ref{exp: x_2^2iv} we have
\begin{align*}
\Y_1 \cdot  x_2^2&=\qc^2 \tc(\tc-1)x_1^2+\qc\tc(\tc-1)x_1 x_2-\qc\tc(\tc-1)^2 x_1 e_1[\Xb_2],\\
z_1\cdot  y_2^2 &=\qc^2 \tc(\tc-1)y_1^2+\qc\tc(\tc-1)y_1 y_2-\qc\tc(\tc-1)^2 (y_1y_3+\cdots+y_1y_k)-\qc\tc(\tc-1)y_1 e_1[\Xb].
\end{align*}
Now, from the definition of $\Phi_k$, $k\geq 2$, we can see that
$$
\Phi_k(\Y_1 \cdot x_2^2)=z_1\cdot  y_2^2.
$$
It is important to remark that while the action of $\Y_1$ on $x_2^2\in \P(k)^+$ is independent of $k$, the action of $z_1$ on  $y_2^2\in V_k$ depends on $k$.
\end{ex}

\begin{ex}
By definition, $\Phi_1(p_2[\Xb_1])=p_2[\Xb]/(\tc^2-1)$. From Example \ref{exp: p_2iii} and Example \ref{exp: p_2iv} for $k=1$, we have,
\begin{align*}
\Y_1 \cdot  p_2[\Xb_1]&=-\qc^2 \tc x_1^2+\qc\tc(\tc-1)x_1 e_1[\Xb_1],\\
z_1 \cdot  p_2[\Xb]/(\tc^2-1)&=-\qc^2 \tc  y_1^2+\qc\tc y_1 e_1[\Xb].
\end{align*}
Again,  we can see that
$$
\Phi_1(\Y_1 \cdot p_2[\Xb_1])=z_1\cdot p_2[\Xb]/(\tc^2-1).
$$
For $k=2$, we have, $\Phi_2(p_2[\Xb_1])=y_2^2+p_2[\Xb]/(\tc^2-1)$ and from Examples \ref{exp: x_2^2iv} and \ref{exp: p_2iv} we have,
\begin{align*}
\Y_1 \cdot p_2[\Xb_1]&=-\qc^2 \tc x_1^2+\qc\tc(\tc-1)x_1 e_1[\Xb_1],\\
z_1 \cdot (y_2^2+p_2[\Xb]/(\tc^2-1))&=(\qc^2 \tc(\tc-1)y_1^2+\qc\tc(\tc-1)y_1 y_2-\qc\tc(\tc-1)y_1 e_1[\Xb]) +(-\qc^2 \tc^2y_1^2  + \qc\tc^2y_1 e_1[\Xb])\\
&= -\qc^2 \tc y_1^2 + \qc\tc(\tc-1)y_1 y_2+ \qc\tc y_1 e_1[\Xb].
\end{align*}
From the definition of $\Phi_2$ we can see that
$$
\Phi_2(\Y_1 \cdot p_2[\Xb_1])=z_1\cdot  (y_2^2+p_2[\Xb]/(\tc^2-1)).
$$
Again, the action of $\Y_1$ on $p_2[\Xb_1]\in \P(k)^+$ is independent of $k$, while the action of $z_1$ on  $y_2^2+p_2{\Xb}/(\tc^2-1)\in V_k$ depends on $k$.
\end{ex}

We emphasize that above result is different from the comments in \cite{CM}*{pg. 694} that suggest a possible algebra isomorphism  between  $e_k\Atq e_k$ (the subalgebra of $\Atq$ generated by loops based at node $k$) and a \qq{partially symmetrized}  version of the stable limit \emph{spherical} DAHA, still to be defined for $k>0$. In particular, $\H^+$ does not contain a copy of the stable limit spherical DAHA which, according to the expectation in \cite{CM}, would be isomorphic to $e_0\Atq e_0$.

\subsection*{Competing interests} The authors declare none.

\section*{Index of Notation}
\renewcommand{\glossarysection}[2][]{}
\printnoidxglossaries


\begin{bibdiv}
\begin{biblist}[\normalsize]
\BibSpec{article}{%
+{}{\PrintAuthors} {author}
+{,}{ }{title}
+{.}{ \textit}{journal}
+{}{ \textbf} {volume}
+{}{ \PrintDatePV}{date}
+{,}{ no. }{number}
+{,}{ }{pages}
+{,}{ }{status}
+{.}{}{transition}
}

\BibSpec{book}{%
+{}{\PrintAuthors} {author}
+{,}{ \textit}{title}
+{.}{ }{series}
+{,}{ vol. } {volume}
+{,}{ \PrintEdition} {edition}
+{,}{ }{publisher}
+{,}{ }{place}
+{,}{ }{date}
+{,}{ }{status}
+{.}{}{transition}
}

\BibSpec{collection.article}{
+{}{\PrintAuthors} {author}
+{,}{ \textit}{title}
+{.}{ In: \textit}{conference}
+{,}{ }{pages}
+{.}{ }{series}
+{,}{ vol. } {volume}
+{,}{ }{publisher}
+{,}{ }{place}
+{,}{ }{date}
+{,}{ }{status}
+{.}{}{transition}
}

\bib{Bel}{article}{
   author={Bellingeri, Paolo},
   title={On presentations of surface braid groups},
   journal={J. Algebra},
   volume={274},
   date={2004},
   number={2},
   pages={543--563},
   issn={0021-8693},
   review={\MR{2043362}},
   doi={10.1016/j.jalgebra.2003.12.009},
}

\bib{BGLX}{article}{
   author={Bergeron, Francois},
   author={Garsia, Adriano},
   author={Sergel Leven, Emily},
   author={Xin, Guoce},
   title={Compositional $(km,kn)$-shuffle conjectures},
   journal={Int. Math. Res. Not. IMRN},
   date={2016},
   number={14},
   pages={4229--4270},
   issn={1073-7928},
   review={\MR{3556418}},
   doi={10.1093/imrn/rnv272},
}

\bib{BGHT}{article}{
   author={Bergeron, Francois},
   author={Garsia, Adriano},
   author={Sergel Leven, Emily },
   author={Xin, Guoce},
   title={Some remarkable new plethystic operators in the theory of
   Macdonald polynomials},
   journal={J. Comb.},
   volume={7},
   date={2016},
   number={4},
   pages={671--714},
   issn={2156-3527},
   review={\MR{3538159}},
   doi={10.4310/JOC.2016.v7.n4.a6},
}

\bib{BS}{article}{
   author={Burban, Igor},
   author={Schiffmann, Olivier},
   title={On the Hall algebra of an elliptic curve, I},
   journal={Duke Math. J.},
   volume={161},
   date={2012},
   number={7},
   pages={1171--1231},
   issn={0012-7094},
   review={\MR{2922373}},
   doi={10.1215/00127094-1593263},
}

\bib{CGM}{article}{
   author={Carlsson, Erik},
   author={Gorsky, Eugene},
   author={Mellit, Anton},
   title={The $\mathbb{A}_{q,t}$ algebra and parabolic flag Hilbert schemes},
   journal={Math. Ann.},
   volume={376},
   date={2020},
   number={3-4},
   pages={1303--1336},
   issn={0025-5831},
   review={\MR{4081116}},
   doi={10.1007/s00208-019-01898-1},
}

\bib{CM}{article}{
   author={Carlsson, Erik},
   author={Mellit, Anton},
   title={A proof of the shuffle conjecture},
   journal={J. Amer. Math. Soc.},
   volume={31},
   date={2018},
   number={3},
   pages={661--697},
   issn={0894-0347},
   review={\MR{3787405}},
   doi={10.1090/jams/893},
}

\bib{Ch}{article}{
   author={Cherednik, Ivan},
   title={Double affine Hecke algebras and Macdonald's conjectures},
   journal={Ann. of Math. (2)},
   volume={141},
   date={1995},
   number={1},
   pages={191--216},
   issn={0003-486X},
   review={\MR{1314036}},
   doi={10.2307/2118632},
}

\bib{Ch-book}{book}{
   author={Cherednik, Ivan},
   title={Double affine Hecke algebras},
   series={London Mathematical Society Lecture Note Series},
   volume={319},
   publisher={Cambridge University Press, Cambridge},
   date={2005},
   pages={xii+434},
   isbn={0-521-60918-6},
   review={\MR{2133033}},
   doi={10.1017/CBO9780511546501},
}

\bib{FO}{article}{
   author={Fe\u{\i}gin, B. L.},
   author={Odesski\u{\i}, A. V.},
   title={Vector bundles on an elliptic curve and Sklyanin algebras},
   conference={Topics in quantum groups and finite-type invariants},
      series={Amer. Math. Soc. Transl. Ser. 2},
      volume={185},
      publisher={Amer. Math. Soc., Providence, RI},
   date={1998},
   pages={65--84},
   review={\MR{1736164}},
   doi={10.1090/trans2/185/04},
}

\bib{FT}{article}{
   author={Feigin, B. L.},
   author={Tsymbaliuk, A. I.},
   title={Equivariant $K$-theory of Hilbert schemes via shuffle algebra},
   journal={Kyoto J. Math.},
   volume={51},
   date={2011},
   number={4},
   pages={831--854},
   issn={2156-2261},
   review={\MR{2854154}},
   doi={10.1215/21562261-1424875},
}

\bib{GHT}{article}{
   author={Garsia, A. M.},
   author={Haiman, M.},
   author={Tesler, G.},
   title={Explicit plethystic formulas for Macdonald $q,t$-Kostka
   coefficients},
   note={The Andrews Festschrift (Maratea, 1998)},
   journal={S\'{e}m. Lothar. Combin.},
   volume={42},
   date={1999},
   pages={Art. B42m, 45},
   review={\MR{1701592}},
}

\bib{GN}{article}{
   author={Gorsky, Eugene},
   author={Negu\c{t}, Andrei},
   title={Refined knot invariants and Hilbert schemes},
   language={English, with English and French summaries},
   journal={J. Math. Pures Appl. (9)},
   volume={104},
   date={2015},
   number={3},
   pages={403--435},
   issn={0021-7824},
   review={\MR{3383172}},
   doi={10.1016/j.matpur.2015.03.003},
}

\bib{Hag}{book}{
   author={Haglund, James},
   title={The $q$,$t$-Catalan numbers and the space of diagonal harmonics},
   series={University Lecture Series},
   volume={41},
   note={With an appendix on the combinatorics of Macdonald polynomials},
   publisher={American Mathematical Society, Providence, RI},
   date={2008},
   pages={viii+167},
   isbn={978-0-8218-4411-3},
   isbn={0-8218-4411-3},
   review={\MR{2371044}},
   doi={10.1007/s10711-008-9270-0},
}

\bib{HHLRU}{article}{
   author={Haglund, J.},
   author={Haiman, M.},
   author={Loehr, N.},
   author={Remmel, J. B.},
   author={Ulyanov, A.},
   title={A combinatorial formula for the character of the diagonal
   coinvariants},
   journal={Duke Math. J.},
   volume={126},
   date={2005},
   number={2},
   pages={195--232},
   issn={0012-7094},
   review={\MR{2115257}},
   doi={10.1215/S0012-7094-04-12621-1},
}

\bib{HMZ}{article}{
   author={Haglund, J.},
   author={Morse, J.},
   author={Zabrocki, M.},
   title={A compositional shuffle conjecture specifying touch points of the
   Dyck path},
   journal={Canad. J. Math.},
   volume={64},
   date={2012},
   number={4},
   pages={822--844},
   issn={0008-414X},
   review={\MR{2957232}},
   doi={10.4153/CJM-2011-078-4},
}

\bib{Ha}{collection.article}{
   author={Haiman, Mark},
   title={Macdonald polynomials and geometry},
   conference={New perspectives in algebraic combinatorics},
      address={Berkeley, CA,1996--97},
      series={Math. Sci. Res. Inst. Publ.},
      volume={38},
      publisher={Cambridge Univ. Press, Cambridge},
   date={1999},
   pages={207--254},
   review={\MR{1731818}},
}

\bib{HaiHil}{article}{
   author={Haiman, Mark},
   title={Hilbert schemes, polygraphs and the Macdonald positivity
   conjecture},
   journal={J. Amer. Math. Soc.},
   volume={14},
   date={2001},
   number={4},
   pages={941--1006},
   issn={0894-0347},
   review={\MR{1839919}},
   doi={10.1090/S0894-0347-01-00373-3},
}

\bib{HaiVan}{article}{
   author={Haiman, Mark},
   title={Vanishing theorems and character formulas for the Hilbert scheme
   of points in the plane},
   journal={Invent. Math.},
   volume={149},
   date={2002},
   number={2},
   pages={371--407},
   issn={0020-9910},
   review={\MR{1918676}},
   doi={10.1007/s002220200219},
}

\bib{IS}{article}{
   author={Ion, Bogdan},
   author={Sahi, Siddhartha},
   title={Double affine Hecke algebras and congruence groups},
   journal={Mem. Amer. Math. Soc.},
   volume={268},
   date={2020},
   number={1305},
   pages={xi+90 pp},
   doi={10.1090/memo/1305},
}

\bib{Jing}{article}{
   author={Jing, Nai Huan},
   title={Vertex operators and Hall-Littlewood symmetric functions},
   journal={Adv. Math.},
   volume={87},
   date={1991},
   number={2},
   pages={226--248},
   issn={0001-8708},
   review={\MR{1112626}},
   doi={10.1016/0001-8708(91)90072-F},
}

\bib{Kn}{collection.article}{
   author={Knop, Friedrich},
   title={Composition Kostka functions},
   conference={Algebraic groups and homogeneous spaces},
      series={Tata Inst. Fund. Res. Stud. Math.},
      volume={19},
      publisher={Tata Inst. Fund. Res., Mumbai},
   date={2007},
   pages={321--352},
   review={\MR{2348910}},
}

\bib{Neg}{article}{
   author={Negut, Andrei},
   title={The shuffle algebra revisited},
   journal={Int. Math. Res. Not. IMRN},
   date={2014},
   number={22},
   pages={6242--6275},
   issn={1073-7928},
   review={\MR{3283004}},
   doi={10.1093/imrn/rnt156},
}

\bib{Mac}{book}{
   author={Macdonald, I. G.},
   title={Symmetric functions and Hall polynomials},
   series={Oxford Classic Texts in the Physical Sciences},
   edition={2},
   note={With contribution by A. V. Zelevinsky and a foreword by Richard
   Stanley;
   Reprint of the 2008 paperback edition [ MR1354144]},
   publisher={The Clarendon Press, Oxford University Press, New York},
   date={2015},
   pages={xii+475},
   isbn={978-0-19-873912-8},
   review={\MR{3443860}},
}

\bib{Me}{article}{
   author={Mellit, Anton},
   title={Toric braids and $(m,n)$-parking functions},
   journal={Duke Math. J.},
   volume={170},
   date={2021},
   number={18},
   pages={4123--4169},
}

\bib{SV}{article}{
   author={Schiffmann, O.},
   author={Vasserot, E.},
   title={The elliptic Hall algebra, Cherednik Hecke algebras and Macdonald
   polynomials},
   journal={Compos. Math.},
   volume={147},
   date={2011},
   number={1},
   pages={188--234},
   issn={0010-437X},
   review={\MR{2771130}},
   doi={10.1112/S0010437X10004872},
}

\bib{SV2}{article}{
   author={Schiffmann, Olivier},
   author={Vasserot, Eric},
   title={The elliptic Hall algebra and the $K$-theory of the Hilbert scheme
   of $\mathbb A^2$},
   journal={Duke Math. J.},
   volume={162},
   date={2013},
   number={2},
   pages={279--366},
   issn={0012-7094},
   review={\MR{3018956}},
   doi={10.1215/00127094-1961849},
}

\end{biblist}
\end{bibdiv}
\end{document}